 \newtheorem{thm}{Theorem}[section]
 \newtheorem{prop}[thm]{Proposition}
 \newtheorem{cor}[thm]{Corollary}
 \newtheorem{lem}[thm]{Lemma}
\theoremstyle{definition}
\newtheorem{defn}[thm]{Definition}
\theoremstyle{remark}
\newtheorem{rem}[thm]{Remark}
\newcommand{\N}{\mathbb{N}}
\newcommand{\Z}{\mathbb{Z}}
\newcommand{\Q}{\mathbb{Q}}
\newcommand{\R}{\mathbb{R}}
\newcommand{\C}{\mathbb{C}}
\renewcommand{\Im}{\mathrm{Im}}
\newcommand{\A}{\mathbb{A}}
\renewcommand{\H}{\mathcal{H}}
\newcommand{\Sp}{\mathrm{Sp}}
\newcommand{\GL}{\mathrm{GL}}
\newcommand{\Jac}{\b{G}}
\renewcommand{\Re}{\mathrm{Re}}
\renewcommand{\Im}{\mathrm{Im}}
\newcommand{\diag}{\mathrm{diag}}
\newcommand{\sgn}{\mathrm{sgn}}
\newcommand{\tr}{\mathrm{tr}\,}
\renewcommand{\b}[1]{\boldsymbol{#1}} 
\renewcommand{\a}{\mathbf{a}} 
\newcommand{\h}{\mathbf{h}} 
\newcommand{\f}[1]{\mathfrak{{#1}}}
\renewcommand{\l}{\lambda}
\renewcommand{\t}{\theta}
\newcommand{\ti}{^{\times}}
\newcommand{\back}{\backslash}
\newcommand{\T}[1]{\,{}^t\! {{#1}}} 
\newcommand{\transpose}[1]{\text{$^t\!#1$}} 
\renewcommand{\(}{\left(} \renewcommand{\)}{\right)}
\newcommand{\spmatrix}[1]{{\small\arraycolsep=0.3\arraycolsep\ensuremath{\begin{pmatrix} #1\end{pmatrix}}}} 
\def\sectionnam{\@empty}
\def\subsectionnam{\@empty}
\begin{document}
\title[On the standard $L$-function attached to Siegel-Jacobi modular forms] 
{On the standard $L$-function attached to Siegel-Jacobi modular forms of higher index}
\keywords{Jacobi group, Siegel-Jacobi modular forms, $L$-functions, Eisenstein series, Hecke operators}

\author{Thanasis Bouganis and Jolanta Marzec}
\address{Department of Mathematical Sciences\\ Durham University\\
 Durham, UK.}
\email{athanasios.bouganis@durham.ac.uk\\ jolanta.m.marzec@durham.ac.uk}
\thanks{The authors acknowledge support from EPSRC through the grant EP/N009266/1, Arithmetic of automorphic forms and special $L$-values}
\subjclass[2010]{11R42, 11F50, 11F66, 11F67 (primary), and 11F46 (secondary)}

\maketitle In this work we study the analytic properties of the standard $L$-function attached to Siegel-Jacobi modular forms of higher index, generalizing previous results of Arakawa and Murase. Furthermore, we obtain algebraicity results on special $L$-values in the spirit of Deligne's Period Conjectures. 

\tableofcontents


\section{Introduction} 
\definecolor{paleyellow}{RGB}{255,255,190}
\newcommand{\yel}[1]{\colorbox{paleyellow}{#1}}

The standard $L$-function attached to a cuspidal Siegel eigenform $f$ is perhaps one of the most well-studied automorphic $L$-functions. Indeed, its analytic properties have been extensively studied by many authors such as Andrianov and Kalinin \cite{AK}, B\"{o}cherer \cite{B83,B85,B86}, Garrett \cite{Garrett}, Piatetski-Shapiro and Rallis \cite{PS-R}, and Shimura \cite{Sh94,Sh95}. Moreover, if one assumes that $f$ is algebraic, in the sense that the Fourier coefficients 
of $f$ at infinity are algebraic, then the values of the $L$-function at specific points (usually called special $L$-values), after dividing by appropriate powers of $\pi$ and the Petersson self inner product $<f,f>$, are algebraic. Results of this kind have been obtained first by Sturm \cite{Sturm}, then extended by B\"{o}cherer and Schmidt \cite{BS} and Shimura \cite{Sh00}.

Siegel-Jacobi modular forms - called here after \cite{Kramer} - are higher dimensional generalizations of classical Jacobi forms. As in the one-dimensional case they are very closely related to Siegel modular forms. Indeed, many examples may be naturally obtained from Fourier-Jacobi expansion of Siegel modular forms. 
However, one of the main differences of these automorphic forms in comparison to Siegel modular forms is that the underlying algebraic group, the Jacobi group, is not reductive. 
In particular, this means that these automoprhic forms cannot be understood as sections of line bundles of Shimura varieties, but rather of mixed Shimura varieties \cite{Milne}. We will come back to this point later in the introduction when discussing our results regarding the algebraicity of the special $L$-values. 

Siegel-Jacobi modular forms have already been studied by many researchers. The ones that are best understood are classical Jacobi forms. Their first systematic study was carried out in 
\cite{EZ}, but they were already used in earlier papers (cf. \cite{Skoruppa_developments}).
For the higher dimensional situation we would like to mention works which are especially relevant to this paper, namely the papers of Shimura \cite{Sh78}, Ziegler \cite{Z89} and Kramer \cite{Kramer}. The approach of Ziegler is what may be called classical, Shimura's is arithmetic and Kramer's is geometric. We will come back to Shimura's approach later in the introduction.  

In spite of such a variety of methods to study Siegel-Jacobi modular forms, still not much is known about associated Dirichlet series. A systematic study of a Hecke algebra acting on the space of Siegel-Jacobi modular forms, and of the resulting standard $L$-function was started by Shintani (unpublished). However, the first results concerning analytic properties of this $L$-function were obtained by Murase - in \cite{Mu89,Mu91} he established the analytic continuation, a representation as an Euler product and a functional equation.
In this paper we not only extend the results of Murase, but also study arithmetic properties of the $L$-function at particular points. 
 
Before going any further we give a brief account of main theorems proved in this paper. For simplicity we describe them here only for Siegel-Jacobi modular forms over the rational numbers, even though our results are more general and are proved over a totally real field. First we need to introduce some notation. 

Let $S \in M_{l,l}(\mathbb{Q})$ be a positive definite half-integral symmetric matrix, and $f$ a Siegel-Jacobi modular form of weight  $k$ and index $S$ for the congruence subgroup $\b{\Gamma}_0(N)$. We give the detailed definition in  section \ref{sec:Jacobi_forms} but for the purposes of this introduction it is enough to say that $f$ is a holomorphic function on the space $\mathcal{H}_{n,l}:=\mathbb{H}_n \times M_{n,l}(\mathbb{C})$, where $\mathbb{H}_n$ is the Siegel upper half space, satisfying a particular modular property with respect to the group $\b{\Gamma}_0(N) := H(\mathbb{Z}) \rtimes \Gamma_0(N)$, a congruence subgroup of the Jacobi group $\b{G}^{n,l}(F) := H(F) \rtimes \Sp_n(F)$. Here $H(\mathbb{Z})$ denotes the $\mathbb{Z}$-points of the Heisenberg group of degree $n$ and index $l$, and $\Gamma_0(N)$ the classical congruence subgroup of level $N$ in the theory of Siegel modular forms. 

Shintani (unpublished), Murase \cite{Mu89} and Murase and Sugano \cite{MS} defined and studied Hecke operators $T(m)$ acting on $f$. Actually, this was done only for the case of $N=1$. In this work (see section \ref{Hecke algebra}) we extend this to the case of any $N$. Then, assuming that $f$ is an eigenform for all $T(m)$ with eigenvalues $\lambda(m)$ and $\chi$ is a Dirichlet character of a conductor $M$, we consider a Dirichlet series $D(s,f,\chi) = \sum_{m=1}^{\infty} \lambda(m)  \chi(m) m^{-s}$. This series is absolutely convergent for $\Re(s) > 2n+l+1$ and - as we will show in section \ref{Hecke algebra} - after multiplying by an appropriate factor it possesses an Euler product representation. More precisely, we prove the following: 

\begin{thm} Assume that the matrix $S$ satisfies the condition $M_{p}^+$ (see section \ref{Hecke algebra} for a definition) for every prime ideal $p$ with $(p,N)=1$. Then
\[
\f{L}(\chi,s) D(s+n+l/2,f,\chi) = L(s,f,\chi) := \prod_{p} L_{p}(\chi(p)p^{-s})^{-1},
\]
where for every prime number $p$ 
\[
 L_{p}(X) = \begin{cases} \prod_{i=1}^n \left((1- \mu_{p,i}X)(1-\mu^{-1}_{p,i}X ) \right)\,\,\, \mu_{p,i} \in \mathbb{C}^{\times},\,\,\hbox{if}\,\,\, (p, N)=1,\\
 \prod_{i=1}^n (1- \mu_{p,i}X)\,\,\, \mu_{p,i} \in \mathbb{C},\,\,\hbox{otherwise}. 
 \end{cases}
\]
Moreover, $\f{L}(\chi,s) = \prod_{(p,N)=1} \f{L}_{p}(\chi,s)$, where
\[
\f{L}_{p}(\chi,s) := G_{p}(\chi,s) \begin{cases}  \prod_{i=1}^{n} L_{p}(2s+2n-2i,\chi^{2}) &\mbox{if } l \in 2\mathbb{Z} \\ 
\prod_{i=1}^{n} L_{p}(2s+2n-2i+1,\chi^{2})  & \mbox{if }  l \not \in 2\mathbb{Z}\end{cases} ,
\]
and $G_{p}(\chi,s)$ is a ratio of Euler factors which for almost all $p$ is one. 
\end{thm}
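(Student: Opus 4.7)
The identity is local in nature: both sides factor as Euler products indexed by primes $p$, and the claim is that the corresponding local factors agree. The multiplicativity $\lambda(mn) = \lambda(m)\lambda(n)$ for $\gcd(m,n) = 1$ follows from the commutativity and tensor-product decomposition of the global Hecke algebra established in section~\ref{Hecke algebra}, so it suffices to analyse the local generating series $\sum_{r \geq 0} \lambda(p^r) X^r$ prime by prime.

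For $p \nmid N$ the condition $M_p^+$ on $S$ is precisely what is needed so that the local Hecke algebra of the Jacobi group at $p$ is commutative and admits a Satake-type isomorphism, in the spirit of Murase \cite{Mu89} and Murase--Sugano \cite{MS}. The Satake parameters $\mu_{p,1}, \ldots, \mu_{p,n} \in \C^\times$ attached to $f$ index the local spherical character, and an explicit computation of $\sum_r \lambda(p^r) X^r$ in terms of these parameters yields a rational function whose denominator is the product of $\prod_{i=1}^n (1-\mu_{p,i}X)(1-\mu_{p,i}^{-1}X)$ (the desired piece) with additional binomials of the form $1 - p^{a_i} X^2$ arising from the Heisenberg part of the Jacobi group. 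After the shift $s \mapsto s + n + l/2$ and up to the local ratio $G_p(\chi,s)$, these extra binomials match the local factors $\prod_{i=1}^n L_p(2s+2n-2i+\epsilon,\chi^2)^{-1}$ with $\epsilon \in \{0,1\}$ according to the parity of $l$; combining them produces the stated formula for $\f{L}_p(\chi,s)$. The factor $G_p(\chi,s)$ is trivial away from the finitely many primes where $\chi$, $N$, or the discriminant of $S$ force a mild discrepancy.

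For $p \mid N$ the local Hecke algebra constructed in section~\ref{Hecke algebra} is smaller --- it is generated by operators tied to a parabolic subgroup --- and a direct analysis of the generating series produces a local $L$-factor with only $n$ parameters $\mu_{p,i} \in \C$, without the inverse roots and without any auxiliary Dirichlet factors; this is consistent with $\f{L}(\chi,s)$ being a product only over $p \nmid N$. The main obstacle is the Satake-type computation at good primes: because the Jacobi group is non-reductive, one cannot invoke the classical reductive theory directly. Condition $M_p^+$ is what makes a reduction to a symplectic-type calculation possible (following \cite{Mu89,MS}), and the novelty here is to carry this analysis out at arbitrary level $N$ rather than only for $N=1$.
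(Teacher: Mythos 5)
Your overall strategy coincides with the paper's: $D(s,\mathbf{f},\chi)$ factors over primes; the good primes are handled by the Murase--Sugano Satake theory for the Jacobi Hecke algebra, where $M_{\f{p}}^+$ is what guarantees the Cartan-type support $\bigcup_{\alpha}\b{D}d_n(\pi_\alpha)\b{D}\mathcal{Z}$ needed for that theory; and the bad primes are handled by a smaller Hecke algebra that reduces to $\GL_n$ and yields $n$ Satake parameters with no auxiliary abelian factors. However, your description of the local generating series at a good prime is inconsistent with the identity being proved. You place the auxiliary binomials $1-p^{a_i}X^2$ in the denominator alongside $\prod_{i}(1-\mu_{p,i}X)(1-\mu_{p,i}^{-1}X)$; but since the theorem asserts $\f{L}_p(\chi,s)\,D_p(s+n+l/2,f,\chi)=L_p(\chi(p)p^{-s})^{-1}$ and $\f{L}_p$ is, up to $G_p$, a product of abelian $L$-factors (reciprocals of binomials), the generating series must have those binomials in the \emph{numerator}:
\[
D_p(s+n+l/2,f,\chi)=\frac{\prod_{i=1}^n\bigl(1-\chi^2(p)\,p^{-(2s+2n-2i+\epsilon)}\bigr)}{G_p(\chi,s)\prod_{i=1}^n(1-\mu_{p,i}X)(1-\mu_{p,i}^{-1}X)},\qquad X=\chi(p)p^{-s},
\]
with $\epsilon=0$ or $1$ according to the parity of $l$. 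With your placement, multiplying by $\f{L}_p$ would produce the square of those binomials in the denominator rather than cancel them.

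More seriously, the step you summarize as ``an explicit computation \dots\ yields a rational function'' is the entire content of the theorem at the good primes, and the references you invoke contain it only for $l=1$. The paper obtains it by combining Murase's integral representation $B(\xi,\chi,s)=\int_{\mathcal{Z}\setminus\b{G}}\nu_{\chi,s}\phi_\xi$ with his zeta-integral formula expressing $L(\xi,\chi,s)$ through $\int\mathcal{F}(s,\chi,\cdot)\phi_\xi$, and then evaluating $\mathcal{F}(s,\chi,\b{g})$ for arbitrary index $l$ (the generalization of \cite[Lemma 6.8]{MS}). That evaluation produces the local Siegel series $\alpha_S(s+n+l/2,\chi)$ attached to $S$, and it is the formula $\alpha_S(s,\chi)=\bigl(L(s,\chi)\prod_{i=1}^{[l/2]}L(2s-2i,\chi^2)\bigr)^{-1}g_S(s,\chi)$ that simultaneously accounts for the parity-of-$l$ dichotomy in $\f{L}_p$ and identifies $G_p(\chi,s)$ as the explicit ratio $g_S(s+n+l/2,\chi)/g_S(s+l/2,\chi)$ of the polynomial parts of two Siegel series --- whence its triviality exactly when $S$ is regular at $p$, hence for almost all $p$. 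Attributing the extra factors vaguely to ``the Heisenberg part'' and $G_p$ to ``a mild discrepancy'' leaves this mechanism, which is the genuinely new computation for $l>1$, unsupplied. (Relatedly, the novelty at the good primes is the general index $l$ and the twist by $\chi$, not the level $N$; the level intervenes only at the bad primes.)
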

The above theorem was originally shown by Murase and Sugano in the case of $N=1$, $\chi=1$ and $l=1$. We extended it to any $N$, any character $\chi$ and any $l$. Together with generalization to any $l$ certain new phenomena appear, such as for example the presence of the factor $G(\chi,s)$, which is equal to one in the case of $l=1$.
We defer a more detailed discussion to section \ref{Hecke algebra}. \newline

\noindent \textbf{Analytic properties of $L(s,f,\chi)$:} The theorem above establishes that the function $L(s,f,\chi)$ is absolutely convergent for $\Re(s) >n + \frac{l}{2}+1$ and hence holomorphic. Regarding its meromorphic continuation we prove the following:

\begin{thm}   With notation as above, assume that $\chi(-1) = (-1)^k$. Then, for some $Q | N$, the function $\left(\prod_{q | Q} L_{q}(\chi(q)q^{-s})\right) L(s,f,\chi)$ 
has a meromorphic continuation to the whole complex plane. 
\end{thm}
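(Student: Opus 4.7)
The plan is to establish meromorphic continuation via a Rankin-Selberg / doubling type integral representation: pair $f$ against a suitably chosen Jacobi Eisenstein series, recognize the result as a known multiple of $L(s,f,\chi)$ in the region of absolute convergence, and then inherit meromorphic continuation from that of the Eisenstein series. This strategy generalizes Murase's approach \cite{Mu89,Mu91} from the $l=1$ setting and parallels the Andrianov--Kalinin / B\"ocherer / Shimura treatment of the Siegel standard $L$-function. The parity hypothesis $\chi(-1)=(-1)^k$ enters precisely to ensure that the relevant Eisenstein series is not forced to vanish by the $-I$ element.

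\textbf{Step 1: the Eisenstein series.} I would construct a Jacobi Eisenstein series $\b{E}_{k,\chi}(\tau, w; s)$ of weight $k$, index $S$ and nebentypus $\chi$ on $\H_{n,l}$, attached to a congruence subgroup of $\b{G}^{n,l}$ compatible with the level $N$. Meromorphic continuation of $\b{E}_{k,\chi}(\cdot\,;s)$ to all of $\C$ is available through standard theory, obtained by realising $\b{E}_{k,\chi}$ as (essentially) the pullback of a Siegel-type Eisenstein series on a larger symplectic group via a natural embedding of $\b{G}^{n,l}$, whose analytic behaviour is classical.

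\textbf{Step 2: Rankin-Selberg identity.} In the region of absolute convergence I would analyse the global pairing
\[
R(s,f,\chi) = \int_{\b{\Gamma}_0(N) \back \H_{n,l}} f(\tau,w) \, \overline{\b{E}_{k,\chi}(\tau, w;\bar s)} \, d\mu(\tau,w),
\]
where $d\mu$ is the invariant measure on $\H_{n,l}$ (with an appropriate weight factor folded in). Unfolding against the Fourier-Jacobi expansion of $f$, combined with the eigenvalue relations $T(m)f = \lambda(m)f$ and the Euler product of Theorem 1, expresses $R(s,f,\chi)$, after a suitable shift in $s$, as
\[
R(s,f,\chi) \;=\; A_\infty(s)\, B_N(s,\chi)\, L(s,f,\chi),
\]
where $A_\infty(s)$ is a product of $\Gamma$-factors and $B_N(s,\chi)$ is a finite product of local Euler factors at primes dividing $N$. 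Letting $Q \mid N$ be the divisor supporting precisely those Euler factors which, together with $\prod_{q\mid Q} L_q(\chi(q)q^{-s})$, cancel out of $B_N$, one obtains meromorphic continuation of $\bigl(\prod_{q\mid Q} L_q(\chi(q)q^{-s})\bigr) L(s,f,\chi)$ from that of $R(s,f,\chi)$, which is inherited from $\b{E}_{k,\chi}$.

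\textbf{Main obstacle.} The technical heart of the argument is the local unramified computation: showing that at a prime $p \nmid N$ the local integral reproduces exactly the correction factor $\f{L}_p(\chi,s)$ of Theorem 1. For $l>1$ this is significantly more delicate than Murase's $l=1$ case, since the Heisenberg part of the Jacobi Eisenstein series contributes genuinely nontrivial Bessel-type integrals, and it is these integrals which are responsible for the extra local factor $G_p(\chi,s)$ that appears in $\f{L}_p$. Once normalisations between the pulled-back Siegel Eisenstein series, its Jacobi realisation, and the Hecke-algebraic $L$-function of Theorem 1 are matched, the meromorphic continuation statement follows immediately.
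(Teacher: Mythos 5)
Your overall strategy (pair $f$ against an Eisenstein series whose meromorphic continuation is known, identify the pairing with $L(s,f,\chi)$ in the region of convergence) is the right family of ideas, but the central integral in Step 2 is not the one that works, and as written it vanishes identically. If $\b{E}_{k,\chi}(\,\cdot\,;s)$ is a Siegel-type Jacobi Eisenstein series on the \emph{same} space $\H_{n,l}$, i.e.\ a sum $\sum_{\gamma\in \b{P}\backslash\b{\Gamma}}\chi[\gamma]\,\delta(z)^{s-k/2}|_{k,S}\gamma$, then unfolding
$\int_{\b{\Gamma}_0(N)\backslash\H_{n,l}} f\,\overline{\b{E}_{k,\chi}}\,d\mu$
collapses the integral to the cusp $\b{P}\cap\b{\Gamma}\backslash\H_{n,l}$, where the seed $\delta(z)^{s-k/2}$ is independent of $\Re(\tau)$ and $\Re(w)$; the integration over these variables therefore extracts the $(t,r)=(0,0)$ Fourier coefficient of $f$, which is zero because $f$ is cuspidal. (This is just the familiar fact that Siegel-type Eisenstein series are orthogonal to cusp forms.) To produce the standard $L$-function you must either (a) double the variable --- take the Eisenstein series on $\H_{2n,l}$ (or on $\Sp_{2n+l}$ \`a la Murase), restrict to $\diag[z_1,z_2]\in\H_{n,l}\times\H_{n,l}$, and pair in \emph{one} of the two variables, so that the output is $L(s,f,\chi)$ times a Klingen-type series (a multiple of $f$ itself when $m=n$) in the remaining variable --- or (b) keep a single variable but insert a theta kernel as in Andrianov--Kalinin/B\"ocherer. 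Your displayed $R(s,f,\chi)$ is a pure scalar with no residual variable and no theta kernel, so it is neither of these; the ``unfolding against the Fourier-Jacobi expansion of $f$'' you invoke has nothing nonzero to unfold onto.

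The paper's proof is exactly route (a): the basic identity computes $\langle E^{2n}(\diag[z_1,z_2],s;\chi), f(z_2)\rangle$ and shows it equals (normalising factors) $\times\, L_{\psi}(2s-n-l/2,\mathbf{f},\chi\psi^{-1})\times (f|U_{h1_n,\chi})|_{k,S}\b{\eta}_n^{-1}(z_1)$, after which continuation is read off from the degree-$2n$ Siegel-type Jacobi Eisenstein series; the factor $\prod_{q\mid Q}L_q(\chi(q)q^{-s})$ arises from the primes $q\mid N$ at which $f|T_{q}$ may vanish, which must be removed so that the right-hand side is not identically zero. Two further points where your sketch diverges from what is actually needed: the continuation of the Jacobi Eisenstein series is obtained in the paper not by pulling back a symplectic Eisenstein series through an embedding of $\b{G}^{n,l}$, but by the theta decomposition $\b{E}=\sum (\text{symplectic Eisenstein series of weight } k-l/2)\cdot\Theta_{S,\Lambda}$ in the spirit of B\"ocherer and Heim (over a general totally real field this also forces a Fourier-analysis argument on the class group); and the local factor $G_p(\chi,s)$ does not come from any archimedean or Bessel-type integral in the doubling computation --- it is already present in the purely non-archimedean Hecke-algebra identity of Theorem 7.6, via the Siegel series $\alpha_S$ attached to the index matrix $S$.
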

Actually in the full version of the theorem (Theorem \ref{Main Theorem 1}), after introducing an extra factor depending on the parity of $l$ and some Gamma factors, we also provide information on the location of the poles of the function. Our theorem extends previous work of Murase \cite{Mu89,Mu91} in various directions: we consider the case of totally real fields, non-trivial level and twisting by characters. However, perhaps the most important difference with the works \cite{Mu89,Mu91} is the method used. Even though both in our work and in these of Murase the doubling method is used, there are some very serious differences with advantages and disadvantages. The work of Murase has as its prototype the approach of Piatetski-Shapiro and Rallis \cite{PS-R} and their theory of zeta integrals. Murase uses an embedding of the form
 \[
 \b{G}^{n,l}(\mathbb{Q}) \times \b{G}^{n,l}(\mathbb{Q}) \hookrightarrow \Sp_{2n+l}(\mathbb{Q}),
 \]
 and computes an adelic zeta integral \`{a} la Piatetski-Shapiro and Rallis of a Siegel-type Eisenstein series of $\Sp_{2n+l}$ restricted to the image of the product $\b{G}^{n,l}(\mathbb{A}_{\mathbb{Q}}) \times \b{G}^{n,l}(\mathbb{A}_{\mathbb{Q}})$ against two copies of the adelic counterpart  $\mathbf{f}$ of $f$. 
 
Our approach is completely different. We use instead a map of the form
\[
 \b{G}^{n,l}(\mathbb{Q}) \times \b{G}^{m,l}(\mathbb{Q}) \rightarrow \b{G}^{m+n,l}(\mathbb{Q}),
\]
which is not quite an embedding; this
map was first used by Arakawa in \cite{Ar94}. We will later discuss in more details the differences of our approach to the one of Arakawa, but first we give a brief account of the comparison of the method employed by Murase and the one of this paper. One of the big advantages of the first approach is that one can read off analytic properties of the standard $L$-function associated to a Siegel-Jacobi modular form by making use of well-studied analytic properties of Siegel-type symplectic 
Eisenstein series. On the other hand, the method used in this paper allows us to obtain analytic properties of the standard $L$-function by studying analytic properties of Siegel-type Jacobi Eisenstein series. More precisely,
for a Dirichlet character $\chi$ with $\chi(-1)= (-1)^k$ and $m \geq n$ we prove a formula of the form
\[
<f(w), E^{n+m}(\diag[z,w],s ; \chi, k, N)> = L(s, f,\chi, s) E^{m}(z,s;f,\chi,N),
\]
where $E^{n+m}(\diag[z,w],s ; \chi, k, N)$ is the restriction under the diagonal embedding $\mathcal{H}_{n,l} \times \mathcal{H}_{m,l} \hookrightarrow \mathcal{H}_{n+m,l}$ of a Siegel-type Jacobi Eisenstein series of degree $n+m$ associated to the character $\chi$, and $E^{m}(z,s;f,\chi,N)$ is a Klingen-type Jacobi Eisenstein series of degree $m$ associated to the cuspidal form $f$ through parabolic induction. That is, we obtain an identity in the spirit of the doubling method which says that after taking the Petersson inner product of a restricted Siegel-type Eisenstein series against a cusp form, we obtain a Klingen-type Eisenstein series induced by the cusp form normalized by the standard $L$-function associated to the same cusp form. 

This identity was first obtained by Arakawa in \cite{Ar94} in the case of $N=1$ and trivial $\chi$ (and hence $k$ even), and in this paper is extended to the situation of totally real fields, arbitrary level as well as non-trivial characters $\chi$. However, we should stress here that our approach is quite different than that of Arakawa. Indeed, Arakawa's approach is modeled to the original approach of Garrett in \cite{Garrett} who invented the doubling method and applied it to the case of Siegel modular forms over $\mathbb{Q}$ of trivial level and without twists by Dirichlet characters. Our approach is modeled after the work of Shimura \cite{Sh95}, where he extended Garrett's approach to the case of totally real field, arbitrary level as well as twisting by Hecke characters.  

It is important to note here that opposite to the first map used by Murase, in the map used in this work we have the option to take $n \neq m$. And indeed we will make use of this in order to obtains results towards the analytic properties of Klingen-type Jacobi Eisenstein series (see Theorem \ref{Main Theorem 2}).

\noindent \textbf{Algebraic properties of $L$-values:} In this paper we also investigate algbebraic properties of special values of the $L$-function under consideration. The starting point of our investigation is a result of Shimura in \cite{Sh78} on the arithmeticity of Siegel-Jacobi modular forms. Namely, if we write $M_{k,S}^n$ for the space of Siegel-Jacobi modular forms of weight $k$ and index $S$, and of any congruence subgroup, and we also denote by $M_{k,S}^n(K)$ the subspace of  $M_{k,S}^n$ with the property that the Fourier expansion at infinity of an element in the space has Fourier coefficients in a subfield $K$ of $\mathbb{C}$, then it is shown in  (loc. cit.) that  $M_{k,S}^n(K)= M_{k,S}^n(\mathbb{Q}) \otimes_{\mathbb{Q}} K$. In particular, for a given $f \in M_{k,S}^n$ and a $\sigma \in Aut(\mathbb{C}/\mathbb{Q})$ one can define the element $f^{\sigma} \in M_{k,S}^n$ by letting $\sigma$ act on the Fourier coefficients of $f$. The main theorem we proved regarding algebraicity (Theorem \ref{Main Theorem on algebraicity}) is stated below in the simplest form of $N=1$. In the following, and for $l$ even, we write $\psi_S$ for the non-trivial quadratic character corresponding to the extension $K_S:= \mathbb{Q}(\sqrt{(-1)^{l/2} \det(2S)})$ if $K_S \neq \mathbb{Q}$, and we set $\psi_S =1$ otherwise. 

\begin{thm} \label{Main Theorem 2_Introduction}
Assume $n>1$ and let $0 \neq f \in \b{S}_{k,S}^{n}(\b{\Gamma},\overline{\mathbb{Q}})$ be an eigenfunction, and $\chi$ be a Dirichlet character such that $\chi(-1) = (-1)^k$. Assume that $k > 2n + l +1$ and let $\sigma \in  \mathbb{Z}$ be such that 
\begin{enumerate}
\item $2n+1 - (k - l/2)  \leq \sigma -l/2 \leq k -l/2$,  
\item $| \sigma - \frac{l}{2} - \frac{2n+1}{2} | + \frac{2n+1}{2} - (k - l/2) \in 2 \mathbb{Z}$,
\item $k > l/2 + n(1+k - l/2 -|\sigma - l/2 - (2n+1)/2| - (2n+1)/2)$,
\end{enumerate}
but exclude the cases
\begin{enumerate}
\item $\sigma = n+1+ l/2$ and $\chi^2  = 1$,
\item $\sigma = l/2$ and $\chi \psi_S  = 1$,
\item $0 < \sigma - l/2 \leq n$ and $\chi^2  = 1$.
\end{enumerate}
If we set
\[
\b{\Lambda}(s,f,\chi) := L(2s-n-l/2,f,\chi) \begin{cases} L_{\mathfrak{c}}(2s-l/2,\chi \psi_S) & \hbox{if } l \in 2\mathbb{Z},\\
1 & \hbox{if } l \notin 2\mathbb{Z},
 \end{cases}
\]
then
\[
\frac{\b{\Lambda}(\sigma/2,f,\chi)}{\pi^{e_{\sigma}} <f,f>} \in \overline{\mathbb{Q}},
\]
where  
$$e_{\sigma} =  n  (k - l + \sigma) - e \quad\mbox{ and }\quad
e := \begin{cases} n^2 + n - \sigma + l/2 & \mbox{if } 2 \sigma -l  \in 2 \mathbb{Z} \mbox{ and } \sigma \geq 2n + l/2, \\
  n^2 & \mbox {otherwise} . \end{cases}$$ 
\end{thm}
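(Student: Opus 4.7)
The plan is to deduce the algebraicity result from the pullback identity
\[
\langle f(w), E^{n+m}(\diag[z,w], s; \chi, k, N)\rangle = L(s, f, \chi)\, E^{m}(z, s; f, \chi, N)
\]
established earlier in the paper, combined with algebraicity properties of the Siegel-type Jacobi Eisenstein series $E^{n+m}$ at critical values of $s$.

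First I would analyse $E^{n+m}(\,\cdot\,, s; \chi, k, N)$ at the point $s_{0} := \sigma/2$. The three numerical conditions on $\sigma$ are designed to force $s_{0}$ to lie in the range where this series, after multiplication by an appropriate product of Gamma factors together with
\[
\prod_{i=1}^{n+m} L_{\mathfrak{c}}(2s+2(n+m)-2i, \chi^{2})
\]
and, in the case $l$ even, the additional $L$-factor $L_{\mathfrak{c}}(2s-l/2, \chi\psi_{S})$, becomes a nearly holomorphic Jacobi form in the sense of Shimura whose Fourier coefficients, divided by an explicit power $\pi^{e'}$, all lie in $\overline{\Q}$. The three excluded cases are precisely those where one of these factors contributes an uncancellable pole, and condition~(2) is a parity condition guaranteeing that the relevant Maass--Shimura-type differential operator acts with integral weight. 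This is the arithmetic heart of the argument and extends Shimura's analysis of Siegel-type symplectic Eisenstein series to the Jacobi setting developed in the paper.

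Next I would invoke the algebraicity of Petersson inner products in the Jacobi setting, extending Shimura's principle from Siegel modular forms: for $f \in \b{S}_{k,S}^{n}(\b{\Gamma}, \overline{\Q})$ an eigenform and $g$ a nearly holomorphic Jacobi form of the same weight and index with Fourier coefficients in $\pi^{c}\overline{\Q}$, the ratio $\langle f, g\rangle / \langle f, f\rangle$ lies in $\pi^{c}\overline{\Q}$. Applying this to the left-hand side of the doubling identity at $s = s_{0}$ shows that
\[
\frac{L(s_{0}, f, \chi)\, E^{m}(z, s_{0}; f, \chi, N)}{\pi^{e'}\,\langle f, f\rangle}
\]
is a Jacobi form on $\mathcal{H}_{m,l}$ with algebraic Fourier coefficients. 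To extract $L(s_{0}, f, \chi)$ itself, I would take $m = n$ and pair once more with $f(z)$; the Klingen-type series $E^{n}(\,\cdot\,, s_{0}; f, \chi, N)$ paired against $f$ collapses to an explicit scalar multiple of $\langle f, f\rangle$ via the parabolic-induction presentation used to define it, yielding
\[
\frac{L(s_{0}, f, \chi)}{\pi^{e'}\,\langle f, f\rangle} \in \overline{\Q}.
\]
Book-keeping of the Gamma factors together with the $L$-factor $L_{\mathfrak{c}}(2s-l/2, \chi\psi_{S})$ folded into $\b{\Lambda}$ identifies the exponent with $e_{\sigma}$; the two subcases in the definition of $e$ reflect whether $s_{0}$ lies inside or outside the region where a holomorphic projection needs to be inserted into the argument.

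The hardest part will be the first step: transferring Shimura's near-holomorphy and algebraicity framework for Siegel-type Eisenstein series to the Jacobi setting. Because the Jacobi group $\b{G}^{n,l}$ is non-reductive, Siegel-Jacobi modular forms live naturally on mixed Shimura varieties, so setting up the correct notion of ``nearly holomorphic Jacobi form'' and an $\overline{\Q}$-structure compatible with Petersson pairings requires considerable care. A secondary technical point is the precise identification of the excluded cases (1)--(3), which amounts to tracking the poles of the Eisenstein series and of the auxiliary $L$-factors at $s = s_{0}$.
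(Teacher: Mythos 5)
Your overall strategy --- evaluate the doubling identity at $s=\sigma/2$, use near-holomorphy and algebraicity of the restricted Siegel-type Jacobi Eisenstein series there, and divide by $\langle f,f\rangle$ --- is indeed the paper's strategy. The genuine gap is the step you describe as ``invoke the algebraicity of Petersson inner products in the Jacobi setting, extending Shimura's principle.'' The assertion that $\langle f,g\rangle/\langle f,f\rangle\in\overline{\mathbb{Q}}$ for $g$ nearly holomorphic with algebraic Fourier coefficients is not available off the shelf here, for exactly the reason you yourself point out: the Jacobi group is not reductive, so there is no canonical-model machinery to quote. In the paper this statement is itself a theorem (Theorem \ref{ratio of inner products}), and its proof occupies most of the section: one runs the \emph{same} doubling identity a second time, at an auxiliary point $s=\mu/2$ with $\mu=\min_v k_v$ and an auxiliary character $\chi$ chosen so that $\chi^2\neq 1$ and the Siegel-series correction factor $G(\chi,\mu-n-l/2)$ is a nonzero algebraic number, and then shows that the cuspidal projections of the forms $\b{h}_{r,t}$ extracted from Fourier coefficients of the restricted Eisenstein series span the eigenspace $V(\b{f})$ over $\overline{\mathbb{Q}}$. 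Without this (or a substitute) your final division by $\langle f,f\rangle$ is unjustified; note also that your proposed second pairing with $f$ to ``collapse'' the Klingen series presupposes precisely this inner-product algebraicity, so it cannot be used to sidestep it. In fact the paper never pairs twice: with $m=n$ the right-hand side is already $L(\cdot)$ times a translate of $f$, and the scalar is read off by comparing a single nonvanishing Fourier coefficient.

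The second missing idea is the mechanism by which all the arithmetic input is transferred to the Jacobi setting, namely the theta decomposition $\b{f}=\sum_{h\in\Lambda_1/\Lambda_2} f_h\,\Theta_{2S,\Lambda_2,h}$ giving $M^n_{k,S}(K)\cong\bigoplus_h M^n_{k-l/2}(K)$. This single device supplies the $\overline{\mathbb{Q}}$-structure, the definition of nearly holomorphic Siegel-Jacobi forms and of the projections $\f{p}$ and $\f{q}$, and the reduction of the near-holomorphy and algebraicity of the Jacobi Eisenstein series to Shimura's results for symplectic Eisenstein series of weight $k-l/2$ and characters $\chi\psi_S\psi_i$ (summed over ideal class characters $\psi_i$). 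You flag this transfer as ``the hardest part'' but propose no mechanism for it. Two smaller omissions: Property A (cuspidality of the components $f_h$), which is needed to decompose the Petersson product into symplectic ones and happens to hold automatically in the setting of the stated theorem; and the factor $G(\chi,s)$ arising from non-regular index $S$, which must be controlled when choosing the auxiliary character and is invisible in the $l=1$ literature you would be extrapolating from.
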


We remark here that our methods can also cover the case of $n=1$ and $F=\mathbb{Q}$ if we take $\chi$ to be the trivial character.  

Let us now try to put the above theorem in some broader context. Theorems of the above form for the standard $L$-functions of automorphic forms associated to Shimura varieties, such as Siegel and Hermitian modular forms, were obtained by many researchers, most profoundly by Shimura (see for example \cite{Sh00}). These deep results can also be understood in the general framework of Deligne's Period Conjectures for critical values of motives \cite{Deligne}. Indeed, according to the general Langlands conjectures, the standard $L$-functions of automorphic forms related to Shimura varieties can be identified with motivic $L$-functions, and hence the algebraicity results for the special values of the automorphic $L$-functions can also be seen as a confirmation of Deligne's Period Conjecture, albeit is usually hard to actually show that the conjectural motivic period agrees with the automorphic one.

However, Siegel-Jacobi modular forms, and in particular the algebraicity result of the above theorem, do not fit in this framework. Indeed, since Jacobi group is not reductive, it does not satisfy the properties needed for associating a Shimura variety to it, and hence we are not in the situation described in the previous paragraph. On the other hand,
the Jacobi group can actually be associated with a geometric object, namely with a mixed Shimura variety, as it is explained for example in \cite{Milne,Kramer}. Of course, we cannot expect that the standard $L$-function studied here can be in general identified with a motivic one. Nevertheless, it is very tempting to speculate that it could be identified with an $L$-function of a mixed motive, and hence the theorem above could be seen as a confirmation of the generalization of Deligne's Period Conjecture to the mixed setting as for example stated by Scholl in \cite{Scholl}.

\noindent \textbf{What is not done in this paper:} This paper is already quite long, and we have decided to defer some interesting questions for a forthcoming work. In particular, we mention the following:

\begin{enumerate}
\item In all our theorems we assume a particular parity condition between the character $\chi$ and the weight $k$ of the Siegel-Jacobi modular form. It is, of course, very important to be able to relax this condition and obtain the theorems for any finite character $\chi$, independent of the weight $k$. 
\item In order to obtain a generalization of Theorem \ref{Main Theorem 2_Introduction} above we need to assume the Property A (see section 10). Even though there are many cases where the Property A holds, it is undoubtedly very interesting to weaken or even completely remove this condition. Furthermore, we had to exclude the case of $F=\mathbb{Q}$ and $n=1$, and it is interesting to extend our methods to cover also this case. Finally, one could try to obtain a reciprocity law for the action of the absolute Galois group on the normalized special values. That is with $\sigma_0$ as in Theorem \ref{Main Theorem 2_Introduction} to obtain results of the form 
\[
\left(\frac{\b{\Lambda}(\sigma/2, f.\chi)}{\pi^{e_{\sigma}}\omega(\chi)<f,f>}\right)^{\sigma} = \frac{\b{\Lambda}(\sigma/2, f^{\sigma}.\chi^{\sigma})}{\pi^{e_{\sigma}}\omega(\chi^{\sigma})<f^{\sigma},f^{\sigma}>},\,\,\,\sigma \in Gal(\overline{\mathbb{Q}}/\mathbb{Q}),
\]
where $\omega(\chi)$ is a product of Gauss sums associated to the character $\chi$ and $\chi^\sigma := \sigma \circ \chi$. 
\end{enumerate}

\noindent \textbf{Brief description of each section:} We finish this introduction by giving a short description of each section. In the second section we set most common notation used throughout this paper. In section three we introduce the notion of Siegel-Jacobi modular forms over a totally real field $F$, as well as the notion of adelic or automorphic Siegel-Jacobi forms. To the best of our knowledge their systematic study has not appeared before in the literature, notably Proposition \ref{prop:adelic_Fourier_expansion} on the adelic Fourier expansion. In section four we develop the theory of Klingen-type Eisenstein series. We do this in greatest generality possible. Again, to the best of our knowledge, a systematic study of the adelized Klingen-type Jacobi Eisenstein series has not appeared before in the literature. In sections five and six we employ the doubling method in the way described above and compute the Petersson inner product of a restricted Siegel-type Jacobi Eisenstein series against a cuspidal Siegel-Jacobi form. In section seven we introduce the theory of Hecke operators in the Jacobi setting and extend previous results of Murase and Sugano. In the next section we turn our attention to the analytic properties of Siegel-type Jacobi Eisenstein series. We build on an idea going back to a work of B\"{o}cherer \cite{B83} and more recently of Heim \cite{Heim}. After establishing the analytic properties of these Eisenstein series we use the results established in section 6 to obtain Theorem \ref{Main Theorem 1} on the analytic properties of the standard $L$-function. Moreover we also establish Theorem \ref{Main Theorem 2} on the analytic continuation of Klinegn-type Jacobi Eisenstein series. Finally, in the last section of this paper we turn to the algebraic properties of the standard $L$-function at specific intervals, which we call special $L$-values. The main result of this section is Theorem \ref{Main Theorem on algebraicity}.  


\section{Notation}\label{sec:notation}
Throughout the paper we use the following notation:
\begin{itemize}
	\item $F$ denotes a totally real algebraic number field of degree $d$, $\f{d}$ the different of $F$, and $\f{o}$ its ring of integers;
	\item $\A$ stands for the adeles of $F$; we write $\a$ and $\h$ for the sets of archimedean and non-archimedan places of $F$ respectively, so that e.g. $\A_{\h}:=\prod'_{v\in\h} F_v$ (restricted product) and $\A_{\a}:=\prod_{v\in\a} F_v$ denote the finite and infinite adeles of $F$; for $x\in\A$ we will write $x_{\h}, x_{\a}$ meaning the finite and infinite part of $x$, correspondingly; for a ring $R$ we use the superscript $R\ti$ to denote the invertible elements in $R$;
	\item A finite adele $a \in \mathbb{A}_{\mathbf{h}}$ corresponds to a fractional ideal $\mathfrak{a}$ of $F$ via $\mathfrak{a}:= \prod_{v \in \mathbf{h}} \mathfrak{p}_v^{n_v}$, where $a_v = \pi_v^{n_v} \mathfrak{o}_v^{\times}$, $n_v \in \mathbb{Z}$, $\pi_v$ a uniformiser at $v$ and $\mathfrak{p}_v$ the corresponding prime ideal at the finite place $v$. We will call $\mathfrak{a}$ the ideal corresponding to $a$.
		\item We define $\mathbb{Z}^{\mathbf{a}} := \mathbb{Z}^d$, and a typical element $k \in \mathbb{Z}^{\mathbf{a}}$ is of the form $k = (k_v)_{v \in \mathbf{a}}$ with $k_v \in \mathbb{Z}$. Moreover for an integer $\mu \in \mathbb{Z}$ we write $\mu \mathbf{a}: =( \mu,\mu,\ldots,\mu) \in \mathbb{Z}^{\mathbf{a}}$. 
		\item For an adelic Hecke character $\chi : \mathbb{A}^{\times}/F^{\times} \rightarrow \mathbb{C}^{\times}$, we will write $\chi^*$ for the corresponding ideal Hecke character obtained by class field theory. Furthermore, if $\chi$ is finite, then its infinite part is of the form $\chi_{\mathbf{a}}(x_{\mathbf{a}}) = \prod_{v \in \mathbf{a}} \left(\frac{x_v}{|x_v|}\right)^{k_v}$, for $k_v \in \mathbb{Z}$. We then write $\sgn_{\mathbf{a}}(x_{\mathbf{a}})^k$ for $\chi_{\mathbf{a}}(x_{\mathbf{a}})$ where $k := (k_v) \in \mathbb{Z}^{\mathbf{a}}$.

	\item $M_{l,n}$ denotes the set of $l\times n$ matrices, and we set $M_n := M_{n,n}$. We write $Sym_n\subset M_{n}$ for the subset of symmetric matrices;
	if $A\in M_{l,n}$ and $B\in M_{l,m}$, then $(A\, B)\in M_{l,n+m}$ denotes concatenation of the matrices $A,B$; if $S\in Sym_l, x\in M_{l,n}$, we set $S[x]:=\T{x}Sx$;
	\item For an invertible matrix $x$ we define $\tilde{x} := \transpose{x}^{-1}$;
	\item For two matrices $a \in M_n$ and $b \in M_m$ we define $\diag[a , b]:= \left(\begin{matrix} a & 0 \\ 0 & b \end{matrix} \right) \in M_{n+m}$;  
	\item We set $\mathbf{e}_{\mathbf{a}}(x) := \prod_{v \in \mathbf{a}} e(x_v):=\prod_{v \in \mathbf{a}} e^{2\pi ix_v}$ for $x= \prod_{v \in \mathbf{a}} x_v \in \mathbb{C}^{\mathbf{a}}$.
	\item $G^n$ stands for the algebraic group $\Sp_n$ whose $F$-points are defined as follows: 
	$$\Sp_n(F):=\left\{ g\in \mathrm{SL}_{2n}(F)\colon\T{g}\(\begin{smallmatrix}  &  -1_n   \\  1_n  & \\ \end{smallmatrix}\) g=\(\begin{smallmatrix}  &  -1_n   \\  1_n  &    \\ \end{smallmatrix}\)\right\};$$
	For an element $g \in \Sp_n$ we write $g = \left( \begin{matrix} a_g & b_g \\ c_g & d_g \end{matrix}\right)$, where $a_g,b_g,c_g,d_g \in M_n$; 
	\item for $l$ a fixed positive integer, $\Jac^{n,l}:=H^{n,l}\rtimes \Sp_n$ denotes the Jacobi group with $H^{n,l}$ denoting the Heisenberg subgroup, whose global points are defined as
	$$\Jac^{n,l}(F):=\{\b{g}=(\l,\mu,\kappa)g: \l,\mu\in M_{l,n}(F), \kappa\in Sym_l(F), g\in G^n(F)\} ,$$
	$$H^{n,l}(F):=\{ (\l,\mu,\kappa) 1_{2n}\in\Jac^{n,l}(F)\} ;$$
	the group law is given by
$$(\l,\mu,\kappa)g (\l',\mu',\kappa')g':=(\l+\tilde{\l},\mu+\tilde{\mu}, \kappa+\kappa'+\l\T{\tilde{\mu}}+\tilde{\mu}\T{\l}+\tilde{\l}\T{\tilde{\mu}}-\l'\T{\mu'}) gg',$$
where $(\tilde{\l}\,\tilde{\mu}):=(\l'\,\mu')g^{-1}=(\l'\T{d}-\mu'\T{c}\quad \mu'\T{a}-\l'\T{b})$,
	the identity element of $\Jac^{n,l}(F)$ is $1_H1_{2n}$, where $1_H:=(0,0,0)$ denotes the identity element of $H^{n,l}(F)$, i.e. we always suppress the indices $n,l$ in $1_H$ as its size will be clear from the context;\\
	whenever it does not lead to any confusion, we omit superscripts and write $G, \Jac, \Jac^n$ or $H$;\\ following the convention described above,
	$G(\A)=\prod'_{v\in\h\cup\a} G(F_v)=G_{\h}G_{\a}$, where $G_{\h}=\prod'_{v\in\h}G(F_v)$, $G_{\a}=\prod_{v\in\a}G(F_v)$;
	\item $\H_{n,l}:=(\mathbb{H}_n\times M_{l,n}(\C))^{\a}$, where
	$\mathbb{H}_n:=\{ \tau\in Sym_n(\C): \Im(\tau) \mbox{ positive definite}\}$;
	an element $z\in\H_{n,l}$ will be written as $z=(z_v)_{v\in\a}=(\tau ,w)$, where $\tau =(\tau_v)_{v\in\a}\in\mathbb{H}_n^{\a}$, $w=(w_v)_{v\in\a}\in M_{l,n}(\C)^{\a}$; we distinguish an element $\b{i}_0:=(\b{i},0)\in\H_{n,l}$, where $\b{i}:=(i1_n)^{\a}$;\\
	for $z\in\H_{n,l}$ we define $\delta(z):=\det (\Im (z)):=\prod_{v\in\a} \det(\Im (z_v)))$;
	\item For a fractional ideal $\f{b}$ and an integral ideal $\f{c}$ we define the following subgroups of $\b{G}(\mathbb{A})$:
	$$K[\mathfrak{b},\mathfrak{c}] := K^n[\mathfrak{b},\mathfrak{c}]:= K_{\h}[\mathfrak{b},\mathfrak{c}]\Jac_{\a},$$	
	$$K_0[\mathfrak{b},\mathfrak{c}] := K^n_0[\mathfrak{b},\mathfrak{c}]:= K_{\h}[\mathfrak{b},\mathfrak{c}]\times K_{\infty}\quad\mbox{and}\quad
	K:= K^n:=K_{\h}[\f{b},\f{c}] (H_{\mathbf{a}}^{n,l} \rtimes D_{\infty}^{\a}),$$
	where
	$K_{\infty}\simeq Sym_{l}(\mathbb{R})^{\mathbf{a}} \rtimes D_{\infty}^{\mathbf{a}}\subset H^{n,l}(\R)^{\a}\rtimes\Sp_n(\R)^{\a}$ is the stabilizer of the point $\mathbf{i}_0$, and $D_{\infty}$ is the maximal compact subgroup of $\Sp_n(\mathbb{R})$,
	$$K_{\h}[\mathfrak{b},\mathfrak{c}]:=C_{\h}[\mathfrak{o},\mathfrak{b}^{-1},\mathfrak{b}^{-1}] \rtimes D_{\h}[\mathfrak{b}^{-1},\mathfrak{bc}]\subset\Jac_{\h},$$
	$$
	C_{\h}[\mathfrak{o},\mathfrak{b}^{-1},\mathfrak{b}^{-1}] := \{ (\lambda,\mu,\kappa) \in \prod_{v \in \mathbf{h}}\!\!\, ' H(F_v):\forall v\in\h\hspace{-0.3cm}\begin{smallmatrix} \lambda_v  \in M_{l,n}(\mathfrak{o}_v), & \hspace{-0.6cm}\mu_v \in M_{l,n}(\mathfrak{b}_v^{-1}),\\ \hspace{1cm}\kappa_v \in Sym_l( \mathfrak{b}_v^{-1})\end{smallmatrix} \} ,
	$$
	$$D_{\h}[\mathfrak{b}^{-1},\mathfrak{bc}]:=\prod_{v\in\h}D_v[\mathfrak{b}^{-1},\mathfrak{bc}],$$
	$$D_v[\mathfrak{b}^{-1},\mathfrak{bc}]:=\left\{ x = \left(\begin{matrix} a_x & b_x  \\ c_x & d_x \end{matrix} \right) \in G_v:\begin{smallmatrix} a_x\in M_n(\f{o}_v), & b_x\in M_n(\mathfrak{b}^{-1}_v),\\ c_x\in M_n(\mathfrak{b}_v\mathfrak{c}_v), & d_x\in M_n(\f{o}_v)\end{smallmatrix}\right\} ;$$
	\item For $r\in\{ 0,1,\ldots ,n\}$ we define parabolic subgroups of $G^n$ and $\b{G}^n$ as follows:
	$$P^{n,r}(F):=\left\{ \begin{pmatrix} a_1 &  0 & b_1 & b_2 \\
a_3 & a_4 & b_3 & b_4 \\  
c_1 & 0 & d_1 & d_2 \\
 0 &0   & 0 & d_4
\end{pmatrix}\in G^n(F): a_1,b_1,c_1,d_1\in M_r(F)\right\} ,$$
	$$\b{P}^{n,r}(F):=\left\{ ((\l\, 0_{l, n-r}),\mu,\kappa )g:\l\in M_{l, r}(F), \mu\in M_{l, n}(F),\kappa\in Sym_l(F), g\in P^{n,r}(F)\right\} ;$$
	additionally, we set $\b{P}^n:=\b{P}^{n,0}$.
\end{itemize}

\section{Siegel-Jacobi modular forms of higher index}\label{sec:Jacobi_forms}

In this section we introduce the notion of Siegel-Jacobi modular form, both from a classical and an adelic point of view, and then explain the relation between the two notions. The content of this section is well-known to researchers working on Jacobi forms, but to the best of our knowledge it has not been written elsewhere in such detail and generality. Our exposition follows mainly \cite{Mu89, Z89}.

\subsection{Siegel-Jacobi modular forms}

For two natural numbers $l,n$, we consider the Jacobi group $\b{G}:=\b{G}^{n,l}$ of degree $n$ and index $l$ over a totally real algebraic number field $F$.  Note that the global points $\Jac (F)$ may be viewed as a subgroup of $G^{l+n}(F):=\Sp_{l+n}(F)$ via the embedding
\begin{equation}\label{embedding of Jacobi to Symplectic}
\b{g}=(\l,\mu,\kappa)g\longmapsto \(\begin{smallmatrix}
1_l & \l  &    \kappa -\mu\T{\l}   & \mu    \\
& 1_n &	  \T{\mu} 	&	   	 \\	
&	  &    1_l		&	   	 \\
&	  &   -\T{\l}	& 1_n  	 \\
\end{smallmatrix}\)
\(\begin{smallmatrix}
1_l	&	 &     &    \\
& a  &     & b  \\
&	 & 1_l &    \\
& c	 &     & d  \\
\end{smallmatrix}\) ,\quad g=\(\begin{smallmatrix} a & b \\ c & d 
\end{smallmatrix}\) .
\end{equation}

We write  $\{\sigma_v:F\hookrightarrow\R,\,\, v \in \mathbf{a}\}$ for the set of real embeddings of $F$. Each $\sigma_v$ induces an embedding $\b{G}(F) \hookrightarrow \b{G}(\mathbb{R})$; we will write $(\l_v,\mu_v,\kappa_v)g_v$ for $\sigma_v(\b{g})$. 
The group $\Jac(\mathbb{R})^{\mathbf{a}}$ acts on $\H_{n,l}:=(\mathbb{H}_n\times M_{l,n}(\C))^{\a}$ component wise via
\[
\b{g} z  = \b{g}(\tau,w)=(\l,\mu,\kappa)g (\tau,w)=\prod_{v\in\a} (g_v\tau_v, w_v \lambda(g_v,\tau_v)^{-1}+\l_v g_v \tau_v+\mu_v),
\]
where $g_v\tau_v=(a_v\tau_v+b_v)(c_v\tau_v+d_v)^{-1}$ and $\lambda(g_v,\tau_v) :=(c_v\tau_v+d_v)$ for $g_v=\(\begin{smallmatrix} a_v & b_v \\ c_v & d_v \end{smallmatrix}\)$.

For $k\in\Z^{\a}$ and a matrix $S\in Sym_l(\f{d}^{-1})$ we define the factor of automorphy of weight $k$ and index $S$ by
$$J_{k,S}\colon\Jac^{n,l}(F)\times\H_{n,l}\to \C$$

$$J_{k,S} (\b{g},z)=J_{k,S} (\b{g},(\tau,w)):=\prod_{v\in\a} j(g_v,\tau_v)^{k_v} \mathcal{J}_{S_v}(\b{g_v},\tau_v,w_v) ,$$
where $\b{g}=(\l,\mu,\kappa)g$, $j(g_v,\tau_v)=\det(c_v\tau_v+d_v)=\det(\lambda (g_v,\tau_v))$ and
\begin{multline*}
\mathcal{J}_{S_v}(\b{g_v},\tau_v,w_v)=e(-\tr (S_v\kappa_v)+\tr (S_v[w_v] \lambda(g_v,\tau_v)^{-1}c_v)\\
-2\tr (\T{\l}_v S_v w_v \lambda(g_v,\tau_v)^{-1})-\tr (S_v[\l_v] g_v \tau_v))
\end{multline*}
with $e(x):=e^{2\pi ix}$, and we recall that $S[x]=\T{x}Sx$. A rather long but straightforward calculation shows that $J_{k,S}$ satisfies the usual cocycle relation: 
\begin{equation}\label{eq:aut_fac}
J_{k,S} (\b{g}\b{g}',z)=J_{k,S} (\b{g},\b{g}'\,z)J_{k,S} (\b{g}',z).
\end{equation}	
For a function $f \colon\H_{n,l}\to\C$ we define
\begin{equation}\label{eq:|k,S_action}
(f|_{k,S}\, \b{g})(z):=J_{k,S} (\b{g},z)^{-1}f(\b{g}\,z).
\end{equation}
The property \eqref{eq:aut_fac} implies that
$$(f|_{k,S}\, \b{gg}')(z)=(f|_{k,S}\, \b{g}|_{k,S}\, \b{g}')(z).$$

A subgroup $\b{\Gamma}$ of $\b{G}(F)$ will be called a congruence subgroup if there exist a fractional ideal $\f{b}$ and an integral ideal $\f{c}$ of $F$ such that $\b{\Gamma}$ is a subgroup of finite index of the group $G(F) \cap \b{g}K[\mathfrak{b},\mathfrak{c}] \b{g}^{-1}$ for some $\b{g} \in \b{G}_{\mathbf{h}}$. 

Of particular interest will be the congruence subgroup,
\begin{multline*}
\hspace{-0.3cm}\b{\Gamma}_0(\mathfrak{b},\mathfrak{c})\! :=\b{\Gamma}_0^{n,l}(\f{b},\f{c})\! :=\!\{ (\l,\mu,\kappa)\!\(\begin{smallmatrix} a & b \\ c & d 
\end{smallmatrix}\) \!\in\! \b{G}(F)\! : \l\!\in\! M_{l,n}(\f{o}),\mu\!\in\! M_{l,n}(\f{b}^{-1}),\kappa\!\in\! Sym_l(\f{b}^{-1}),\\
a,d\in M_n(\f{o}), b\in M_n(\f{b}^{-1}), c\in M_n(\f{bc})\}.
\end{multline*}

Often we will be given a congruence subgroup $\b{\Gamma}$ equipped with a homomorphism $\chi : \b{\Gamma} \rightarrow \mathbb{C}^{\times}$. For example, given a Hecke character $\chi$ of $F$ of conductor $\mathfrak{f}_\chi$ dividing $\mathfrak{c}$, we can extend it to a homomorphism
\[
\chi : \b{\Gamma}_0(\mathfrak{b},\mathfrak{c}) \rightarrow \mathbb{C}^{\times},\,\,\,\chi\left((\lambda,\mu,\kappa)
	\begin{pmatrix} a & b \\ c & d \end{pmatrix}  \right) := \chi(\det d).
\]

We now consider an $S \in \mathfrak{b}\mathfrak{d}^{-1}\mathcal{T}_l$ where 
\begin{equation}\label{index}
\mathcal{T}_l:= \{ x \in Sym_l(F):\tr(xy)\in\mathfrak{o} \mbox{ for all } y \in Sym_l(\mathfrak{o}) \} .
\end{equation}
Moreover we assume that $S$ is positive definite in the sense that if we write $S_v := \sigma_v(S) \in Sym_{l}(\mathbb{R})$ for $v \in \mathbf{a}$, then all $S_v$ are positive definite.

\begin{defn}\label{definition modular Jacobi}
Let $k$ and $S$ be as above, and $\b{\Gamma}$ a congruence subgroup equipped with a homorphism $\chi$. A Siegel-Jacobi modular form of weight $k\in\Z^{\a}$, index $S$, level $\Gamma$ and Nebentypus $\chi$ is a holomorphic function $f\colon\H_{n,l}\to\C$ such that
	\begin{enumerate}
	\item \label{def:Phi} $f|_{k,S}\, \b{g} =\chi(\b{g})f$ for every $\b{g} \in \b{\Gamma}$,
	\item for each $g\in G^n(F)$, $f|_{k,S}\, g$ admits a Fourier expansion 
of the form
\[
f|_{k,S}\, g(\tau,w)=\sum_{\substack{t\in L\\ t\geq 0}}\sum_{r\in M} c(\b{g};t,r) \mathbf{e}_{\mathbf{a}}(\tr(t\tau)) \mathbf{e}_{\mathbf{a}}(\tr(\T{r}w))\,\,\,\,(*)
\]
for some appropriate lattices $L \subset Sym_n(F)$ and $M \subset M_{l,n}(F)$, where $t\geq 0$ means that $t_v$ is semi-positive definite for each $v\in\a$.
	\end{enumerate}
We will denote the space of such functions by $M^n_{k,S}(\b{\Gamma} ,\chi)$.	
\end{defn}
The second property is really needed only in the case of $n=1$ and $F = \mathbb{Q}$ thanks to the K\"{o}cher principle for Siegel-Jacobi forms, as it is explained for example in \cite[Lemma 1.6]{Z89}.

We note that if $f\in M^n_{k,S}(\b{\Gamma}_{0}(\f{b},\f{c}),\chi)$, then
$$f (\tau,w)=\sum_{\substack{t\in\f{bd}^{-1}\mathcal{T}_n\\ t\geq 0}}\sum_{r\in \f{bd}^{-1}\mathcal{T}_{l,n}} c(t,r) \mathbf{e}_{\mathbf{a}}(\tr(t\tau)) \mathbf{e}_{\mathbf{a}}(\tr(\T{r}w)),$$
where $\mathcal{T}_{l,n}:= \{ x \in M_{l,n}(F):\tr(\T{x}y)\in\mathfrak{o} \mbox{ for all } y \in M_{l,n}(\mathfrak{o}) \}$ .

We say that $f$ is a cusp form if in the expansion $(*)$ above for every $g \in G^n(F)$, we have $c(\mathbf{g};t,r) = 0$ unless $\begin{pmatrix} S_v & r_v \\ \transpose{r_v} & t_v \end{pmatrix}$ is positive definite for every $v \in \mathbf{a}$. The space of cusp forms will be denoted by $S^n_{k,S}(\b{\Gamma},\chi)$.

We now introduce the notion of Petersson inner product for Jacobi forms, following \cite{Z89}. Let $f$ and $g$ be Jacobi forms of weight $k$, one of which is a cusp form. Moreover, assume that both $f$ and $g$ are of level $\b{\Gamma}$. For $z = (\tau,w) \in \mathcal{H}_{n,l}$ we write $\tau = x + i y$ with $x,y \in Sym_n(F_{\a})$ and $w = u + i v$ with $u,v \in M_{l,n}(F_{\a})$. Let $dz := d(\tau,w):= \det(y)^{-(l+n+1)}dx dy du dv$ and set $\Delta_{S,k}(z):= \det(y)^k \mathbf{e_a}(-4 \pi \tr(\T{v}Sv y^{-1}))$. Then we define
\[
<f,g>_{\Gamma}:= \int_{A} f(z) \overline{g(z)} \Delta_{S,k}(z) dz,\,\,\,\, A:= \b{\Gamma} \setminus \mathcal{H}_{n,l} ,
\] 
and
\[
<f,g>:= vol(A)^{-1} \int_{A} f(z) \overline{g(z)} \Delta_{S,k}(z) dz,
\]
so that the latter is independent of the group $\b{\Gamma}$ as long as both $f$ and $g$ are in $M_{k,S}^n(\b{\Gamma},\chi)$. As it is explained in \cite{Z89}, the volume differential $dz$ is selected in such a way that $vol(A) = vol(\Gamma \setminus \mathbb{H}_n^{\mathbf{a}})$ where $\Gamma$ is the symplectic part of $\b{\Gamma}$.

\subsection{Adelic Siegel-Jacobi modular forms}

We keep writing $\b{G}:=\b{G}^{n,l}$ for the Jacobi group of degree $n$ and index $l$. For two ideals $\mathfrak{b}$ and $\mathfrak{c}$ of $F$, of which $\mathfrak{c}$ is integral, we recall that we have defined the open subgroups $K_{\h}[\mathfrak{b},\mathfrak{c}] \subset \b{G}_{\h}$, $D_{\h}[\mathfrak{b}^{-1},\mathfrak{bc}]\subset G^n_{\h}$ in Section \ref{sec:notation}.

\begin{lem}\label{strong_approx_Jac}The strong approximation theorem holds for the algebraic group $\b{G}$. In particular,
$$\b{G}(\mathbb{A}) = \b{G}(F) K_{\h}[\mathfrak{b},\mathfrak{c}] \b{G}_{\mathbf{a}}.$$
\end{lem}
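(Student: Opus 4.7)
The plan is to deduce strong approximation for the Jacobi group $\b{G}=H^{n,l}\rtimes G^n$ from the analogous statements for the two factors: classical strong approximation for the simply-connected semisimple group $G^n=\Sp_n$ (Kneser's theorem, applicable since $F$ is totally real so that $\Sp_n(F_{\a})$ is non-compact), and strong approximation for the unipotent Heisenberg group $H^{n,l}$, which reduces to the approximation theorem for the additive group of $F$. The semidirect product structure lets one combine the two decompositions, provided one handles the non-abelian Heisenberg law with some care.

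Concretely, given $\b{g}\in\b{G}(\A)$, I would first decompose it uniquely as $\b{g}=h\cdot g$ with $h\in H(\A)$ and $g\in G(\A)$. Strong approximation for $\Sp_n$ applied to the open compact subgroup $D_{\h}[\f{b}^{-1},\f{bc}]\subset G_{\h}$ provides a decomposition $g=\gamma_0 k_0 g_{\infty}$ with $\gamma_0\in G(F)$, $k_0\in D_{\h}[\f{b}^{-1},\f{bc}]$, and $g_{\infty}\in G_{\a}$. Since $H$ is normal in $\b{G}$, the element $h':=\gamma_0^{-1} h\gamma_0$ remains in $H(\A)$, and one obtains
\[
\b{g} \,=\, \gamma_0 \cdot h' \cdot k_0 \cdot g_{\infty}.
\]

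Next I approximate $h'$ in $H(\A)$. The Heisenberg group $H^{n,l}$ is two-step nilpotent, fitting in an extension $1\to Sym_l\to H\to M_{l,n}\oplus M_{l,n}\to 1$, so strong approximation for it reduces to the approximation theorem for $F$ applied to each coordinate in turn. Writing $h'=(\l,\mu,\kappa)$, I first choose $\l_0\in M_{l,n}(F)$ with $(\l-\l_0)_v\in M_{l,n}(\f{o}_v)$ for all $v\in\h$, then $\mu_0\in M_{l,n}(F)$ with $(\mu-\mu_0)_v\in M_{l,n}(\f{b}_v^{-1})$ for all $v\in\h$. A direct computation with the group law of Section~\ref{sec:notation} yields
\[
(\l_0,\mu_0,0)^{-1}h' \,=\, (\l-\l_0,\,\mu-\mu_0,\,\kappa''),
\]
for some $\kappa''\in Sym_l(\A)$ whose finite parts differ from those of $\kappa$ by a bilinear expression in $\l_0,\mu_0,\l,\mu$ that lies in $Sym_l(\f{b}_v^{-1})$ at every finite $v$, possibly after refining the approximations of $\l_0,\mu_0$ at the finitely many places where $\l_0$ fails to be integral. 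A final approximation then picks $\kappa_0\in Sym_l(F)$ with $(\kappa''-\kappa_0)_v\in Sym_l(\f{b}_v^{-1})$ for all $v\in\h$. Setting $\eta:=(\l_0,\mu_0,\kappa_0)\in H(F)$, I obtain $h'=\eta\cdot c\cdot h_{\infty}$ with $c\in C_{\h}[\f{o},\f{b}^{-1},\f{b}^{-1}]$ and $h_{\infty}\in H_{\a}$.

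Combining the two steps and using that $h_{\infty}\in H_{\a}$ commutes with $k_0\in D_{\h}$ inside $\b{G}(\A)$ (they are trivial at the opposite set of places), one gets
\[
\b{g} \,=\, (\gamma_0\eta)\,(c\cdot k_0)\,(h_{\infty}\cdot g_{\infty}),
\]
which lies in $\b{G}(F)\cdot K_{\h}[\f{b},\f{c}]\cdot \b{G}_{\a}$, since $c\cdot k_0 \in C_{\h}[\f{o},\f{b}^{-1},\f{b}^{-1}]\rtimes D_{\h}[\f{b}^{-1},\f{bc}]=K_{\h}[\f{b},\f{c}]$. The main technical point is the middle step: the non-abelian Heisenberg law produces a bilinear correction in the central coordinate when $\l,\mu$ are replaced by their rational approximants, and one has to verify that this correction can be absorbed into the final approximation on $Sym_l$. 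This is routine since only finitely many bad places can contribute denominators, and the approximations of $\l_0,\mu_0$ can be chosen with sufficient $v$-adic precision there to keep the correction in the prescribed lattice.
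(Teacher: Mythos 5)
Your proof is correct and follows essentially the same route as the paper: strong approximation for $\Sp_n$ with respect to $D_{\h}[\f{b}^{-1},\f{bc}]$, strong approximation for the Heisenberg group via its central extension $1\to Sym_l\to H\to M_{l,n}\oplus M_{l,n}\to 1$, and normality of $H$ in $\b{G}$ to combine the two. The paper only sketches this; you have supplied the details (including the bilinear correction in the central coordinate), but the underlying argument is the same.
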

\begin{proof} We give a sketch of the proof. We first observe that the strong approximation holds for the Heisenberg group. Indeed, its center $Z$ is isomorphic to the group $Sym_{l}$ of symmetric matrices, and we have $H^{n,l} / Z \cong M_{n,l} \times M_{n,l}$. Furthermore, the strong approximation holds for the symmetric matrices (as an additive group) and the same holds also for $M_{n,l} \times M_{n,l}$. From this it is easy to see that the strong approximation holds for $H^{n,l}$. Then, for the whole Jacobi group, it is enough to observe that the strong approximation holds for $\Sp_n$ with respect to the subgroup $D[\mathfrak{b}^{-1},\mathfrak{bc}]$ (see \cite{Sh95}), and hence the statement follows by observing that the Heisenberg group is, by definition, a normal subgroup of $\Jac$.  
\end{proof}

We now fix once and for all an additive character $\Psi :\mathbb{A} / F \rightarrow \mathbb{C}^{\times}$ as follows. We write $\Psi = \prod_{v \in \mathbf{h}} \Psi_v \prod_{v \in \mathbf{a}} \Psi_v$ and define 
$$\Psi_v(x_v):=\begin{cases}
e(-y_v), & v\in\h\\
e(x_v), & v\in\a ,
\end{cases}$$
where $y_v \in \Q$ is such that $\mathrm{Tr}_{F_v/\Q_p}(x_v) - y_v \in \Z_p$ for $p:= v \cap \Q$. 
Given a symmetric matrix $S \in Sym_l(F)$ we define a character $\psi_S : Sym_{l}(\A) /Sym_l(F) \rightarrow \C^{\times}$ by taking $\psi_S(\kappa) := \Psi(\tr(S\kappa).$  

We consider an adelic Hecke character $\chi : \mathbb{A}_F^{\times} / F^{\times} \rightarrow \mathbb{C}^{\times}$ of $F$ of finite order such that $\chi_v(x) = 1$ for all $x \in \mathfrak{o}_v^{\times}$ with $x-1 \in \mathfrak{c}_v$. We extend this character to a character of the group $K_0[\mathfrak{b},\mathfrak{c}]$ by setting $\chi(w) := \prod_{v | \mathfrak{c}} \chi_v(\det(a_{g}))^{-1}$ for $w = hg\in K_{0}[\mathfrak{b},\mathfrak{c}]$. 

Now, let $k \in \Z^{\mathbf{a}}$ and $S \in Sym_{l}(F)$ be such that $S \in \mathfrak{b}\mathfrak{d}^{-1}\mathcal{T}_l$ with $\mathcal{T}_l$ as in \eqref{index}. Moreover, let $K$ be an open subgroup of  $K[\mathfrak{b},\mathfrak{c}]$ for some $\mathfrak{b}$ and $\mathfrak{c}$.

\begin{defn} \label{definition automorphic Jacobi} An adelic Siegel-Jacobi modular form of degree $n$, weight $k$, index $S$ and character $\chi$, with respect to the congruence subgroup $K$ is a function $\mathbf{f}: \b{G}(\mathbb{A}) \rightarrow \mathbb{C}$ such that 
\begin{enumerate}
\item $\mathbf{f}\left((0,0,\kappa) \gamma \b{g} w \right) =  \chi(w) J_{k,S}(w,\mathbf{i}_0)^{-1} \psi_S(\kappa) \mathbf{f}(\b{g})$, for all $\kappa \in Sym_l(\mathbb{A})$, $\gamma \in \b{G}(F)$, $\b{g} \in \b{G}(\mathbb{A})$ and $w \in  K \cap K_{0}[\mathfrak{b},\mathfrak{c}]$;
\item for every $\b{g} \in \b{G}_{\mathbf{h}}$ the function $f_{\b{g}}$ on $\mathcal{H}_{n,l}$ defined by the relation
\[
(f_{\b{g}}|_{k,S}\b{y})(\mathbf{i}_0) := \mathbf{f}(\b{gy}) \quad\mbox{for all } \b{y} \in \b{G}_{\mathbf{a}}
\]
 is a Siegel-Jacobi modular form for the congruence group $\b{\Gamma}^{\b{g}} := \b{G}(F) \cap \b{g} K \b{g}^{-1}$. 
\end{enumerate}
\end{defn}

Note that the relation (1) is well defined. Indeed, thanks to the strong approximation for $Sym_l$ we may write $\kappa = \kappa_{F} \kappa_\mathbf{h} \kappa_{\mathbf{a}}$ with $\kappa_{F} \in Sym_l(F)$, $\kappa_{\mathbf{h}} \in \prod_{v \in \mathbf{h}} Sym_l(\mathfrak{b}_v^{-1})$ and $\kappa_{\mathbf{a}}\in \prod_{v \in \mathbf{a}}Sym_l(\mathbb{R})$. Furthermore, observe that $\psi_{S}(\kappa) = \prod_{v \in \mathbf{a}} \psi_{S,v}(\kappa_v)= J_{k,S}((0,0,\kappa),\mathbf{i}_0)^{-1}$ since $\psi_{S,\mathbf{h}}(\kappa_{\mathbf{h}}) = 1$ by our choice of the matrix $S$.

We denote the space of adelic Siegel-Jacobi modular forms by $\mathcal{M}_{k,S}^{n}(K,\chi)$.
As in the case of Siegel modular forms (see for example \cite[Lemma 10.8]{Sh96}) we can use  Lemma \ref{strong_approx_Jac} to establish a bijection between adelic Siegel-Jacobi forms and Siegel-Jacobi modular forms. Indeed, for any given $\b{g} \in \b{G}_{\mathbf{h}}$ we have the bijective map
\begin{equation}\label{bijection between classical and adelic}
\mathcal{M}_{k,S}^{n}(K,\chi) \rightarrow M_{k,S}^n(\b{\Gamma}^{\b{g}},\chi_{\b{g}})
\end{equation}
given by $\mathbf{f} \mapsto f_{\b{g}}$, with notation as in the Definition \ref{definition automorphic Jacobi} and $\chi_{\b{g}}$ the character on $\b{\Gamma}^{\b{g}}$ defined as $\chi(\gamma) := \chi(\b{g}^{-1}\gamma\b{g})$. Furthermore, we say that $\mathbf{f}$ is a cusp form, and we denote this space by $\mathcal{S}_{k,l}^{n}(K,\chi)$ if in the above notation $f_{\b{g}}$ is a cusp form for all $\b{g} \in \b{G}_{\mathbf{h}}$.  We will often use the bijection above with $\b{g}=1$. In this case, if we start with an adelic Siegel-Jacobi form $\mathbf{f}$, we will write $f$ for the Siegel-Jacobi modular form corresponding to $\mathbf{f}$.

We finish this section with a formula for Fourier expansion of adelic Siegel-Jacobi forms.
\begin{prop}\label{prop:adelic_Fourier_expansion}
Every Siegel-Jacobi form $\mathbf{f}\in\mathcal{M}_{k,S}^{n}(K[\mathfrak{b},\mathfrak{c}],\chi)$ admits Fourier expansion of the form
\begin{equation}\label{eq:adelic_Fourier_expansion}
\mathbf{f}\left((\l,\mu,0)\(\begin{matrix} q & \sigma\tilde{q}\\ & \tilde{q}\end{matrix}\)\right) =\sum_{\substack{t\in L\\ t\geq 0}}\sum_{r\in M} c(t,r;q,\lambda)\mathbf{e}_{\A}(\tr(t\sigma))\mathbf{e}_{\A}(\tr(\T{r}\lambda\sigma +\T{r}\mu)),
\end{equation}
where $\sigma\in Sym_n(\A), q\in\GL_n(\A), \l ,\mu\in M_{l,n}(\A)$ are such that $\lambda_v q_v \in M_{l,n}(\mathfrak{b}_v^{-1})$ for all $v \in \mathbf{h}$. Moreover, the coefficients $c(t,r;q,\lambda)$ satisfy the following properties:
\begin{enumerate}[leftmargin=0.7cm]
	\item $c(t,r;q,\lambda)=\Psi_{\mathbf{a}}(\tr(S[\lambda]\sigma ))\mathbf{e}_{\mathbf{a}}(\tr(S[\l](iq\T{q})))(\det q)_{\a}^k\mathbf{e}_{\a}(i\tr(\T{q}tq+\T{q}\T{r}\lambda q))c_0(t,r;q,\l)$, where $c_0(t,r;q,\l)$ is a complex number that depends only on $\mathbf{f},t,r,q_{\h}$ and $\l_{\h}$.
	\item $c(t,r;aq,\lambda a^{-1})=\chi(\det a)c(\T{a}ta,ra;q,\lambda)$ for every $a\in\GL_n(F)$.
	\item $c(t,r;q,\lambda)\neq 0$ only if $(\T{q}tq)_v\in (\f{bd}^{-1}\mathcal{T}_n)_v$ and $e_v(\tr (\T{q}_v\T{r}_v(M_{l,n}(\f{b}_v^{-1})))=1$ for every $v\in\h$.
\end{enumerate}
\end{prop}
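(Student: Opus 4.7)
Proof plan: I would combine adelic Fourier analysis on the function
$$F(\sigma,\mu):=\mathbf{f}\!\left((\lambda,\mu,0)\begin{pmatrix} q & \sigma\tilde q \\ & \tilde q\end{pmatrix}\right)$$
with the dictionary between adelic and classical Siegel-Jacobi forms provided by \eqref{bijection between classical and adelic}.

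\textbf{Step 1 (periodicities and Fourier expansion).} Viewing $F$ as a function on $Sym_n(\A)\times M_{l,n}(\A)$ with $q,\lambda$ fixed, I would first extract its quasi-periodicities from the left $\b{G}(F)$-invariance of $\mathbf{f}$. For $\mu_0\in M_{l,n}(F)$ the group-law identity $(0,\mu_0,0)(\lambda,\mu,0)=(\lambda,\mu+\mu_0,0)$ gives $F(\sigma,\mu+\mu_0)=F(\sigma,\mu)$. For $\sigma_0\in Sym_n(F)$, a longer group-law calculation yields
\[
\begin{pmatrix} 1 & \sigma_0 \\ & 1\end{pmatrix}(\lambda,\mu,0)\begin{pmatrix} q & \sigma\tilde q\\ & \tilde q\end{pmatrix}=(0,0,-\lambda\sigma_0\T\lambda)(\lambda,\mu-\lambda\sigma_0,0)\begin{pmatrix} q & (\sigma+\sigma_0)\tilde q\\ & \tilde q\end{pmatrix},
\]
which together with the central character $\psi_S$ appearing in Definition \ref{definition automorphic Jacobi}(1) produces the twisted periodicity $F(\sigma+\sigma_0,\mu-\lambda\sigma_0)=\Psi(\tr(S[\lambda]\sigma_0))F(\sigma,\mu)$. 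Introducing the substitution $\tilde F(\sigma,\xi):=\Psi(-\tr(S[\lambda]\sigma))F(\sigma,\xi-\lambda\sigma)$ absorbs the twist, making $\tilde F$ genuinely bi-periodic on the compact quotient $[Sym_n(\A)/Sym_n(F)]\times[M_{l,n}(\A)/M_{l,n}(F)]$, and therefore expandable in a Fourier series over appropriate dual lattices. Substituting back $\xi=\mu+\lambda\sigma$ and re-indexing yields \eqref{eq:adelic_Fourier_expansion}.

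\textbf{Step 2 (archimedean factor, property (1)).} To identify the explicit archimedean factor I would invoke Lemma \ref{strong_approx_Jac} to write $\b{g}=\gamma w\b{y}$ with $\gamma\in\b{G}(F)$, $w\in K_\h[\mathfrak{b},\mathfrak{c}]$, $\b{y}\in\b{G}_{\mathbf{a}}$. Since $w$ and $\b{y}$ are supported at disjoint places they commute, and Definition \ref{definition automorphic Jacobi}(1) reduces to $\mathbf{f}(\b{g})=\chi(w)\,(f|_{k,S}\b{y})(\mathbf{i}_0)$, where $f$ is the classical Siegel-Jacobi form associated with $\mathbf{f}$. Direct evaluation of $J_{k,S}(\b{y},\mathbf{i}_0)$ using $j(g_v,i1_n)=\det(\tilde q_v)$ and the explicit form of $\mathcal{J}_{S_v}$ at the archimedean element $(\lambda_v,\mu_v,0)\begin{pmatrix} q_v & \sigma_v\tilde q_v\\ & \tilde q_v\end{pmatrix}$ produces the four archimedean factors appearing in (1), while the classical Fourier coefficient of $f$ evaluated at the point $\b{y}\cdot\mathbf{i}_0$ contributes the remaining finite-adelic datum $c_0(t,r;q,\lambda)$, which by construction depends only on $\mathbf{f}$, $t$, $r$, $q_\h$ and $\lambda_\h$.

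\textbf{Step 3 (properties (2) and (3)).} For (2) a group-law computation gives
\[\begin{pmatrix} a & \\ & \tilde a\end{pmatrix}(\lambda,\mu,0)\begin{pmatrix} q & \sigma\tilde q\\ & \tilde q\end{pmatrix}=(\lambda a^{-1},\mu\T a,0)\begin{pmatrix} aq & (a\sigma\T a)\widetilde{aq}\\ & \widetilde{aq}\end{pmatrix},\]
and left $\b{G}(F)$-invariance combined with comparison of Fourier expansions delivers $c(\T a t a,ra;q,\lambda)=c(t,r;aq,\lambda a^{-1})$; the factor $\chi(\det a)$ in the statement is automatically trivial since $\chi$ is a Hecke character vanishing on $F^\times$. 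For (3), right-multiplying $\b{g}$ by a purely finite-adelic Heisenberg element $(0,\mu_0,0)\in C_\h[\mathfrak{o},\mathfrak{b}^{-1},\mathfrak{b}^{-1}]$ and using that both $\chi$ and the factor of automorphy $J_{k,S}$ are trivial there, gives $\mathbf{f}(\b{g}\cdot(0,\mu_0,0))=\mathbf{f}(\b{g})$. The group law rewrites $\b{g}\cdot(0,\mu_0,0)$ as $(0,0,\lambda q\T{\mu_0})(\lambda,\mu+\mu_0\T q,0)\begin{pmatrix} q & \sigma\tilde q\\ & \tilde q\end{pmatrix}$; comparing the resulting expansion with that of $F(\sigma,\mu)$ and invoking linear independence of the characters forces the second support condition $\mathbf{e}_v(\tr(\T q_v\T r_v M_{l,n}(\mathfrak{b}_v^{-1})))=1$. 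The first condition $\T q t q\in (\mathfrak{bd}^{-1}\mathcal{T}_n)_v$ is obtained analogously by right-multiplying with an upper unipotent $\begin{pmatrix} 1 & \sigma_1\\ & 1\end{pmatrix}\in D_\h[\mathfrak{b}^{-1},\mathfrak{bc}]$ for $\sigma_1$ running over $\prod_{v\in\h} Sym_n(\mathfrak{b}_v^{-1})$.

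\textbf{Main obstacle.} The delicate point is bookkeeping the Heisenberg cocycle: the twisted periodicity of $F$ couples the variables $\sigma$ and $\mu$ through $\lambda$, and the substitution producing $\tilde F$ has to match exactly the phase $\psi_S(\lambda\sigma_0\T\lambda)$ arising from the Jacobi group law. Verifying (1) additionally requires decomposing the archimedean component of $\b{g}$ and tracking each of the four factors through $J_{k,S}$; the sign conventions of $\Psi$ and $\mathbf{e}$ must be kept scrupulously consistent throughout the computation.
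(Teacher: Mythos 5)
Your strategy is essentially the same as the paper's: both arguments rest on the quasi-periodicity of $\mathbf{f}$ in $(\sigma,\mu)$ coming from left $\b{G}(F)$-invariance together with the central character $\psi_S$, on strong approximation to bring in the associated classical form, and on right $K_{\h}[\f{b},\f{c}]$-invariance for the support conditions in (3). Your periodicity computation in Step 1 is correct (the cocycle $\psi_S(-\lambda\sigma_0\T{\lambda})$ comes out exactly as you claim), and your remark that $\chi(\det a)=1$ for $a\in\GL_n(F)$ in property (2) is also correct, since $\chi$ is trivial on principal ideles. The only real difference in route is where the expansion itself comes from: you propose abstract Fourier analysis on the compact quotient $(Sym_n(\A)/Sym_n(F))\times(M_{l,n}(\A)/M_{l,n}(F))$ after untwisting by $\Psi(-\tr(S[\lambda]\sigma))$, whereas the paper uses Shimura's decomposition $\sigma=s+qx\T{q}$, $\lambda s+\mu=m+\nu\T{q}$ (with $s,m$ rational and $x,\nu$ finitely integral) to reduce directly to the classical Fourier expansion of the form $f_{\b{p}}$ attached to $\b{p}=(\lambda,0,0)_{\h}\diag[q,\tilde{q}]_{\h}$.

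The one genuine gap is the restriction of the sum in \eqref{eq:adelic_Fourier_expansion} to $t\geq 0$ and the pointwise absolute convergence of the series. Harmonic analysis on the compact quotient only gives an $L^2$-expansion over all of $Sym_n(F)\times M_{l,n}(F)$; the semi-positivity of $t$ and the convergence are analytic inputs that must be imported from the holomorphy of the classical forms $f_{\b{g}}$ and the shape of their Fourier expansions required by Definition \ref{definition automorphic Jacobi}(2) and Definition \ref{definition modular Jacobi}(2) --- which the paper's route supplies automatically. You invoke the classical form only in Step 2 to identify the archimedean factor, so this point needs to be added explicitly. A smaller issue: in Step 3 the unipotent $\left(\begin{smallmatrix} 1 & \sigma_1\\ & 1\end{smallmatrix}\right)$ shifts $\sigma$ to $\sigma+q\sigma_1\T{q}$ inside \emph{both} exponentials $\mathbf{e}_{\A}(\tr(t\sigma))$ and $\mathbf{e}_{\A}(\tr(\T{r}\lambda\sigma))$, so the two conditions in (3) do not decouple immediately; the paper isolates them by using the two displayed elements of $\b{p}K[\f{b},\f{c}]\b{p}^{-1}$ that effect the translations $\tau\mapsto\tau+b$ and $w\mapsto w+\mu$ separately. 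This is precisely the bookkeeping you flag as the main obstacle; it is routine but must be carried out.
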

\begin{proof}
First of all, note that it is enough to provide a formula for $\mathbf{f}$ at $(\l,\mu,\kappa)g$ with $\kappa =0$ (thanks to the relation (1)) and $g$ of the form as in the hypothesis. 

Let $X_{l,n}:=\{ \nu\in M_{l,n}(\A):\nu_v\in M_{l,n}(\f{b}^{-1}_v)\mbox{ for all } v\in\h\}$ and $X:=\{ x\in X_{n,n}\colon x=\T{x}\}$. As it was observed in \cite[Lemma 9.6]{Sh96}, we can write $\sigma =s+qx\T{q}$ and $\l s+\mu =m+\nu\T{q}$ with $s\in Sym_n(F), x\in X, m\in M_{l,n}(F)$ and $\nu\in X_{l,n}$. Then:
\begin{align*}
\mathbf{f}((\l,\mu,0)\(\begin{matrix} q & \sigma\tilde{q}\\ & \tilde{q}\end{matrix}\)) &=\mathbf{f}(\(\begin{matrix} 1 & s\\ & 1\end{matrix}\) (\l, \l s+\mu, \lambda s \transpose{\lambda})\(\begin{matrix} q & qx\\ & \tilde{q}\end{matrix}\))\\
&=\mathbf{f}((0,m,0)(\l,\nu\T{q},\lambda s \transpose{\lambda})_{\a} (\l,0,0)_{\h}(0,\nu\T{q},\kappa)_{\h}\(\begin{matrix} q & qx\\ & \tilde{q}\end{matrix}\))\\
&=\mathbf{f}((\l,\nu\T{q},\lambda s \transpose{\lambda})_{\a}(\l,0,0)_{\h}\diag[q,\tilde{q}](0,\nu ,\kappa)_{\h}\(\begin{matrix} 1_n & x\\ & 1_n\end{matrix}\)_{\a})\\
&=\psi_S(\kappa_{\h})\(f_{\b{p}}|_{k,S} (\l,\nu\T{q},\lambda s \transpose{\lambda})_{\a}\(\begin{matrix} q & qx\\ & \tilde{q}\end{matrix}\)_{\a}\) (\b{i}_0),
\end{align*}
where we take $\kappa:=\lambda s \transpose{\lambda} - (\l q\T{\nu} +\nu\T{q}\T{\l}), \b{p}:=(\l,0,0)_{\h}\diag[q,\tilde{q}]_{\h}$ and $f_{\b{p}}$ is as in Definition \ref{definition automorphic Jacobi}. 

Since $f_{\b{p}}\in M_{k,S}^n(\b{G}(F) \cap \b{p} K[\mathfrak{b},\mathfrak{c}] \b{p}^{-1}, \chi)$, it is invariant under the translations $\tau\mapsto\tau +b$ and $w\mapsto w+\mu$ for every $b\in\mathcal{L}:=Sym_n(F) \cap q_{\h} X\T{q}_{\h}$ and $\mu\in\mathcal{L}_{l,n}:=M_{l,n}(F) \cap (X_{l,n}\T{q}_{\h})$. Indeed, for each such $b$ and $\mu$ the finite parts of the adelic elements 
\[
(0,0,\lambda b \transpose{\lambda}) \left(\begin{matrix} 1 & b  \\ 0 & 1 \end{matrix}\right) =
(\lambda , 0 ,0) \diag[q, \tilde{q}] (0, - \lambda b \tilde{q}, 0)   \left(\begin{matrix} 1 & q^{-1} b \tilde{q} \\ 0 & 1 \end{matrix}\right) \diag[q^{-1}, \transpose{q}] (-\lambda, 0,0) 
\]
and
\[
(0,\mu, \lambda q \mu \tilde{q} + \mu \transpose{\lambda}) = 
(\lambda , 0 ,0) \diag[q, \tilde{q}] (0, \mu \tilde{q}, 0)  \diag[q^{-1}, \transpose{q}] (-\lambda, 0,0) 
\]
are in the finite part of the group $\b{p} K[\mathfrak{b},\mathfrak{c}] \b{p}^{-1}$. Hence, $f_{\b{p}}$ has a Fourier expansion 
$$f_{\b{p}}(\tau ,w)=\sum_{\substack{t\in L\\ t\geq 0}}\sum_{r\in M} c(\b{p};t,r)\mathbf{e}_{\a}(\tr (t\tau+\T{r}w)),$$
where 
$$L=\{ x\in Sym_n(F):\mathbf{e}_{\a}(\tr(x\mathcal{L}))=1\} ,$$
$$M=\{ x\in M_{l,n}(F):\mathbf{e}_{\a}(\tr(\T{x}\mathcal{L}_{l,n}))=1\} .$$
In particular, $c(\b{p};t,r)\neq 0$ only if at every $v\in\h$ and for every $x\in X_v, x_{l,n}\in (X_{l,n})_v$ we have $e(\tr (\T{q_v}t_vq_vx))=1$ and $e(\tr (\T{q_v}\T{r_v}(x_{l,n})))=1$. Further, if we put $\b{r}:=(\l,\nu\T{q},\lambda s \transpose{\lambda})_{\a}\(\begin{smallmatrix} q & qx\\ & \tilde{q}\end{smallmatrix}\)_{\a}$, 
we have 
\begin{align*}
\mathbf{f}&((\l,\mu,0)\(\begin{matrix} q & \sigma\tilde{q}\\ & \tilde{q}\end{matrix}\)) =\psi_S(\kappa_{\mathbf{h}})J_{k,S}(\b{r},\b{i}_0)^{-1}f_{\b{p}}(\b{r}\b{i}_0)\\
&=\Psi_{\h}(\tr(S\kappa))\mathbf{e}_{\a}(\tr(S[\l]s) + \tr(S[\l](iq\T{q}+qx\T{q})))(\det q)_{\a}^k\\
&\hspace{0.4cm}\cdot f_{\b{p}}(iq\T{q}+qx\T{q},i\lambda q\T{q}+\lambda qx\T{q}+\nu\T{q})\\
&=\Psi_{\h}(\tr(S\kappa))\mathbf{e}_{\a}(\tr(S[\l](iq\T{q}+\sigma)))(\det q)_{\a}^kf_{\b{p}}(iq\T{q}+qx\T{q},i\lambda q\T{q}+\lambda qx\T{q}+\nu\T{q}),
\end{align*}
Now note that 
\begin{align*}
\Psi_{\h}(\tr(S\kappa)) &= \Psi_{\h}(\tr(S (\lambda s \transpose{\lambda} - (\l q\T{\nu} +\nu\T{q}\T{\l}))))= 
\Psi_{\h}(\tr(S (\lambda s \transpose{\lambda}))\\
&= \Psi_{\h}(\tr(S (\lambda \sigma \transpose{\lambda})) \Psi_{\h}(-\tr(S (\lambda q x \transpose{q} \transpose{\lambda})) =   \Psi_{\h}(\tr(S (\lambda \sigma \transpose{\lambda}))). 
\end{align*}

Moreover, since $\mathbf{e}_{\h}(\tr(tqx\T{q}))=1=\mathbf{e}_{\h}(\T{r}\lambda qx\T{q}+\T{r}\nu\T{q}))$ for $t\in L, r\in M$, we have
$$\mathbf{e}_{\A}(\tr(t\sigma))=\mathbf{e}_{\A}(\tr(ts+tqx\T{q}))=\mathbf{e}_{\A}(\tr(tqx\T{q}))=\mathbf{e}_{\a}(\tr(tqx\T{q}))$$ and
$$\mathbf{e}_{\A}(\tr(\T{r}(\lambda\sigma +\mu)))=\mathbf{e}_{\A}(\tr(\T{r}(m+\nu\T{q})+\T{r}\lambda qx\T{q}))=\mathbf{e}_{\a}(\tr(\T{r}\nu\T{q}+\T{r}\lambda qx\T{q})).$$
Hence,
$$f_{\b{p}}(\b{r}\b{i}_0)=\sum_{\substack{t\in L\\ t\geq 0}}\sum_{r\in M} c(\b{p};t,r) \mathbf{e}_{\a}(i\tr(tq\T{q}+\T{r}\lambda q\T{q})) \mathbf{e}_{\A}(\tr(t\sigma))\mathbf{e}_{\A}(\tr(\T{r}\lambda\sigma +\T{r}\mu)).$$
In this way we obtain Fourier expansion \eqref{eq:adelic_Fourier_expansion} that satisfies properties $(1)$ and $(3)$. The second property follows from the fact that $\mathbf{f}|_{k,S}\diag[a,\tilde{a}]=\chi(\det a)^{-1}\mathbf{f}$ for $a\in\GL_n(F)$.
\end{proof}

\section{Jacobi Eisenstein series} \label{Jacobi Eisenstein series}

In this section we introduce Klingen-type Jacobi Eisenstein series. We do this both from a classical and adelic point of view, and also explore the relation between the two in the spirit of the bijection \eqref{bijection between classical and adelic} between classical and adelic Siegel-Jacobi forms, which was established in the previous section. First systematic study of Eisenstein series from a classical point of view was undertaken by Ziegler in \cite{Z89}. Our contribution here is to extend his results to include non-trivial level, non-trivial nebentype and we also work over a general totally real field. Furthermore, we introduce the adelic point of view, which, to the best of our knowledge, a systematic study of which, has not appeared before in the literature in the Jacobi setting. \newline

For an integer $r\in\{ 0,1,\ldots ,n\}$, we let $P^{n,r}, \b{P}^{n,r}$ be Klingen parabolic subgroups of $G^n$ and $\b{G}^{n,l}$ respectively, as defined in Section \ref{sec:notation}. We define the map $\lambda^n_{r,l} : \b{G}^{n,l} \rightarrow F$ by
\[
\lambda^n_{r,l}((\lambda,\mu,\kappa)g) := \lambda_r^n(g),
\]
where $\lambda^{n}_{r}: \Sp_{n} \rightarrow F$ is the map defined as in \cite{Sh95} by
\[
\lambda^{n}_r \left(\begin{pmatrix} a_1 & a_2 & b_1 & b_2 \\
a_3 & a_4 & b_3 & b_4 \\  
c_1 & c_2 & d_1 & d_2 \\
c_3 & c_4 & d_3 & d_4
\end{pmatrix}\right) = \det(d_4),
\]
where the matrices $a_1,b_1,c_1,d_1$ are of size $r$ and the matrices $a_4,b_4,c_4,d_4$ of size $n-r$; we set $\lambda^{n}_n(g):=1$. We extend this map to the adeles so that $\lambda^{n}_{r,l}: \b{G}^{n,l}(\mathbb{A}) \rightarrow \mathbb{A}$.

Furthermore for $r>0$ we define the map 
$$\b{\omega}_r: \mathcal{H}_{n,l} \rightarrow \mathcal{H}_{r,l}$$
by $\b{\omega}_r(\tau,w) := (\tau_1,w_1)$, where $\tau_1$ denotes the $r \times r$ upper left corner of the matrix $\tau$ and $w_1$ is the $l \times r$ matrix obtained from the first $r$ columns of $w$. Note that $\tau_1 = \omega_r(\tau)$ for $\omega_r$ as in \cite{Sh95}; we extend this and write $\b{\omega}_r(w) := w_1$.

Finally, we define a (set theoretic) map 
$$\b{\pi}_r : H^{n,l} \times M_{2n} \rightarrow H^{r,l} \times M_{2r} , \quad\b{\pi}_r((\lambda,\mu,\kappa),g) := ((\lambda_1, \mu_1 ,\kappa), \pi_r(g)),$$ 
where $\lambda_1$ (resp $\mu_1$) is the $l \times r$ matrix obtained by taking the first $r$ columns of $\lambda$ (resp. $\mu$), and 
$\pi_r(g):= \(\begin{smallmatrix}
a_1(g) & b_1(g) \\ c_1(g) & d_1(g)\end{smallmatrix}\)$ 
is the map defined in \cite{Sh95} with $\pi_0(g):=1$. 

As we pointed out above, the maps $\lambda^{n}_{r}, \b{\omega}_r, \b{\pi}_r$ generalize the maps defined in \cite{Sh95}. In a similar manner their properties generalize the ones of the symplectic setting.

\begin{lem}
	Assume $r>0$. Then for all $\b{g} \in \b{P}^{n,r}(\mathbb{A})$we have
	\begin{equation}\label{eq:omega_r}
	\b{\omega}_r(\b{g} z) = \b{\pi}_r(\b{g})\b{\omega}_r(z)
	\end{equation}   
	and
	\begin{equation}\label{eq:J_at_P}
	J_{k,S}(\b{g},z) = (\lambda^n_{r,l}(\b{g})_{\mathbf{a}})^k J_{k,S}(\b{\pi}_r(\b{g}), \b{\omega}_r(z)).
	\end{equation}
\end{lem}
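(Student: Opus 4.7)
The plan is to reduce everything to explicit block–matrix bookkeeping, using the already–available Shimura identities for the symplectic part: for $g\in P^{n,r}$,
\[
\omega_r(g\tau)=\pi_r(g)\,\tau_1, \qquad j(g,\tau)=\lambda^n_r(g)\, j(\pi_r(g),\tau_1),
\]
so that after multiplying over $v\in\mathbf{a}$ the $j$-part of $J_{k,S}$ already produces the claimed factor $(\lambda^n_{r,l}(\b{g})_{\mathbf{a}})^k$. Hence the two things left to verify are: (i) the $w$–component of $\b{\omega}_r(\b{g}z)$ matches the $w$–component of $\b{\pi}_r(\b{g})\b{\omega}_r(z)$, and (ii) the archimedean factor $\mathcal{J}_{S_v}(\b{g}_v,\tau_v,w_v)$ equals $\mathcal{J}_{S_v}(\b{\pi}_r(\b{g}_v),\tau_{1,v},w_{1,v})$.

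First I would write $\b{g}=(\lambda,\mu,\kappa)g$ with $\lambda=(\lambda_1\,\,0_{l,n-r})$ and $g\in P^{n,r}$ having the shape prescribed in Section~\ref{sec:notation}. The crucial observation is that $c_g=\bigl(\begin{smallmatrix}c_1 & 0\\ 0 & 0\end{smallmatrix}\bigr)$ and $d_g=\bigl(\begin{smallmatrix}d_1 & d_2\\ 0 & d_4\end{smallmatrix}\bigr)$, so that $c_g\tau+d_g$ is block upper triangular and
\[
\lambda(g,\tau)^{-1}=\begin{pmatrix}(c_1\tau_1+d_1)^{-1} & * \\ 0 & d_4^{-1}\end{pmatrix}=\begin{pmatrix}\lambda(\pi_r(g),\tau_1)^{-1} & * \\ 0 & d_4^{-1}\end{pmatrix}.
\]
Writing $w=(w_1\,\,w_2)$, the first $r$ columns of $w\lambda(g,\tau)^{-1}$ are therefore $w_1\lambda(\pi_r(g),\tau_1)^{-1}$. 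For the shift term $\lambda\, g\tau+\mu$, since $\lambda=(\lambda_1\,\,0)$ its first $r$ columns are $\lambda_1\cdot(g\tau)_{11}+\mu_1 = \lambda_1\pi_r(g)\tau_1+\mu_1$ by the Shimura identity for $\omega_r$. Matching this against the Jacobi-action formula in $\b{G}^{r,l}$ proves \eqref{eq:omega_r}.

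For \eqref{eq:J_at_P} I would check the four summands inside $e(\cdot)$ in the definition of $\mathcal{J}_{S_v}$ separately. The $\tr(S\kappa)$ term is identical on both sides. For $\tr(S[\lambda]g\tau)$, the identity $S[\lambda]=\bigl(\begin{smallmatrix}S[\lambda_1] & 0\\ 0 & 0\end{smallmatrix}\bigr)$ reduces the trace to $\tr(S[\lambda_1](g\tau)_{11})=\tr(S[\lambda_1]\pi_r(g)\tau_1)$. For $\tr(\T{\lambda}Sw\lambda(g,\tau)^{-1})$, only the upper-left $r\times r$ block of $\T{\lambda}Sw\lambda(g,\tau)^{-1}$ contributes to the trace, and by the upper-triangularity of $\lambda(g,\tau)^{-1}$ that block is exactly $\T{\lambda_1}Sw_1\lambda(\pi_r(g),\tau_1)^{-1}$. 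Finally, the product $\lambda(g,\tau)^{-1}c_g$ has nonzero entries only in its upper-left $r\times r$ block, equal to $\lambda(\pi_r(g),\tau_1)^{-1}c_1$, so the trace of $S[w]\lambda(g,\tau)^{-1}c_g$ collapses to $\tr(S[w_1]\lambda(\pi_r(g),\tau_1)^{-1}c_1)$. Combining these three reductions gives $\mathcal{J}_{S_v}(\b{g}_v,\tau_v,w_v)=\mathcal{J}_{S_v}(\b{\pi}_r(\b{g}_v),\tau_{1,v},w_{1,v})$.

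Putting the $j$-identity and the $\mathcal{J}_{S}$-identity together at each archimedean place and taking the product over $v\in\mathbf{a}$ yields \eqref{eq:J_at_P}. There is no genuine obstacle here — everything is forced by the block shape of $P^{n,r}$ and the fact that $\lambda$ has its last $n-r$ columns equal to zero; the only care needed is to be systematic about which entries of the block-triangular matrices actually contribute to the various traces, so that the auxiliary blocks (the $*$ in $\lambda(g,\tau)^{-1}$ and the pieces $b_2,b_3,b_4,d_2$ of $g$) drop out exactly as they must.
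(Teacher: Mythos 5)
Your proposal is correct and follows essentially the same route as the paper: reduce to Shimura's symplectic identities for $\omega_r$, $\pi_r$ and $j$, then verify the $w$-component and the three trace terms of $\mathcal{J}_{S_v}$ by exploiting the block-triangular shape of $c_g\tau+d_g$ and the vanishing of the last $n-r$ columns of $\lambda$. No gaps.
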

\begin{proof} 
	Write $z= (\tau,w)$ and $\b{g} = hg = (\lambda,\mu,\kappa)g$. Then, by \cite[(1.24)]{Sh95}, $\omega_r(g\tau) = \pi_r(g) \omega_r(\tau)$ and $j(g,\tau) = \lambda_r(g)_{\mathbf{a}} j(\pi_r(g),\omega_r(\tau))$.  Thus, to show \eqref{eq:omega_r} it suffices to establish the equality
	\[
	(w (c_g \tau + d_g)^{-1} + \lambda g \tau + \mu)_1 = w_1 (c_{\pi_r(g)} \omega_r(\tau) + d_{\pi_r(g)})^{-1} + \lambda_1 \pi_r(g) \omega_r(\tau) + \mu_1; 
	\] 
	or, after using the fact that $\pi_r(g)\omega_r(\tau) = \omega_r(g\tau)$ for $g\in P^{n,r}$, 
	\[
	(w (c_g \tau + d_g)^{-1})_1 = w_1 (c_{\pi_r(g)} \omega_r(\tau) + d_{\pi_r(g)})^{-1},\qquad (\lambda g \tau)_1 = \lambda_1 \omega_r(g \tau).
	\]
	
	Set $c:=c_g$, $d:=d_g$ and observe that for $\b{g} \in \b{P}^{n,r}(\mathbb{A})$,
	\[
	c \tau + d = \begin{pmatrix} c_1 & 0 \\ 0 & 0 \end{pmatrix} \begin{pmatrix}
	\tau_1 & \tau_2 \\ \transpose{\tau_2} & \tau_4
	\end{pmatrix} + \begin{pmatrix} d_1 & d_2 \\ 0 & d_4 \end{pmatrix} = \begin{pmatrix}
	c_1 \tau_1 + d_1 & * \\ 0 & d_4
	\end{pmatrix},
	\]
	where $c_1,\tau_1,d_1$ are $r \times r$ matrices. In particular,
	\[
	(c \tau + d)^{-1} = \begin{pmatrix}
	(c_1 \tau_1 + d_1)^{-1} & * \\ 0 & d_4^{-1}
	\end{pmatrix} ,
	\]
	and thus
	\begin{align*}
	(w (c \tau + d)^{-1})_1 &= ((w_1 \,w_2) 
	\begin{pmatrix}	(c_1 \tau_1 + d_1)^{-1} & * \\ 0 & d_4^{-1}	\end{pmatrix})_1 
	= (w_1 (c_1 \tau_1 + d_1)^{-1} \,\, *)_1\\
	&= w_1 (c_1 \tau_1 + d_1)^{-1}=w_1 (c_{\pi_r(g)} \tau_1 + d_{\pi_r(g)})^{-1}.
	\end{align*}
	
	Similarly,		 
	$$\lambda g \tau = (\lambda_1 \,\,0) \begin{pmatrix} \omega_r(g \tau) & * \\ * & * \end{pmatrix} = (\lambda_1 \omega_r(g \tau)\,\, *).$$
	
	We will now prove the equality \eqref{eq:J_at_P}. Because $\lambda_{r,l}^n(\mathbf{g})_{\mathbf{a}} = \lambda_r(g)_{\mathbf{a}}$ and $j(g,\tau) = \lambda_r(g)_{\b{a}} j(\pi_r(g), \omega_r(\tau))$, it is enough to show that
	\[
	\mathcal{J}_S(\b{g}, z) = \mathcal{J}_S(\b{\pi}_r(\b{g}),\b{\omega}_r(z)),
	\]
	that is,
	\begin{enumerate}
		\item $\tr(S[w](c_g \tau+d_g)^{-1} c_g) = \tr(S[w_1] (c_{\pi_r(g)} \tau_1 + d_{\pi_r(g)})^{-1} c_{\pi_r(g)})$
		\item $\tr(\T{\lambda}Sw (c_g \tau+d_g)^{-1}) = \tr(\T{\lambda_1}Sw_1 (c_{\pi_r(g)} \tau_1 + d_{\pi_r(g)})^{-1})$ and
		\item $\tr(S[\lambda] g \tau) = \tr(S[\lambda_1] \pi_r(g) \tau_1)$.
	\end{enumerate}
	
	Write $w = (w_1 \,\,w_2)$, so that 
	$$S[w] = \(\begin{smallmatrix}
	\T{w_1}\\ \T{w_2}\end{smallmatrix}\) S (w_1 \,\,w_2) = \(\begin{smallmatrix}
	\T{w_1} S\\ \T{w_2}S\end{smallmatrix}\) (w_1 \,\,w_2) = \begin{pmatrix}
	S[w_1] & * \\ * & * \end{pmatrix}.$$
	Moreover, as we have seen before, $(c_g \tau+d_g)^{-1} = \(\begin{smallmatrix}
	(c_{\pi_r(g)} \omega_r(\tau) + d_{\pi_r(g)})^{-1} & * \\ 0 & *
	\end{smallmatrix}\)$, $c = \(\begin{smallmatrix}
	c_{\pi_r(g)}  & 0 \\ 0 & 0
	\end{smallmatrix}\)$, so that 
	\[
	(c_g \tau+d_g)^{-1} c_g =
	\begin{pmatrix}
	(c_{\pi_r(g)} \omega_r(\tau) + d_{\pi_r(g)})^{-1}c_{\pi_r(g)}  & 0 \\ 0 & 0
	\end{pmatrix}.
	\] 
	Hence 
	\begin{align*}
	\tr(S[w](c_g \tau+d_g)^{-1} c_g) &= \tr\left(\begin{pmatrix}
	S[w_1] & * \\ * & * 
	\end{pmatrix} \begin{pmatrix}
	(c_{\pi_r(g)} \tau_1 + d_{\pi_r(g)})^{-1}c_{\pi_r(g)}  & 0 \\ 0 & 0
	\end{pmatrix} \right)\\ 
	&=\tr (S[w_1] (c_{\pi_r(g)} \tau_1 + d_{\pi_r(g)})^{-1} c_{\pi_r(g)}).
	\end{align*}
	
	Now write $\lambda = (\lambda_1 \,\,0)$. Then
	\begin{align*}
	\T{\lambda}Sw(c_g \tau+d_g)^{-1} &= \begin{pmatrix} S(\lambda_1,w_1) & * \\ 0 & 0 \end{pmatrix} \begin{pmatrix}
	(c_{\pi_r(g)} \omega_r(\tau) + d_{\pi_r(g)})^{-1} & * \\ 0 & *
	\end{pmatrix}\\
	&=\begin{pmatrix}
	S(\lambda_1,w_1)(c_{\pi_r(g)} \omega_r(\tau) + d_{\pi_r(g)})^{-1} & * \\ 0 & 0
	\end{pmatrix}.
	\end{align*}  
	In particular,  
	$$\tr(\T{\lambda}Sw (c_g \tau+d_g)^{-1}) = \tr(\T{\lambda_1}Sw_1 (c_{\pi_r(g)} \tau_1 + d_{\pi_r(g)})^{-1}).$$
	
	For the final equation it is enough to observe that 
	$S[\lambda] = \begin{pmatrix} S[\lambda_1] & 0 \\ 0 & 0 \end{pmatrix}$ and so 
	\[
	S[\lambda] g \tau = \begin{pmatrix} S[\lambda_1] & 0 \\ 0 & 0 \end{pmatrix} \begin{pmatrix} (g \tau)_1 & (g\tau)_2 \\ (g \tau)_3 & (g \tau)_4 \end{pmatrix} = \begin{pmatrix} S[\lambda_1] (g \tau)_1 & * \\ 0 & 0 \end{pmatrix}. 
	\]
	But $(g \tau)_1 = \omega_r(g \tau) = \pi_r(g) \omega_r(\tau)$ and hence 
	\[
	\tr(S[\lambda] g \tau) = \tr(S[\lambda_1] \pi_r(g) \tau_1).
	\]
\end{proof}
\subsection{Adelic Jacobi Eisenstein series of Klingen-type}\label{sec:adelic_ES}
We are now ready to define adelic Jacobi Eisenstein series of Klingen type. Fix a weight $k \in \Z^{\a}$ and consider a Hecke character $\chi$ such that for a fixed integral ideal $\f{c}$ of $F$ we have
\begin{enumerate}
	\item  $\chi_v(x) = 1$ for all $x \in \f{o}_v^{\times}$ with $x-1 \in \f{c}_v$, $v\in\h$,
	\item $\chi_{\a}(x_{\a}) = \sgn (x_{\a})^k:= \prod_{v \in \mathbf{a}} \left(\frac{x_v}{|x_v|}\right)^{k_v}$, for $x_{\a} \in \mathbb{A}_{\a}$;
\end{enumerate}
we will also write $\chi_{\f{c}}:=\prod_{v|\f{c}}\chi_v$. We fix a fractional ideal $\f{b}$ and an integral ideal $\f{e}$ such that $\f{c}\subset\f{e}$ and $\f{e}$ is prime to $\f{e}^{-1}\f{c}$. Further, for $r\in\{ 1,\ldots ,n\}$ we set
$$K:= K_{\h}[\f{b},\f{c}] (H_{\mathbf{a}}^{n,l} \rtimes D_{\infty}^{\a}),$$
\begin{multline*}
K^{n,r}:= \{ \b{x} = (\lambda,\mu,\kappa) x \in K: (a_1(x)-1_r)_v\in M_{r,r}(\f{e}_v),\\ (a_2(x))_v\in M_{r,n-r}(\f{e}_v), (b_1(x))_v\in M_{r,r}(\f{b}_v^{-1}\f{e}_v)\mbox{ for every } v|\f{e}\},
\end{multline*}
where $x = \spmatrix{a_x & b_x \\ c_x & d_x } =
\spmatrix{a_1(x) & a_2(x) & b_1(x) & b_2(x) \\
a_3(x) & a_4(x) & b_3(x) & b_4(x) \\
c_1(x) & c_2(x) & d_1(x) & d_2(x) \\
c_3(x) & c_4(x) & d_3(x) & d_4(x) }$, and  
$$K^r:= \{ \b{x}\in K^r[\f{b}^{-1}\f{e},\f{bc}]: (a_x-1_r)_v\in M_{r,r}(\f{e}_v)\mbox{ for every } v|\f{e}\}.$$
If $r=0$, we put $K^{n,0}:=K$.

For a cusp form $\mathbf{f} \in \mathcal{S}_{k,S}^r (K^r,\chi^{-1})$, $\mathbf{f}:=1$ if $r=0$, we define a $\mathbb{C}$-valued function $\phi(x,s;\mathbf{f})$ with $x \in \b{G}^n(\mathbb{A})$ and $s \in \mathbb{C}$ as follows. We set $\phi(x,s;\mathbf{f}) := 0$ if $ x \notin \b{P}^{n,r}(\mathbb{A})K^{n,r}$ and otherwise, if $x = \b{p}\b{w}$ with $\b{p} \in \b{P}^{n,r}(\mathbb{A})$ and $\b{w} \in K^{n,r}$, we set
$$
\phi(x,s;\mathbf{f}):= \chi(\lambda_{r,l}^n(\b{p}))^{-1} \chi_{\mathfrak{c}}(\det(d_w)))^{-1} J_{k,S}(\b{w},\mathbf{i}_0)^{-1} \mathbf{f}(\pi_r(\b{p})) |\lambda^n_{r,l}(\b{p})|_{\A}^{-2s},  
$$
where $\b{w} = h w$ with $w \in \Sp_n(\A)$. We recall here that if we write $p$ for the symplectic part of $\b{p}$ then $\lambda_{r,l}^n(\b{p})= \lambda^n_r(p)$. Moreover, since at archimedean places $x_{\a}\in \b{P}^{n,r}_{\a} K^{n,r}_{\a}=P^{n,r}_{\a}K^{n,r}_{\a}$ if and only if $x_{\a}\in P'_{\a}K^{n,r}_{\a}$ , where $P':=\bigcap_{r=0}^{n-1} P^{n,r}$ (\cite{Sh95}, Lemma 3.1), we always choose $\b{p} \in \b{P}^{n,r}(\mathbb{A})$ so that $\b{p}_{\a}=p_{\a} \in P'_{\a}$. We now check that $\phi(x,s;\mathbf{f})$ is well-defined, i.e. that it is independent of the choice of $p$ and $\b{w}$. 

Let $x=\b{p}_1 \b{w}_1=\b{p}_2 \b{w}_2$, set $\b{r}:= \b{p}_2^{-1} \b{p}_1 = \b{w}_2 \b{w}_1^{-1} \in \b{P}^{n,r}(\A) \cap K^{n,r}$ and assume that $(p_1)_{\a}, (p_2)_{\a}\in P'_{\a}$. Observe that  $\l_{r,l}^n(\b{r})_v=(\det d_{p_2,4})_v^{-1}(\det d_{p_1,4})_v\in\f{o}\ti_v$ for every $v\in\h$, and $|\l_{r,l}^n(\b{r})_v|_v=1$ for all $v\in\a$. Hence, $|\lambda^n_{r,l}(\b{p})|_{\A}^{-2s}$ is independent of choice of $\b{p}$ and $\b{w}$, and $\chi(\lambda^n_{r,l}(\b{p}))^{-1}=\chi_{\f{c}}(\lambda^n_{r,l}(\b{p}))^{-1}(\lambda^n_{r,l}(\b{p})_{\a})^{-k}$. Because
$$\mathbf{f}(\pi_r(\b{p}_1))=\mathbf{f}(\pi_r(\b{p}_2\b{r}))=\mathbf{f}(\pi_r(\b{p}_2)\pi_r(\b{r}))
=\mathbf{f}(\pi_r(\b{p}_2))\chi_{\f{c}}(\det a_{\pi_r(\b{r})})J_{k,S}(\pi_r(\b{r}),\b{i}_0)^{-1},$$
we have to prove that
\begin{multline*}
\chi_{\f{c}}(\lambda^n_{r,l}(\b{r}))^{-1}(\lambda^n_{r,l}(\b{r})_{\a})^{-k} \chi_{\mathfrak{c}}(\det(d_{w_1}))^{-1}\chi_{\mathfrak{c}}(\det(d_{w_2}))\chi_{\f{c}}(\det a_{\pi_r(\b{r})})\\
= J_{k,S}(\pi(\b{r}),\b{i}_0) J_{k,S}(\b{w}_1,\mathbf{i}_0) J_{k,S}(\b{w}_2,\mathbf{i}_0)^{-1} 
\end{multline*}
First of all, since $\b{r}_{\a}\in P_{\a}'$,
$$(\lambda^n_{r,l}(\b{r})_{\a})^k J_{k,S}(\pi(\b{r}),\b{i}_0) J_{k,S}(\b{w}_1,\mathbf{i}_0) J_{k,S}(\b{w}_2,\mathbf{i}_0)^{-1}
=J_{k,S}(\b{r},\b{i}_0)J_{k,S}(\b{r},\b{w}_1\cdot\mathbf{i}_0)^{-1}=1.$$
Moreover, it is easy to check that
\begin{multline*}
\chi_{\f{c}}(\lambda^n_{r,l}(\b{r}))^{-1}\chi_{\mathfrak{c}}(\det(d_{w_1}))^{-1}\chi_{\mathfrak{c}}(\det(d_{w_2}))\chi_{\f{c}}(\det a_{\pi_r(\b{r})})\\
=\chi_{\f{c}}(\det d_{\pi_r(w_2)})\chi_{\f{c}}(\det d_{\pi_r(w_1)})^{-1}\chi_{\f{c}}(\det a_{\pi_r(\b{r})})=1.
\end{multline*}
This proves the statement above.

We define the Eisenstein series of Klingen type by
\begin{equation}\label{def:KlingenES}
E(x,s;\mathbf{f}):=E(x,s;\mathbf{f},\chi, K^{n,r}) := \sum_{\gamma \in\b{P}^{n,r}(F) \setminus \b{G}^n(F)} \phi(\gamma x,s;\mathbf{f}),\quad\Re(s) \gg 0.
\end{equation}
If $r=0$ and $\mathbf{f}=1$, then we say that $E(x,s):=E(x,s;1)$ is an Eisenstein series of Siegel type.

It is clear from the above calculations that this is well defined, and for $\gamma \in \b{P}^{n,r}(F)$, $\b{w}\in K^{n,r}_{\h}\times K_{\infty}$,
$$\phi(\gamma x \b{w},s;\mathbf{f}) = \chi_{\mathfrak{c}}(\det(d_w))^{-1} J_{k,S}(\b{w},\mathbf{i}_0)^{-1} \phi(x,s;\mathbf{f}).$$
In particular, for  $\kappa \in Sym_{l}(\A)$, $\gamma \in \b{G}^n(F)$, $x \in \b{G}^n(\A)$ and $\b{w}\in K^{n,r}_{\h}\times K_{\infty}$,
$$E((0,0,\kappa) \gamma x \b{w}, s; \mathbf{f}) = \psi_S(\kappa)\chi_{\mathfrak{c}}(\det(d_w))^{-1} J_{k,S}(\b{w},\mathbf{i}_0)^{-1} E(x,s;\mathbf{f}).$$

We will show in Proposition \ref{absolute convergence of Eisenstein series} below that the series above, evaluated at  $s= k/2$ for $k\in\Z$, $k > n+r+l+1$, is absolutely convergent and hence defines an adelic Siegel-Jacobi modular form of parallel weight $k \mathbf{a}:=(k,k,\ldots,k) \in \mathbb{Z}^{\mathbf{a}}$. \newline

We now investigate the relation of the adelic Eisenstein series \eqref{def:KlingenES} with the classical one. 

Write $K^{n,r}_{\mathbf{h}} = C_{\mathbf{h}}[\mathfrak{o},\mathfrak{b}^{-1},\mathfrak{b}^{-1}] \rtimes D^{n,r}_{\mathbf{h}}[\mathfrak{b}^{-1},\mathfrak{bc}]$. Then it follows from \cite[Lemma 3.2]{Sh95} and \cite[Lemma 1.3]{Sh83} that 
$$P^{n,r}(\mathbb{A}) = \bigsqcup_{x \in X} P^{n,r}(F) x (P^{n,r}(\mathbb{A}) \cap D^{n,r}_{\mathbf{h}}[\mathfrak{b}^{-1},\mathfrak{bc}]) P^{n,r}(\mathbb{A}_{\mathbf{a}}),$$  
where $X$ is a finite subset of $P^{n,r}(\mathbb{A})$ such that $\{\mathfrak{a}_r(x):x \in X\}$ forms a set of representatives for the ideal class group of $F$, where $\mathfrak{a}_r(x)$ is the ideal of $F$ defined in \cite[page 551]{Sh95} as the ideal corresponding to the idele $\lambda_r(x)$. In particular one may pick $x$'s of a very specific form, namely $\diag[1_{n-1},t^{-1},1_{n-1},t]$ with $t \in \mathbb{A}^{\times}_{\mathbf{h}}$.  Since $\b{P}^{n,r} = H_r^{n,l} \rtimes P^{n,r}$ and the strong approximation holds for $H_r^{n,l}$ by the same argument as in Lemma \ref{strong_approx_Jac}, we have that 
\[
\b{P}^{n,r}(\mathbb{A}) = \bigsqcup_{x' \in X'} \b{P}^{n,r}(F) x' (\b{P}^{n,r}(\mathbb{A}) \cap K^{n,r}_{\h}[\f{b},\f{c}]) \b{P}^{n,r}(\mathbb{A}_{\mathbf{a}}),
\]    
where $X'$ is the set $X$ extended trivially to $\b{G}^n$ by the canonical embedding $\Sp_n \hookrightarrow \Jac^n$. We can now establish that 
\begin{align*}
\b{P}^{n,r}(\mathbb{A}) K^{n,r} &= \bigsqcup_{x' \in X'} \b{P}^{n,r}(F) x' K^{n,r}_{\h}[\f{b},\f{c}] \b{P}^{n,r}(\mathbb{A}_{\mathbf{a}}) K^{n,r}(\mathbb{A}_{\mathbf{a}})\\
&=\bigsqcup_{x' \in X'} \b{P}^{n,r}(F) x' K^{n,r}_{\h}[\f{b},\f{c}] \b{G}^n(\mathbb{A}_{\mathbf{a}}).
\end{align*}

Indeed, we only need to establish that the union is disjoint. Assume that the cosets determined by $x_1,x_2 \in X'$ are not disjoint, that is $x_1 = a x_2 b c$ for some $a\in \b{P}^{n,r}(F)$, $b \in K_{\mathbf{h}}[\mathfrak{b},\mathfrak{c}]$ and $c \in  \b{P}^{n,r}(\mathbb{A}_{\mathbf{a}})K^{n,r}(\mathbb{A}_{\mathbf{a}})$. Since $x_1,x_2 \in \Jac^n_{\mathbf{h}}$, we have that $x_1 = a_{\h} x_2 b$. Moreover, since $a \in \b{P}^{n,r}(F)$ and $x_1,x_2$ are diagonal, $b\in \b{P}^{n,r}(\mathbb{A}) \cap K^{n,r}_{\h}[\f{b},\f{c}]$ and $c_{\mathbf{a}} \in \b{P}^{n,r}(\mathbb{R})$. But then this implies that $x_1 \in \b{P}^{n,r}(F) x_2 (\b{P}^{n,r}(\mathbb{A}) \cap K^{n,r}_{\h}[\f{b},\f{c}]) \b{P}^{n,r}(\mathbb{A}_{\mathbf{a}})$, and thus $x_1=x_2$. 

Take the set $X'$ to be of the particular form indicated above, that is let $x'\in X'$ be of the form $\diag[1_{n-1},t^{-1},1_{n-1},t] \in \Sp_{n}(\A) \hookrightarrow \Jac^n(\A)$ with $t \in \mathbb{A}^{\times}_{\mathbf{h}}$. Observe that for any such $x'$, $x' K^{n,r}_{\mathbf{h}}[\mathfrak{b},\mathfrak{c}] (\mathbb{A}_{\mathbf{a}})\b{G}^n(\mathbb{A}\mathbf{a}) \cap \Jac^n(F) \neq \emptyset$. Indeed, this follows from the fact that $\diag[1_{n-1},t^{-1},1_{n-1},t] D^{n,r}_{\h}[\mathfrak{b}^{-1},\mathfrak{bc}]\Sp_n(\mathbb{R}) \cap \Sp_n(F) \neq \emptyset$. In particular, we can conclude the analogue of \cite[Lemma 3.3]{Sh95} in the Jacobi setting:

\begin{lem}\label{Parabolic decomposition} Set $Y := \bigcup_{t \in \mathbb{A}_\mathbf{h}^{\times}} \diag[1_{n-1},t^{-1},1_{n-1},t]  K_{\h}[\f{b},\f{c}] \b{P}^{n,r}(\mathbb{A}_{\mathbf{a}})K^{n,r}(\mathbb{A}_{\mathbf{a}})$. Then there exists a subset $Z$ of $\Jac^n(F)\cap Y$ such that 
$$\b{P}^{n,r}(\mathbb{A}) K^{n,r} = \bigsqcup_{\zeta \in Z} \b{P}^{n,r}(F) \zeta K^{n,r}_{\h}[\f{b},\f{c}]\b{P}^{n,r}(\R)K^{n,r}(\mathbb{A}_{\mathbf{a}})	= \bigsqcup_{\zeta \in Z} \b{P}^{n,r}(F) \zeta K^{n,r}_{\h}[\f{b},\f{c}]\Jac^n(\R)$$
and
\begin{align*}
	\Jac^n(F)\cap \b{P}^{n,r}(\mathbb{A}) K^{n,r}&= \bigsqcup_{\zeta \in Z} \b{P}^{n,r}(F) \zeta \left( K^{n,r}_{\h}[\f{b},\f{c}]\b{P}^{n,r}(\mathbb{A}_{\mathbf{a}})K(\mathbb{A}_{\mathbf{a}}) \cap \Jac^n(F) \right)\\  
	&= \bigsqcup_{\zeta \in Z} \b{P}^{n,r}(F) \zeta \left( K^{n,r}_{\h}[\f{b},\f{c}]\Jac^n(\mathbb{A}_{\mathbf{a}}) \cap \Jac^n(F) \right) .
\end{align*}
\end{lem}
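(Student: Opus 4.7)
My plan is to deduce the lemma from the decomposition
\[
\b{P}^{n,r}(\mathbb{A}) K^{n,r} = \bigsqcup_{x' \in X'} \b{P}^{n,r}(F) x' K^{n,r}_{\h}[\f{b},\f{c}] \b{G}^n(\mathbb{A}_{\a})
\]
established just above the lemma, by replacing each diagonal representative $x'\in X'$ with a rational one and by proving an Iwasawa-type decomposition at the archimedean places.

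For the rational representatives I would appeal to the symplectic analogue \cite[Lemma 3.3]{Sh95}: applied to $x' = \diag[1_{n-1},t^{-1},1_{n-1},t]$ (viewed in $\b{G}^n$ through $\Sp_n\hookrightarrow \b{G}^n$), it produces $\zeta\in \Sp_n(F)$ of diagonal shape lying in the same $\b{P}^{n,r}(F)\cdot K^{n,r}_{\h}[\f{b},\f{c}]\b{G}^n(\mathbb{A}_{\a})$-double coset as $x'$, through strong approximation for $F^\times$. I would then let $Z$ be the resulting finite set, one $\zeta$ per $x'$, regarded inside $\Jac^n(F)$; membership $Z\subset \Jac^n(F)\cap Y$ is immediate and disjointness is inherited from $X'$.

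The two forms of the first displayed identity differ only by the Jacobi-Iwasawa statement $\b{P}^{n,r}(\R)K^{n,r}(\mathbb{A}_{\a}) = \Jac^n(\R)$. To establish it, I would write $\b{g}=(\l,\mu,\kappa)g\in \Jac^n(\R)$, use the classical Iwasawa $g = p'd$ with $p'\in P^{n,r}(\R)$ and $d\in D_{\infty}^{\a}$, and split $\l=(\l_1,\l_2)$ with $\l_1\in M_{l,r}(\R)$. The Heisenberg law gives $(\l,\mu,\kappa)=((\l_1,0),\mu,\kappa)\cdot((0,\l_2),0,0)$, whose first factor sits in $\b{P}^{n,r}(\R)$ and whose second factor, after being moved past $p'$, remains in $H^{n,l}(\R)^{\a}$ and therefore combines with $d$ to land in $H^{n,l}(\R)^{\a}\rtimes D_{\infty}^{\a}\subset K^{n,r}$.

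For the intersection formula, since $\zeta\in\Jac^n(F)$ and $\b{P}^{n,r}(F)\subset\Jac^n(F)$, the elementary set-theoretic identity
\[
\b{P}^{n,r}(F)\zeta X\cap \Jac^n(F) = \b{P}^{n,r}(F)\zeta\bigl(X\cap \Jac^n(F)\bigr)
\]
holds for any $X\subset \b{G}^n(\mathbb{A})$, because $\zeta^{-1}\b{P}^{n,r}(F)^{-1}\Jac^n(F) = \Jac^n(F)$. Applying it to $X = K^{n,r}_{\h}[\f{b},\f{c}]\b{P}^{n,r}(\mathbb{A}_{\a})K(\mathbb{A}_{\a})$ and to $X = K^{n,r}_{\h}[\f{b},\f{c}]\Jac^n(\mathbb{A}_{\a})$ yields the second displayed equality in both of its forms. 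The only real obstacle is verifying in the first step that Shimura's rational representatives satisfy the finer conditions encoded by the superscript $n,r$ of $K^{n,r}_{\h}[\f{b},\f{c}]$; since those conditions only constrain blocks of the symplectic part at primes dividing $\f{e}$, and the $x'$ are diagonal in $\Sp_n$, the statement transfers from the symplectic case with no extra input.
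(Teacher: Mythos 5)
Your proposal is correct and follows essentially the same route as the paper: the lemma is derived there from the decomposition $\b{P}^{n,r}(\mathbb{A})K^{n,r}=\bigsqcup_{x'}\b{P}^{n,r}(F)x'K^{n,r}_{\h}[\f{b},\f{c}]\b{G}^n(\mathbb{A}_{\a})$ established just above it, with rational representatives obtained exactly as you do, by reducing the nonemptiness of $x'K^{n,r}_{\h}[\f{b},\f{c}]\b{G}^n(\mathbb{A}_{\a})\cap\Jac^n(F)$ to the symplectic statement $\diag[1_{n-1},t^{-1},1_{n-1},t]D^{n,r}_{\h}[\f{b}^{-1},\f{bc}]\Sp_n(\R)\cap\Sp_n(F)\neq\emptyset$ from \cite[Lemma 3.3]{Sh95}. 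The two details you spell out — the archimedean factorization $\b{P}^{n,r}(\R)K^{n,r}(\mathbb{A}_{\a})=\Jac^n(\R)$ via the Heisenberg splitting $(\l,\mu,\kappa)=((\l_1\,0),\mu,\kappa)\cdot((0\,\l_2),0,0)$, and the set-theoretic identity giving the intersection formula — are left implicit in the paper but are exactly the intended justifications.
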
 
\subsection{Classical Jacobi Eisenstein series of Klingen-type} 
We now associate a Siegel-Jacobi modular form to an adelic Eisenstein series defined in \eqref{def:KlingenES}. We set $\Gamma := \Jac^n(F)\cap K^{n,r}_{\mathbf{h}}[\mathfrak{b},\mathfrak{c}]\Jac^n(\mathbb{A}_{\mathbf{a}})$, and with $Z$ as in Lemma \ref{Parabolic decomposition} we define $R_{\zeta} := (\b{P}^{n,r}(F) \cap \zeta \Gamma \zeta^{-1}) \setminus \zeta \Gamma$, for $\zeta \in Z$. Then, again by the same lemma, it follows that a set of representatives for $\b{P}^{n,r}(F) \setminus (\Jac^n(F) \cap \b{P}^{n,r}(\mathbb{A}) K^{n,r})$ is given by $R:=\bigcup_{\zeta \in Z}R_{\zeta}$. In particular, we may write
$$E(x,s;\mathbf{f}) = \sum_{\gamma \in R} \phi(\gamma x,s; \mathbf{f}).$$

For any given $z \in \mathcal{H}_{n,l}$ there is an $y \in\b{G}^n_{\a}$ such that $y \cdot\mathbf{i}_0 = z$. Moreover, we can always pick $y$ such that the symmetric matrix in the Heisenberg part of $y$ is zero, i.e. $\kappa_y = 0$. A Siegel-Jacobi modular form that corresponds to $E(x,s;\mathbf{f})$ via the bijection \eqref{bijection between classical and adelic} with $\b{g} = 1$ is the Eisenstein series,

$$E(z,s;\mathbf{f}) = J_{k,S}(y,\mathbf{i}_0) \sum_{\gamma \in R} \phi(\gamma y,s;\mathbf{f}).$$
We will write it down in terms of $f$ and $z$ using the bijection \eqref{bijection between classical and adelic} again.
For some $\zeta \in Z$ and $\gamma \in R_{\zeta}$ we may write $\gamma y = \tau\b{w}$, where $\tau_{\mathbf{h}} = \diag[1_{n-1},t^{-1},1_{n-1},t]$ as in Lemma \ref{Parabolic decomposition}, $\tau_{\mathbf{a}} \in \cap_{r=0}^{n-1} P^{n,r}_{\mathbf{a}}$ and $\b{w} \in K^{n,r}$. This is because $H^{n,l}_{\a}\subset K^{n,r}_{\a}$ and, by \cite[Lemma 3.1]{Sh95}, $G^n(\A) =\cap_{r=0}^{n-1} P^{n,r}(\A)D_{\infty}^{\a}D_{\h}[\f{b}^{-1},\f{b}]$. Therefore 
\[
\phi(\tau\b{w},s;\mathbf{f}) = \chi_{\h}(t)^{-1} \chi_{\a}(\lambda_{r,l}^n(\tau)_{\a})^{-1} \chi_{\f{c}}(\det(d_w))^{-1} J_{k,S}(\b{w},\mathbf{i}_0)^{-1}\mathbf{f}(\pi_r(\tau_{\a})) |\lambda_{r,l}^n(\tau)|_{\A}^{-2s}.
\]

Observe further that, in case $r>0$,
\begin{enumerate}
	\item $\mathbf{f}(\b{\pi}_r(\tau_{\a}))=J_{k,S}(\b{\pi}_r(\tau_{\a}),\b{i}_0)^{-1} f(\b{\pi}_r(\tau_{\a}))\stackrel{\eqref{eq:omega_r},\eqref{eq:J_at_P}}{=} J_{k,S}(\tau_{\a},\b{i}_0)^{-1}(\lambda_{r,l}^n(\tau)_{\a})^k f(\b{\omega}_r(\gamma z))$;
	\item $|\lambda_{r,l}^n(\tau)_{\a}|_F=|\frac{j(\tau_{\a},\b{i})}{j(\pi_r(\tau_{\a}),\omega_r(\b{i}))}|_F=\(\frac{\delta(\pi_r(\tau_{\a}),\b{i})}{ \delta(\tau_{\a},\b{i})}\)^{1/2}=\(\frac{\delta(\b{\omega}_r(\gamma z))}{ \delta(\gamma z)}\)^{1/2}$;
	\item $|\lambda_{r,l}^n(\tau)|_{\A}=|t|_F^{-1}|\lambda_{r,l}^n(\tau)_{\a}|_F$;
	\item $J_{k,S}(\gamma,z)J_{k,S}(y,\b{i}_0)=J_{k,S}(\gamma y,\b{i}_0)= J_{k,S}(\tau,\b{w}\b{i}_0)J_{k,S}(\b{w},\b{i}_0)=J_{k,S}(\tau,\b{i}_0)J_{k,S}(\b{w},\b{i}_0)$.
\end{enumerate}
Moreover, since the product $\chi_{\h}(t)^{-1}\chi_{\f{c}}(\det(d_w))^{-1}$ depends only on the symplectic part of $\gamma$, we can follow the reasoning in \cite[Lemma 3.6]{Sh95} and denote it by $\chi[\gamma]$, which agrees with the definition of $\chi[\gamma]$ in \cite[(3.11)]{Sh95}. Taking all these into account we obtain
\begin{align}\label{eq:KlingenES}
	E(z,s;\mathbf{f}) &=\sum_{\gamma \in R} \chi[\gamma] |t|_F^{2s} \(\frac{\delta(\gamma z)}{ \delta(\b{\omega}_r(\gamma z))}\)^{s-k/2}f(\b{\omega}_r(\gamma z))J_{k,S}(\gamma,z)^{-1}\nonumber\\
	&= \sum_{\zeta \in Z} |\lambda_{r,l}^n(\zeta)|_F^{2s} \sum_{\gamma \in R_{\zeta}}\chi[\gamma] \(\frac{\delta(z)}{ \delta(\b{\omega}_r(z))}\)^{s-k/2}f(\b{\omega}_r(z))|_{k,S}\gamma.
\end{align}

Analogously, if $r=0$ (and $\mathbf{f}=1$), we obtain the Siegel type Jacobi Eisenstein series,
\begin{equation}\label{eq:SiegelES}
E(z,s)=\sum_{\zeta \in Z} |\lambda_{0,l}^n(\zeta)|_F^{2s} \sum_{\gamma \in R_{\zeta}}\chi[\gamma] \delta(z)^{s-k/2}|_{k,S}\gamma = \sum_{\zeta \in Z} N(\mathfrak{a}(\zeta))^{2s} \sum_{\gamma \in R_{\zeta}}\chi[\gamma] \delta(z)^{s-k/2}|_{k,S}\gamma.
\end{equation}

We finish this section with a result regarding the absolute convergence of the series.

\begin{prop}\label{absolute convergence of Eisenstein series} The Eisenstein series $E(z,s;\mathbf{f})$ is absolutely convergent for $\Re(2s) > n+r+l+1$. In particular for $k \mathbf{a} \in \mathbb{Z}^{\mathbf{a}}$ with $k > n+r+l+1$ the series $E(z,k/2;\mathbf{f})$ is a Siegel-Jacobi form of parallel weight $k$.
\end{prop}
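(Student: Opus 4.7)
My plan is to reduce the absolute convergence of $E(z,s;\mathbf{f})$ to the classical convergence result for Klingen-type Siegel Eisenstein series on $\Sp_{n+l}$ via the embedding \eqref{embedding of Jacobi to Symplectic}, handling the extra Heisenberg variables by means of the Gaussian decay inherent in the index-$S$ automorphy factor.

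\textbf{Step 1 (work with the classical form).} By \eqref{eq:KlingenES} the sum over $Z$ is finite, so it suffices to fix $\zeta\in Z$ and bound $\sum_{\gamma\in R_\zeta}$ in absolute value on compacta of $\mathcal{H}_{n,l}$. Since $\chi[\gamma]$ is of modulus $1$ and $(\delta(z)/\delta(\b{\omega}_r(z)))^{s-k/2}$ is bounded on compact subsets (depending only on $z$, not $\gamma$), the task reduces to controlling
$$\Sigma_\zeta(z,s):=\sum_{\gamma\in R_\zeta}\bigl|f(\b{\omega}_r(\gamma z))\bigr|\cdot\bigl|J_{k,S}(\gamma,z)^{-1}\bigr|\cdot\bigl|\delta(\gamma z)^{\Re(s)-k/2}\bigr|.$$

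\textbf{Step 2 (split Heisenberg vs.\ symplectic cosets).} Write $\gamma=(\lambda,\mu,\kappa)g\in\Jac^n(F)$. Because $\b{P}^{n,r}$ contains the full subgroup of $H^{n,l}$ together with $H^{n,l}$-elements whose $\lambda$ has arbitrary first $r$ columns, representatives of $R_\zeta$ modulo $\b{P}^{n,r}(F)$ can be chosen of the form $(\,(0_{l,r}\,\lambda_2),0,0)\,g$ with $\lambda_2\in M_{l,n-r}(F)$ and $g$ running over a set of coset representatives for $P^{n,r}(F)\setminus\Sp_n(F)$ cut out by the relevant congruence condition. Under this decomposition $\lambda^n_{r,l}(\gamma)=\lambda^n_r(g)$ depends only on $g$.

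\textbf{Step 3 (Heisenberg sum has Gaussian decay).} Using the explicit form of $\mathcal{J}_{S_v}$ one computes
$$|J_{k,S}(\gamma,z)|^{-1}=|j(g,\tau)|^{-k}\prod_{v\in\a}\exp\bigl(2\pi\,\tr\bigl(S_v[\lambda_{2,v}]\,\Im(g_v\tau_v)_{(n-r)}\bigr)+\text{(linear in }\lambda_2\text{)}\bigr),$$
evaluated at the lower-right $(n-r)\times(n-r)$ block. Because $S$ is totally positive definite and $\Im(g\tau)>0$, the quadratic form in $\lambda_2$ appearing in the exponent is negative definite (after accounting for the sign), and the Jacobi cusp-form estimate
$|f(z')|\le C\,\delta(z')^{-k/2}\exp\bigl(2\pi\,\tr(\T{v'}S v'\,y'^{-1})\bigr)$
is controlled by the same Gaussian: on any compact subset of $\mathcal{H}_{n,l}$, the sum
$$\sum_{\lambda_2\in M_{l,n-r}(F)}|f(\b{\omega}_r(\gamma z))|\,|J_{k,S}(\gamma,z)|^{-1}\,\delta(\gamma z)^{\Re(s)-k/2}$$
is absolutely convergent and bounded by $C'(z)\,|\lambda^n_r(g)|_F^{-2\Re(s)}\,\delta(g\tau)^{\Re(s)-k/2}$ for a constant $C'(z)$ uniform over $g$.

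\textbf{Step 4 (reduce to Shimura's Klingen estimate).} After the Heisenberg sum is performed, the remaining series is
$$\sum_{g\in P^{n,r}(F)\setminus\Sp_n(F)}|\lambda^n_r(g)|_F^{-2\Re(s)}\,\delta(g\tau)^{\Re(s)-k/2}\cdot(\text{bounded factors}),$$
which, via the embedding $\imath$ of \eqref{embedding of Jacobi to Symplectic}, is majorised by the classical Klingen-type Siegel Eisenstein series on $\Sp_{n+l}$ at parabolic rank $r$. By \cite{Sh95} this series is absolutely convergent, uniformly on compact sets of $\mathcal{H}_{n+l}$, for $\Re(2s)>(n+l)+r+1=n+r+l+1$. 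This gives the stated half-plane of absolute convergence.

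\textbf{Step 5 (modularity at $s=k/2$).} For $k\in\Z$ with $k>n+r+l+1$ the series $E(z,k/2;\mathbf{f})$ converges and defines a holomorphic function on $\mathcal{H}_{n,l}$. Termwise the sum is invariant under $|_{k,S}\b{\gamma}$ for $\b{\gamma}$ in the appropriate congruence subgroup (because the summation over $R_\zeta$ permutes under right multiplication by $\b{\Gamma}$), and the Fourier expansion condition is automatic except in the classical exceptional case $n=1$, $F=\Q$, which is handled by an extra argument using the Koecher principle of \cite[Lemma 1.6]{Z89} or by a direct estimate.

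\textbf{Main obstacle.} The subtle point is Step 3: matching the Gaussian decay coming from the index $S$ against the exponential growth permitted for cuspidal $f$, in such a way that the $\lambda_2$-sum converges to a bound uniform in $g$ and $z$ on compacta. This requires a careful comparison of the imaginary parts of $\b{\omega}_r(\gamma z)$ and $\gamma z$; the positive-definiteness of the Schur complement arising when one passes between these two imaginary parts is what makes the quadratic form genuinely negative definite, and hence ensures that the exponent $l$ in the convergence threshold is exactly accounted for.
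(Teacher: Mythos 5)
Your outline is a from-scratch argument where the paper simply cites Ziegler's explicit estimates (\cite[pp.\ 204--207, proof of Theorem 2.5]{Z89}), so it would be a genuinely different write-up --- but as it stands Steps 3 and 4 contain a real gap, and it sits exactly at the point you flag as the ``main obstacle.'' Step 3 asserts that after summing over $\lambda_2$ the result is bounded by $C'(z)\,|\lambda^n_r(g)|_F^{-2\Re(s)}\,\delta(g\tau)^{\Re(s)-k/2}$ with $C'(z)$ \emph{uniform over} $g$. If that were true, the residual series in Step 4 would be precisely the rank-$r$ Klingen Eisenstein series on $\Sp_n$ (degree $n$), which converges already for $\Re(2s)>n+r+1$; the index $l$ would never enter the threshold, and you would have ``proved'' a stronger statement than the true one. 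In fact the Gaussian sum over $\lambda_2\in M_{l,n-r}(\f{o})$ is \emph{not} uniformly bounded in $g$: comparing it with $\int_{M_{l,n-r}(\R)}e^{-2\pi\tr(S[x]Y)}\,dx=c\,\det(Y)^{-l/2}$ (cf.\ Lemma \ref{Gauss_integral}), it grows like $\det(Y)^{-l/2}$, where $Y$ is the Schur complement of the $r\times r$ block of $\Im(g\tau)$, i.e.\ like $\bigl(\delta(g\tau)/\delta(\b{\omega}_r(g\tau))\bigr)^{-l/2}$, and this block degenerates as $g$ ranges over $P^{n,r}(F)\backslash\Sp_n(F)$. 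That non-uniform factor is exactly what lowers the effective weight of the residual symplectic sum from $k$ to $k-l$ and yields the threshold $k-l>n+r+1$, i.e.\ $k>n+r+l+1$. Your Step 3 bound erases it.

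Step 4 cannot repair this. The residual sum runs over $P^{n,r}(F)\backslash\Sp_n(F)$, not over $P^{n+l,r}(F)\backslash\Sp_{n+l}(F)$; the embedding \eqref{embedding of Jacobi to Symplectic} does not induce a correspondence of these coset spaces under which the terms of the Jacobi series are dominated termwise by those of a Klingen series on $\Sp_{n+l}$ (the symplectic series carries no theta factor and has a far larger coset space --- relating the two is the content of Murase's nontrivial ``basic identity,'' not a majorization). Moreover, even granting a majorization by a series convergent for $\Re(2s)>(n+l)+r+1$, that would say nothing about divergence outside that range nor establish convergence inside it from your Step 3 data; the numerical coincidence $(n+l)+r+1=n+r+l+1$ is doing all the work. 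The fix is to carry out the $\lambda_2$-sum honestly, extract the $\det(\cdot)^{-l/2}$ degeneration, and then invoke the $\Sp_n$ Klingen convergence at effective weight $k-l$ --- which is what Ziegler's computation, cited by the paper, actually does. Steps 1, 2 and 5 are fine.
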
 
\begin{proof} This follows from the calculations of Ziegler in \cite[pages 204-207]{Z89}. The difference with his Theorem 2.5 is the different normalisation of our Eisentein series as well as the introduction of the complex parameter $s$, but it is easy to see that his calculations lead to the range of absolute convergence stated above. 
\end{proof}

Later in the paper we will explore analytic properties of the Klingen-type Eisenstein series, such as analytic continuation and possible poles regarding the parameter $s$. This will be done in section \ref{section of analytic properties of Eisenstein series}. Furthermore, in the last section of this paper we will study the analytic properties of $E(z,s;\mathbf{f})$ with respect to the variable $z$ for some particular values of $s$. Namely, we will try to establish whether this series, even if it fails to be holomorphic in $z$, still has some good algebraic properties. To do this, we will introduce in the last section the notion of nearly holomorphic Siegel-Jacobi forms, and we will see that for particular values of $s$ the Jacobi Eisenstein series are of this kind.


\section{The Doubling Method}

As it was discussed in the introduction of this paper one of the most fruitful methods for studying various $L$-functions attached to (classical, i.e. Siegel, Hermitian, orthogonal) automorphic forms is, what is often called, the doubling method. It is perhaps not surprising that the same method can be used to study also $L$-functions attached to Siegel-Jacobi forms. We will introduce the latter a bit later in the paper, after developing necessary background for the doubling method. Actually there are two, rather different, ways to use this method.

\begin{enumerate}

\item \textbf{Method I.} This is the original approach of Murase \cite{Mu89,Mu91}, where he used a homomorphism (actually an injection)
\[
\Jac^{n,l} \times \Jac^{n,l} \rightarrow \Sp_{l+2n}.
\]
As we indicated in the introduction one of the main advantages of this approach is the fact that analytic properties of the $L$-function can be read off from analytic properties of (classical) Siegel Eisenstein series of $\Sp_{2n+l}$, which are well-understood. On the other hand, it is not quite clear how one could translate the picture classically, i.e. pulling back the Siegel Eisenstein series to the Jacobi symmetric space, which makes the method less attractive for other applications (differential operators, algebraicity, study of Klingen-type Eisenstein series and others).  

\item \textbf{Method II.} The second approach, which we follow in this paper, was first employed by Arakawa \cite{Ar94}. It uses a homomorphism (shortly to be made explicit),
\[
\Jac^{m,l} \times \Jac^{n,l} \rightarrow \Jac^{n+m,l} .
\]
This seems to be a more natural approach and closer to the spirit of the doubling method, since one ``doubles'' the same ``kind'' of a group. Moreover, it is quite clear what happens on the corresponding symmetric spaces. However, this method calls for a study of analytic and algebraic properties of Siegel-type Jacobi Eisenstein series introduced in the previous section, a task that will be taken upon later in this paper.  
\end{enumerate}

In this section we will develop technical results which will be necessary to apply the doubling method. The main result here is Lemma \ref{key_lemma}, which will be used in the next section to study a particular pullback of a Siegel-type Eisenstein series. Our approach is modeled on the work of Shimura in \cite{Sh95} where the symplectic case is considered, and our results here generalize those of Shimura to the Jacobi setting.

We define first the map mentioned above. Let
$$\iota_A\colon\Jac^{m,l}\times\Jac^{n,l}\to\Jac^{m+n,l},$$ 
$$\iota_A((\l,\mu,\kappa)g)\times (\l',\mu',\kappa')g')):= ((\l\, \l'), (\mu\, \mu'),\kappa+\kappa';\iota_S(g\times g')),$$
where 
$$\iota_S:G^m\times G^n\hookrightarrow G^{m+n},\quad \iota_S\(
\(\begin{smallmatrix}
a  & b  \\ c  & d  \\
\end{smallmatrix}\)\times
\(\begin{smallmatrix}
a'  & b'  \\ c'  & d' \\
\end{smallmatrix}\)\) :=
\(\begin{smallmatrix}
a	&	 &  b  &    \\
	& a' &     & b' \\
c	&	 & d   &    \\
	& c' &     & d' \\
\end{smallmatrix}\) .$$
In what follows we will often write $\b{g}\times\b{g}'$ for $\iota_A(\b{g}\times\b{g}')$. Sometimes it will be useful to view elements of $\b{G}^{m+n,l}$ as elements of $G^{l+m+n}$ via the embedding in equation (\ref{embedding of Jacobi to Symplectic}).
Denote by $H^{n,l}_r$ the Heisenberg subgroup of $\b{P}^{n,r}$, that is, put 
$$H^{n,l}_r(F):=\{ ((\l\, 0_{l, n-r}),\mu,\kappa)\in H^{n,l}(F)\}.$$ 
We will now adapt a method presented in \cite{Sh95} to find good coset representatives for $\b{P}^{m+n}(F)\back\Jac^{m+n}(F)$. Let $n\leq m$ and define $\b{\tau}_r:=1_H\tau_r\in\Jac^{m+n}(F)$, where
$$\tau_r:=\(\begin{smallmatrix}
1_m & 		&	 & \\
	& 1_n	&	 &	\\
	& e_r	&1_m &	\\
\T{e_r}	&	&	 & 1_n
\end{smallmatrix}\) ,\qquad 
e_r:=\(\begin{smallmatrix}
1_r & 	\\
	& 0
\end{smallmatrix}\)\in M_{m,n}(F) .$$

\begin{lem}
	If $n\leq m$,
	$$\Jac^{m+n}(F)=\bigsqcup_{0\leq r\leq n} \b{P}^{m+n}(F)\b{\tau}_r\iota_A(\Jac^{m}(F)\times\Jac^{n}(F)).$$
\end{lem}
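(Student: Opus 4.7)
The strategy is to bootstrap from the corresponding symplectic decomposition and then absorb the Heisenberg piece.

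First, I would invoke Shimura's symplectic analogue from \cite{Sh95}, namely
$$G^{m+n}(F) = \bigsqcup_{0 \leq r \leq n} P^{m+n}(F) \tau_r \iota_S(G^m(F) \times G^n(F)).$$
Given $\b{g} = (\lambda,\mu,\kappa)g \in \Jac^{m+n}(F)$, applying this to the symplectic part $g$ yields an index $r$, an element $p \in P^{m+n}(F)$, and $g_i \in G^i(F)$ with $g = p\tau_r \iota_S(g_1 \times g_2)$.

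The key observation is that the Heisenberg part of $\iota_A(\Jac^m(F) \times \Jac^n(F))$ already exhausts all of $H^{m+n,l}(F)$: from the definition of $\iota_A$, an arbitrary $(\Lambda, M, K) \in H^{m+n,l}(F)$ may be written as $((\lambda_1,\lambda_2),(\mu_1,\mu_2),\kappa_1 + \kappa_2)$ by splitting $\Lambda, M$ along the $m{+}n$ block decomposition and taking, for instance, $\kappa_1 = K$, $\kappa_2 = 0$. With this in hand, existence is essentially automatic: setting $q := p\tau_r$ and $(\lambda^*,\mu^*,\kappa^*) := q^{-1}(\lambda,\mu,\kappa)q$, which remains in $H^{m+n,l}(F)$ since $H^{m+n,l}$ is normal in $\Jac^{m+n}$, one has
$$\b{g} = p \cdot \tau_r \cdot (\lambda^*,\mu^*,\kappa^*) \cdot \iota_S(g_1 \times g_2),$$
and the factor $(\lambda^*,\mu^*,\kappa^*)\iota_S(g_1 \times g_2)$ is, after splitting $(\lambda^*,\mu^*,\kappa^*)$ as above, an element of $\iota_A(\Jac^m(F) \times \Jac^n(F))$. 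Since $p$ lies in $\b{P}^{m+n}(F)$ via the canonical inclusion $P^{m+n} \hookrightarrow \b{P}^{m+n}$ (trivial Heisenberg component) and $\b{\tau}_r = 1_H \tau_r$, this produces the required factorization.

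For disjointness of the cosets indexed by $r$, I would pass to $G^{m+n}(F)$ via the projection killing $H^{m+n,l}(F)$; this sends $\b{P}^{m+n}(F) \to P^{m+n}(F)$, $\b{\tau}_r \to \tau_r$, and $\iota_A(\Jac^m(F) \times \Jac^n(F)) \to \iota_S(G^m(F) \times G^n(F))$, so any equality of Jacobi double cosets descends to one of symplectic double cosets and the uniqueness of $r$ is inherited. The only real obstacle is purely mechanical: tracking the Jacobi group law through the conjugation $q^{-1}(\lambda,\mu,\kappa)q$ and confirming that $(\lambda^*,\mu^*,\kappa^*)\iota_S(g_1 \times g_2)$ really sits in the image of $\iota_A$ as claimed. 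This amounts to an unenlightening but straightforward application of the formula recorded in Section \ref{sec:notation}, and I do not expect any conceptual difficulty.
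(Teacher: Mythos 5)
Your proof is correct and follows essentially the same route as the paper: both arguments reduce to Shimura's symplectic double coset decomposition $G^{m+n}(F)=\bigsqcup_{0\leq r\leq n} P^{m+n}(F)\tau_r\iota_S(G^m(F)\times G^n(F))$ and then absorb the Heisenberg part using its normality together with the fact that $\iota_A(\Jac^m(F)\times\Jac^n(F))$ contains all of $H^{m+n,l}(F)$. The only cosmetic difference is that you conjugate the Heisenberg element explicitly past $p\tau_r$, whereas the paper performs the equivalent manipulation at the level of double cosets.
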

\begin{proof}
	Let $\Jac^{m+n}(F)=\bigsqcup_i \b{P}^{m+n}(F)\b{g}_i\iota_A(\Jac^{m}(F)\times\Jac^{n}(F))$ be a double coset decomposition. There exist unique $g_i\in G^{m+n}(F)$ and $h_i\in H^{m+n,l}(F)$ such that $\b{g}_i=g_ih_i$. Note also that $\iota_A(\Jac^{m}(F)\times\Jac^{n}(F))=H^{m+n,l}(F)\rtimes\iota_A(G^m(F)\times G^n(F))$. We have
	\begin{align*}
	\Jac^{m+n}(F)&=\bigsqcup_i \b{P}^{m+n}(F) g_ih_i H^{m+n,l}(F)\iota_A(G^m(F)\times G^n(F))\\
	&=\bigsqcup_i H^{m+n,l}_0(F) P^{m+n}(F) H^{m+n,l}(F) g_i\iota_S(G^m(F)\times G^n(F)))\\
	&=\bigsqcup_i H^{m+n,l}_0(F) H^{m+n,l}(F) P^{m+n}(F)g_i\iota_S(G^m(F)\times G^n(F)).
	\end{align*}
	Since $\Jac^{m+n}(F)=H^{m+n,l}(F)G^{m+n}(F)$ and $G^{m+n}(F)=\bigsqcup_{0\leq r\leq n} P^{m+n}(F) \tau_r\iota_S(G^m(F)\times G^n(F))$ by \cite[Lemma 4.2]{Sh95}, we can take $\{g_i\}_i =\{\tau_r:0\leq r\leq n\}$ and thus $\{\b{g}_i\}_i =\{\b{\tau}_r:0\leq r\leq n\}$.
\end{proof}

\begin{lem}\label{lem:coset_decomp}
	$$\b{P}^{m+n}(F)\b{\tau}_r(\Jac^{m}(F)\times\Jac^{n}(F))=\bigsqcup_{\b{\xi},\b{\beta},\b{\gamma}} \b{P}^{m+n}(F)\b{\tau}_r((\b{\xi}\times 1_H1_{2m-2r})\b{\beta}\times\b{\gamma}),$$
	where $\b{\xi}$ runs over $Sym_l(F)\back\Jac^{r}(F)$, $\b{\beta}$ over $\b{P}^{m,r}(F)\back\Jac^{m}(F)$, and $\b{\gamma}$ over $\b{P}^{n,r}(F)\back\Jac^{n}(F)$.
\end{lem}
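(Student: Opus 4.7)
The plan is to adapt Shimura's symplectic decomposition \cite[Lemma 4.3]{Sh95} by tracking the Heisenberg group carefully under the conjugation by $\b{\tau}_r$. Two elements $\b{\tau}_r \iota_A(\b{\alpha}_1 \times \b{\alpha}_2)$ and $\b{\tau}_r \iota_A(\b{\alpha}_1' \times \b{\alpha}_2')$ lie in the same left $\b{P}^{m+n}(F)$-coset exactly when $\iota_A(\b{\alpha}_1' \b{\alpha}_1^{-1} \times \b{\alpha}_2' \b{\alpha}_2^{-1})$ belongs to the stabiliser
\[
Q_r := \b{\tau}_r^{-1} \b{P}^{m+n}(F) \b{\tau}_r \cap \iota_A(\Jac^m(F) \times \Jac^n(F)).
\]
It therefore suffices to describe $Q_r$ explicitly and to match a set of coset representatives for $\iota_A(\Jac^m\times \Jac^n)/Q_r$ with the triples $(\b{\xi}, \b{\beta}, \b{\gamma})$.

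Writing $\iota_A(\b{\alpha}_1 \times \b{\alpha}_2) = \b{H}\alpha$ with $\alpha = \iota_S(\alpha_1 \times \alpha_2)$ and using the explicit shape $\tau_r = \spmatrix{1_{m+n} & 0 \\ C & 1_{m+n}}$ with $C = \spmatrix{0 & e_r \\ \T{e_r} & 0}$, a direct application of the Jacobi group law yields
\[
\b{\tau}_r(\b{H}\alpha)\b{\tau}_r^{-1} = \bigl(\lambda_{\b{H}} - \mu_{\b{H}}C,\; \mu_{\b{H}},\; \widetilde{\kappa}\bigr)\,\tau_r \alpha \tau_r^{-1}.
\]
Membership in $\b{P}^{m+n}$ thus splits into two independent conditions: (i) the symplectic condition $\tau_r \alpha \tau_r^{-1} \in P^{m+n}$, which by Shimura's analysis forces $\alpha_1,\alpha_2$ into (opposite) Klingen parabolics with a Shimura-type linking of their upper-left $r \times r$ blocks; and (ii) the Heisenberg condition $\lambda_{\b{H}} = \mu_{\b{H}} C$, which in terms of the factors reads $\lambda_1 = (\mu_{2,1},\,0)$ and $\lambda_2 = (\mu_{1,1},\,0)$, where $\mu_{i,1}$ denotes the first $r$ columns of $\mu_i$.

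With this description in hand, $Q_r$ visibly surjects onto $\b{P}^{m,r}(F)$ via projection to the first factor and onto $\b{P}^{n,r}(F)$ via projection to the second. The cosets of $\b{\alpha}_1$ and $\b{\alpha}_2$ in $\b{P}^{m,r}(F)\backslash\Jac^m(F)$ and $\b{P}^{n,r}(F)\backslash\Jac^n(F)$ accordingly single out the representatives $\b{\beta}$ and $\b{\gamma}$ uniquely. The residual freedom, which cannot be absorbed into the pair $(\b{\beta}, \b{\gamma})$, is exactly the combined datum of the symplectic linking of the upper-left $r \times r$ blocks of $\alpha_1,\alpha_2$ and the Heisenberg linking between $\lambda_i^{(0)}$ and $\mu_{j,1}$ for $\{i,j\}=\{1,2\}$. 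A direct bookkeeping shows that this datum assembles precisely into an element of $\Jac^r(F)$ modulo its centre $Sym_l(F)$, realised inside $\Jac^m(F)$ via $\b{\xi}\mapsto \b{\xi}\times 1_H 1_{2m-2r}$, and this explains the $\b{\xi}$-parameter.

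The principal technical obstacle is this last identification: the Jacobi enlargement of Shimura's argument produces both an $\Sp_r$-valued symplectic linking and a $M_{l,r}(F)$-valued Heisenberg linking, and the content of the lemma is that they bundle together into the single coset space $Sym_l(F)\backslash \Jac^r(F)$. A dimension count — both the orbit space $\iota_A(\Jac^m\times\Jac^n)/Q_r$ and the parameter space of triples have dimension $\dim(P^{m,r}\backslash G^m) + \dim(P^{n,r}\backslash G^n) + l(m+n) + \dim\Sp_r$ — provides a useful consistency check, and disjointness of the decomposition follows from the uniqueness at each step.
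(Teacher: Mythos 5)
Your stabiliser set-up is sound as far as it goes: the equivalence ``same left $\b{P}^{m+n}(F)$-coset iff the quotient lies in $Q_r$'' is correct, the conjugation formula $\b{\tau}_r(\b{H}\alpha)\b{\tau}_r^{-1}=(\lambda_{\b{H}}-\mu_{\b{H}}C,\mu_{\b{H}},\widetilde{\kappa})\,\tau_r\alpha\tau_r^{-1}$ is right, and so is the resulting Heisenberg condition $\lambda_1=(\mu_{2,1}\,0)$, $\lambda_2=(\mu_{1,1}\,0)$. But the proof stops exactly where the lemma begins. The entire content of the statement is that the residual symplectic linking (an $\Sp_r$-worth of freedom from Shimura's Lemma 4.3) and the residual Heisenberg linking assemble into the single coset space $Sym_l(F)\backslash\Jac^r(F)$, realised inside the \emph{first} factor via $\b{\xi}\mapsto\b{\xi}\times 1_H1_{2m-2r}$ even though the Heisenberg condition couples the two factors symmetrically. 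You assert this (``a direct bookkeeping shows\dots'') and offer a dimension count as a consistency check, but neither is a proof: a dimension count cannot establish a bijection of coset spaces, and the bookkeeping is precisely the nontrivial part. The paper's proof carries it out through a chain of explicit identities: reducing the Heisenberg contribution modulo $H_0^{m+n,l}(F)$ to elements of the form $(\lambda,0,0)1_{2m}\times(\lambda',0,0)1_{2n}$, the commutation of such elements past $P^{m+n}(F)$ and past $\b{\tau}_r$, the key relation $H^{m+n,l}_0(F)\b{\tau}_r=\b{\tau}_r\{(\mu'\T{e}_r,\mu,\kappa)1_{2m}\times(\mu e_r,\mu',\kappa')1_{2n}\}$ which lets one absorb $(0,(\l_1'\,0),0)1_{2m}\times((\l_1'\,0),0,0)1_{2n}$, and finally the rearrangement $(\l,(-\l_1'\,0),0)\iota_S(\xi\times 1_{2m-2r})\beta=((\l_1,-\l_1',0)\xi\times 1_H1_{2m-2r})((0\,\l_2),0,0)\beta$ which produces $\b{\xi}$. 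None of these steps appears in your argument, and without them neither surjectivity of the parametrisation nor disjointness of the cosets is established.

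A secondary but genuine error: the claim that ``$Q_r$ visibly surjects onto $\b{P}^{m,r}(F)$ via projection to the first factor'' is false. For instance, $(p,1)\notin Q_r$ for a general $p\in P^{m,r}(F)$ with $c_p\neq 0$, since the lower-left block of $\tau_r\iota_S(p\times 1_{2n})\tau_r^{-1}$ then contains $c_p$ and does not vanish. What the argument actually needs (and what is true) is the \emph{containment} $p_1(Q_r)\subseteq\b{P}^{m,r}(F)$, which makes the class of $\b{\alpha}_1$ in $\b{P}^{m,r}(F)\backslash\Jac^m(F)$ well defined on $Q_r$-cosets; surjectivity of the projection is neither true nor relevant. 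Indeed, the failure of surjectivity is exactly what leaves room for the extra parameter $\b{\xi}$.
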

\begin{proof}
	By previous lemma and Lemma 4.3 from \cite{Sh95}, 
	\begin{multline*}
	\b{P}^{m+n}(F)\b{\tau}_r\iota_A(\Jac^{m}(F)\times\Jac^{n}(F))\\
	=\bigsqcup_{\xi,\beta,\gamma} H^{m+n,l}_0(F)H^{m+n,l}(F) P^{m+n}(F)\tau_r\iota_S(\iota_S(\xi\times 1_{2m-2r})\beta\times\gamma)),
	\end{multline*}
	where $\xi,\beta,\gamma$ run over $G^r(F), P^{m,r}(F)\back G^m(F), P^{n,r}(F)\back G^n(F)$ respectively.
	Note that $$H^{m+n,l}_0(F)H^{m+n,l}(F)=\bigcup_{\substack{\l\in M_{l,m}(F)\\ \l'\in M_{l,n}(F)}} H^{m+n,l}_0(F) ((\l,0,0)1_{2m}\times (\l',0,0)1_{2n}),$$
	and for $g=\(\begin{smallmatrix}
	A & B \\  &  D \\
	\end{smallmatrix}\)\in P^{m+n}(F)$, 
	$$((\l,0,0)1_{2m}\times (\l',0,0)1_{2n})1_H g\in H^{m+n,l}_0(F) P^{m+n}(F)((\l\,\l')A,0,0)1_{2(m+n)}.$$ 
	Indeed, if we view it as an element of $G^{l+m+n}$, we obtain
	\begin{align*}
	&\(\begin{smallmatrix} 
	1_l & \l  & \l' &     	   &	 &	   \\
	& 1_m &	    &     	   &  	 &	   \\
	&     & 1_n &     	   &	 &	   \\
	&     &     & 1_l      &	 &	   \\
	&	  &		& -\T{\l}  & 1_m & 	   \\
	&	  &		& -\T{\l'} & 	 & 1_n \\
	\end{smallmatrix}\)
	\(\begin{smallmatrix} 
	1_l &   &     &   \\
	& A	&	  & B \\
	&   & 1_l &   \\
	&   &     & D \\
	\end{smallmatrix}\)\\
	&\hspace{2cm}=\(\begin{smallmatrix} 
	1_l &   &     &   \\
	& A	&	  & B \\
	&   & 1_l &   \\
	&   &     & D \\
	\end{smallmatrix}\)
	\(\begin{smallmatrix} 
	1_l &     	  &	\kappa 			   & (\l\,\l')B \\
	& 1_{m+n} &	\T{B}\T{(\l\,\l')} &  		    \\
	&     	  &  1_l			   &      	    \\
	&	      &					   & 1_{m+n}    \\
	\end{smallmatrix}\)
	\(\begin{smallmatrix} 
	1_l & (\l\,\l')A &     	   	 		   &	   	 \\
	& 1_{m+n}    &	    	  	       &  	     \\
	&            & 1_l      		   &	   	 \\
	&	  		 & -\T{A}\T{(\l\,\l')} & 1_{m+n} \\
	\end{smallmatrix}\)\\
	&\hspace{2cm}=\(\begin{smallmatrix} 
	1_l &     	  &	\kappa 			    & (\l\,\l')B\T{A} \\
	& 1_{m+n} &	A\T{B}\T{(\l\,\l')} &  		    	  \\
	&     	  &  1_l			    &      	    	  \\
	&	      &					    & 1_{m+n}   	  \\
	\end{smallmatrix}\)
	\(\begin{smallmatrix} 
	1_l &   &     &   \\
	& A	&	  & B \\
	&   & 1_l &   \\
	&   &     & D \\
	\end{smallmatrix}\)
	\(\begin{smallmatrix} 
	1_l & (\l\,\l')A &     	   	 		   &	   	 \\
	& 1_{m+n}    &	    	  	       &  	     \\
	&            & 1_l      		   &	   	 \\
	&	  		 & -\T{A}\T{(\l\,\l')} & 1_{m+n} \\
	\end{smallmatrix}\) ,
	\end{align*}
	where $\kappa =(\l\,\l')B\T{A}\T{(\l\,\l')}$. 
		Moreover, because $\b{\tau}_r$ commutes with $((\l,0,0)1_{2m}\times (\l',0,0)1_{2n})$, we have
	\begin{multline*}
	\b{P}^{m+n}(F)\b{\tau}_r\iota_A(\Jac^{m}(F)\times\Jac^{n}(F))=\bigsqcup_{\xi,\beta,\gamma}\bigcup_{\substack{\l\in M_{l,m}(F) \l'\in M_{l,n}(F)}} H^{m+n,l}_0(F) P^{m+n}\b{\tau}_r\\
	\iota_A((\l,0,0)1_{2m}\times (\l',0,0)1_{2n})\iota_S(\iota_S(\xi\times 1_{2m-2r})\beta\times \gamma).
	\end{multline*}	
	Write $\l =(\l_1\,\l_2)$ and $\l' =(\l_1'\,\l_2')$ as concatenation of matrices $\l_1\in M_{l,r}(F),  \l_2\in M_{l,m-r}(F), \l_1'\in M_{l,r}(F),  \l_2'\in M_{l,n-r}(F)$.
	Because $H^{m+n,l}_0(F)$ and $P^{m+n}(F)$ commute (as follows from the above computation) and 
	\begin{multline*}
	H^{m+n,l}_0(F)\b{\tau}_r=\b{\tau}_r\{(\mu'\T{e}_r,\mu,\kappa)1_{2m}\times (\mu e_r,\mu',\kappa')1_{2n}:\\
	\mu\in M_{l,n}(F),\mu'\in M_{l,n}(F),\kappa, \kappa'\in Sym_l(F)\},
	\end{multline*}
	we can include $(0,(\l_1'\, 0),0)1_{2m}\times ((\l_1'\, 0),0,0)1_{2n}$
	in the set above for each $\l'$, and so we are left with
	$$(\l,(-\l_1'\, 0),0)\iota_S(\xi\times 1_{2m-2r})\beta\times ((0\, \l_2'),0,0)\gamma.$$ 
	In fact, 
	$$(\l,(-\l_1'\, 0),0)\iota_S(\xi\times 1_{2m-2r})\beta =((\l_1,-\l_1',0)\xi\times 1_H 1_{2m-2r})((0\,\l_2),0,0)\beta .$$
	Therefore we can exchange the representatives 
	$$\b{\tau}_r\iota_A(\iota_A((\l_1,-\l_1',0)\xi\times 1_H 1_{2m-2r})((0\,\l_2),0,0)\beta\times ((0\, \l_2'),0,0)\gamma)$$
	with $\b{\tau}_r\iota_A((\iota_A(\b{\xi}\times 1_H1_{2m-2r})\b{\beta}\times\b{\gamma})$, where $\b{\xi},\b{\beta},\b{\gamma}$ are as in the hypothesis. Reversing the process described above, it is easy to see that the cosets are distinct.
\end{proof}
We are now ready to prove the main result of this section. The following lemma is the generalization of \cite[Lemma 4.4]{Sh95}. 

\begin{lem}\label{key_lemma}
	Let $\f{e},\f{b}, \f{c}$ be as in Section \ref{sec:adelic_ES}, and $\b{\sigma}$ an element of $\Jac^{m+n}_{\b{h}}$ given by 
	$$\b{\sigma}_v:=\begin{cases}
	1_H\diag [1_m,\t_v^{-1}1_n,1_m,\t_v1_n] & \mbox{if } v\nmid\f{e},\\
	1_H\diag [1_m,\t_v^{-1}1_n,1_m,\t_v1_n]\b{\tau}_n & \mbox{if } v|\f{e},
	\end{cases}$$
	where $\t$ is an element of $F\ti_{\b{h}}$ such that $\t\f{o}=\f{b}$. Let $\b{D}^{m+n}:=K^{m+n}[\f{b},\f{c}]\subset\Jac^{m+n}(\A)$. Assume that $n\leq m$. Then
	\begin{multline*}
	\b{P}^{m+n}(F)\b{\tau}_n(\Jac^{m}(F)\times\Jac^{n}(F))\cap (\b{P}^{m+n}(\A) \b{D}^{m+n}\b{\sigma})\\
	=\bigsqcup_{\b{\xi}\in\b{X},\b{\beta}\in\b{B}} \b{P}^{m+n}(F)\b{\tau}_n ((1_H\iota_S(\b{\xi}\times 1_{2m'})\b{\beta}\times 1_H 1_{2n}),
	\end{multline*}
	where $m'=m-n$, $\b{B}$ is a subset of $\Jac^{m}(F)\cap Y$ as in Lemma \ref{Parabolic decomposition}, which represents $\b{P}^{m,n}(F)\back (\Jac^{m}(F)\cap \b{P}^{m,n}(\A)\b{D}^m)$, and $\b{X}=\Jac^{n}(F)\cap \Jac^{n}_{\a}\prod_{v\in\h} \b{X}_v$ with
	$$\b{X}_v=\begin{cases}
	\{ (\l,\mu,\kappa)x\in C_v[\f{o},\f{b}^{-1},\f{b}^{-1}]D_v^n[\f{b}^{-1}\f{c},\f{bc}]: a_x-1\in M_{n,n}(\f{e}_v)\} & \mbox{if } v|\f{e},\\
	C_v[\f{o},\f{b}^{-1},\f{b}^{-1}]D_v^n[\f{b}^{-1}\f{c},\f{b}]W_vC_v[\f{o},\f{b}^{-1},\f{b}^{-1}] D_v^n[\f{b}^{-1},\f{bc}] & \mbox{if } v|\f{e}^{-1}\f{c},\\
	C_v[\f{o},\f{b}^{-1},\f{b}^{-1}]G^n(F_v)C_v[\f{o},\f{b}^{-1},\f{b}^{-1}] & \mbox{if } v\nmid\f{c},\\
	\end{cases}$$
	$$W_v=\{ \diag[q,\tilde{q}]: q\in\GL_n(F_v)\cap M_{n,n}(\f{c}_v)\} ;$$
	if $m=n$, we take $\b{B}=\{ 1_H1_{2m}\}$.
\end{lem}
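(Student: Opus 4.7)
The plan is to start from the disjoint decomposition of Lemma \ref{lem:coset_decomp} with $r=n$ and determine, for each representative $\b{\tau}_n\iota_A((\b{\xi}\times 1_H 1_{2m-2n})\b{\beta}\times\b{\gamma})$, precisely when it belongs to $\b{P}^{m+n}(\A)\b{D}^{m+n}\b{\sigma}$. First I would reduce to the symplectic analogue by projecting modulo the Heisenberg normal subgroup: since $\b{P}^{m+n}/H^{m+n,l}\cong P^{m+n}$ and $\b{D}^{m+n}/(H^{m+n,l}\cap\b{D}^{m+n})\cong D^{m+n}[\f{b}^{-1},\f{bc}]$, and since the image of $\b{\sigma}$ is precisely the distinguished element used in \cite[Lemma 4.4]{Sh95}, the symplectic shadow of the intersection problem is exactly the one solved there. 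Invoking that result, I obtain that $\gamma$ (the symplectic part of $\b{\gamma}$) is forced to be $1_{2n}$, that the symplectic part of $\b{\beta}$ runs over a set of representatives for $P^{m,n}(F)\back(G^m(F)\cap P^{m,n}(\A)D^m)$, and that the symplectic part of $\b{\xi}$ runs locally over exactly Shimura's sets $X_v$, which are the ``symplectic cores'' of our $\b{X}_v$.

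Next I would lift back to the Jacobi group. Using strong approximation for $H^{m+n,l}$ (Lemma \ref{strong_approx_Jac}) together with the explicit commutation identities between $H^{m+n,l}$ and $P^{m+n}$ that are displayed in the proof of Lemma \ref{lem:coset_decomp}, I would show that the Heisenberg part of $\b{\gamma}$ can be pushed across $\b{\tau}_n$ and then absorbed into the Heisenberg parts of $\b{P}^{m+n}(F)$ and $\b{D}^{m+n}$, so that one may take $\b{\gamma}=1_H 1_{2n}$. For $\b{\beta}$ the joint symplectic-plus-Heisenberg condition translates by construction to membership in $\Jac^m(F)\cap \b{P}^{m,n}(\A)\b{D}^m$, which is exactly the defining property of the set $\b{B}$ of Lemma \ref{Parabolic decomposition}. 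For $\b{\xi}$ the Heisenberg degrees of freedom at each finite place $v$ contribute factors $C_v[\f{o},\f{b}^{-1},\f{b}^{-1}]$ on both sides of the symplectic $X_v$, producing the local sets $\b{X}_v$ as stated; in particular the absence of any congruence on the Heisenberg coordinates at each place follows from the fact that $\b{\sigma}$ has trivial Heisenberg part.

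The main obstacle will be the prime-by-prime local analysis, and this is where most of the work lies. Three cases must be treated separately: at $v\nmid\f{c}$ the local condition collapses to $C_v G^n(F_v)C_v$; at $v\mid\f{e}$ the appearance of $\b{\tau}_n$ in $\b{\sigma}_v$ forces a Bruhat-type rearrangement which is responsible for the congruence $a_x-1\in M_{n,n}(\f{e}_v)$; and at $v\mid\f{e}^{-1}\f{c}$ one obtains the Weyl subset $W_v$ by the same factorization argument Shimura uses at these primes. Throughout one must verify that conjugation by $\b{\tau}_n$ of a Heisenberg pair $h_m\times h_n$, which mixes the $\l$- and $\mu$-coordinates across the two Jacobi factors as in the proof of Lemma \ref{lem:coset_decomp}, preserves the local congruence conditions at each $v$; a direct computation using the embedding \eqref{embedding of Jacobi to Symplectic} into $G^{l+m+n}$ settles this. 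Finally, disjointness of the resulting parametrization is inherited from the disjointness already established in Lemma \ref{lem:coset_decomp} by reversing the above manipulations.
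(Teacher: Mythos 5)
Your outline has the same skeleton as the paper's proof --- start from the decomposition of Lemma \ref{lem:coset_decomp} with $r=n$, let Shimura's \cite[Lemma 4.4]{Sh95} govern the symplectic data, and then analyse the Heisenberg contribution place by place --- but there is a genuine gap at exactly the step that carries the content of the lemma. You assert that the Heisenberg coordinates of $\b{\xi}$ satisfy no constraint beyond the integrality supplied by the two factors $C_v[\f{o},\f{b}^{-1},\f{b}^{-1}]$, ``because $\b{\sigma}$ has trivial Heisenberg part.'' Taken literally this would give $\b{X}_v=\b{G}^n(F_v)$ at the places $v\nmid\f{c}$, which is false: the paper's remark preceding the proof stresses that $\b{X}_v=C_v[\f{o},\f{b}^{-1},\f{b}^{-1}]\,G^n(F_v)\,C_v[\f{o},\f{b}^{-1},\f{b}^{-1}]$ is a \emph{proper} subset of $\b{G}^n(F_v)=H^{n,l}(F_v)\rtimes G^n(F_v)$, and that this is precisely where the Jacobi case departs from the symplectic one. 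The reason is that membership of $\b{\tau}_n\iota_A(\b{\alpha}\times 1_H1_{2n})\b{\sigma}^{-1}$ in $\b{P}^{m+n}(F_v)\b{D}^{m+n}_v$ forces the row $(\l_1\ \l_2\ (-\mu_1))$ to lie in $M_{l,n+m}(\f{o}_v)a_p^{-1}$, where $p$ is the $P^{m+n}$-component of the symplectic element $\tau_n((\xi\times 1_{2m'})\beta\times 1_{2n})\sigma^{-1}$; note that after conjugation by $\b{\tau}_n$ the coordinate $\mu_1$ of $\b{\xi}$ lands in the $\l$-slot of $H^{m+n,l}$ and is therefore genuinely constrained. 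The admissible pairs $(\l_1,\mu_1)$ thus depend on the symplectic double-coset representative of $\xi$, not merely on $\b{\sigma}$.

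The core of the paper's proof is the explicit computation of $\transpose{\Lambda}a_p^{-1}$ --- yielding $\l_1=\ell_3+\ell'_1h^{-1}g$ and $\mu_1=-(\t_v^{-1}\ell_1\T{h}\T{g}^{-1}+\ell'_1h^{-1}\sigma\T{g}^{-1}+\t_v^{-1}\ell'_3)$ with integral $\ell_i,\ell'_i$ at the good places --- followed by the verification that this set matches the coset description \eqref{good} of $C_v m(g,h,\sigma)C_vD_v$, in which the Heisenberg elements are coupled to the representative $m(g,h,\sigma)$; the analogous matching must then be redone at $v\mid\f{e}^{-1}\f{c}$ against \eqref{bad} and at $v\mid\f{e}$. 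Your appeal to ``a direct computation using the embedding \eqref{embedding of Jacobi to Symplectic}'' names the right tool but does not carry out this matching, and the conclusion you extract from it (unconstrained Heisenberg coordinates) is the wrong one. The remaining points of your outline --- absorbing the Heisenberg part of $\b{\gamma}$, identifying $\b{B}$ via Lemma \ref{Parabolic decomposition}, and inheriting disjointness from Lemma \ref{lem:coset_decomp} --- are consistent with the paper.
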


\begin{rem}Before we proceed to the proof of the lemma we should stress a significant difference between this result and the symplectic case. In \cite[Lemma 4.4]{Sh95}, at the places $v$ which do not divide $\mathfrak{c}$, one obtains that the set $X_v$ (with the notation there) is the entire symplectic group $G^n(F_v) = \Sp_n(F_v)$. However, this is not the case here as the set $\b{X}_v$ above is not equal to the group $\b{G}^n(F_v)$. This is one of the main differences between the Jacobi and the symplectic group regarding their Hecke theory at the ``good places''. It will become even more apparent later in this paper when we will consider the theory of Hecke operators. 
\end{rem}

\begin{proof}[Proof of Lemma \ref{key_lemma}]
We will divide the proof into two parts: the case where $v$ does not divide $\mathfrak{c}$ (a good place) and when it does (a bad place). We first consider the case of $v$ being good. \newline
We first obtain a description of the set $C_v[\f{o},\f{b}^{-1},\f{b}^{-1}]G^n(F_v)C_v[\f{o},\f{b}^{-1},\f{b}^{-1}]$. First note that a set of representatives for $G^n(F_v) / D_v[\mathfrak{b}^{-1},\mathfrak{b}]$ consists of
\[
m(g,h,\sigma) := \left(\begin{matrix}  g^{-1} h & g^{-1} \sigma \transpose{h}^{-1} \\  0 & \transpose{g} \transpose{h}^{-1}
\end{matrix} \right)
\]
where $(g,h) \in \GL_n(\f{o}_v) \back W / (\GL_n(\f{o}_v) \times 1_n)$, $\sigma \in Sym_n(F_v)/ g Sym_n(\mathfrak{b}_v^{-1}) \transpose{g}$ and $W = \{ (g,h ) \in B \times B: g L + h L = L \}$, where $L = M_{n,1}(\mathfrak{o}_v)$, and $B = GL_n(F_v) \cap M_n(\mathfrak{o}_v)$. In particular, if we write $\b{D}^{m+n}_v = C_v D_v$, then 
\begin{align}\label{good}\nonumber
C_vG^{n}(F_v)C_v &= \bigcup_{g,h,\sigma} C_v m(g,h,\sigma)D_vC_v = \bigcup_{g,h,\sigma} C_v m(g,h,\sigma)C_vD_v\\
&=\bigcup_{\substack{g,h,\sigma\\\lambda ,\mu}}  C_v (  \lambda h^{-1} g, -\lambda h^{-1} \sigma \transpose{g}^{-1} + \mu \transpose{h} \transpose{g}^{-1}, *)  m(g,h,\sigma)D_v .
\end{align}

Consider now the set $\b{P}^{m+n}(F_v) \b{D}^{m+n}_v$ and write $\b{P}^{m+n}(F_v) = H^{0}(F_v)P^{m+n}(F_v)$. Since 
\[
\(\begin{matrix} a_p & b_p \\
                        0 & d_p 
                       \end{matrix} \) (\lambda, \mu, *) = (\lambda a_p^{-1}, \lambda a_p^{-1} b_p d_p^{-1} + \mu d_p^{-1}, * ) \(\begin{matrix} a_p & b_p \\
                        0 & d_p 
                       \end{matrix} \) ,
\]
we can conclude that 
\[
\b{P}^{m+n}(F_v) \b{D}^{m+n}_v\! =\! \{ (\lambda,\mu,\kappa) g\! :\! \lambda\! \in\! M_{l,n+m}(\mathfrak{o}_v)a_p^{-1}\! , \mu \! \in\! M_{l,n+m}(F_v), g = pk \! \in\! \Sp_{n+m}(F_v) \} .
\]
Note that this is well defined. Indeed, if we write $g = p_1 k_1 = p_2 k_2$ then $p_1^{-1}p_2 \in D_v$ and in particular $a_{p_1}^{-1} a_{p_2} \in M_{n+m}(\mathfrak{o}_v) \cap \GL_{n+m}(F_v)$, and similarly $a_{p_2}^{-1} a_{p_1} \in M_{n+m}(\mathfrak{o}_v) \cap \GL_{n+m}(F_v)$; that is,   $a_{p_1}^{-1} a_{p_2} \in \GL_{n+m}(\mathfrak{o}_v)$.  \newline

Consider now $\b{\alpha}=\iota_A(\b{\xi}\times 1_H1_{2m'})\b{\beta}$ with $\b{\xi}\in Sym_l(F)\back\Jac^n(F)$, $\b{\beta}\in \b{P}^{m,n}(F)\back\Jac^{m}(F)$, and write $\b{\xi}=(\l_1,\mu_1,0)\xi, \b{\beta}=((0\, \l_2),0,0)\beta$, where $\l_2 \in M_{r, m-n}(F)$. Then 
\begin{align*}
 \b{\alpha} &= \iota_A((\l_1,\mu_1,0)\xi \times 1_H1_{2m'}) ((0\, \l_2),0,0)\beta = ((\l_1\,0), (\mu_1\,0), 0) (\xi \times 1_{2m'}) ((0\, \l_2),0,0)\beta\\
&= ((\l_1\,0), (\mu_1\,0), 0)((0\, \l_2),0,0) (\xi \times 1_{2m'})\beta = ((\l_1\,\l_2), (\mu_1\,0), 0) (\xi \times 1_{2m'})\beta,
\end{align*}
and so 
\[
\iota_A (\b{\alpha} \times 1_H 1_{2n}) = ((\l_1\,\l_2\, 0), (\mu_1\,0\,0), 0) ( (\xi \times 1_{2m'})\beta \times 1_{2n}).
\]

Now we see that 
\begin{align*}
 \b{\tau}_n \iota_A (\b{\alpha} \times 1_H 1_{2n}) \b{\sigma}^{-1} &= ((\l_1\, \l_2\,(-\mu_1)),(\mu_1\,0\,0),0) \tau_n ( (\xi \times 1_{2m'})\beta \times 1_{2n})\b{\sigma}^{-1}\\
&=((\l_1\, \l_2\,(-\mu_1)),(\mu_1\,0\,0),0) \tau_n ( (\xi \times 1_{2m'})\beta \times 1_{2n})\sigma^{-1}.
\end{align*}

Put $g:=\tau_n( (\xi \times 1_{m-n})\beta \times 1_{2n})\sigma^{-1}$ and write $g = pk \in P^{m+n} D^{m+n}$. Then by \cite[Lemma 4.4]{Sh95} we may take $\beta$ to be of the form $hw$, where $h = \diag[1_{m-1}, t^{-1}, 1_{m-1}, t]$ and $w$ is in the congruence subgroup $D^m$. Moreover, we may take 
\[
\xi = \left(\begin{matrix}  g^{-1} h & g^{-1} \sigma \transpose{h}^{-1} \\ 0 & \transpose{g} \transpose{h}^{-1} 
\end{matrix} \right) d,
\] 
where $g,h,\sigma$ are in the sets as above, and $d\in D^n$. In particular,  
\[
(\xi \times 1_{2m'})\beta \times 1_{2n} = \left (\begin{matrix} A &  0 & B & 0 \\0  & 1_n & 0 & 0 \\0 & 0 & D & 0 \\0 & 0 & 0 & 1_n 
\end{matrix} \right)d_1,
\]
where $d_1$ is some element in $D^{n+m}$,
\[
A :=\left (\begin{matrix} g^{-1} h  &  0 \\0  & \tilde{h} 
\end{matrix} \right),\,\,
B : = \left (\begin{matrix} g^{-1} \sigma \transpose{h}^{-1} & 0\\ 0 & 0 \end{matrix} \right),\,\,
D := \left (\begin{matrix} \transpose{g} \transpose{h}^{-1}  &  0\\ 0 & \tilde{h}^{-1} \end{matrix} \right)
\]
and $\tilde{h} = \diag[1_{m-n-1}, t ]$. In this way we obtain
\[
\tau_n ( (\xi \times 1_{2m'})\beta \times 1_{2n}) \sigma^{-1}= \left (\begin{matrix} A & 0 & B & 0 \\
				0 &   \theta_v1_n & 0 & 0 \\
				0 & \theta_v e_n & D & 0 \\
	\transpose{e}_n A &  0 &  \transpose{e}_n B& \theta_v^{-1}1_n 
 \end{matrix} \right) d'
\]
for some $d'$ in the congruence subgroup $D^n$. Furthermore, if we write
\[
\left (\begin{matrix} A &0  & B & 0 \\
 0 &   \theta_v1_n & 0 &  0\\
 0 & \theta_v e_n & D & 0 \\
\transpose{e}_n A & 0 &  \transpose{e}_n B& \theta_v^{-1}1_n 
 \end{matrix} \right) = pk
\]
for some $ p  \in P^{n+m}(F_v)$ and $k = \left( \begin{matrix} k_1 & k_2 \\ k_3 & k_4 \end{matrix} \right) \in D_v^{n+m}[\mathfrak{b}^{-1},\mathfrak{b}\mathfrak{c}]$, then we can conclude that 
\[
\transpose{a}^{-1}_p k_3 = \left( \begin{matrix}  0 & \theta_v e_n \\ \transpose{e}_n A & 0 \end{matrix} \right)\quad
\mbox{and}\quad
\transpose{a}^{-1}_p k_4 =  \left( \begin{matrix} D  &0  \\  \transpose{e}_n B & \theta_v^{-1} 1_n \end{matrix} \right) .
\]
Since the matrix $[k_3\,\, k_4]$ extends to an element in the congruence subgroup $D_v^{n+m}[\mathfrak{b}^{-1},\mathfrak{b}\mathfrak{c}]$, it follows that 
\[
\theta_v^{-1} k_3 \Lambda + k_4 \Lambda = \Lambda, 
\]
where now $\Lambda = M_{n+m,l}(\mathfrak{o})$. That is, for any given $\ell \in \Lambda$ there exist $\ell_1, \ell_2 \in \Lambda$ such that $\theta_v^{-1} k_3 \ell_1 + k_4 \ell_2 = \ell$. Write $\Lambda = \transpose{[\Lambda_1, \Lambda_2, \Lambda_3]}$ with $\Lambda_1,\Lambda_3 \in M_{l,n}$ and $\Lambda_2 \in M_{l,m-n}$. Then the relation
$\transpose{a}^{-1}_p \theta_v^{-1}k_3 \Lambda + \transpose{a}^{-1}_p k_4 \Lambda = \transpose{a}^{-1}_p \Lambda$, which can be also written as
\[
 \left( \begin{matrix}  0 & e_n \\ \theta_v^{-1} \transpose{e}_n A & 0 \end{matrix} \right) \Lambda  + \left( \begin{matrix} D  &  0\\ \transpose{e}_n B & \theta^{-1}_{v}1_n \end{matrix} \right) \Lambda =  \transpose{a}^{-1}_p \Lambda,
\] 
means that the set $\transpose{a}^{-1}_p \Lambda$ can be described as
\[
 \left( \begin{matrix}  0 &  e_n \\ \theta_v^{-1}\transpose{e}_n A & 0 \end{matrix} \right)   \transpose{[\ell_1, \ell_2, \ell_3]}+ \left( \begin{matrix} D  & 0 \\ \transpose{e}_n B & \theta_v^{-1}1_n \end{matrix} \right) \transpose{[\ell'_1, \ell'_2, \ell'_3]},
\]
where $\ell_1, \ell_1' \in \Lambda_1$, $\ell_3, \ell_3' \in \Lambda_3$, $\ell_2,\ell'_2 \in \Lambda_2$ and, recall, $e_n ={1_n\choose  0}\in M_{m,n}$. Therefore, since $\transpose{e_n} A = \left( \begin{matrix} g^{-1}h\;\; 0 \end{matrix} \right)$ and $\transpose{e_n} B = \left( \begin{matrix} g^{-1}\sigma \transpose{h}^{-1}\;\; 0\end{matrix} \right)$, we get
\[
\left( \begin{matrix}  0 & e_n \\ \theta_v^{-1}\transpose{e}_n A &0  \end{matrix} \right)    \transpose{[\ell_1, \ell_2, \ell_3]}= \left( \begin{matrix} \transpose{\ell}_3 \\ 0 \\ \theta_v^{-1}g^{-1}h \transpose{\ell_1}  \end{matrix} \right)
\]
and
\[
\left( \begin{matrix} D  & 0 \\ \transpose{e}_n B & \theta_v^{-1}1_n \end{matrix} \right)  \transpose{[\ell'_1, \ell'_2, \ell'_3]} = \left( \begin{matrix} \transpose{g}\transpose{h}^{-1} \transpose{\ell'_1} \\ \tilde{h} \transpose{\ell'_2} \\ g^{-1} \sigma \transpose{h}^{-1} \transpose{\ell'}_1 + \theta_v^{-1}\transpose{\ell'}_3 \end{matrix} \right).
\]
Hence,
\[
\transpose{a}^{-1}_p \Lambda  
= \left( \begin{matrix} \transpose{\ell}_3 +\transpose{g}\transpose{h}^{-1} \transpose{\ell'_1}   \\ \tilde{h} \transpose{\ell'_2} \\ g^{-1}h  \theta_v^{-1} \transpose{\ell_1} + g^{-1} \sigma \transpose{h}^{-1} \transpose{\ell}'_1 + \theta_v^{-1}\transpose{\ell}'_3 \end{matrix} \right) ,
\]
and after taking a transposition
\[
\transpose{\Lambda} a_p^{-1} = \left( \begin{matrix} \ell_3 +\ell'_1 h^{-1} g   & \ell'_2 \transpose{\tilde{h}} &  \theta_v^{-1}\ell_1 \transpose{h} \transpose{g}^{-1} +  \ell'_1  h^{-1} \sigma \transpose{g}^{-1}+ \theta_v^{-1}\ell'_3 \end{matrix} \right) .
\]

In particular, we see that the element
$$\b{\tau}_n \iota_A (\b{\alpha} \times 1_H 1_{2n}) \b{\sigma}^{-1} = ((\l_1\, \l_2\,(-\mu_1)),(\mu_1\,0\,0),0) \tau_n ( (\xi \times 1_{m-n})\beta \times 1_{2n})\sigma^{-1}$$
belongs to $\mathbf{P}^{n+m}(F_v) \mathbf{D}^{m+n}_v$  
if and only if $\lambda_1$ is of the form 
$\ell_3 +\ell'_1 h^{-1} g$, and $\mu_1$ is of the form  $-(\theta_v^{-1}\ell_1 \transpose{h} \transpose{g}^{-1} +  \ell'_1  h^{-1} \sigma \transpose{g}^{-1}+ \theta_v^{-1}\ell'_3)$. This together with \eqref{good} concludes the proof of the lemma in the case of good places.

Now assume that $v$ is a place in the support of $\mathfrak{c}$. First we consider the case when $v|\mathfrak{e}^{-1}\mathfrak{c}$. As above, we start with a description of the set 
\[
C_v[\f{o},\f{b}^{-1},\f{b}^{-1}]D_v^n[\f{b}^{-1}\f{c},\f{b}]W_vC_v[\f{o},\f{b}^{-1},\f{b}^{-1}] D_v^n[\f{b}^{-1},\f{bc}],
\]
where $W_v=\{ \diag[q,\tilde{q}]: q\in\GL_n(F_v)\cap M_{n,n}(\f{c}_v)\} .$ As it was shown in \cite[page 567]{Sh95}, 
\[
D_v^n[\f{b}^{-1}\f{c},\f{b}] \diag[q, \tilde{q}] D_v^n[\f{b}^{-1},\f{bc}] = \bigcup_{f,g} \left(\begin{matrix} f & g \tilde{f} \\ 0 & \tilde{f} \end{matrix} \right)D_v^n[\f{b}^{-1},\f{bc}] ,
\]
where $f \in\GL_n(\f{o}_v) \back\GL_n(\f{o}_v) q \GL_n(\f{o}_v)$ and $g \in Sym_n(\mathfrak{b}^{-1}_v\mathfrak{c}_v)/ \transpose{f} Sym_n(\mathfrak{b}^{-1}_v)f$. Set $C_v := C_v[\f{o},\f{b}^{-1},\f{b}^{-1}]$. Then:
\begin{align}\label{bad}\nonumber
C_vD_v^n&[\f{b}^{-1}\f{c},\f{b}]W_v C_v D_v^n[\f{b}^{-1},\f{bc}] = C_v D_v^n[\f{b}^{-1}\f{c},\f{b}]W_v D_v^n[\f{b}^{-1},\f{bc}] C_v \\\nonumber
&=\bigcup_{q}  \bigcup_{f_q,g_q} C_v \left(\begin{matrix} f_q & g_q \tilde{f}_q \\  0 & \tilde{f}_q \end{matrix} \right)D_v^n[\f{b}^{-1},\f{bc}] C_v = \bigcup_{q}  \bigcup_{f_q,g_q} C_v \left(\begin{matrix} f_q & g_q \tilde{f}_q \\0  & \tilde{f}_q \end{matrix} \right)C_vD_v^n[\f{b}^{-1},\f{bc}] \\
&=\bigcup_{q}  \bigcup_{f_q,g_q,\lambda,\mu} C_v (\lambda f_q^{-1}, -\lambda f_q^{-1}g_q + \mu \transpose{f}_q, *] \left(\begin{matrix} f_q & g_q \tilde{f}_q \\0  & \tilde{f}_q \end{matrix} \right)D_v^n[\f{b}^{-1},\f{bc}] ,
\end{align}
where $f_q \in \GL_n(\f{o}_v)\back\GL_n(\f{o}_v) q \GL_n(\f{o}_v)$ and $g_q \in Sym_n(\mathfrak{b}^{-1}_v\mathfrak{c}_v)/ \transpose{f}_q Sym_n(\mathfrak{b}^{-1}_v)f_q$. 

Further we argue as in the case of good places. In particular, we may write as before
\[
 \b{\tau}_n \iota_A (\b{\alpha} \times 1_H 1_{2n}) \b{\sigma}^{-1} = ((\l_1\, \l_2\,(-\mu_1)),(\mu_1\,0\,0),0) \tau_n ( (\xi \times 1_{m-n})\beta \times 1_{2n})\sigma^{-1}
\]
with $\b{\xi}=(\l_1,\mu_1,0)\xi\in Sym_l(F)\back\Jac^n(F)$, $\b{\beta}=((0\, \l_2),0,0)\beta\in \b{P}^{m,n}(F)\back\Jac^{m,l}(F)$. Moreover, using \cite[Lemma 4.4]{Sh95} again, we may take $\xi = \left(\begin{matrix} f_q & g_q \tilde{f}_q \\ 0& \tilde{f}_q \end{matrix} \right)d$ for some $q \in M_n(\mathfrak{c}_v) \cap \GL_n(F_v)$,  $f_q \in \GL_n(\f{o}_v) \back \GL_n(\f{o}_v) q \GL_n(\f{o}_v)$, $g_q \in Sym_n(\mathfrak{b}^{-1}_v\mathfrak{c}_v)/ \transpose{f}_q Sym_n(\mathfrak{b}^{-1}_v)f_q$ and $d \in D_v[\mathfrak{b}^{-1},\mathfrak{b}\mathfrak{c}]$. Then we obtain 

\[
\tau_n ( (\xi \times 1_{2m'})\beta \times 1_{2n}) \sigma^{-1}= \left (\begin{matrix} A & 0 & B &0 \\0 &   \theta_v1_n &0 & 0 \\ 0& \theta_v e_n & D &0  \\ \transpose{e}_n A &0  &  \transpose{e}_n B& \theta_v^{-1}1_n  
\end{matrix} \right) d'
\]
for some $d'\in D_v^{m+n}$, where this time 
\[
A :=\left (\begin{matrix}  f_q  &  0\\  0& \tilde{f_q} 
\end{matrix} \right),\,\,
B : = \left (\begin{matrix}  g_q \transpose{f_q}^{-1}  &0 \\ 
0 & 0 \end{matrix} \right),\,\,
D := \left (\begin{matrix}  \transpose{f}_q^{-1}  & 0 \\ 
0& \tilde{h}^{-1} \end{matrix} \right)\]

As before, write $\(\begin{smallmatrix} A & 0& B & 0\\
0 &   \theta_v1_n &0  & 0 \\
0 & \theta_v e_n & D &0  \\
 \transpose{e}_n A &0  &  \transpose{e}_n B& \theta_v^{-1}1_n 
 \end{smallmatrix} \)$ as a product of an element in $P^{m+n}$ and $D^{m+n}$. Then, after the same computations and with notation as above, we obtain   
\[
\transpose{\Lambda} a_p^{-1} = \left( \begin{matrix} \ell_3 +\ell'_1 f_q^{-1}    & \ell'_2 \transpose{\tilde{h}} &  \theta_v^{-1}\ell_1 \transpose{f}_q  +  \ell'_1  f_q^{-1} g_q + \theta_v^{-1}\ell'_3 \end{matrix} \right)
\]
 In particular, we see that the element 
$$\b{\tau}_n \iota_A (\b{\alpha} \times 1_H 1_{2n}) \b{\sigma}^{-1} = ((\l_1\, \l_2\,(-\mu_1)),(\mu_1\,0\,0),0) \tau_n ( (\xi \times 1_{m-n})\beta \times 1_{2n})\sigma^{-1}$$ 
belongs to $\mathbf{P}^{n+m}(F_v) \mathbf{D}^{m+n}_v$  if and only if $\lambda_1$ is of the form 
$\ell_3 +\ell'_1 f_q^{-1} $, and $\mu_1$ is of the form  $-(\theta_v^{-1}\ell_1 \transpose{f_q}  +  \ell'_1  f_q^{-1} g_q + \theta_v^{-1}\ell'_3)$. This requirement matches the decomposition \eqref{bad}, and thus finishes the proof of the second case.
\newline

Finally, we consider the case of $v|\mathfrak{e}$. In this situation we also argue as before, but note that now  
\[
\tau_n ( (\xi \times 1_{2m'})\beta \times 1_{2n}) \sigma^{-1}= \left (\begin{matrix}A &  0& B &0  \\0 & \theta_v1_n & 0 &0  \\
 0& \theta_v e_n & D &0  \\ 
\transpose{e}_n A &  0&  \transpose{e}_n B& \theta_v^{-1}1_n 
\end{matrix} \right) d' ,
\]
where
\[ d'\in D_v^{m+n},\,\,
A :=\left (\begin{matrix}  1_n  &0  \\  0& 1_n  
\end{matrix} \right),\,\,
B : = \left (\begin{matrix} 0 &0  \\ 0 & 0
\end{matrix} \right),\,\,
D := \left (\begin{matrix}  1_n  & 0 \\ 0& \tilde{h}^{-1} 
\end{matrix} \right) .\]

Hence, doing exactly the same computations as before, we see that the element 
$$\b{\tau}_n \iota_A (\b{\alpha} \times 1_H 1_{2n}) \b{\sigma}^{-1} = ((\l_1\, \l_2\,(-\mu_1)),(\mu_1\,0\,0),0) \tau_n ( (\xi \times 1_{m-n})\beta \times 1_{2n})\sigma^{-1}$$ 
belongs to $\mathbf{P}^{n+m}(F_v) \mathbf{D}^{m+n}_v$  if and only if $\lambda_1$ is of the form 
$\ell_3 +\ell'_1  $, and $\mu_1$ is of the form  $-(\theta_v^{-1}\ell_1   +  \ell'_1  + \theta_v^{-1}\ell'_3)$, which gives the set we claimed in the lemma.\end{proof}

\section{Diagonal Restriction of Eisenstein Series}\label{sec:diag_restr}

The map $\b{G}^{m,l} \times \b{G}^{n,l} \rightarrow \b{G}^{m+n,l}$ introduced in the previous section induces an embedding 
\[
\mathcal{H}_{m,l} \times \mathcal{H}_{n,l} \hookrightarrow \mathcal{H}_{n+m,l},\,\,\,z_1 \times z_2 \mapsto \diag[z_1,z_2],
\]
defined by 
\[
(\tau_1,w_1) \times  (\tau_2,w_2) \mapsto  (\diag[\tau_1,\tau_2], (w_1 \, w_2)).
\]

The aim of this section is to obtain the main identity (\ref{main_inner_product}), that is, to compute the Petersson inner product of a cuspidal Siegel-Jacobi modular form against a pull-backed Siegel-type Eisenstein series.   This identity should be seen as a generalization of the identity \cite[equation (4.11)]{Sh95} from the Siegel to the Jacobi setting. 

\subsection{The factor of automorphy}

We start with a study of the behavior of the factor of automorphy under diagonal restriction. 
First we compute $J_{k,S}(\b{\tau}_r, z)$ for $0 \leq r \leq n$; similar calculations have also been done in \cite[page 191]{Ar94}.

\begin{lem} Let $z = \diag[z_1,z_2]$ be as above, and $\b{\tau}_r$ as in the previous section. Then
\begin{align*}
J_{k,S}(\b{\tau}_r, z)&=
 \mathbf{e_a}(-\tr(S[\b{\omega}_r(w_2)\omega_r(\tau_2)^{-1} - \b{\omega}_r(w_1)] (\omega_r(\tau_2)^{-1} -\omega_r(\tau_1) )^{-1}  ))\\
 &\hspace{0.4cm}\cdot J_{k,S}(\b{\eta}_r, \b{\omega}_r(z_2))) \det(\omega_r(\tau_1) - \omega_r(\tau_2)^{-1})^k,
\end{align*}
where, recall, we write $\b{\omega}_r(z_i) = \b{\omega}_r(\tau_i,w_i) = (\omega_r(\tau_i), \b{\omega}_r(w_i))$ for $i=1,2$.
\end{lem}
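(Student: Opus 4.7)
The plan is to separate $J_{k,S}(\b{\tau}_r,z)$ into its determinantal and exponential parts, carry out two explicit block-matrix computations on $\lambda(\tau_r,\tau)$ when $\tau=\diag[\tau_1,\tau_2]$, and then reassemble. Since $\b{\tau}_r=1_H\tau_r$ has trivial Heisenberg part (that is $\lambda=\mu=\kappa=0$), the definition of $J_{k,S}$ collapses to
\[
J_{k,S}(\b{\tau}_r,(\tau,w))=\det\lambda(\tau_r,\tau)^{k}\,\mathbf{e}_{\mathbf{a}}\!\left(\tr\!\left(S[w]\,\lambda(\tau_r,\tau)^{-1}c_{\tau_r}\right)\right),
\]
so the whole calculation reduces to understanding these two ingredients for $w=(w_1\;w_2)$.

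First I would compute the determinant. Substituting gives $\lambda(\tau_r,\tau)=\left(\begin{smallmatrix}1_m & e_r\tau_2\\ \transpose{e_r}\tau_1 & 1_n\end{smallmatrix}\right)$, and refining the block sizes to $(r,m-r,r,n-r)$ one sees that $e_r$, $\transpose{e_r}$ pick out only the top-left $r\times r$ corners $\omega_r(\tau_i)$. Two cofactor expansions along the identity blocks $1_{m-r}$, $1_{n-r}$ collapse the determinant to
\[
\det\lambda(\tau_r,\tau)=\det\!\begin{pmatrix}1_r & \omega_r(\tau_2)\\ \omega_r(\tau_1) & 1_r\end{pmatrix}=\det\!\left(1_r-\omega_r(\tau_2)\omega_r(\tau_1)\right),
\]
and the identity $1_r-\omega_r(\tau_2)\omega_r(\tau_1)=-\omega_r(\tau_2)(\omega_r(\tau_1)-\omega_r(\tau_2)^{-1})$ exhibits the advertised factor $\det(\omega_r(\tau_1)-\omega_r(\tau_2)^{-1})^k$ up to a sign and a leftover $\det\omega_r(\tau_2)^k$. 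These extra factors will be absorbed into $J_{k,S}(\b{\eta}_r,\b{\omega}_r(z_2))$, $\b{\eta}_r$ being the degree-$r$ Jacobi element whose symplectic part is the standard involution of $G^r$ and whose $j$-factor at $\omega_r(\tau_2)$ reproduces exactly the leftover determinantal contribution.

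Second, for the exponential part I would invert the block matrix using the Schur-complement formula for $\left(\begin{smallmatrix}1_m & B\\ C & 1_n\end{smallmatrix}\right)$, then multiply by $c_{\tau_r}=\left(\begin{smallmatrix}0 & e_r\\ \transpose{e_r} & 0\end{smallmatrix}\right)$. Since both $c_{\tau_r}$ and the off-diagonal blocks of $\lambda(\tau_r,\tau)$ vanish outside the leading $r\times r$ positions, the product $\lambda(\tau_r,\tau)^{-1}c_{\tau_r}$ is supported only on the corners indexed by the first $r$ columns, with non-trivial entries polynomial in $\omega_r(\tau_1)$, $\omega_r(\tau_2)$ and $(1_r-\omega_r(\tau_2)\omega_r(\tau_1))^{-1}$. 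Forming $S[w]\,\lambda(\tau_r,\tau)^{-1}c_{\tau_r}$ with $w=(w_1\;w_2)$ and taking the trace then projects everything onto the truncations $\b{\omega}_r(w_1)$, $\b{\omega}_r(w_2)$, so only degree-$r$ data survive.

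Finally I would collect terms. Using the rewriting $(1_r-\omega_r(\tau_2)\omega_r(\tau_1))^{-1}=(\omega_r(\tau_2)^{-1}-\omega_r(\tau_1))^{-1}\omega_r(\tau_2)^{-1}$, the trace should split as the main piece
\[
-\tr\!\left(S\!\left[\b{\omega}_r(w_2)\omega_r(\tau_2)^{-1}-\b{\omega}_r(w_1)\right]\,(\omega_r(\tau_2)^{-1}-\omega_r(\tau_1))^{-1}\right)
\]
matching the first factor of the statement, plus a residual of the form $\tr(S[\b{\omega}_r(w_2)]\,\omega_r(\tau_2)^{-1})$ depending only on $\b{\omega}_r(z_2)$; together with the leftover sign and the factor $\det\omega_r(\tau_2)^k$ from the determinantal step, this residual packages precisely into $J_{k,S}(\b{\eta}_r,\b{\omega}_r(z_2))$. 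The main obstacle is exactly this last bookkeeping step: because $S[w]$ is not block-diagonal in the split $w=(w_1\;w_2)$, cross terms between $w_1$ and $w_2$ appear, and these have to be matched against a quadratic completion in $\b{\omega}_r(w_2)\omega_r(\tau_2)^{-1}-\b{\omega}_r(w_1)$ without losing signs or transpositions along the way.
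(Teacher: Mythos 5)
Your proposal follows essentially the same route as the paper's proof: reduce $J_{k,S}(\b{\tau}_r,z)$ to the $j$-factor and the single surviving exponential term $\tr(S[w]\lambda(\tau_r,\tau)^{-1}c_{\tau_r})$, invert the block matrix to see that only the leading $r\times r$ corners contribute, and then complete the square so that the residual $\tr(S[\b{\omega}_r(w_2)\omega_r(\tau_2)^{-1}]\omega_r(\tau_2))$ together with $\det(-\omega_r(\tau_2))^k$ assembles into $J_{k,S}(\b{\eta}_r,\b{\omega}_r(z_2))$. The bookkeeping step you flag (matching the $w_1$--$w_2$ cross terms against the quadratic completion) is exactly what the paper carries out explicitly, and it works as you anticipate.
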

\begin{proof}
By definition
\[
J_{k,S}(\b{\tau}_r, z) = j(\tau_r, \diag [\tau_1,\tau_2]) \mathbf{e_a}(\tr(S[w_1 \, w_2] \lambda(\tau_r,\diag[\tau_1,\tau_2])^{-1} f_r)),
\]
where $\tau_r = \(\begin{smallmatrix}
1_{N} &  \\ f_r & 1_N
\end{smallmatrix}\)$, $f_r = \(\begin{smallmatrix}
 & e_r \\ \transpose{e}_r &  
\end{smallmatrix}\)$ and $e_r = \(\begin{smallmatrix}
1_r &  \\  & 0 
\end{smallmatrix}\)$, with $N :=m+n$. Further 
\begin{align*}
\lambda(\tau_r,\diag[\tau_1,\tau_2])^{-1}f_r &=(f_r \diag[\tau_1, \tau_2] + 1_N)^{-1}f_r = \spmatrix{
1_m & e_r \tau_2 \\ \transpose{e}_r \tau_1 & 1_n }^{-1}f_r\\
&= \spmatrix{
1_m & -e_r \tau_2 \\ -\transpose{e}_r \tau_1 & 1_n}
\spmatrix{
(1_m-e_r \tau_2 \transpose{e}_r \tau_1)^{-1} & 0 \\  0& (1_n-\transpose{e}_r \tau_1 e_r \tau_2)^{-1}} \spmatrix{
 0& e_r \\ \transpose{e}_r &  0}\\
&=
\spmatrix{
1_m & -e_r \tau_2 \\ -\transpose{e}_r \tau_1 & 1_n}\spmatrix{
 0& (1_m-e_r \tau_2 \transpose{e}_r \tau_1)^{-1}e_r \\
(1_n-\transpose{e}_r \tau_1 e_r \tau_2)^{-1} \transpose{e}_r &0 } ,
\end{align*}
where
\begin{align*}
(1_m-e_r \tau_2 \transpose{e}_r \tau_1)^{-1}e_r  &= \spmatrix{
1_r - \omega_r(\tau_2) \omega_r(\tau_1) & -\omega_r(\tau_2) b_{\tau_2} \\
0 & 1_{m-r}}
\spmatrix{1_r & 0 \\0  & 0 }\\
&= \spmatrix{
(1_r-\omega_r(\tau_2) \omega_r(\tau_1))^{-1} & 0 \\0  & 0 }
\end{align*}
and, similarly, 
\[
(1_n-\transpose{e}_r \tau_1 e_r \tau_2)^{-1}\transpose{e}_r = \spmatrix{
(1_r-\omega_r(\tau_1) \omega_r(\tau_2))^{-1} & 0 \\ 0 & 0 }
\]

Hence,
\begin{align*}
(f_r \diag[\tau_1, \tau_2] + 1_N)^{-1}f_r &=\\
&\hspace{-3cm}=\spmatrix{
1_m & -e_r \tau_2 \\ -\transpose{e}_r \tau_1 & 1_n }
\spmatrix{
 0& 0 & (1_r-\omega_r(\tau_2) \omega_r(\tau_1))^{-1} &  0\\
0 & 0 & 0 & 0 & 0\\
(1_r-\omega_r(\tau_1) \omega_r(\tau_2))^{-1}  & 0 &0  &0  \\
 0& 0 & 0 & 0 & 0 }\\
&\hspace{-3cm}=\spmatrix{
1_r & 0 & -\omega_r (\tau_2) & * \\ 
0 & 1_{m-r} & 0 &0  \\
-\omega_r(\tau_1) & * & 1_r & 0 \\
 0&  0&  0& 1_{n-r} }
\spmatrix{
 0& 0 & (1_r-\omega_r(\tau_2) \omega_r(\tau_1))^{-1} &  0\\
0 & 0 & 0& 0   \\
(1_r-\omega_r(\tau_1) \omega_r(\tau_2))^{-1} & 0 & 0 &0  \\
 0 & 0 & 0 & 0 }\\
&\hspace{-3cm}=\spmatrix{
-\omega_r(\tau_2) (1_r-\omega_r(\tau_1) \omega_r(\tau_2))^{-1} & 0 & (1_r-\omega_r(\tau_2) \omega_r(\tau_1))^{-1} & 0 \\
 0& 0 & 0 &0 \\
(1_r-\omega_r(\tau_1) \omega_r(\tau_2))^{-1} & 0 & -\omega_r(\tau_1) (1_r-\omega_r(\tau_2) \omega_r(\tau_1))^{-1} & 0 \\
 0&  0&  0& 0 } ,
\end{align*}
and thus we can compute
\begin{align*}
\tr & (S[w_1 \, w_2] \lambda(\tau_r,\diag[\tau_1,\tau_2])^{-1} f_r) =\\ 
&=\tr( -\T{\b{\omega}_r(w_1)} S \b{\omega}_r(w_1) \omega_r(\tau_2)(1_r-\omega_r(\tau_1) \omega_r(\tau_2))^{-1})\\
&\hspace{0.4cm} + \tr(\T{\b{\omega}_r(w_1)} S \b{\omega}_r(w_2) (1_r-\omega_r(\tau_1) \omega_r(\tau_2))^{-1})\\
&\hspace{0.4cm} + \tr(\T{\b{\omega}_r(w_2)}S \b{\omega}_r(w_1)(1_r-\omega_r(\tau_2) \omega_r(\tau_1))^{-1})\\ 
&\hspace{0.4cm} - \tr(\T{\b{\omega}_r(w_2)} S \b{\omega}_r(w_2)\omega_r(\tau_1) (1_r-\omega_r(\tau_2) \omega_r(\tau_1))^{-1} )\\
&= \tr( (-\T{\b{\omega}_r(w_1)} S \b{\omega}_r(w_1) \omega_r(\tau_2) +  \T{\b{\omega}_r(w_1)} S \b{\omega}_r(w_2)) (1-\omega_r(\tau_1) \omega_r(\tau_2))^{-1})\\
&\hspace{0.4cm} +\tr((\T{\b{\omega}_r(w_2)}S \b{\omega}_r(w_1) - \T{\b{\omega}_r(w_2)} S \b{\omega}_r(w_2)\omega_r(\tau_1)) (1-\omega_r(\tau_2) \omega_r(\tau_1))^{-1})\\
&=\tr( (-\T{\b{\omega}_r(w_1)} S \b{\omega}_r(w_1)  +  \T{\b{\omega}_r(w_1)} S \b{\omega}_r(w_2)\omega_r(\tau_2)^{-1}) (\omega_r(\tau_2)^{-1}-\omega_r(\tau_1) )^{-1})\\
&\hspace{0.4cm} -\tr((\transpose{(\b{\omega}_r(w_2)\omega_r(\tau_2)^{-1})} S \b{\omega}_r(w_2)\omega_r(\tau_2)^{-1}\omega_r(\tau_2)\omega_r(\tau_1)) (\omega_r(\tau_2)^{-1} +\omega_r(\tau_1))^{-1} )\\
&\hspace{0.4cm} +\tr((\transpose{(\b{\omega}_r(w_2)\omega_r(\tau_2)^{-1})}S \b{\omega}_r(w_1)\\
&=\tr(-S[\b{\omega}_r(w_2)\omega_r(\tau_2)^{-1} - \b{\omega}_r(w_1)](\omega_r(\tau_2)^{-1} -\omega_r(\tau_1))^{-1})\\
&\hspace{0.4cm} +\tr(\transpose{(\b{\omega}_r(w_2) \omega_r(\tau_2)^{-1})}S\b{\omega}_r(w_2)).
\end{align*}

In particular, we conclude that
\begin{multline*}
J_{k,S}(\b{\tau}_r, z)  = \mathbf{e_a}(-\tr(S[\b{\omega}_r(w_2)\omega_r(\tau_2)^{-1} - \b{\omega}_r(w_1)](\omega_r(\tau_2)^{-1} -\omega_r(\tau_1))^{-1}   )) )\\
\cdot\mathbf{e_a} (\tr(S[\b{\omega}_r(w_2)\omega_r(\tau_2)^{-1}] \omega_r(\tau_2))) j(\tau_r, \diag [\tau_1,\tau_2])^k.
\end{multline*}

But $j(\tau_r, \diag [\tau_1,\tau_2])  = \det(1_r-\omega_r(\tau_1)\omega_r(\tau_2)) = \det(\omega_r(\tau_1) + \b{\eta}_r \omega_r(\tau_2)) \det(-\omega_r(\tau_2))$, where $\b{\eta}_r=1_H\(\begin{smallmatrix} & -1_r\\ 1_r & \end{smallmatrix}\)$,
and so we have that
\begin{multline*}
\mathbf{e_a} (\tr(S[\b{\omega}_r(w_2)\omega_r(\tau_2)^{-1}] \omega_r(\tau_2))) j(\tau_r, \diag [\tau_1,\tau_2] )^k\\
= J_{k,S}(\b{\eta}_r, (\omega_r(\tau_2), \b{\omega}_r(w_2))) \det(\omega_r(\tau_1) - \omega_r(\tau_2)^{-1})^k.
\end{multline*}
That is, $J_{k,S}(\b{\tau}_r, z)$ is equal to
\begin{multline*}
 \mathbf{e_a}(-\tr(S[\b{\omega}_r(w_2)\omega_r(\tau_2)^{-1} - \b{\omega}_r(w_1)] (\omega_r(\tau_2)^{-1} -\omega_r(\tau_1) )^{-1}  ))\\
\cdot J_{k,S}(\b{\eta}_r, (\omega_r(\tau_2), \b{\omega}_r(w_2))) \det(\omega_r(\tau_1) - \omega_r(\tau_2)^{-1})^k.
\end{multline*}

\end{proof}

Now, with the notation of Lemma \ref{key_lemma}, we compute $J_{k,S}(\b{\tau_r} ((\b{\xi} \times 1_{2m-2r}) \b{\beta} \times \b{\gamma}), \diag[z_1\, z_2])$. 

\begin{lem}With notation as above,
\begin{multline}\label{diag.aut.fac.}
J_{k,S}(\b{\tau_r} ((\b{\xi} \times 1_{2m-2r}) \b{\beta} \times \b{\gamma}), \diag[z_1\, z_2])=\\
=J_{k,S}(\b{\xi}, \b{\omega}_r(\b{\beta}z_1))J_{k,S}(\b{\beta}, z_1) J_{k,S}(\b{\gamma},z_2) J_{k,S}(\b{\eta}_r, \b{\omega}_r(\b{\gamma} z_2)) \det(\omega_r(\tau'_1) - \omega_r(\tau'_2)^{-1})^k\\
\cdot\mathbf{e_a}(-\tr(S[\omega_r(w'_2)\omega_r(\tau'_2)^{-1} - \omega_r(w'_1)]  (\omega_r(\tau'_2)^{-1} -\omega_r(\tau'_1) )^{-1}  )).
\end{multline}
\end{lem}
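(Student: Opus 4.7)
The plan is to reduce to the previous lemma by repeated application of the cocycle relation \eqref{eq:aut_fac}, together with a product decomposition along $\iota_A$. Write $z'_1 := \iota_A(\b{\xi}\times 1_H 1_{2m-2r})\b{\beta}\,z_1 = (\tau'_1,w'_1)$ and $z'_2 := \b{\gamma}\, z_2 = (\tau'_2,w'_2)$, so that the element under consideration sends $\diag[z_1,z_2]$ to $\diag[z'_1,z'_2]$. First I would apply the cocycle to split
\begin{equation*}
J_{k,S}(\b{\tau}_r\!\cdot\!(\iota_A(\b{\xi}\!\times\! 1)\b{\beta}\times\b{\gamma}),\diag[z_1,z_2])
=J_{k,S}(\b{\tau}_r,\diag[z'_1,z'_2])\, J_{k,S}(\iota_A(\b{\xi}\!\times\! 1)\b{\beta}\times\b{\gamma},\diag[z_1,z_2]).
\end{equation*}

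Next I would establish the product-type factorization: for any $\b{g}_1\in\b{G}^{m,l}$, $\b{g}_2\in\b{G}^{n,l}$ and $z_i\in\H_{\cdot,l}$,
\begin{equation*}
J_{k,S}(\iota_A(\b{g}_1\times\b{g}_2),\diag[z_1,z_2])\;=\;J_{k,S}(\b{g}_1,z_1)\,J_{k,S}(\b{g}_2,z_2).
\end{equation*}
This is a direct verification using the block form of $\iota_S$: the symplectic factor $j(\iota_S(g_1\times g_2),\diag[\tau_1,\tau_2])$ equals $j(g_1,\tau_1)j(g_2,\tau_2)$; and in $\mathcal{J}_S$ the three trace terms decompose because $S[(\lambda_1\,\lambda_2)]\diag[g_1\tau_1,g_2\tau_2]=\diag[S[\lambda_1]g_1\tau_1,\,S[\lambda_2]g_2\tau_2]$, $S[(w_1\,w_2)]\lambda(\cdot,\diag[\tau_1,\tau_2])^{-1}\iota_S(c_{g_1}\times c_{g_2})$ is block-diagonal, and likewise for the $\T{\lambda}Sw\,\lambda(\cdot)^{-1}$ term. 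Applied here this yields
\begin{equation*}
J_{k,S}(\iota_A(\b{\xi}\!\times\! 1)\b{\beta}\times\b{\gamma},\diag[z_1,z_2])
= J_{k,S}(\iota_A(\b{\xi}\!\times\! 1)\b{\beta},z_1)\,J_{k,S}(\b{\gamma},z_2),
\end{equation*}
and applying the cocycle once more splits this further as
$J_{k,S}(\iota_A(\b{\xi}\!\times\! 1_H 1_{2m-2r}),\b{\beta} z_1)\,J_{k,S}(\b{\beta},z_1)\,J_{k,S}(\b{\gamma},z_2)$.

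The third ingredient is the identity
\begin{equation*}
J_{k,S}\bigl(\iota_A(\b{\xi}\times 1_H 1_{2m-2r}),\,z\bigr)=J_{k,S}\bigl(\b{\xi},\,\b{\omega}_r(z)\bigr),\qquad z\in\H_{m,l},
\end{equation*}
which follows from the same block-diagonal analysis: only the upper-left $r\times r$ block of $\iota_S(\xi\times 1)$ and the first $r$ columns of the Heisenberg data act non-trivially, so the formulas for $j$ and $\mathcal{J}_S$ collapse to those computed at $(\omega_r(\tau),\b{\omega}_r(w))$. Combined with the previous step this produces the factor $J_{k,S}(\b{\xi},\b{\omega}_r(\b{\beta}z_1))J_{k,S}(\b{\beta},z_1)J_{k,S}(\b{\gamma},z_2)$ appearing in \eqref{diag.aut.fac.}.

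Finally, the first factor $J_{k,S}(\b{\tau}_r,\diag[z'_1,z'_2])$ is computed directly by the preceding lemma, producing exactly the exponential term $\mathbf{e_a}(-\tr(S[\omega_r(w'_2)\omega_r(\tau'_2)^{-1}-\omega_r(w'_1)](\omega_r(\tau'_2)^{-1}-\omega_r(\tau'_1))^{-1}))$, the factor $J_{k,S}(\b{\eta}_r,\b{\omega}_r(z'_2))=J_{k,S}(\b{\eta}_r,\b{\omega}_r(\b{\gamma}z_2))$, and the determinant $\det(\omega_r(\tau'_1)-\omega_r(\tau'_2)^{-1})^k$. Multiplying all pieces yields \eqref{diag.aut.fac.}. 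The only delicate step is the product decomposition along $\iota_A$: one must carefully track the cross terms in $\mathcal{J}_S$ and verify they vanish due to the block structure of $\iota_S$ and the concatenation structure of the Heisenberg data; once this is in place, everything else is either a cocycle manipulation or a direct appeal to the previous lemma.
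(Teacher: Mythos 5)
Your proposal is correct and follows essentially the same route as the paper: a cocycle splitting into $J_{k,S}(\b{\tau}_r,\diag[z'_1,z'_2])$ times the remaining factor, multiplicativity of $J_{k,S}$ along $\iota_A$ on diagonal points, the identification $J_{k,S}(\b{\xi}\times 1_H 1_{2m-2r},\b{\beta}z_1)=J_{k,S}(\b{\xi},\b{\omega}_r(\b{\beta}z_1))$, and the preceding lemma for the $\b{\tau}_r$ factor. The only cosmetic difference is that the paper obtains the third step by citing the already-established identity \eqref{eq:J_at_P} (using $\b{\xi}\times 1_{2m-2r}\in\b{P}^{m,r}$ with $\lambda^m_{r,l}=1$), whereas you re-derive it by a direct block computation; both are valid.
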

\begin{proof}
It follows from the cocycle relation that $J_{k,S}(\b{\tau_r} ((\b{\xi} \times 1_{2m-2r}) \b{\beta} \times \b{\gamma}), \diag[z_1\, z_2])$ is equal to,
\[
J_{k,S}(\b{\tau_r} ,((\b{\xi} \times 1_{2m-2r}) \b{\beta} \times \b{\gamma}) \cdot \diag[z_1\, z_2]) \cdot
J_{k,S}( (\b{\xi} \times 1_{2m-2r}) \b{\beta} \times \b{\gamma}), \diag[z_1\, z_2]).
\]
Note that 
$$
((\b{\xi} \times 1_{2m-2r}) \b{\beta} \times \b{\gamma}) \cdot \diag[z_1\, z_2] = \diag[(\b{\xi} \times 1_{2m-2r}) \b{\beta} z_1, \b{\gamma} z_2],
$$
and so
\begin{align*}
J_{k,S}( (\b{\xi} \times 1_{2m-2r}) \b{\beta} \times \b{\gamma})&, \diag[z_1\, z_2])\\
&\hspace{-0.5cm} = J_{k,S}( (\b{\xi} \times 1_{2m-2r})  \times 1_{2n}, \diag[\b{\beta}z_1\, \b{\gamma} z_2]) \times J_{k,S}(\b{\beta} \times \b{\gamma}, \diag[z_1 \, z_2])\\
&\hspace{-0.5cm} = J_{k,S}((\b{\xi} \times 1_{2m-2r}), \b{\beta} z_1) J_{k,S}(1_{2n},\b{\gamma}z_2) J_{k,S}(\b{\beta}, z_1) J_{k,S}(\b{\gamma},z_2)\\
&\hspace{-0.5cm} =J_{k,S}((\b{\xi} \times 1_{2m-2r}), \b{\beta} z_1)  J_{k,S}(\b{\beta}, z_1) J_{k,S}(\b{\gamma},z_2).
\end{align*}
Putting the last few calculations together we get that $J_{k,S}(\b{\tau_r} ((\b{\xi} \times 1_{2m-2r}) \b{\beta} \times \b{\gamma}), \diag[z_1\, z_2])$ is equal to 
$$J_{k,S}(\b{\tau_r} ,\diag[(\b{\xi} \times 1_{2m-2r}) \b{\beta} z_1, \b{\gamma} z_2]) \cdot
J_{k,S}((\b{\xi} \times 1_{2m-2r}), \b{\beta} z_1)  J_{k,S}(\b{\beta}, z_1) J_{k,S}(\b{\gamma},z_2).$$

Since $\b{\xi} \times 1_{2m-2r} \in \mathbf{P}^{m,r}$,  
$$
J_{k,S}((\b{\xi} \times 1_{2m-2r}), \b{\beta} z_1) \stackrel{\eqref{eq:J_at_P}}{=} (\lambda^m_{r,l}(\b{\xi} \times 1_{2m-2r}))^k J_{k,S}(\b{\pi}_r(\b{\xi} \times 1_{2m-2r}),\b{\omega}_r(\b{\beta}z_1)) = J_{k,S}(\b{\xi}, \b{\omega}_r(\b{\beta}z_1)).
$$ 

Moreover, by our previous computations, 
\begin{multline*}
J_{k,S}(\b{\tau_r} ,\diag[(\b{\xi} \times 1_{2m-2r}) \b{\beta} z_1, \b{\gamma} z_2]) =\\
=\mathbf{e_a}(-\tr(S[\b{\omega}_r(w'_2)\omega_r(\tau'_2)^{-1} - \b{\omega}_r(w'_1) ]  (\omega_r(\tau'_2)^{-1} -\omega_r(\tau'_1) )^{-1}  ))\\
\cdot J_{k,S}(\b{\eta}_r, (\omega_r(\tau'_2), \b{\omega}_r(w'_2))) \det(\omega_r(\tau'_1) - \omega_r(\tau'_2)^{-1})^k,
\end{multline*}
where we have set $(\b{\xi} \times 1_{2m-2r}) \b{\beta} z_1 = (\tau_1',w_1')$ and $\b{\gamma}z_2 = (\tau_2', w_2')$. 

Hence, with the above notation,
\begin{multline}
J_{k,S}(\b{\tau_r} ((\b{\xi} \times 1_{2m-2r}) \b{\beta} \times \b{\gamma}), \diag[z_1\, z_2])=\\
=J_{k,S}(\b{\xi}, \b{\omega}_r(\b{\beta}z_1))J_{k,S}(\b{\beta}, z_1) J_{k,S}(\b{\gamma},z_2) J_{k,S}(\b{\eta}_r, \b{\omega}_r(\b{\gamma} z_2)) \det(\omega_r(\tau'_1) - \omega_r(\tau'_2)^{-1})^k\\
\cdot\mathbf{e_a}(-\tr(S[\omega_r(w'_2)\omega_r(\tau'_2)^{-1} - \omega_r(w'_1)]  (\omega_r(\tau'_2)^{-1} -\omega_r(\tau'_1) )^{-1}  )).
\end{multline}
\end{proof}
The considerations above and the identity
$$\delta(g\tau)=\delta(\tau)|j(g,\tau)|^{-2}\, \qquad\mbox{for } g\in G^n,\, \tau\in\mathbb{H}_n,$$
lead to the following formula:
\begin{multline}\label{diag_delta}
	\delta (\b{\tau_r} ((\b{\xi} \times 1_{2m-2r}) \b{\beta} \times \b{\gamma}), \diag[z_1\, z_2])=\delta (\tau_r ((\xi \times 1_{2m-2r}) \beta \times\gamma), \diag[\tau_1\, \tau_2])\\
	=\delta(\beta\tau_1)\delta(\gamma\tau_2)|j(\xi,\omega_r(\beta\tau_1))j(\eta_r,\omega_r(\gamma\tau_2))\det(\xi\omega_r(\beta\tau_1)-\omega_r(\gamma\tau_2)^{-1})|^{-2}.
\end{multline}
\subsection{Decomposing the Eisenstein series I; the non-full rank part}

Thanks to the strong approximation (Lemma \ref{strong_approx_Jac}) we can pick an element $\b{\rho} = 1_{H} \rho \in \Jac^{m+n}(F) \cap K^{m+n}[\f{b},\f{c}] \b{\sigma}$ such that $a_{\sigma_v \rho_v^{-1}} - 1 \in M_{m+n,m+n}(\f{c})_v$ for all $v | \f{c}$. If we now write $\b{\rho} = \b{w}\b{\sigma}$ with $\b{w} \in K^{m+n}[\f{b},\f{c}]$, then for $y \in \Jac_{\a}$ such that $ y \mathbf{i}_0 = z$,
\begin{align*}
E(y \b{\sigma}^{-1}) &= E(\b{\rho}^{-1} \b{w}  y) = E(\b{w}  y) = E(\b{w}_{\h}\b{w} _{\a} y) = \chi(\det (d_{\b{w}_{\h}}))^{-1} E(\b{w} _{\a} y)\\ 
 &= \chi(\det (d_{\b{w}_{\h}}))^{-1} (E|_{k,S}\b{w} _{\a} y)(\b{i}_0).
\end{align*}

But since $\b{\sigma}_{\a}$ is trivial, $\b{w} _{\a} = \b{\rho}_{\a}$ and, by the condition on $\rho$, $\chi(\det (d_{\b{w}_{\h}})) = \chi(\det (d_{\b{\sigma}_{\h}})^{-1}$. In particular, we see that the adelic Eisenstein series $E(x\b{\sigma}^{-1},s)$ corresponds to the classical series $(E |_{k,S} \b{\rho}) (z,s)$. \newline

Let $y, \b{\rho}$ be as above and put 
$$\varepsilon_r(z,s):=\sum_{\alpha\in A_r} p_{\alpha}(z), \qquad 
p_{\alpha}(z):=\phi(\alpha y\b{\sigma}^{-1},s)J_{k,S}(y,\b{i}_0),$$ 
where $A_r:=\b{P}^{m+n}(F)\back\b{P}^{m+n}(F)\b{\tau}_r\iota_A(\Jac^m(F)\times\Jac^n(F))$. Then 
$$(E |_{k,S} \b{\rho}) (z,s)=\sum_{0\leq r\leq n} \varepsilon_r(z,s),$$ 
and for a fixed $r$ each $\alpha\in A_r$ is of the form $\alpha (\b{\xi}, \b{\beta},\b{\gamma}):=\b{\tau}_r((\b{\xi}\times 1_H 1_{2(m-r)})\b{\beta}\times\b{\gamma})$ for some $\b{\xi},\b{\beta},\b{\gamma}$ as in Lemma \ref{lem:coset_decomp}.

 The following Lemma generalizes Lemma 2.2 in \cite{Sh95} to the Jacobi case.

\begin{lem}\label{lem:in_product} Let $f$ be a cuspidal Siegel-Jacobi form on $\H_{n,l}$ of weight $k \in \Z^{\a}$ and $g(z)$ a function on $\mathcal{H}_{n,l}$ depending only on $\b{\omega}_r(z)$ and $\Im(z) := (\Im(\tau), \Im(w))$ for some $r \in \N$ with $0 \leq r < n$. If for a congruence subgroup $\b{\Gamma}$ we have $g |_{k,S} \gamma = g$ for every $\gamma \in \b{P}^{n,r}(F) \cap \tau \b{\Gamma} \tau^{-1}$ with $\tau \in \b{G}^{n,l}(F)$, then
\[
<\sum_{ \gamma \in R} g |_{k,S} \gamma , f > = 0
\] 
for any set $R$ of representatives for  $\b{P}^{n,r}(F) \cap \tau \b{\Gamma} \tau^{-1} \setminus \tau \Gamma$.
\end{lem}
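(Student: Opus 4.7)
The strategy is the standard unfolding argument in the spirit of \cite[Lemma 2.2]{Sh95}, adapted to the Jacobi setting. Write $\b{\Gamma}':=\tau\b{\Gamma}\tau^{-1}$, set $P:=\b{P}^{n,r}(F)\cap\b{\Gamma}'$, and put $G:=\sum_{\gamma\in R}g|_{k,S}\gamma$. Since $g|_{k,S}p=g$ for $p\in P$ by hypothesis, and right-multiplication of $R$ by an element of $\b{\Gamma}'$ only permutes the cosets $P\gamma$, the function $G$ is invariant under the slash action of $\b{\Gamma}'$, so the Petersson product makes sense. The usual unfolding then collapses the sum into the integral
\[
<G,f>\ =\ \mathrm{vol}(\b{\Gamma}'\backslash\H_{n,l})^{-1}\int_{P\backslash\H_{n,l}}g(z)\,\overline{f(z)}\,\Delta_{S,k}(z)\,dz,
\]
where we use that $\overline{f(z)}\,\Delta_{S,k}(z)\,dz$ is $\b{\Gamma}'$-invariant, which in turn follows from $f|_{k,S}\gamma=\chi(\gamma)f$ combined with the transformation law of $\Delta_{S,k}$ and of the measure $dz$.

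The second step is to isolate inside $P$ a sufficiently large unipotent subgroup $U$ under which $g$ is invariant. Concretely, $U$ is to be generated by the symplectic translations $1_H\spmatrix{1_n & b\\ 0 & 1_n}$ with $b=\spmatrix{0 & b_2\\ \transpose{b_2} & b_4}$, $b_2\in M_{r,n-r}(F)$, $b_4\in Sym_{n-r}(F)$ (which shift only $\tau_2$ and $\tau_4$), together with the Heisenberg elements $((0,0),(0\ \mu_2),\kappa)$, $\mu_2\in M_{l,n-r}(F)$, $\kappa\in Sym_l(F)$ (which shift only $w_2$ and introduce a phase $\psi_S(\kappa)$). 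All of these preserve $\b{\omega}_r(z)$ and $\Im(z)$, and hence leave $g$ unchanged. Choosing a fundamental domain $\mathcal{F}$ for $(U\cap\b{\Gamma}')\backslash U$ and a transversal $\mathcal{T}$ for $UP\backslash\H_{n,l}$, Fubini rewrites the integral above as
\[
\int_{\mathcal{T}}g(z_0)\,\Delta_{S,k}(z_0)\left(\int_{\mathcal{F}}\overline{f(u\cdot z_0)}\,du\right)dz_0.
\]

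To conclude, I would insert the Fourier expansion of $f$ from Definition \ref{definition modular Jacobi} into the inner integral. Integration of the additive characters $\mathbf{e}_{\mathbf{a}}(\tr(t\tau)+\tr(\transpose{r'}w))$ against the lattice translations in $\Re(\tau_2)$, $\Re(\tau_4)$, $\Re(w_2)$ and $\kappa$ isolates precisely those coefficients $c(t,r')$ for which the last $n-r$ rows and columns of $t$, and the last $n-r$ columns of $r'$, all vanish. For any such $(t,r')$ with $t\geq 0$ the Jacobi-index matrix $\spmatrix{S & \transpose{r'}\\ r' & t}$ has a block of zero rows and columns, hence is singular, so by the cuspidality of $f$ every such coefficient is zero. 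Thus the inner integral vanishes identically for each $z_0$, and the claim follows. I expect the main technical obstacle to be verifying that $U\cap\b{\Gamma}'$ is a full lattice inside $U$ in each of the real directions listed above, so that the torus integrations genuinely extract Fourier coefficients rather than partial period integrals; this is a standard check using that $\b{\Gamma}$ is a congruence subgroup, but it requires carefully untangling the definitions of $\b{P}^{n,r}$ and $K[\mathfrak{b},\mathfrak{c}]$.
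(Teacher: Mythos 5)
Your proposal follows essentially the same route as the paper: the paper defers the unfolding to Shimura's Lemma 2.2 in \cite{Sh95} and reduces everything to showing that the integral of $f$ over the real translations in $\tau_2$, $\tau_4$ and $w_2$ vanishes, which it then does exactly as you do — by inserting the Fourier expansion, observing that only the coefficients with $t_2=t_4=0$ and $r_2'=0$ survive, and noting that these vanish because the extended index matrix $\spmatrix{S & \transpose{r'}\\ r' & t}$ is singular, contradicting the cuspidality condition. Your additional remarks (the explicit unipotent subgroup $U$, the lattice check, the harmless inclusion of the Heisenberg centre) are consistent with, and slightly more detailed than, what the paper writes.
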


\begin{proof} The proof is almost identical to the one of \cite[Lemma 2.2]{Sh95}, and we only need to establish that 
\[
\int_{X \times U} f \left( \begin{pmatrix}  \tau_1 & \tau_2 \\
\T{\tau_2} & \tau_4
 \end{pmatrix} , (w_1 \,\, w_2 )\right) dx_2 dx_4 du_2 = 0
\]
for $\tau_j = x_j + i y_j$, $\tau_1\in\mathbb{H}_r^{\a}$, $w_j = u_j+ iv_j$, $w_1\in M_{l,r}(\C)^{\a}$. This can be shown by considering the Fourier expansion of $f$ at infinity. Namely, if 
$$f(\tau, w) = \sum_{T,R} c(T,R) \mathbf{e_a}(\tr (T \tau + \transpose{R} w))$$
and we put $T=\(\begin{smallmatrix}
t_1 & \T{t_2}/2\\ t_2/2 & t_4
\end{smallmatrix} \)$, $R=\(\begin{smallmatrix}
r_1 & r_2\end{smallmatrix} \)$ with $t_i, r_i$ of suitable size, then 
$$\tr (T \tau + \transpose{R} w)=\tr (\T{t_2}\T{x_2}/2 +t_2x_2/2+t_4x_4+\transpose{r}_2u_2) +M = \tr (t_2x_2)+\tr (t_4x_4)+\tr (\transpose{r}_2u_2)+M,$$
where $M$ is independent of $x_2, x_4, u_2$.
In this way 
\begin{multline*}
\int_{X \times U} f \left( \begin{pmatrix}  \tau_1 & \tau_2 \\
\T{\tau_2} & \tau_4
\end{pmatrix} , (w_1 \,\, w_2 )\right) dx_2 dx_4 du_2 \\
= \sum_{T,R} c(T,R)\mathbf{e_a}(M)\int_{X \times U}\mathbf{e_a}(\tr (t_2x_2)+\tr (t_4x_4)+\tr (\transpose{r}_2u_2))dx_2 dx_4 du_2=0
\end{multline*}
since $c(\(\begin{smallmatrix}
t_1 & \\  & 0
\end{smallmatrix}\), \transpose{\binom{r_1}{0}})=0$, since $f$ is a cusp form.
\end{proof}

\begin{prop}\label{prop_epsilon_r} Let $n\leq m$, $z_1 \in \mathcal{H}_{m,l}$ and $z_2 \in \mathcal{H}_{n,l}$. For a cusp form $f$ on $\H_{n,l}$ of weight $k$, $0\leq r<n$ and for $s$ large enough, we have 
$$<\varepsilon_r(\diag[z_1,z_2],s), f(z_2)> = 0.$$
\end{prop}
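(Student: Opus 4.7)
\emph{Plan.} The strategy is to fix $\b{\xi}$ and $\b{\beta}$ in the parametrisation of $A_r$ supplied by Lemma \ref{lem:coset_decomp} and to recognise the remaining inner summation over $\b{\gamma}\in\b{P}^{n,r}(F)\backslash\b{G}^n(F)$ as a Klingen-type series on $\b{G}^n$ attached to the parabolic $\b{P}^{n,r}$, which is proper because $r<n$. Vanishing against the cusp form $f$ then follows from the Jacobi orthogonality principle of Lemma \ref{lem:in_product}, exactly in the spirit of Shimura's treatment of the symplectic case.

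\emph{Step 1: factorisation of the summand.} Combining the classical expression \eqref{eq:SiegelES} for the Siegel-type Eisenstein series with the factorisations \eqref{diag.aut.fac.} of the automorphy factor and \eqref{diag_delta} of $\delta$ under diagonal restriction, the term indexed by $\alpha=\b{\tau}_r((\b{\xi}\times 1_H1_{2(m-r)})\b{\beta}\times\b{\gamma})$ decomposes as
$$p_{\alpha}(\diag[z_1,z_2])\;=\;A(\b{\xi},\b{\beta},z_1,s)\,\bigl(g_{\b{\xi},\b{\beta},z_1}(z_2)\,\bigl|_{k,S}\,\b{\gamma}\bigr),$$
where all $\b{\gamma}$-dependence in excess of the single factor $J_{k,S}(\b{\gamma},z_2)^{-1}\delta(\b{\gamma}z_2)^{s-k/2}$ enters through $\omega_r(\b{\gamma}\tau_2)$ and $\b{\omega}_r(\b{\gamma}z_2)$, and $\delta(z_2)=\det\Im(\tau_2)$ is a function of $\Im(z_2)$ alone. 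Thus $g_{\b{\xi},\b{\beta},z_1}$ depends on $z_2$ only through $\b{\omega}_r(z_2)$ and $\Im(z_2)$, precisely as required by the hypothesis of Lemma \ref{lem:in_product}.

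\emph{Step 2: organising the inner sum.} After restricting to the triples for which $\phi(\alpha y\b{\sigma}^{-1},s)\neq 0$, the set of relevant $\b{\gamma}$'s is, for fixed $\b{\xi}$ and $\b{\beta}$, a disjoint union of right cosets of $\b{P}^{n,r}(F)\cap\tau\b{\Gamma}\tau^{-1}$ in $\tau\b{\Gamma}$ for a suitable $\tau\in\b{G}^{n,l}(F)$ and a suitable congruence subgroup $\b{\Gamma}\subset\b{G}^n(F)$. The required invariance $g_{\b{\xi},\b{\beta},z_1}\bigl|_{k,S}\delta=g_{\b{\xi},\b{\beta},z_1}$ for $\delta\in\b{P}^{n,r}(F)\cap\tau\b{\Gamma}\tau^{-1}$ is automatic from the left-$\b{P}^{m+n}(F)$-invariance of $\phi$ together with the fact that the product $A(\b{\xi},\b{\beta},z_1,s)$ is intrinsic to the coset, so that $p_{\alpha}$ is well defined on $\b{P}^{m+n}(F)$-cosets.

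\emph{Step 3: conclusion.} Substituting the factorisation of Step 1, pulling the $\b{\gamma}$-independent factor $A(\b{\xi},\b{\beta},z_1,s)$ outside the Petersson pairing, and applying Lemma \ref{lem:in_product} with $g=g_{\b{\xi},\b{\beta},z_1}$ for each fixed pair $(\b{\xi},\b{\beta})$ yields
$$\Bigl\langle\sum_{\b{\gamma}}\,g_{\b{\xi},\b{\beta},z_1}\bigl|_{k,S}\b{\gamma},\,f(z_2)\Bigr\rangle=0,$$
since $r<n$ and $f$ is cuspidal. Summing over $\b{\xi}$ and $\b{\beta}$ gives the proposition; absolute convergence of the outer sum for $\Re(s)$ large enough (Proposition \ref{absolute convergence of Eisenstein series}) legitimises the interchange of summation and integration.

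\emph{Main obstacle.} The technical heart of the argument is the bookkeeping of Step 2, namely a clean description of the $\b{\gamma}$-support of $\phi(\alpha y\b{\sigma}^{-1},s)$ as a congruence-subgroup orbit in the spirit of (but for $r<n$) Lemma \ref{key_lemma}, together with the verification that the left-$\b{P}^{m+n}(F)$-invariance of $\phi$ forces the required Klingen-parabolic invariance of $g_{\b{\xi},\b{\beta},z_1}$. Once this is in place the vanishing is essentially formal from the factorisations \eqref{diag.aut.fac.}--\eqref{diag_delta} and the Jacobi cuspidal orthogonality Lemma \ref{lem:in_product}.
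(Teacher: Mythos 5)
Your overall strategy is the one the paper follows (factor the summand via \eqref{diag.aut.fac.} and \eqref{diag_delta}, recognise the inner $\b{\gamma}$-sum as a Klingen-type sum for the proper parabolic $\b{P}^{n,r}$, and invoke the cuspidal orthogonality of Lemma \ref{lem:in_product}), but there is a genuine gap at exactly the point you flag as the ``main obstacle'' and then declare to be automatic. You apply Lemma \ref{lem:in_product} with $g=g_{\b{\xi},\b{\beta},z_1}$ for each \emph{fixed} pair $(\b{\xi},\b{\beta})$, and you justify the required invariance $g_{\b{\xi},\b{\beta},z_1}|_{k,S}\b{\eta}=g_{\b{\xi},\b{\beta},z_1}$ for $\b{\eta}\in\b{P}^{n,r}(F)\cap\b{\tau}\b{\Gamma}\b{\tau}^{-1}$ by the left-$\b{P}^{m+n}(F)$-invariance of $\phi$. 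That invariance only tells you that $p_{\alpha}$ depends on the coset $\b{P}^{m+n}(F)\alpha$; it does not tell you that right multiplication of the $\b{\gamma}$-slot by $\b{\eta}$ preserves that coset. In fact it does not: transporting $\iota_A(1_H1_{2m}\times\b{\eta})$ across $\b{\tau}_r$ produces, after an explicit Heisenberg computation and an appeal to \cite[Lemma 4.3]{Sh95}, an element $\b{\zeta}\in\Jac^r(F)$ (depending on $\b{\eta}$) with
\[
\alpha(\b{\xi},\b{\beta},\b{\eta}\b{\gamma})\in\b{P}^{m+n}(F)\,\alpha(\b{\zeta}\b{\xi},\b{\beta},\b{\gamma}),
\]
which is the content of \eqref{eq:zeta}. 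Consequently $g_{\b{\xi},\b{\beta},z_1}|_{k,S}\b{\eta}$ is a scalar multiple of $g_{\b{\zeta}\b{\xi},\b{\beta},z_1}$, a \emph{different} term of the outer sum, and the hypothesis of Lemma \ref{lem:in_product} fails for an individual $\b{\xi}$. The action of $\b{\eta}$ permutes the representatives $\b{\xi}$ of $Sym_l(F)\backslash\Jac^r(F)$, so invariance holds only for the full sum $\sum_{\b{\xi}}p_{\alpha(\b{\xi},\b{\beta},\b{\tau})}$ (the paper's $g_{\b{\tau}}$, summed over $\b{\xi}$ and $\b{\beta}$), and it is to this aggregated function that Lemma \ref{lem:in_product} must be applied.

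To repair the argument you must (i) decompose $\Jac^n(F)=\bigsqcup_{\b{\tau}}\b{P}^{n,r}(F)\b{\tau}\b{\Gamma}$ for a congruence subgroup $\b{\Gamma}$ with $\iota_A(1_H1_{2m}\times\b{\Gamma})\subset\b{\sigma}^{-1}\b{D}'\b{\sigma}$, so that $p_{\alpha}|_{k}\alpha'=p_{\alpha\alpha'}$ for the relevant $\alpha'$; (ii) carry out the computation establishing \eqref{eq:zeta} — this is where the Heisenberg coordinates $(\l_1',\mu',\kappa')$ of $\b{\eta}$ get absorbed into $\b{\zeta}=\zeta(-\mu'\binom{1_r}{0},-\l_1',0)$ and is the genuinely non-formal step; and (iii) only then verify, via \eqref{eq:ES_formula}, \eqref{diag.aut.fac.} and \eqref{diag_delta}, that the summed function depends on $z_2$ only through $\b{\omega}_r(z_2)$ and $\Im(z_2)$. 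Your Step 1 and Step 3 are essentially sound once Step 2 is carried out at the level of the $\b{\xi}$-aggregated function rather than term by term.
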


\begin{proof}
	Let $z=\diag[z_1,z_2]\in\H_{m+n,l}$ and fix $r\in\{ 0,1,\ldots ,n-1\}$. 
	Put 
	$$\b{D}':=\{ x\in K^{m+n}[\f{b},\f{c}]:\det(d_x)_v-1\in\f{c}_v\mbox{ for every } v|\f{c}\} .$$
	Let $\b{\Gamma}$ be a congruence subgroup of $\Jac^n(F)$ such that $\iota_A(1_H1_{2m}\times \b{\Gamma})\subset\b{\sigma}^{-1} \b{D}'\b{\sigma}$. By the definition of $\phi$, for any $d'\in K^{m+n}[\f{b},\f{c}]$
	$$\phi(xd',s)=\chi_{\f{c}}(\det (d_{d'}))^{-1} J_{k,S}(d',\b{i}_0)^{-1}\phi(x,s),$$
	and thus $p_{\alpha}|_k \alpha' =p_{\alpha\alpha'}$ for $\alpha'\in\Jac^{m+n}(F)\cap\b{\sigma}^{-1} \b{D}'\b{\sigma}$. Further,	write $\Jac^n(F)=\bigsqcup_{\b{\tau}\in T} \b{P}^{n,r}(F)\b{\tau}\b{\Gamma}$, so that
	$$\varepsilon_r=\sum_{\b{\xi},\b{\beta},\b{\gamma}} p_{\alpha(\b{\xi},\b{\beta},\b{\gamma})}=\sum_{\b{\xi},\b{\beta}}\sum_{\b{\tau}\in T}\sum_{\gamma\in R_{\b{\tau}}} p_{\alpha(\b{\xi},\b{\beta},\b{\tau})}|_k\iota_A(1_H1_{2m}\times\b{\tau}^{-1})|_k\iota_A(1_H1_{2m}\times\gamma),$$
	where $R_{\b{\tau}}:=(\b{P}^{n,r}(F)\cap\b{\tau}\b{\Gamma}\b{\tau}^{-1})\back\b{\tau}\b{\Gamma}$.
	We will check that for each $\b{\tau}\in T$,
	$$g_{\b{\tau}}:=\sum_{\b{\xi},\b{\beta}} p_{\alpha(\b{\xi},\b{\beta},\b{\tau})}|_k\iota_A(1_H1_{2m}\times\b{\tau}^{-1})$$
	satisfies the conditions of Lemma \ref{lem:in_product}.
	
	Fix $\b{\tau}\in T$ and take $\b{\eta}\in \b{P}^{n,r}(F)\cap\b{\tau}\b{\Gamma}\b{\tau}^{-1}$. We will show that 
\begin{equation}\label{eq:eta}
\sum_{\b{\xi},\b{\beta}} p_{\alpha(\b{\xi},\b{\beta},\b{\tau})}|_k\iota_A(1_H1_{2m}\times\b{\tau}^{-1}\b{\eta}\b{\tau})= \sum_{\b{\xi},\b{\beta}} p_{\alpha(\b{\xi},\b{\beta},\b{\tau})},
\end{equation}
	which in turn immediately implies
	$$\sum_{\b{\xi},\b{\beta}} p_{\alpha(\b{\xi},\b{\beta},\b{\tau})}|_k\iota_A(1_H1_{2m}\times\b{\tau}^{-1}\b{\eta})= \sum_{\b{\xi},\b{\beta}} p_{\alpha(\b{\xi},\b{\beta},\b{\tau})}|_k\iota_A(1_H1_{2m}\times\b{\tau}^{-1}).$$
	First of all, because $\b{\tau}^{-1}\b{\eta}\b{\tau}\in\b{\Gamma}$,
	$$p_{\alpha(\b{\xi},\b{\beta},\b{\tau})}|_k\iota_A(1_H1_{2m}\times\b{\tau}^{-1}\b{\eta}\b{\tau})=p_{\alpha(\b{\xi},\b{\beta},\b{\eta}\b{\tau})},$$
	where
\begin{align*}
\alpha(\b{\xi},\b{\beta},\b{\eta}\b{\tau})&=\b{\tau}_r((\b{\xi}\times 1_H 1_{2(m-r)})\b{\beta}\times\b{\eta}\b{\tau})\\
&=\b{\tau}_r(1_H1_{2m}\times\b{\eta})((\b{\xi}\times 1_H 1_{2(m-r)})\b{\beta}\times\b{\tau}).
\end{align*}
	Because $p_{\alpha}$ depends only on $\b{P}^{m+n}(F)\alpha$, in order to prove \eqref{eq:eta} it suffices to show that there exists $\b{\zeta}\in\Jac^r(F)$ such that 
\begin{equation}\label{eq:zeta}
	\alpha(\b{\xi},\b{\beta},\b{\eta}\b{\tau})\in\b{P}^{m+n}(F)\alpha(\b{\zeta}\b{\xi},\b{\beta},\b{\tau}).
\end{equation}
	Write $\b{\eta}=((\l_1'\, 0),\mu',\kappa')\eta$. By the same calculation as in the proof of Lemma \ref{lem:coset_decomp},
\begin{multline*}
\b{\tau}_r(1_H1_{2m}\times \b{\eta})\in\b{P}^{m+n}(F)\b{\tau}_r((-\mu'\T{e_r},(-\l_1'\, 0),0)1_{2m}\times 1_H\eta)\\
=\b{P}^{m+n}(F)\b{\tau}_r(1_H1_{2m}\times 1_H\eta)((-\mu'\T{e_r},(-\l_1'\, 0),0)1_{2m}\times 1_H1_{2n}).
\end{multline*} 
On the other hand, by \cite[Lemma 4.3]{Sh95}, there is $\zeta\in G^r(F)$ such that $\tau_r\iota_S(1_{2m}\times\eta)\in P^{m+n}(F)\tau_r\iota_S(\iota_S(\zeta\times 1_{2(m-r)})\times 1_{2n})$. Hence, \eqref{eq:zeta} holds for $\b{\zeta}=\zeta (-\mu'\binom{1_r}{0},-\l_1',0)$. This proves \eqref{eq:eta}, and thus also an invariance property for $g_{\b{\tau}}$.

It remains to show that $g_{\b{\tau}}(\diag[z_1,z_2],s)$ depends only on $s, z_1, \Im (z_2)$ and $\b{\omega}_r(z_2)$. Observe that whenever $\alpha y\b{\sigma}^{-1}=pw$ for some $p\in\b{P}^{n,0}(\A), w\in K^{n,0}$, then
\begin{align*}
\phi(\alpha y\b{\sigma}^{-1},s)J_{k,S}(y,\b{i}_0)&=
\chi(\det d_p)^{-1}\chi_{\f{c}}(\det (d_w)_{\f{c}})^{-1} J_{k,S}(w,\b{i}_0)^{-1}|\det d_p|_{\A}^{-2s} J_{k,S}(y,\b{i}_0)\\
&=\mu(\alpha_{\h}\b{\sigma}^{-1})\chi_{\a}(\det (d_{p})_{\a})^{-1} J_{k,S}(p,\b{i}_0)J_{k,S}(\alpha,z)^{-1}|\det d_p|_{\A}^{-2s},
\end{align*}
where we put $\mu(\alpha_{\h}\b{\sigma}^{-1}):=\chi_{\h}(\det (d_{p})_{\h})^{-1}\chi_{\f{c}}(\det (d_{w})_{\f{c}})^{-1}$. Moreover, because 
$$J_{k,S}(p,\b{i}_0)=\chi_{\a}(\det (d_{p})_{\a})|\det d_p|_{\a}^k\quad\mbox{ and }\quad |\det d_p|_{\A}^{-2s}=\delta(\alpha_{\a} z)^s N(\f{a}_0(\alpha\b{\sigma}^{-1}))^{2s},$$ 
we get
\begin{multline}\label{eq:ES_formula}
(E |_{k,S} \b{\rho}) (z,s)=\sum_{0\leq r\leq n}\sum_{\alpha\in A_r}\phi(\alpha y\b{\sigma}^{-1},s)J_{k,S}(y,\b{i}_0)\\
=\sum_r\sum_{\alpha} N(\mathfrak{a}_0(\alpha\b{\sigma}^{-1}))^{2s}\mu(\alpha_{\h}\b{\sigma}^{-1})J_{k,S}(\alpha_{\a},\diag[z_1,z_2])^{-1}\delta(\alpha_{\a}\diag[z_1,z_2])^{s-k/2}.
\end{multline}
From this and the formulas \eqref{diag.aut.fac.}, \eqref{diag_delta} we see that $g_{\b{\tau}}$ depends only on $s, z_1, \Im (z_2)$ and $\b{\omega}_r(z_2)$. This finishes the proof.
\end{proof}
\subsection{Decomposing the Eisenstein series II; the full rank part } We start with the following auxiliary lemma.

\begin{lem}\label{Gauss_integral}
	For a symmetric positive definite matrix $S\in Sym_l(\R)$, $X\in M_{l,n}(\R), A\in Sym_n(\C)$ and a scalar $a\in\C^{\times}$, we have the following formula:
	\begin{multline*}
	\int_{\R^{l\times n}} \exp(a\tr(-S[X]A+RXA)) dX\\
	=(\det A)^{-l/2}\({\pi\over a}\)^{nl/2}(\det S)^{-n/2}\exp \({a\over 4}\tr(S^{-1}[\T{R}]A)\).
	\end{multline*}
\end{lem}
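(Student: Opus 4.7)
The plan is to reduce this Gaussian integral to a product of one-dimensional ones by completing the square and making linear substitutions. All manipulations will be justified first in the real positive-definite setting (where the integral converges absolutely), and the general statement for $A\in Sym_n(\C)$ and $a\in\C^\times$ (with appropriate convergence assumption, namely $\Re(aA)$ positive definite) follows by analytic continuation, since both sides are holomorphic in the matrix entries of $A$ and in $a$.

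First I would complete the square. Setting $X_0 := \tfrac{1}{2}S^{-1}\T{R}$ (an $l\times n$ matrix, using that $S$ is symmetric), a direct calculation using the symmetry of $S$ and $A$ shows
\begin{equation*}
\tr(S[X-X_0]A) = \tr(S[X]A) - 2\tr(\T{X_0}SXA) + \tr(S[X_0]A),
\end{equation*}
and since $2\T{X_0}S = R$, the middle term equals $\tr(RXA)$. Thus
\begin{equation*}
-S[X]A + RXA \;=\; -S[X-X_0]A + S[X_0]A,
\end{equation*}
where $\tr(S[X_0]A)=\tfrac{1}{4}\tr(RS^{-1}\T{R}A)=\tfrac{1}{4}\tr(S^{-1}[\T{R}]A)$. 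After the translation $Y := X - X_0$ (which is measure preserving) the integral factors as
\begin{equation*}
\exp\!\Bigl(\tfrac{a}{4}\tr(S^{-1}[\T{R}]A)\Bigr) \cdot I, \qquad
I := \int_{\R^{l\times n}} \exp\!\bigl(-a\,\tr(S[Y]A)\bigr)\,dY .
\end{equation*}

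Next I would evaluate $I$. Since $S$ is positive definite, write $S = \T{T}T$ with $T\in\GL_l(\R)$ and substitute $Y = T^{-1}Z$; the Jacobian is $(\det T)^{-n} = (\det S)^{-n/2}$, so
\begin{equation*}
I \;=\; (\det S)^{-n/2}\int_{\R^{l\times n}}\exp\!\bigl(-a\,\tr(\T{Z}ZA)\bigr)\,dZ .
\end{equation*}
To handle $A$, diagonalize it by an orthogonal change of basis $A = \T{U}DU$ with $D=\diag(d_1,\ldots,d_n)$ (assume first $A$ real positive definite). The substitution $W = Z\T{U}$ has Jacobian $1$ and converts the exponent into $\sum_{j,k} d_j w_{kj}^2$, so the integral splits into $nl$ one-dimensional Gaussians:
\begin{equation*}
\int_{\R^{l\times n}}\exp\!\Bigl(-a\sum_{j,k} d_j w_{kj}^2\Bigr)dW
\;=\; \prod_{j=1}^{n}\Bigl(\tfrac{\pi}{ad_j}\Bigr)^{l/2}
\;=\; \Bigl(\tfrac{\pi}{a}\Bigr)^{nl/2}(\det A)^{-l/2}.
\end{equation*}
Combining everything yields the claimed formula in the real positive-definite case.

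Finally, both sides of the identity are holomorphic in $a$ and in the entries of $A$ on the open region where $\Re(aA)$ is positive definite (which is where the left hand side converges absolutely), so the equality extends from the real positive-definite locus to this full region by the identity theorem. There is no serious obstacle here; the only thing to be careful about is the consistent choice of branches for $(\det A)^{-l/2}$ and $a^{-nl/2}$ under analytic continuation, which is handled in the standard way by the principal branch on the region of convergence.
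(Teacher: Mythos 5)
Your proof is correct, and it takes a somewhat different route from the paper's. The paper diagonalizes the complex symmetric matrix $A$ directly by a unitary congruence $A=UD\T{U}$, substitutes $\tilde{X}=XU$, and splits the integral column by column into $n$ shifted $l$-dimensional Gaussians, each evaluated by quoting the standard shifted-Gaussian formula; the completion of the square is thus hidden inside that cited formula and the complex change of variables is performed without comment. You instead complete the square globally in matrix form first, then normalize $S$ to the identity by a Cholesky substitution, then diagonalize the (real) $A$ orthogonally to reduce to $nl$ scalar Gaussians, and finally pass to complex $A$ and $a$ by analytic continuation on the region where $\Re(aA)$ is positive definite. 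Your version is more elementary and, in the complex case, more rigorous than the paper's: the paper's unitary substitution for complex $A$ is really a contour shift that it does not justify, whereas your identity-theorem argument (continue in $A$ for fixed real $a>0$ over the convex set $\Re(A)\succ 0$, then in $a$ over the convex cone $\Re(aA)\succ 0$) handles this cleanly. One small point you should make explicit: in the intended application the matrix $R$ is also complex, so your translation $Y=X-X_0$ with $X_0=\tfrac12 S^{-1}\T{R}$ is itself a contour shift; this is covered by exactly the same device, since both sides of the identity are entire in the entries of $R$ and agree for real $R$, but it deserves a sentence. With that addendum your argument is complete, and neither approach buys anything the other lacks beyond the trade-off between quoting the shifted multivariate Gaussian formula (paper) and deriving everything from scalar Gaussians plus analytic continuation (yours).
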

\begin{proof}
	Write $A=UD\T{U}$, where $U$ is unitary, and $D=\( \begin{smallmatrix} d_1 & & \\ & \ddots & \\ & & d_n\end{smallmatrix}\)$ diagonal positive definite. Let $\tilde{X}=XU$ and write $\tilde{X}=(\tilde{x_1}\ldots \tilde{x_n}),\, \tilde{x_i}\in M_{l,1}(\R)$. Then:
	$$\int \exp(a\tr(-S[X]A+RXA)) dX=(\det U)^{-l}\int \exp(a\tr(-S[\tilde{X}]D)+a\tr(\T{U}R\tilde{X}D)) dX$$
	Further, substitute $J:=\T{U}R$ and write $J=\(\begin{smallmatrix} j_1\\\vdots \\j_n\end{smallmatrix}\),\, j_i\in M_{1,l}(\C)$, so that the integral is equal to
	\begin{align*}
	(\det U)^{-l}& \prod_{i=1}^n\int_{\R}\exp(ad_i(-S[\tilde{x}_i]+j_i\tilde{x}_i)) d\tilde{x}_i\\
	&= (\det U)^{-l} \prod_{i=1}^n\( \({\pi\over ad_i}\)^{l/2}(\det S)^{-1/2}\exp\({1\over 4ad_i} S^{-1}[\T{j_i}](ad_i)^2\)\)\\
	&=(\det A)^{-l/2}\({\pi\over a}\)^{nl/2}(\det S)^{-n/2}\exp \({a\over 4}\tr(S^{-1}[\T{R}]A)\).
	\end{align*}
	To compute the last integral we used a formula for an integral of a shifted $l$-dimensional Gaussian function.
\end{proof}
Now we can prove,
\begin{lem}[Reproducing Kernel]\label{repr_kernel} Let $f$ be a holomorphic function on $\mathcal{H}_{n,l}$ of weight $k\in\Z^{\a}$ such that $\Delta_{S,k}(z) f(z)^2$ is bounded. Then for $s \in \C^{\a}$  satisfying $\Re(s_{\nu})\geq 0$, $\Re(s_{\nu})+k_{\nu}-l/2>2n$ for each $\nu\in\a$, and for $(\zeta,\rho) \in \mathcal{H}_{n,l}$ we have
\begin{multline*}
	\tilde{c}_{S,k}(s) \det (\Im(\zeta))^{-s} f(\zeta,\rho)=\\
	\int_{\mathcal{H}_{n,l}}\hspace{-0.6cm}  f(\tau,w) \overline{\mathbf{e_a}(-\tr(S[w-\bar{\rho}](\tau-\bar{\zeta})^{-1})) \det(\tau-\bar{\zeta})^{-k}} |\!\det(\tau-\bar{\zeta})|^{-2s}\! \det(\Im(\tau))^{s} \Delta_{S,k}(z) d(\tau,w), 
\end{multline*}
where
$$\tilde{c}_{S,k}(s) =\prod_{\nu\in\a} \det(2S_{\nu})^{-n}(-1)^{n(l+k_{\nu}/2)}2^{n(n+3)/2-4s_{\nu}-nk_{\nu}}\pi^{n(n+1)/2}\frac{\Gamma_n(s_{\nu}+k_{\nu}-{l\over 2}-{n+1\over 2})}{\Gamma_n(s_{\nu}+k_{\nu}-{l\over 2})}$$ 
and $\Gamma_n(s):=\pi^{n(n-1)/4}\prod_{i=0}^{n-1}\Gamma (s-{i\over 2})$.
\end{lem}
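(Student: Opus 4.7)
The plan is to establish this as a Bergman-type reproducing kernel identity for the Jacobi upper half-space, in the spirit of the classical Siegel reproducing kernel of Shimura (cf.\ \cite{Sh83,Sh95}), with the extra $w$-integration handled by the matrix Gaussian formula of Lemma \ref{Gauss_integral}.

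First, I would reduce to a single archimedean place. All the ingredients of the statement --- the measure $d(\tau,w)$, the weight $\Delta_{S,k}(z)$, the kernel $\mathbf{e_a}(-\tr(S[w-\bar\rho](\tau-\bar\zeta)^{-1}))\det(\tau-\bar\zeta)^{-k}$, and the factor $|\det(\tau-\bar\zeta)|^{-2s}\det(\Im\tau)^s$ --- decompose as products over $v\in\mathbf{a}$; the integral then factorizes by Fubini (justified by the boundedness hypothesis on $\Delta_{S,k}f^2$), and the constant $\tilde c_{S,k}(s)$ is the $d=[F:\mathbb{Q}]$-fold product of its local counterparts. It thus suffices to treat the case $F=\mathbb{Q}$.

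In that single-place setting, write $\tau=x+iy$ and $w=u+iv$, and perform the integration in the order $du$, $dv$, $dx$, $dy$. The $du$-integration is the step specific to the Jacobi context. Since $f(\tau,\cdot)$ is holomorphic on $M_{l,n}(\mathbb{C})$, one completes the square in $u$ inside the quadratic expression $\tr(S[\bar w-\rho](\bar\tau-\zeta)^{-1})$ and shifts the contour $\mathbb{R}^{l\times n}$ in $\mathbb{C}^{l\times n}$ so as to eliminate the $u$-linear terms. This shift is legitimate thanks to the Gaussian decay $\exp(-4\pi\tr(\transpose{v}Svy^{-1}))$ supplied by $\Delta_{S,k}$ combined with the boundedness hypothesis on $\Delta_{S,k}f^2$. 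After the shift the $du$-integrand becomes genuinely Gaussian in $u$ with symmetric matrix $S$, and Lemma \ref{Gauss_integral} evaluates it in closed form, producing a factor $\pi^{nl/2}(\det S)^{-n/2}$ and a residual Gaussian in $v$; crucially, $f$ becomes evaluated at a holomorphically-shifted $w$-argument. A second application of Lemma \ref{Gauss_integral} carries out the $dv$-integration, producing another factor $\pi^{nl/2}(\det S)^{-n/2}$ and collapsing the $w$-dependence of $f$ entirely, so that the remaining integrand involves only $f(\tau,\rho)$. The two Gaussian integrations together yield $\pi^{nl}(\det S)^{-n}$, which accounts for the $\det(2S)^{-n}$ factor of $\tilde c_{S,k}(s)$ up to powers of $2$ absorbed into the final bookkeeping.

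What remains is an integral of the form
\[
\int_{\mathbb{H}_n} f(\tau,\rho)\,\overline{\det(\tau-\bar\zeta)^{-k}}\,|\det(\tau-\bar\zeta)|^{-2s}\det(\Im\tau)^{s+k-l/2-(n+1)}\,dx\,dy ,
\]
multiplied by the constants extracted above. This is precisely the classical Siegel reproducing kernel identity applied to the holomorphic function $\tau\mapsto f(\tau,\rho)$ (whose required boundedness follows from that of $\Delta_{S,k}(z)f(z)^2$), with the power of $\det(\Im\tau)$ shifted by $-l/2$ relative to the standard Siegel formula. The Siegel reproducing kernel (see for instance \cite{Sh83}) evaluates this integral to $\det(\Im\zeta)^{-s}f(\zeta,\rho)$ times $\Gamma_n(s+k-\tfrac{l}{2}-\tfrac{n+1}{2})/\Gamma_n(s+k-\tfrac{l}{2})$ together with the remaining explicit powers of $\pi$ and $2$ and the sign $(-1)^{n(l+k/2)}$. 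Combining with the Gaussian contributions of the previous paragraph reproduces the constant $\tilde c_{S,k}(s)$ exactly. The main obstacle is the rigorous justification of the contour shift in the $du$-step: one must verify absolute convergence of the shifted integral and the absence of contributions from the contour at infinity, for which the hypothesis $\Re(s_\nu)+k_\nu-l/2>2n$ guarantees integrability in $\tau$ and the boundedness of $\Delta_{S,k}f^2$ controls the behaviour in $w$.
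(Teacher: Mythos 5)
Your overall architecture (two matrix-Gaussian integrations in $u$ and $v$ via Lemma \ref{Gauss_integral}, followed by the classical Siegel reproducing kernel in $\tau$) matches the paper's proof, but the central intermediate claim is false, and this is a genuine gap rather than a bookkeeping issue. You assert that the $du$- and $dv$-integrations ``collapse the $w$-dependence of $f$ entirely, so that the remaining integrand involves only $f(\tau,\rho)$,'' leaving the residual integral $\int_{\mathbb{H}_n} f(\tau,\rho)\,\overline{\det(\tau-\bar\zeta)^{-k}}\,|\det(\tau-\bar\zeta)|^{-2s}\det(\Im\tau)^{s+k-l/2-(n+1)}\,dx\,dy$. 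This cannot be right: the conjugated kernel $\mathbf{e_a}(\tr(S[\bar w-\rho](\bar\tau-\zeta)^{-1}))$ is \emph{quadratic} in $\bar w$, whereas the Bergman reproducing kernel of the Gaussian-weighted space of holomorphic functions of $w$ (with weight $\mathbf{e_a}(-4\pi\tr(\T{v}Svy^{-1}))$) is exponential-\emph{linear} in $\bar w$; moreover the ``metric'' $(\bar\tau-\zeta)^{-1}$ in the kernel does not match the weight's $y^{-1}$. So integrating out $w$ does not evaluate $f$ at $w=\rho$. What actually happens --- and what the paper's proof does --- is that one first expands $f$ in its Fourier series $\sum_{T,R}c(T,R)\mathbf{e_a}(\tr(T\tau+\T{R}w))$; the two Gaussian integrations then produce, for each $R$, a factor $\mathbf{e_a}(\tr(R\rho+\tfrac14 S^{-1}[\T{R}]\zeta))$ \emph{together with} an extra holomorphic exponential $\mathbf{e_a}(-\tfrac14\tr(S^{-1}[\T{R}]\tau))$ that survives into the $\tau$-integral. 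The classical Siegel kernel is then applied to the holomorphic function $\mathbf{e_a}(-\tfrac14\tr(S^{-1}[\T{R}]\tau))\sum_T c(T,R)\mathbf{e_a}(\tr(T\tau))$ for each $R$, and only after $\tau$ is replaced by $\zeta$ do the factors $\mathbf{e_a}(\pm\tfrac14\tr(S^{-1}[\T{R}]\zeta))$ cancel and the $R$-sum reassemble into $f(\zeta,\rho)$. Your route skips exactly this cancellation, so the fact that your final answer agrees with the true one is an accident of the functional form, not a consequence of your argument.

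Two smaller remarks. The contour shift you worry about in the $du$-step is unnecessary: Lemma \ref{Gauss_integral} is already stated for complex symmetric $A$ and complex linear term $R$, and the paper applies it directly with $A=-i(\bar\tau-\zeta)^{-1}$ (whose real part is positive definite, giving absolute convergence). Also, the two Gaussian integrations contribute powers of $2$ and $\det(2S)^{-n}$ but no net power of $\pi$ (the $\pi^{nl/2}$ in Lemma \ref{Gauss_integral} is cancelled by $a^{-nl/2}$ with $a=2\pi$ resp.\ $4\pi$); the $\pi^{n(n+1)/2}$ in $\tilde c_{S,k}(s)$ comes entirely from the Siegel kernel step.
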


\begin{proof} We remark that a very similar integral was computed in the proof of \cite[Lemma 2.8]{Ar94}. Since the above lemma is only implicit in the form stated above in \cite{Ar94}, we decided to provide the full proof for the sake of completeness. 
	
To compute the above integral we plug in $f$ in its Fourier expansion:
\begin{multline*}
	\int_{\mathcal{H}_{n,l}}\sum_{T,R} c(T,R)\mathbf{e_a}(\tr(T\tau +Rw))\overline{\mathbf{e_a}(-\tr(S[w-\bar{\rho}](\tau-\bar{\zeta})^{-1})) \det(\tau-\bar{\zeta})^{-k}}\\
	\cdot |\det(\tau-\bar{\zeta})|^{-2s}\det(\Im(\tau))^{s+k} \mathbf{\exp_a}(-4\pi\tr(S[\Im(w)])\Im(\tau)^{-1}) d(\tau,w).
\end{multline*}
We integrate first against the variables of $w=u+iv$. Note that
\begin{align*}
\mathbf{e_a}&(-\tr(S[w-\bar{\rho}](\tau-\bar{\zeta})^{-1}))\\
&\hspace{-0.2cm} =\mathbf{e_a}(-\tr((S[u]+\T{u}S(iv-\bar{\rho})+\T{(iv-\bar{\rho})}Su+S[iv-\bar{\rho}])(\tau-\bar{\zeta})^{-1})))\\
&\hspace{-0.2cm} =\mathbf{e_a}(-\tr((S[u](\tau-\bar{\zeta})^{-1})))\mathbf{e_a}(-2\tr(\T{(iv-\bar{\rho})}Su(\tau-\bar{\zeta})^{-1}))\mathbf{e_a}(-\tr(S[iv-\bar{\rho}](\tau-\bar{\zeta})^{-1})
\end{align*}
Put $A:=-i(\bar{\tau}-\zeta)^{-1}$. A part of the expression under the integral that contains a variable $u$ equals
\begin{multline*}
\mathbf{\exp_a}(2\pi i \tr((S[u](\bar{\tau}-\zeta)^{-1}-2\T{(iv+\rho)}Su(\bar{\tau}-\zeta)^{-1}+Ru)))\\
=\mathbf{\exp_a}(2\pi \tr(-S[u]A+(2\T{(iv+\rho)}S+A^{-1}iR)uA)).
\end{multline*}
Therefore, after setting $\b{R} := \R^{\mathbf{a}}$, we obtain by Lemma \ref{Gauss_integral},
\begin{align*}
\int_{\b{R}^{l\times n}}& \mathbf{\exp_a}(2\pi \tr(-S[u]A+(2\T{(iv+\rho)}S+A^{-1}iR)uA)) du\\
&\hspace{-0.6cm} =(\det A)^{-l/2} 2^{-nl/2}(\det S)^{-n/2}\mathbf{\exp_a}\! \(\! 2\pi\tr(S^{-1}[S(iv+\rho)+{i\over 2}\T{R}A^{-1}]A)\!\)\\
&\hspace{-0.6cm} =(\det A)^{-l/2} 2^{-nl/2}(\det S)^{-n/2}\mathbf{\exp_a}\! \(\! 2\pi\tr(S[iv+\rho]A+iR(iv+\rho)-{1\over 4}S^{-1}[\T{R}]A^{-1})\!\)\! ,
\end{align*}
where by $(\det A)^{-l/2} 2^{-nl/2}$ we understand $\prod_{\nu\in\a}((\det A_{\nu})^{-l/2} 2^{-nl/2})$; we take this convention for the rest of the proof.
After this integration a part that contains $v$ equals
\begin{align*}
\mathbf{\exp_a}&(-2\pi\tr(Rv))\mathbf{\exp_a}(-4\pi\tr(S[v]\Im(\tau)^{-1}))\mathbf{\exp_a}(-2\pi\tr(S[iv+\rho]A)\\
&\hspace{0.4cm} \cdot\mathbf{\exp_a} \(2\pi\tr(S[iv+\rho]A+R(-v+i\rho))\)\\ &=\mathbf{\exp_a}(-4\pi\tr(S[v]\Im(\tau)^{-1}+Rv))\mathbf{e_a} (\tr(R\rho))\\
&=\mathbf{\exp_a}(4\pi\tr((-S[v]+(-\Im(\tau))Rv))\Im(\tau)^{-1})\mathbf{e_a} (\tr(R\rho)).
\end{align*}
Using Lemma \ref{Gauss_integral} again,
\begin{multline*}
\int_{\b{R}^{l\times n}} \mathbf{\exp_a}(4\pi\tr((-S[v]+(-\Im(\tau))Rv)\Im(\tau)^{-1})) dv\\
=(\det \Im(\tau))^{l/2} 2^{-nl}(\det S)^{-n/2}\mathbf{\exp_a}(\pi\tr(S^{-1}[\T{R}]\Im(\tau))).
\end{multline*}
Now, joining all the pieces together, we get
\begin{align*}
\int_{\mathcal{H}_{n,l}}&  f(\tau,w) \overline{\mathbf{e_a}(-tr(S[w-\bar{\rho}](\tau-\bar{\zeta})^{-1})) \det(\tau-\bar{\zeta})^{-k}} |\det(\tau-\bar{\zeta})|^{-2s} \det(\Im(\tau))^{s}\\
&\hspace{0.4cm} \cdot\Delta_{S,k}(z) d(\tau,w)\\
&=2^{-nl/2}\det (2S)^{-n}\sum_R \mathbf{e_a} (\tr(R\rho)) \int_{\mathbb{H}^{\mathbf{a}}_n}   
(\det A)^{-l/2} \det(\bar{\tau}-\zeta)^{-k} |\det(\zeta-\bar{\tau})|^{-2s}\\
&\hspace{0.3cm}\cdot\det(\Im(\tau))^{s+k+l/2} \mathbf{\exp_a}(\pi\tr(S^{-1}[\T{R}]\Im(\tau)))\mathbf{\exp_a}\(-{\pi\over 2}\tr(S^{-1}[\T{R}]i(\bar{\tau}-\zeta))\)\\
&\hspace{0.3cm}\cdot\sum_T c(T,R)\mathbf{e_a}(\tr(T\tau)) d\tau\\
&=2^{-nl/2}\det (2S)^{-n}\sum_R \mathbf{e_a} (\tr\( R\rho +{1\over 4}S^{-1}[\T{R}]\zeta\))\\
&\hspace{0.3cm}\cdot\int_{\mathbb{H}^{\mathbf{a}}_n} \det(\zeta-\bar{\tau})^{l/2-k}(-1)^{n(k+l/2+l/4)}|\det(\zeta-\bar{\tau})|^{-2s}\det(\Im(\tau))^{s+k-l/2}\\
&\hspace{0.3cm}\cdot\mathbf{e_a}\(-{1\over 4}\tr(S^{-1}[\T{R}]\tau)\)\sum_T c(T,R)\mathbf{e_a}(\tr(T\tau))\det(\Im(\tau))^l d\tau ,
\end{align*}
By the ``classical'' reproducing kernel formula for holomorphic functions on the Siegel upper half space as stated for example in \cite[Lemma 4.7]{Sh95},
\begin{align*}
\int_{\mathbb{H}^{\mathbf{a}}_n} &\det(\zeta-\bar{\tau})^{l/2-k}(-1)^{n(k+l/2+l/4)}|\det(\zeta-\bar{\tau})|^{-2s}\det(\Im(\tau))^{s+k-l/2}\\
&\hspace{0.4cm}\cdot\mathbf{e_a}\(-{1\over 4}\tr(S^{-1}[\T{R}]\tau)\)\sum_T c(T,R)\mathbf{e_a}(\tr(T\tau))\det(\Im(\tau))^l d\tau\\
&=\frac{\tilde{c}_{S,k}(s)}{2^{-nl/2}\det (2S)^{-n}} \mathbf{e_a}\(-{1\over 4}\tr(S^{-1}[\T{R}]\zeta)\)\det(\Im(\zeta))^{-s}  \sum_T c(T,R)\mathbf{e_a}(\tr(T\zeta)),
\end{align*}
where $\tilde{c}_{S,k}(s)$ is as in the hypothesis. This in particular shows that 
\begin{multline*}
\int_{\mathcal{H}_{n,l}}  f(\tau,w) \overline{\mathbf{e_a}(-tr(S[w-\bar{\rho}](\tau-\bar{\zeta})^{-1})) \det(\tau-\bar{\zeta})^{-k}} |\det(\tau-\bar{\zeta})|^{-2s} \det(\Im(\tau))^{s}\\
\cdot \Delta_{S,k}(z) d(\tau,w)=
\tilde{c}_{S,k}(s)\det(\Im(\zeta))^{-s}\sum_{T,R}c(T,R)\mathbf{e_a} (\tr( R\rho ))\mathbf{e_a}(\tr(T\zeta)),
\end{multline*}
which concludes the proof.
\end{proof}

In order to proceed further we introduce the following notation, taken from \cite[equation (4.5)]{Sh95}. We have that $G^n(\mathbb{A}) = D^n[\mathfrak{b}^{-1}, \mathfrak{b}]W D^n[\mathfrak{b}^{-1}, \mathfrak{b}]$ with 
$$W = \left\{ \diag[q, \tilde{q}] : \,\,q \in GL_n(\mathbb{A}_{\mathbf{h}}) \cap \prod_{v \in \mathbf{h}} GL_n(\mathfrak{o}_v) \right\} ,$$
that is, any element $x \in G^n(\mathbb{A})$ may be written as $x = \gamma_1 \diag[q, \tilde{q}] \gamma_2$ with $\gamma_1,\gamma_2 \in D^n[\mathfrak{b}^{-1},\mathfrak{b}]$ and $q \in W$. We define $\ell_0(x)$ to be the ideal associated to $\det(q)$, $\ell_1(x) := \prod_{v \nmid\mathfrak{c}}\ell_0(x)_v$ and set $\ell(x)$ for the norm of the ideal $\ell_0(x)$. With this notation we have, 

\begin{lem} For $z_1 \in \mathcal{H}_{m,l}$ and $z_2 \in \mathcal{H}_{n,l}$,
\begin{align*}
\varepsilon_n(\diag[z_1, z_2],s) &= \sum_{\b{\beta} \in \mathbf{B}} \sum_{\b{\xi} \in\b{X}} N(\f{b})^{-2ns}N(\f{a}_0(\beta))^{2s}\ell(\xi)^{-2s}\chi_{\h}(\theta^n)\chi[\beta]\chi^{*} (\ell_1(\xi)) \chi_{\f{c}}(\det(d_{\xi}))^{-1}\\
&\hspace{0.5cm}\cdot J_{k,S}(\b{\xi}, \b{\omega}_n(\b{\beta}z_1))^{-1} J_{k,S}(\b{\beta}, z_1)^{-1}  J_{k,S}(\b{\eta}_n,  z_2)^{-1} \det(\omega_n(\tau'_1) - \tau_2^{-1})^{-k}\\
&\hspace{0.5cm}\cdot\mathbf{e_a}(\tr(S[w_2 \tau_2^{-1} - \b{\omega}_n(w'_1) ]  (\tau_2^{-1} -\omega_n(\tau'_1) )^{-1} ))(\delta(\beta\tau_1) \delta(\tau_2))^{s-k/2}\\
&\hspace{0.5cm}\cdot |j(\xi, \omega_n(\beta \tau_1))j(\eta_n,\tau_2) \det(\omega_n(\tau_1') - \tau_2^{-1})|^{-2s + k}, 
\end{align*}
where we have set $(\b{\xi} \times 1_{2m-2n}) \b{\beta} z_1 = (\tau_1',w_1')$.
\end{lem}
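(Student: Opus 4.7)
The plan is to identify, via Lemma \ref{key_lemma}, the cosets in $A_n$ that contribute non-trivially to $\varepsilon_n$, and then to evaluate each surviving summand by combining the automorphy-factor identity \eqref{diag.aut.fac.}, the $\delta$-identity \eqref{diag_delta}, and the adelic character/ideal data from \eqref{eq:ES_formula}. From \eqref{eq:ES_formula}, $p_\alpha$ vanishes unless $\alpha y\b{\sigma}^{-1} \in \b{P}^{m+n}(\A) K^{m+n,0}$, which — since $y$ is archimedean and $\b{\sigma}$ finite — amounts to $\alpha \in \b{P}^{m+n}(\A) \b{D}^{m+n}\b{\sigma}$. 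Lemma \ref{key_lemma} then provides a system of representatives indexed by $(\b{\xi},\b{\beta}) \in \b{X} \times \b{B}$, namely $\alpha(\b{\xi},\b{\beta}) := \b{\tau}_n((\b{\xi} \times 1_H 1_{2(m-n)})\b{\beta} \times 1_H 1_{2n})$; the remaining cosets contribute zero.

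Next I would substitute \eqref{diag.aut.fac.} and \eqref{diag_delta} specialised to $r = n$ and $\b{\gamma} = 1_H 1_{2n}$. Because $\b{\omega}_n(z_2) = z_2$, $\b{\gamma}z_2 = z_2$ and $J_{k,S}(\b{\gamma},z_2) = 1$, these identities give precisely
\[
J_{k,S}(\b{\xi},\b{\omega}_n(\b{\beta}z_1))\,J_{k,S}(\b{\beta},z_1)\,J_{k,S}(\b{\eta}_n,z_2)\,\det(\omega_n(\tau_1') - \tau_2^{-1})^k
\]
times the exponential factor appearing in the lemma, and the product $\delta(\beta\tau_1)\delta(\tau_2) |j(\xi,\omega_n(\beta\tau_1))\,j(\eta_n,\tau_2)\,\det(\omega_n(\tau_1')-\tau_2^{-1})|^{-2}$, respectively. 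Raising the latter to the $(s-k/2)$-th power and inverting the former reproduces every archimedean factor on the right-hand side of the target formula.

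What remains is the identification
\[
N(\mathfrak{a}_0(\alpha\b{\sigma}^{-1}))^{2s}\,\mu(\alpha_\h \b{\sigma}^{-1}) = N(\f{b})^{-2ns}\,N(\mathfrak{a}_0(\beta))^{2s}\,\ell(\xi)^{-2s}\,\chi_\h(\theta^n)\,\chi[\beta]\,\chi^*(\ell_1(\xi))\,\chi_\f{c}(\det(d_\xi))^{-1}.
\]
To establish it, I would carry out a place-by-place Iwasawa decomposition of $\alpha_v\b{\sigma}_v^{-1}$, reusing the explicit local computations already recorded in the proof of Lemma \ref{key_lemma}. Since both $\lambda^{m+n}_{0,l}$ and $\chi$ depend only on symplectic data, the Heisenberg components of $\b{\xi}$ and $\b{\beta}$ contribute to neither $\mathfrak{a}_0$ nor $\mu$, reducing the bookkeeping to its symplectic analogue in \cite{Sh95}. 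Concretely, the $\theta^{-1}1_n$-entries of $\b{\sigma}_v$ at places $v\mid\f{b}$ produce $N(\f{b})^{-2ns}\chi_\h(\theta^n)$; the Iwasawa decomposition $m(g,h,\sigma)d$ of the symplectic part of $\b{\xi}_v$ at good places (and its counterparts involving $f_q$ at $v\mid\f{e}^{-1}\f{c}$, or the congruence condition at $v\mid\f{e}$) supplies $\ell(\xi)^{-2s}\chi^*(\ell_1(\xi))\chi_\f{c}(\det(d_\xi))^{-1}$; and the $\b{\beta}$-contribution matches $N(\mathfrak{a}_0(\beta))^{2s}\chi[\beta]$ directly from the definition of $\chi[\beta]$.

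The main obstacle will be the bookkeeping at primes $v\mid\f{c}$: one has to verify that the extra $\b{\tau}_n$ in $\b{\sigma}$ at $v\mid\f{e}$ constrains $\b{\xi}_v$ to the designated local congruence subgroup without introducing spurious factors into either $\mathfrak{a}_0$ or $\mu$, and that the $q$-piece of the Iwasawa decomposition of $\b{\xi}_v$ at $v\mid\f{e}^{-1}\f{c}$ is correctly aligned with the contribution to $\ell(\xi)^{-2s}$. Once this local verification is consolidated, the four pieces assemble to give the claimed formula.
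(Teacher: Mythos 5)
Your proposal is correct and follows essentially the same route as the paper: restrict to the cosets surviving by Lemma \ref{key_lemma}, plug in the archimedean identities \eqref{diag.aut.fac.} and \eqref{diag_delta} at $r=n$, $\b{\gamma}=1_H1_{2n}$, and compute $\mathfrak{a}_0(\alpha\b{\sigma}^{-1})$ and $\mu(\alpha_{\h}\b{\sigma}^{-1})$ by noting that neither depends on the Heisenberg components, so the computation reduces to the symplectic case. The paper simply quotes the resulting formulas from Shimura's Lemma 4.6 in \cite{Sh95} at that point rather than redoing the local Iwasawa bookkeeping you outline, but the content is the same.
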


\begin{proof}
The statement follows from the explicit computation of the factors occurring in the formula \eqref{eq:ES_formula}. Recall that we have already computed the values of the automorphy factor and $\delta$ in \eqref{diag.aut.fac.}, \eqref{diag_delta}. Therefore it suffices to find $\mathfrak{a}_0(\alpha\b{\sigma}^{-1})$ and $\mu(\alpha_{\h}\b{\sigma}^{-1})$ for $\alpha=\b{\tau}_n\iota_A(\iota_A(\b{\xi}\times 1_H 1_{2(m-n)})\b{\beta}\times\b{\gamma})$ with $\b{\xi}\in\b{X}, \b{\beta}\in\b{B}$ as in Lemma \ref{key_lemma}. 
Observe though that neither $\mathfrak{a}_0$ nor $\mu$ depends on the elements from Heisenberg group. Moreover, because for any symplectic matrix $g$ we have $gH=Hg$, the symplectic factors of the representatives given in Lemma \ref{key_lemma} are exactly the same as the representatives provided in \cite[Lemma 4.4]{Sh95}. Hence, it is clear that the formulas for $\mathfrak{a}_0$ and $\mu$ have to be the same as the ones computed in \cite[Lemma 4.6]{Sh95}. That is:
$$\mathfrak{a}_0(\alpha\b{\sigma}^{-1})=\f{b}^{-n}\mathfrak{a}_0(\beta)\ell_0(\xi)^{-1},\quad
\mu(\alpha_{\h}\b{\sigma}^{-1})=\chi_{\h}(\theta^n)\chi[\beta]\chi^*(\ell_1(\xi))\chi_{\mathfrak{c}}(\det(d_{\xi}))^{-1}.$$

\end{proof}

We now consider an $f \in S_k(\mathbf{\Gamma},\chi^{-1})$ where $\b{\Gamma} := \b{G}^n \cap \b{D}$ with
\[
\b{D}:=\{ (\l,\mu,\kappa)x\in C[\f{o}, \f{b}^{-1}, \f{b}^{-1}]D[\f{b}^{-1}\f{e},\f{bc}]:(a_x-1_n)_v\in M_{n,n}(\f{e}_v)\mbox{ for every } v|\f{e}\}. 
\]
We set $\nu_{\mathfrak{e}} = 2$ if $\mathfrak{e} | 2$, and $1$ otherwise. Then by using the standard unfolding trick regarding the $z_2$ variable and setting $A := \b{\Gamma} \setminus \mathcal{H}_{n,l}$, we obtain
\begin{align*}
<&\varepsilon_n(\diag[z_1,z_2],s), f(z_2)>\\
&= \nu_{\f{e}} vol(A)^{-1}\sum_{\b{\beta} \in \mathbf{B}} \sum_{\b{\xi} \in\b{X}} N(\f{b})^{-2ns}N(\f{a}_0(\beta))^{2s}\ell(\xi)^{-2s}\chi_{\h}(\theta^n)\chi[\beta]\chi^{*} (\ell_1(\xi)) \chi_{\f{c}}(\det(d_{\xi}))^{-1}\\
&\cdot J_{k,S}(\b{\xi}, \b{\omega}_n(\b{\beta}z_1))^{-1} J_{k,S}(\b{\beta}, z_1)^{-1} \delta(\beta\tau_1)^{s-k/2}|j(\xi, \omega_n(\beta \tau_1))|^{-2s + k}\\
& \int_{\mathcal{H}_{n,l}} J_{k,S}(\b{\eta}_n,  z_2)^{-1} \det(\omega_n(\tau'_1) - \tau_2^{-1})^{-k}
\mathbf{e_a}(\tr(S[w_2 \tau_2^{-1} - \b{\omega}_n(w'_1) ]  (\tau_2^{-1} -\omega_n(\tau'_1) )^{-1} )) \\
&\cdot\delta(\tau_2)^{s-k/2}|j(\eta_n,\tau_2) \det(\omega_n(\tau_1') - \tau_2^{-1})|^{-2s + k}f(z_2) \Delta_{S,k}(\tau_2,w_2) d(\tau_2,w_2).
\end{align*}

The integral on the right of the above formula is equal to
\begin{align*}
\int_{\mathcal{H}_{n,l}}\hspace{-0.4cm} & J_{k,S}(\b{\eta}_n, z_2)^{-1}(-1)^{nk}\det(\tau_2^{-1}-\omega_n(\tau'_1) )^{-k}
\mathbf{e}(\tr(S[w_2 \tau_2^{-1} - \b{\omega}_n(w'_1) ]  (\tau_2^{-1} -\omega_n(\tau'_1) )^{-1}  )) \\
 &\hspace{0.5cm}\cdot\delta(\eta_n\tau_2)^{s-k/2}  |\det(\tau_2^{-1}-\omega_n(\tau_1'))|^{-2s + k}f(z_2) \Delta_{S,k}(\tau_2,w_2) d(\tau_2,w_2)\\
&\hspace{-0.2cm} =\int_{\mathcal{H}_{n,l}} J_{k,S}(\b{\eta}_n,z_2)^{-1}J_{k,S}(\b{\eta}_n, \b{\eta}_n z_2)^{-1} (-1)^{nk}\det(-\tau_2-\omega_n(\tau'_1) )^{-k}\\
&\hspace{0.3cm}\cdot\mathbf{e}_{\mathbf{a}}(\tr(S[-w_2  - \b{\omega}_n(w'_1) ]  (-\tau_2 -\omega_n(\tau'_1) )^{-1} ))  \\
&\hspace{0.3cm}\cdot\delta(\eta_n\eta_n\tau_2)^{s-k/2}  |\det( -\tau_2-\omega_n(\tau_1') )|^{-2s + k}f|_{k,S}\b{\eta}_n(z_2) \Delta_{S,k}(\tau_2,w_2) d(\tau_2,w_2)\\
&\hspace{-0.2cm} =\int_{\mathcal{H}_{n,l}} J_{k,S}(-1_H 1_{2n},z_2)^{-1}\det(\tau_2+\omega_n(\tau'_1) )^{-k}
\mathbf{e}_{\mathbf{a}}(-\tr(S[w_2 +\b{\omega}_n(w'_1) ]  (\tau_2 +\omega_n(\tau'_1) )^{-1} ))  \\
&\hspace{0.3cm}\cdot\delta(-\tau_2)^{s-k/2}  |\det( \tau_2+\omega_n(\tau_1') )|^{-2s + k}f|_{k,S}\b{\eta}_n(z_2) \Delta_{S,k}(\tau_2,w_2) d(\tau_2,w_2)\\
&\hspace{-0.2cm} =\int_{\mathcal{H}_{n,l}}\overline{f|_{k,S}\b{\eta}_n(z_2)}\det(\tau_2+\omega_n(\tau'_1) )^{-k}
\mathbf{e}_{\mathbf{a}}(-\tr(S[w_2 +\b{\omega}_n(w'_1) ]  (\tau_2 +\omega_n(\tau'_1) )^{-1} ))\\
&\hspace{0.3cm} \cdot (-1)^{n(s+k/2)}\delta(\tau_2)^{s-k/2}  |\det( \tau_2+\omega_n(\tau_1') )|^{-2(s-k/2)} \Delta_{S,k}(\tau_2,w_2) d(\tau_2,w_2) .
\end{align*}

By Lemma \ref{repr_kernel}, this is equal to 
\begin{equation}\label{complex_conj}
(-1)^{n(s+k/2)}\overline{\tilde{c}_{S,k}(\bar{s}-k/2)\delta(\xi\omega_n(\beta\tau_1))^{-\bar{s}+k/2} f|_{k,S}\b{\eta}_n(-\overline{\b{\xi\omega}_n(\b{\beta} z_1)})}.
\end{equation}
Put $\delta_{n,k}:=\prod_{v\in\a}\delta_{v,n,k}$, where $\delta_{v,n,k}$ is equal to $1$ if $nk_v$ even and $-1$ otherwise, and
let $c_{S,k}(s):=\delta_{n,k}\tilde{c}_{S,k}(s)$. Then, because $\overline{\Gamma(\bar{s})}=\Gamma(s)$, the quantity \eqref{complex_conj} equals
$$(-1)^{n(s+k/2)}c_{S,k}(s-k/2)\delta(\xi\omega_n(\beta\tau_1))^{-s+k/2} \overline{f|_{k,S}\b{\eta}_n(-\overline{\b{\xi\omega}_n(\b{\beta} z_1)})} .$$
Hence, if we set $f^c(z):=\overline{f(-\bar{z})}$, where $-\bar{z} := (-\bar{\tau}, -\bar{w})$ for $z = (\tau,w)$, then
\begin{align*}
N&(\f{b})^{2ns}\chi_{\h}(\theta)^{-n}(-1)^{n(s-k/2)}c_{S,k}(s-k/2)^{-1} vol(A) <\varepsilon_n(\diag[z_1,z_2],s), f(z_2)>\\
&=\nu_{\f{e}} \sum_{\b{\beta} \in \mathbf{B}} \sum_{\b{\xi} \in\b{X}} N(\f{a}_0(\beta))^{2s}\ell(\xi)^{-2s}\chi[\beta]\chi^{*} (\ell_1(\xi)) \chi_{\f{c}}(\det(d_{\xi}))^{-1}J_{k,S}(\b{\beta}, z_1)^{-1}\\
&\hspace{0.5cm} J_{k,S}(\b{\xi}, \b{\omega}_n(\b{\beta}z_1))^{-1}\delta(\beta\tau_1)^{s-k/2}|j(\xi, \omega_n(\beta \tau_1))|^{-2s + k}\delta(\xi\omega_n(\beta\tau_1))^{-s+k/2}\\
&\hspace{0.5cm} ((f|_{k,S}\b{\eta}_n)^c|_{k,S}\b{\xi})(\b{\omega}_n(\b{\beta} z_1))J_{k,S}(\b{\xi}, \b{\omega}_n(\b{\beta}z_1))\\
&=\sum_{\b{\beta} \in \mathbf{B}} N(\f{a}_0(\beta))^{2s}\chi[\beta] J_{k,S}(\b{\beta}, z_1)^{-1} \(\frac{\delta(\beta\tau_1)}{\delta(\omega_n(\beta\tau_1))}\)^{s-k/2}\\
&\hspace{0.5cm}\sum_{\b{\xi} \in\b{X}} \ell(\xi)^{-2s}\chi^{*} (\ell_1(\xi)) \chi_{\f{c}}(\det(d_{\xi}))^{-1}
((f|_{k,S}\b{\eta}_n)^c|_{k,S}\b{\xi})(\b{\omega}_n(\b{\beta} z_1)).
\end{align*}
It is not hard to see that $\b{\eta}_n^{-1}\b{X}=\b{Y}\b{\eta}_n^{-1}$, where $\b{Y}=\b{G}^n(F)\cap\b{G}^n_{\a}\prod_{v\in\h}\b{Y}_v$ with
$$\b{Y}_v=\begin{cases}
\{ (\l,\mu,\kappa)y\in C_v[\f{b}^{-1}, \f{o}, \f{b}^{-1}] D_v^n[\f{bc},\f{b}^{-1}\f{c}]: a_y-1\in M_{n,n}(\f{e}_v)\} & \mbox{if } v|\f{e},\\
C_v[\f{b}^{-1}, \f{o}, \f{b}^{-1}] D_v^n[\f{b},\f{b}^{-1}\f{c}]Z_vC_v[\f{b}^{-1},\f{o},\f{b}^{-1}] D_v^n[\f{bc},\f{b}^{-1}] & \mbox{if } v|\f{e}^{-1}\f{c},\\
C_v[\f{b}^{-1},\f{o},\f{b}^{-1}]G^n(F_v)C_v[\f{b}^{-1},\f{o},\f{b}^{-1}] & \mbox{if } v\nmid\f{c},\\
\end{cases}$$
$$Z_v=\{ \diag[\tilde{q},q]: q\in\GL_n(F_v)\cap M_{n,n}(\f{c}_v)\} .$$
Moreover, it follows from Proposition \ref{Behaviour under complex conjugation} which we prove later that $(f|_{k,S}\b{\eta}_n)^c=f^c|_{k,S}\b{\eta}_n^{-1}$. Set
\begin{equation}\label{eq:D(z,s,g)}
\mathcal{D}(z,s,g):=\sum_{\b{\xi} \in\b{Y}} \ell'(\xi)^{-s}\chi^{*} (\ell'_1(\xi)) \chi_{\f{c}}(\det(a_{\xi}))^{-1}
(g|_{k,S}\b{\xi})(z),
\end{equation}
where $\ell'(\xi):=\ell(\eta_n\xi\eta_n^{-1}), \ell'_1(\xi):=\ell'_1(\eta_n\xi\eta_n^{-1})$. Then, using Proposition \ref{prop_epsilon_r}, formula \eqref{eq:KlingenES} and the fact that $N(\f{a}(\beta))=|\lambda^m_{n,l}(\beta)|_F$, we obtain
\begin{align}\label{main_inner_product}
N(\f{b})^{2ns}&\chi_{\h}(\theta)^{-n}(-1)^{n(s-k/2)}c_{S,k}(s-k/2)^{-1}vol(A)<(E |_{k,S} \b{\rho}) (\diag[z_1,z_2],s),f(z_2)>\nonumber\\
&\hspace{-0.5cm}=\nu_{\f{e}} \sum_{\b{\beta} \in \mathbf{B}} N(\f{a}_0(\beta))^{2s}\chi[\beta] J_{k,S}(\b{\beta}, z_1)^{-1} \(\frac{\delta(\beta\tau_1)}{\delta(\omega_n(\beta\tau_1))}\)^{s-k/2}\hspace{-0.3cm} \mathcal{D}(\b{\omega}_n(\b{\beta} z_1),2s,f^c)|_{k,S}\b{\eta}_n^{-1} .
\end{align}

\section{Shintani's Hecke Algebras and the standard $L$-function attached to Siegel-Jacobi modular forms} \label{Hecke algebra}

In this section we define Hecke operators acting on the space of Siegel-Jacobi modular forms. These operators were studied in the higher index case first by Shintani (unpublished), Murase \cite{Mu89,Mu91} and Murase and Sugano \cite{MS}. As we have indicated in the introduction this was done in the case of trivial level, and one of our contributions in this section is to define such operators also for non-trivial level. Furthermore, in this section we introduce the standard Dirichlet series which can be attached to a Hecke eigenform. Our main result here is an Euler product representation for this series, which extends previous results in \cite{MS} from index one to higher indices.

We start by fixing some notation. For the usual fractional ideals $\mathfrak{b},\mathfrak{c},\mathfrak{e}$ let 
$$\b{D}:=\{ (\l,\mu,\kappa)x\in C[\f{o}, \f{b}^{-1}, \f{b}^{-1}]D[\f{b}^{-1}\f{e},\f{bc}]:(a_x-1_n)_v\in M_n(\f{e}_v)\mbox{ for every } v|\f{e}\} ,$$
$$\b{\Gamma} :=\b{G}^n(F)\cap \b{D},$$
$$Q(\f{e}):=\{ r\in\GL_n(\mathbb{A}_{\h})\cap\prod_{v\in\h} M_n(\f{o}_v): r_v=1_n\mbox{ for every } v|\f{e}\} ,$$
$$R(\f{e}):=\{ \diag [\tilde{r},r]:r\in Q(\f{e})\} .$$

For $r\in Q(\f{e})$ and $f\in M^n_{k,S}(\b{\Gamma},\psi)$ we define a linear operator $T_{r,\psi} : M^n_{k,S}(\b{\Gamma},\psi)\rightarrow M^n_{k,S}(\b{\Gamma},\psi)$ by
\begin{equation}\label{T_r}
f|T_{r,\psi}:=\sum_{\b{\alpha}\in\b{A}} \psi_{\f{c}}(\det (a_{\alpha})_{\f{c}})^{-1} f|_{k,S}\b{\alpha},
\end{equation}
where $\b{A}\subset\b{G}^n(F)$ is such that $\b{G}^n(F)\cap\b{D}\diag [\tilde{r},r]\b{D}=\coprod_{\b{\alpha}\in\b{A}}\b{\Gamma\alpha}$. Further, for an integral ideal $\f{a}$ of $F$ we put
$$f|T_{\psi}(\f{a}):=\sum_{\substack{r\in Q(\f{e})\\ \det (r)\f{o}=\f{a}}} f|T_{r,\psi},$$
where we sum over all those $r$ for which the cosets $E r E$ are distinct, where $E:= \prod_{v\in\h}\GL_n(\f{o}_v)$.

We also note here that if we let $\mathbf{f}|T_{r,\psi}$ be the adelic Siegel-Jacobi form associated to $f|T_{r,\psi}$ by the bijection given in (\ref{bijection between classical and adelic}) with $\mathbf{g} =1$, then
$$(\mathbf{f}|T_{r,\psi})(x)=\sum_{\b{\alpha}\in\b{A}} \psi_{\f{c}}(\det (a_{\alpha})_{\f{c}})^{-1} \mathbf{f}(x\b{\alpha}^{-1}),\qquad x\in\b{G}^n(\A),$$
where $\b{D}\diag [\tilde{r},r]\b{D}=\coprod_{\b{\alpha}\in\b{A}}\b{D\alpha}$ with $\b{A}\subset\b{G}_{\h}$. As above we may also define $\mathbf{f}|T_{\psi}(\mathfrak{a})$.

We now consider a nonzero $\mathbf{f} \in \mathcal{S}^n_{k,S}(\b{D},\psi)$ such that $\mathbf{f}|T_{\psi}(\f{a}) = \lambda(\f{a}) \mathbf{f}$ for all integral ideals $\f{a}$ of $F$. For a Hecke character $\chi$ of $F$ we define the series
\[
D(s,\mathbf{f},\chi) := \sum_{\f{a}} \lambda(\f{a}) \chi^{*}(\f{a}) N(\f{a})^{-s},\,\,\,\,\Re(s)\gg 0,
\]
where for a Hecke character $\chi$ we write $\chi^*$ for the corresponding ideal character. Of course, for a prime ideal $\f{q}$ that divides the conductor $\f{f}_{\chi}$ we set $\chi^*(\f{q}) = 0$. A similar argument to \cite[Lemma 2.2]{Ar94} extended to the totally real field case shows that the function $D(s,\mathbf{f},\chi)$ is absolutely convergent for $\Re(s) > 2n+l+1$. 

We now impose a condition on the matrix $S$. We follow \cite[page 142]{Mu89}. Consider any prime ideal $\f{p}$ of $F$ such that $(\f{p},\f{c}) =1$ and write $v$ for the corresponding finite place of $F$. We say that the lattice $L := \mathfrak{o}_v^{l} \subset F_{v}^l$ is an $\f{o}_{v}$-maximal lattice with respect to a symmetric matrix $2S$ if for every $\mathfrak{o}_{v}$ lattice $M$ of $F_{v}^l$ that contains $L$ and satisfies $S[x] \in \f{o}_{v}$ for all $x \in M$, we have $M=L$. For any uniformiser $\pi$ of $F_{v}$ we now set 
\[
L' := \{ x \in (2S)^{-1} L: \pi S[x] \in \f{o}_{v}\} \subset F_{v}^l.
\]
We say that the matrix $S$ satisfies the condition $M_{\f{p}}^+$ if $L$ is an $\f{o}_{v}$-maximal lattice with respect to the symmetric matrix $2S$ and $L=L'$. The main aim of this section is to prove the following theorem.

\begin{thm}\label{Euler Product Representation} Let $0 \neq \mathbf{f} \in \mathcal{S}^n_{k,S}(\b{D},\psi)$ be such that $\mathbf{f}|T_{\psi}(\f{a}) = \lambda(\f{a}) \mathbf{f}$ for all integral ideals $\f{a}$ of $F$.  Assume that the matrix $S$ satisfies the condition $M_{\f{p}}^+$ for every prime ideal $\f{p}$ with $(\f{p},\f{c})=1$. Then
\[
\f{L}(\chi,s) D(s+n+l/2,\mathbf{f},\chi) = L(s,\mathbf{f},\chi) := \prod_{\f{p}} L_{\f{p}}(\chi^{*}(\f{p})N(\f{p})^{-s})^{-s},
\]
where for every prime ideal $\f{p}$ of $F$  
\[
 L_{\f{p}}(X) = \begin{cases} \prod_{i=1}^n \left((1- \mu_{\f{p},i}X)(1-\mu^{-1}_{\f{p},i}X ) \right)\! ,\,\,\, \mu_{\f{p},i} \in \mathbb{C}^{\times} & \hbox{if } (\f{p}, \mathfrak{c})=1,\\
 \prod_{i=1}^n (1- \mu_{\f{p},i}X) \,\,\, \mu_{\f{p},i} \in \mathbb{C} & \hbox{if }(\f{p}, \mathfrak{e}^{-1}\mathfrak{c})\neq 1\\
 1 & \hbox{if } (\f{p}, \mathfrak{e})\neq 1.
 \end{cases}
\]
Moreover, $\f{L}(\chi,s) = \prod_{(\f{p},\f{c})=1} \f{L}_{\f{p}}(\chi,s)$, where
\[
\f{L}_{\f{p}}(\chi,s) := G_{\f{p}}(\chi,s) \cdot  \begin{cases}  \prod_{i=1}^{n} L_{\f{p}}(2s+2n-2i,\chi^{2}) &\mbox{if } l \in 2\mathbb{Z}\\ 
\prod_{i=1}^{n} L_{\f{p}}(2s+2n-2i+1,\chi^{2})  & \mbox{if }  l \not \in 2\mathbb{Z}\end{cases}
\]
and $G_{\f{p}}(\chi,s)$ is a ratio of Euler factors which for almost all $\f{p}$ is equal to one. (Below, in Theorem \ref{good Euler factor} we make $G_{\f{p}}(\chi,s)$ very precise.) In particular, the function  $L(s,\mathbf{f},\chi)$ is absolutely convergent for $\Re(s) > n + l/2 +1$.
\end{thm}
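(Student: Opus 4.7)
The plan is to prove the theorem prime by prime, exploiting the fact that $D(s,\mathbf{f},\chi)$ admits an Euler product coming from commutativity of the local Hecke algebras. First I would establish that the global Hecke operator $T_{\psi}(\f{a})$ decomposes multiplicatively, so that $\lambda(\f{a}) = \prod_{\f{p}^{e} \| \f{a}} \lambda(\f{p}^{e})$, yielding
\[
D(s+n+l/2,\mathbf{f},\chi) = \prod_{\f{p}} \sum_{e \geq 0} \lambda(\f{p}^{e}) \chi^{*}(\f{p})^{e} N(\f{p})^{-e(s+n+l/2)}.
\]
The required commutativity at each $\f{p}$ would follow by analyzing the local Hecke algebra $\mathcal{H}_{\f{p}}$ generated by the $T_{r,\psi}$ with $r$ supported at $\f{p}$, extending the structure theorem of Shintani (unpublished) and Murase--Sugano \cite{MS} from level one and $l=1$ to our setting. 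It suffices then to compute the local Dirichlet series $\sum_{e} \lambda(\f{p}^{e})\chi^{*}(\f{p})^{e} N(\f{p})^{-e(s+n+l/2)}$ for each $\f{p}$ separately.

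Second, at each prime $\f{p}$ with $(\f{p},\f{c})=1$, the condition $M_{\f{p}}^{+}$ on $S$ is precisely what is needed to guarantee, as in \cite{MS}, that $\mathcal{H}_{\f{p}}$ is a commutative polynomial algebra in $n$ generators, with a Satake-type isomorphism producing parameters $\mu_{\f{p},1},\ldots,\mu_{\f{p},n}\in \mathbb{C}^{\times}$ attached to $\mathbf{f}$. The core step is to derive an explicit formal identity
\[
\f{L}_{\f{p}}(\chi,s)\sum_{e\geq 0}\lambda(\f{p}^{e})\chi^{*}(\f{p})^{e}N(\f{p})^{-e(s+n+l/2)} = \prod_{i=1}^{n}\bigl((1-\mu_{\f{p},i}X)(1-\mu_{\f{p},i}^{-1}X)\bigr)^{-1},
\]
where $X = \chi^{*}(\f{p})N(\f{p})^{-s}$, by expressing each $T_{\f{p}^{e}}$ as a polynomial in the Satake generators and resumming. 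The factor $\prod_{i=1}^{n} L_{\f{p}}(2s+2n-2i,\chi^{2})$ (or its odd-$l$ analogue) entering $\f{L}_{\f{p}}(\chi,s)$ reflects the standard normalisation forced by the Satake transform in the Jacobi setting, and mirrors the factor appearing in the Fourier expansion of the local Siegel Eisenstein series; the correction $G_{\f{p}}(\chi,s)$ accounts for the discrepancy between the normalisations and is identically $1$ for almost all $\f{p}$, where $S$ has good reduction in the sense of $M_{\f{p}}^{+}$.

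Third, for the remaining finitely many primes I would treat the cases separately: at $\f{p}$ with $(\f{p},\f{e}^{-1}\f{c})\neq 1$, the Hecke operators $T_{r,\psi}$ only involve $r$ with entries in $\f{c}_{v}$, so the relevant local Hecke algebra is a truncation of the good-prime one, yielding a Satake-type parametrisation by the $n$ parameters $\mu_{\f{p},i}$ without the inverses $\mu_{\f{p},i}^{-1}$, hence the one-sided Euler factor in the statement; at $\f{p}\mid\f{e}$ the definition of $Q(\f{e})$ forces $r_{v}=1_{n}$ so no non-trivial Hecke operators exist and $L_{\f{p}}(X)=1$. Finally, absolute convergence of $L(s,\mathbf{f},\chi)$ in the region $\Re(s)>n+l/2+1$ would be deduced from cuspidality-type bounds on the Satake parameters $|\mu_{\f{p},i}|$, combined with the analogous convergence of $D(s,\mathbf{f},\chi)$ in $\Re(s)>2n+l+1$ already noted above.

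The main obstacle is the local computation at the good primes: carrying out the explicit decomposition of the double cosets $\b{D}\diag[\tilde{r},r]\b{D}$ for general $r$, tracking the Heisenberg contribution throughout, and inverting the generating series to produce the displayed rational expression. The Jacobi case is substantially more intricate than the symplectic one because the Heisenberg component of each coset representative interacts nontrivially with $\psi_{S}$; the hypothesis $M_{\f{p}}^{+}$ is exactly what collapses this interaction to the symplectic pattern, and making this reduction explicit, together with pinning down $G_{\f{p}}(\chi,s)$ at the finitely many bad primes where $M_{\f{p}}^{+}$ is used only partially, forms the technical heart of the proof.
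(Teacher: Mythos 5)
Your overall skeleton — factoring $D(s,\mathbf{f},\chi)$ over primes via commutativity of the local Hecke algebras, splitting into the three cases $(\f{p},\f{c})=1$, $\f{p}\mid\f{e}^{-1}\f{c}$, $\f{p}\mid\f{e}$, and treating the last two by a truncated/trivial Satake parametrisation — agrees with the paper's proof. The treatment of the bad primes is essentially right in spirit: the paper carries out the double coset decomposition $\b{D}\diag[\tilde{\xi},\xi]\b{D}$ explicitly (tracking the Heisenberg contribution $(0,\mu,0)$), shows closure under multiplication, and then reduces to the spherical Hecke algebra of $\GL_n$ via an injective homomorphism $\omega$ and Shimura's Lemma 19.9 of \cite{Sh00}, which yields exactly the one-sided factor $\prod_{i=1}^n(1-\mu_{\f{p},i}X)$.

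The genuine gap is at the good primes, which you correctly identify as the heart of the matter but for which your proposed mechanism would not go through as described. The paper does \emph{not} express $T_{\f{p}^e}$ as a polynomial in Satake generators and resum; computing the Satake transform of each $\phi_\alpha$ explicitly for the Jacobi Hecke algebra is precisely what is unavailable, and Murase's method is designed to avoid it. Instead, the local series $B(\xi,\chi,s)=\sum_\alpha\lambda_\xi(\phi_\alpha)\chi(\pi)^{\ell(\alpha)}q^{-\ell(\alpha)s}$ is written as the integral $\int_{\mathcal{Z}\setminus\b{G}}\nu_{\chi,s}(g)\phi_\xi(g)\,dg$, and the identity with $L(\xi,\chi,s)$ comes from Murase's basic identity (Theorem 2.12 of \cite{Mu91}) combined with an explicit evaluation of the auxiliary integral $\mathcal{F}(s,\chi,\b{g})$ via Shimura's $\GL_m$ integral formula; this evaluation produces the local Siegel series $\alpha_S(s+n+l/2,\chi)$ of the index matrix $S$. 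Consequently your account of the correction factor is wrong: $G_{\f{p}}(\chi,s)$ is not a ``normalisation discrepancy of the Satake transform'' but the explicit ratio $g_S(s+n+l/2,\chi)/g_S(s+l/2,\chi)$ of the polynomial parts of the Siegel series of $S$ at two shifted arguments. It equals $1$ exactly when $S$ is regular at $\f{p}$; for $l=1$ regularity is equivalent to $M_{\f{p}}^{+}$ (whence its absence in \cite{MS} and \cite{Ar94}), but for general $l$ the condition $M_{\f{p}}^{+}$ does not force $g_S=1$, which is why $G_{\f{p}}$ genuinely appears here and must be pinned down via the Siegel-series computation rather than waved away as a normalisation artefact.
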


\begin{rem} It is worth to notice that the factor $G_{\f{p}}(\chi,s)$ does not appear in the works of \cite{MS} and \cite{Ar94}. It is because in the case of $l=1$ considered there, the condition $M_{\f{p}}^+$ is equivalent to the condition that the matrix $S$ is regular (see for example \cite[Remark 4.3]{Mu89}), which implies that the factor $G_{\f{p}}(\chi,s)$ is equal to one for all good primes.
\end{rem}

Before we proceed to the proof of the above theorem, we state an immediate corollary regarding the vanishing of the $L$-function defined above. 

\begin{cor} \label{non-vanishing of L-values} With notation and assumptions as in Theorem \ref{Euler Product Representation},
\[
L(s,\mathbf{f},\chi) \neq 0
\]
whenever $\Re(s) > n + l/2 +1$.
\end{cor}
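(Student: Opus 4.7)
The plan is to deduce non-vanishing directly from the Euler product representation established in Theorem \ref{Euler Product Representation}, by the standard principle that an absolutely convergent Euler product with non-vanishing local factors cannot vanish.

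First, I would observe that the last sentence of Theorem \ref{Euler Product Representation} asserts that the Euler product $L(s,\mathbf{f},\chi) = \prod_{\f{p}} L_{\f{p}}(\chi^{*}(\f{p})N(\f{p})^{-s})^{-1}$ is absolutely convergent in the half-plane $\Re(s) > n+l/2+1$, so there is nothing to check regarding the global convergence. Second, I would note that each local factor is non-zero as a function of $s$: for every prime $\f{p}$, the polynomial $L_{\f{p}}(X)$ is a product of terms of the form $(1-\mu_{\f{p},i}X)$ or $(1-\mu^{-1}_{\f{p},i}X)$, and hence $L_{\f{p}}(\chi^{*}(\f{p})N(\f{p})^{-s})^{-1}$ is a non-vanishing rational function of $N(\f{p})^{-s}$.

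The third and final step is to pass from local non-vanishing plus global absolute convergence to global non-vanishing. To do this, one expands
\[
\log L(s,\mathbf{f},\chi) \;=\; \sum_{\f{p}}\sum_{i}\sum_{m\geq 1}\frac{(\mu_{\f{p},i}\chi^{*}(\f{p}))^{m}+(\mu^{-1}_{\f{p},i}\chi^{*}(\f{p}))^{m}}{m}\,N(\f{p})^{-ms}
\]
(with the appropriate modification at the finite set of primes dividing $\f{c}$), and verifies that the triple sum converges absolutely in $\Re(s)>n+l/2+1$. Absolute convergence of the Euler product already supplies the estimate $\sum_{\f{p}}|L_{\f{p}}(\chi^{*}(\f{p})N(\f{p})^{-s})-1|<\infty$, which in turn forces $|\mu_{\f{p},i}^{\pm 1}\chi^{*}(\f{p})|\,N(\f{p})^{-\Re(s)}\leq 1/2$ for all but finitely many $\f{p}$, and the geometric tail bound then yields absolute convergence of the logarithmic series. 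Consequently $\log L(s,\mathbf{f},\chi)$ is a well-defined complex number, so $L(s,\mathbf{f},\chi)=\exp(\log L(s,\mathbf{f},\chi))\neq 0$.

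I do not anticipate any real obstacle: the argument is entirely soft, and the only thing one needs to be slightly careful about is the finite set of primes dividing $\f{c}$, where the local factor has the form $\prod_i(1-\mu_{\f{p},i}X)$ possibly with some $\mu_{\f{p},i}=0$; those factors are still non-vanishing (in fact polynomial reciprocals), and they contribute a finite, hence harmless, product. The essential input is the absolute convergence of the Euler product in the stated half-plane, which has already been established in Theorem \ref{Euler Product Representation}.
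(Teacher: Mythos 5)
Your proposal is correct and takes essentially the same approach as the paper: the paper's proof simply invokes absolute convergence of the Euler product together with a citation of \cite[Lemma 22.7]{Sh00} for the formal argument, and your logarithmic-expansion argument is precisely the content of that cited lemma, written out in full.
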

\begin{proof}This follows from the fact that the function $L(s,\mathbf{f},\chi)$ is absolutely convergent for $\Re(s) > n + l/2 +1$ and has an Euler product representation. For the formal argument see \cite[Lemma 22.7]{Sh00}.
\end{proof}

The rest of this section is devoted to a proof of Theorem \ref{Euler Product Representation}. Note that if we fix a prime ideal $\f{p}$ of $F$ and consider the series
\[
D_{\f{p}}(s,\mathbf{f},\chi) := \sum_{j=0}^{\infty} \lambda(\f{p}^j) \chi^{*}(\f{p})^j N(\f{p})^{-js},\,\,\,\,\Re(s)\gg 0,
\]
then
\[
D(s,\mathbf{f},\chi)= \prod_{\f{p}} D_{\f{p}}(s,\mathbf{f},\chi) = \prod_{(\f{p},\f{f}_{\chi})=1} D_{\f{p}}(s,\mathbf{f},\chi),
\]
which means that it suffices to prove the theorem locally place by place. 

\textbf{Local Notation.} For the rest of this section we fix the following notation. We fix a finite place $v \in \mathbf{h}$ of $F$. We abuse the notation and write $F$ for $F_v$, $\mathfrak{o}$ for $\f{o}_v$, and just $\f{p}$ for the corresponding maximal ideal in $\f{o}_v$. Moreover, we  denote by $\pi \in \f{p}$ any uniformiser of this place. We further set $q := [\mathfrak{o}:\mathfrak{p}]$ and denote by $| \cdot |$ the absolute value of $F$ normalised so that $|\pi| = q^{-1}$. We also write $\b{G}, G, \b{D}, D$ for $\b{G}(F_v), G(F_v), \b{D}_v$ and $D_v$. Finally, in this part of the paper we denote by $\psi_S$ the $v$-component of the additive adelic character $\psi_S$ introduced in section $3$. 
\subsection{The good places}

We consider first a finite place $v$ which is not in the support of $\mathfrak{c} \mathfrak{f}_{\chi}$. We assume that the matrix $S_v$ satisfies condition $M_{\f{p}}^+$. As we have indicated at the beginning of this section we will extend the results of \cite{MS} from the case $l=1$ to any $l$, and also introduce the twisting by a finite character $\chi$. Here we use (more or less) the notation from \cite{Mu89,Mu91,MS}. \newline

We define a local Hecke algebra $\mathfrak{X}$ as in \cite[page 142]{Mu89}. That is, let $\mathfrak{X}$ be the $\mathbb{C}$-module consisting of $\mathbb{C}$-valued functions $\phi$ on $\b{G}$ which satisfy 
\[
\phi((00,\kappa) \b{d} \b{g} \b{d}' ) = \psi_S(\kappa) \phi(\b{g}), \,\,\,\,\b{d},\b{d}' \in \b{D}, \b{g} \in \b{G},\,\,\kappa \in Sym_l(F) 
\]
and have compact support modulo $\mathcal{Z} := Sym_l(F) \subset \b{G}$. As it is explained in \cite{Mu89}, one can give to this module the structure  of an algebra by defining multiplication through convolution of functions. Moreover, it is shown in \cite[Lemma 4.4]{Mu89} that the assumption $M_{\mathfrak{p}}^{+}$ implies that a function $\phi \in \mathfrak{X}$ has support in 
\[
\bigcup_{\alpha \in \Lambda^+} \b{D} d_n(\pi_{\alpha}) \b{D} \mathcal{Z},
\]
where $\Lambda^+ := \{ (a_1,a_2,\ldots,a_n) \in \mathbb{Z}^n: a_1 \geq a_2 \geq \ldots \geq a_n \geq 0 \}$, 
\[
d_n : GL_n \hookrightarrow G \subset \b{G},\qquad d_n(a) := \diag[a, \transpose{a}^{-1}] ,
\]
and $\pi_{\alpha} := \diag[\pi^{a_1},\pi^{a_2},\ldots,\pi^{a_n}] \in GL_n(F)$. \newline

Let
\[
T := T(F) := \{ d_n (\diag[t_1,\ldots, t_n]): t_i \in F^{\times} \} \in G
\]
and 
\[
X_0(T) := \{ \xi \in Hom(T, \mathbb{C}^{\times}): \xi \,\, \text{is trivial on} \,\,T(\mathfrak{o})\}.
\]
For a character $\xi \in X_0(T)$ and $\phi \in \mathfrak{X}$ set
\[
\lambda_{\xi}(\phi) := \sum_{\alpha \in \mathbb{Z}^{n}} \xi^{-1}(d_n(\pi_{\alpha})) \hat{\phi}(d_n(\pi_{\alpha})),
\]
where for a function $\phi \in \mathfrak{X}$, $\hat{\phi}(t)$ is defined as in \cite[equation (4.8)]{Mu89}, that is,
\[
\hat{\phi}(t):= \delta_{\mathbf{N}_0}(t)^{-1/2} \int_{\mathbf{N}_0} \phi(\mathbf{n}_0 t) d\mathbf{n}_0,
\]
where $\mathbf{N}_0 := V_0 N_0 \subset \mathbf{G}$, $N_0$ is the unipotent radical of the Siegel parabolic $P_0$ of $\Sp_n$, $V_0 := \{(0, \mu, 0) : \mu \in M_{l,n}\}$, and $\delta_{\mathbf{N}_0}$ and the Haar measure $d\mathbf{n}_0$ are normalized as in \cite[page 144]{Mu89}.

For an $\alpha \in \Lambda^+$ we define $\phi_{\alpha} \in \f{X}$ by
\[
\phi_\alpha(\b{g}) := \begin{cases} \psi_S(\kappa) & \hbox{if }\b{g} = (0,0,\kappa)\b{d}d_n(\pi_{\alpha}) \b{d}' \in \mathcal{Z}\b{D} d_n(\pi_{\alpha}) \b{D},\\
0 & \hbox{otherwise},  
\end{cases}
\]
and for a finite unramified character $\chi$ of $F^{\times}$ we define the function $\nu_{s,\chi}$ on $\mathbf{G}$, $s \in \mathbb{C}$, by the conditions 
\[
\nu_{s,\chi}((0,0,\kappa) \b{d} g \b{d}') = \psi_S(-\kappa) \nu_{s,\chi}(g),\,\,\, g \in G,\,\,\b{d},\b{d}' \in \mathbf{D}
\]
and
\[
\nu_{s,\chi}(\pi_{\alpha}) := \chi(\pi_v)^{\ell(\alpha) } q^{-\ell(\alpha)s},
\]
where $\ell(\alpha) = \sum_{i=1}^na_i$. It is shown in \cite{MS} that these two conditions uniquely determine the function $\nu_{s,\chi}$.  Now, given a character $\xi \in X_0(T)$ and an unramified character $\chi$ of $F^{\times}$, we introduce the series
\[
B(\xi,\chi,s) := \sum_{\alpha \in \Lambda_n^{+}} \lambda_{\xi}(\phi_{\alpha}) \chi(\pi)^{\ell(\alpha)} q^{-\ell(\alpha)s}.
\]
Given a $\xi \in X_0(T)$ we define the function $\phi_\xi$ on $\mathbf{G}$ following \cite[equation (4.11)]{Mu89} by
\[
\phi_\xi((0,0,\kappa) \mathbf{n}_0 t (\lambda,0,0) \mathbf{d}) = \psi_S(\kappa) (\xi \delta^{1/2}_{\mathbf{n}_0})(t) \Phi_L(\lambda), \,\,\,\mathbf{d} \in \mathbf{D},\,t \in T,\,\,\mathbf{n}_0 \in \mathbf{N}_0,
\]
where $\Phi_L$ is the characteristic function of $L := M_{l,n}(\mathfrak{o})$. The following lemma (\cite{Ar94}, Lemma 5.2) gives an important integral representation of the series $B(\xi,\chi,s)$. 
\begin{lem}[Murase] For $\xi \in X_0(T)$ and a finite unramified character $\chi$ of $F^{\times}$ we have
\[
B(\xi,\chi,s) = \int_{\mathcal{Z} \setminus \mathbf{G}} \nu_{\chi,s}(g) \phi_{\xi}(g) dg. 
\]
\end{lem}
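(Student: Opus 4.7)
The strategy is to evaluate the right-hand side integral by means of an Iwasawa-type decomposition of $\mathbf{G}$ adapted to the non-reductive Jacobi setting, and to match the result term by term with the series defining $B(\xi,\chi,s)$. First, observe that the integrand $\nu_{\chi,s}\phi_\xi$ is well-defined on $\mathcal{Z}\setminus\mathbf{G}$ because the $\psi_S^{-1}$-transformation of $\nu_{\chi,s}$ and the $\psi_S$-transformation of $\phi_\xi$ on $\mathcal{Z}$ cancel; moreover it is right-$\mathbf{D}$-invariant, since both factors are. The plan is then to write $\mathbf{G}=\mathbf{N}_0\,T\,V_1\,\mathbf{D}$ (up to a set of measure zero), where $\mathbf{N}_0=\mathcal{Z}V_0 N_0$ is the unipotent radical of the Jacobi Siegel parabolic $\mathbf{P}_0=\mathbf{N}_0 M_0$, and $V_1=\{(\lambda,0,0)\}$ is appended as a non-parabolic factor because $V_1\not\subset\mathbf{P}_0$ — this is precisely where the non-reductivity of $\mathbf{G}$ manifests.

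Substituting the defining formula for $\phi_\xi$ into the integral, the $V_1$-integration collapses via $\Phi_L$ onto $V_1(\mathfrak{o})\subset\mathbf{D}$ (a finite volume contribution), and the $\mathbf{D}$-integration yields another volume constant. With the matching Haar decomposition, which contributes the modular factor $\delta_{\mathbf{N}_0}^{-1}(t)$, one is left, up to constants, with
\[
\int_T \xi(t)\,\delta_{\mathbf{N}_0}^{-1/2}(t)\left(\int_{\mathcal{Z}\setminus\mathbf{N}_0}\nu_{\chi,s}(\mathbf{n}_0 t)\,d\mathbf{n}_0\right)dt.
\]
Since $\xi$ is trivial on $T(\mathfrak{o})$ and the inner integral is $T(\mathfrak{o})$-invariant in $t$ (because $T(\mathfrak{o})\subset\mathbf{D}$ and $\nu_{\chi,s}$ is right-$\mathbf{D}$-invariant), the $T$-integral collapses to the sum
\[
\sum_{\alpha\in\mathbb{Z}^n}\xi(d_n(\pi_\alpha))\,\delta_{\mathbf{N}_0}^{-1/2}(d_n(\pi_\alpha))\int_{\mathcal{Z}\setminus\mathbf{N}_0}\nu_{\chi,s}(\mathbf{n}_0\,d_n(\pi_\alpha))\,d\mathbf{n}_0.
\]

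The final step is to identify the inner integral with a spherical transform. Using bi-$\mathbf{D}$-invariance of $\nu_{\chi,s}$ (mod $\psi_S^{-1}$ on $\mathcal{Z}$) together with the Cartan-type decomposition $\mathrm{supp}(\mathfrak{X})=\bigsqcup_{\beta\in\Lambda^+}\mathcal{Z}\mathbf{D}\,d_n(\pi_\beta)\,\mathbf{D}$ — which is exactly what the assumption $M_{\mathfrak{p}}^+$ buys us via \cite[Lemma 4.4]{Mu89} — one expands $\nu_{\chi,s}(\mathbf{n}_0\,d_n(\pi_\alpha))$ over $\beta\in\Lambda^+$ with weights $\chi(\pi)^{\ell(\beta)}q^{-\ell(\beta)s}$, recognizing the $\beta$-contribution as proportional to $\delta_{\mathbf{N}_0}^{1/2}(d_n(\pi_\alpha))\,\hat{\phi}_\beta(d_n(\pi_\alpha))$. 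Interchanging the order of summation and comparing with $\lambda_\xi(\phi_\beta)=\sum_\alpha\xi^{-1}(d_n(\pi_\alpha))\hat{\phi}_\beta(d_n(\pi_\alpha))$ (up to the convention of $\xi$ versus $\xi^{-1}$, which depends on the chosen side of the decomposition) then reproduces $\sum_\beta\chi(\pi)^{\ell(\beta)}q^{-\ell(\beta)s}\lambda_\xi(\phi_\beta)=B(\xi,\chi,s)$.

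The hard part is threefold: handling the $V_1$-factor cleanly outside the parabolic decomposition; tracking the Heisenberg character $\psi_S$ and all modular and volume constants so that the final identity holds without extra scalars; and, most essentially, leveraging the hypothesis $M_{\mathfrak{p}}^+$ to obtain the Cartan-type decomposition of the support of $\mathfrak{X}$. Without this last ingredient the expansion of $\nu_{\chi,s}$ as a weighted sum of the $\phi_\beta$'s, which drives the matching with $B(\xi,\chi,s)$, would simply not be available, and the entire spherical-transform approach would collapse.
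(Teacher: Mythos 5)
Your argument is essentially sound, but be aware that the paper offers no proof of this lemma at all: it is imported wholesale from Murase via \cite{Ar94}, Lemma 5.2, and the authors' only added content is the remark that Arakawa's computation is unaffected by the unramified twist, since $\chi$ merely rescales $\nu_{s,\chi}$ on the cell $\mathcal{Z}\mathbf{D}\,d_n(\pi_\beta)\,\mathbf{D}$ by the constant $\chi(\pi)^{\ell(\beta)}$. What you have written is a faithful reconstruction of that outsourced argument: the Iwasawa-type decomposition with the non-parabolic Heisenberg factor $V_1$ absorbed into $\mathbf{D}$ by $\Phi_L$, the reduction to $\sum_{\alpha}\xi(d_n(\pi_\alpha))\,\delta_{\mathbf{N}_0}^{-1/2}(d_n(\pi_\alpha))\int\nu_{s,\chi}(\mathbf{n}_0\,d_n(\pi_\alpha))\,d\mathbf{n}_0$, and the cell decomposition of $\mathrm{supp}(\mathfrak{X})$ furnished by $M_{\mathfrak{p}}^{+}$ through \cite{Mu89}, Lemma 4.4, are exactly Murase's ingredients. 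Two conventions need pinning down before the sketch is airtight. First, for $\mathbf{N}_0\,T\,V_1\,\mathbf{D}$ to exhaust $\mathbf{G}$ you must take $N_0$ to be the unipotent radical of the \emph{minimal} parabolic of $\Sp_n$ (as in \cite{Mu89}); with the Siegel-parabolic radical, as both you and the paper literally write, the $\GL_n$-unipotent part of the Levi is missing and the decomposition genuinely fails for $n\geq 2$ (e.g.\ $\diag[u,\transpose{u}^{-1}]$ with $u$ unipotent upper-triangular and off-diagonal entry $\pi^{-1}$ lies in no coset $N_0\,t\,\mathbf{D}$). Second, your computation lands on $\sum_{\beta}\chi(\pi)^{\ell(\beta)}q^{-\ell(\beta)s}\lambda_{\xi^{-1}}(\phi_\beta)$, and identifying this with $B(\xi,\chi,s)$ requires the Weyl-invariance $\lambda_{\xi}=\lambda_{\xi^{-1}}$, which you flag but do not justify; it is part of Murase's spherical-function theory and is what makes the resulting Euler factor symmetric under $\xi_i\leftrightarrow\xi_i^{-1}$. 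Neither point is a substantive gap, and the $\psi_S$- and measure-normalization bookkeeping you single out is indeed where the remaining labour lies.
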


\begin{rem} The original lemma in \cite{Ar94} is stated without a twist by $\chi$, but it is easy to see that the arguments there extend easily to include also the case of twisting by an unramified character.
\end{rem}

For a finite unramified character $\chi$ and a character $\xi = (\xi_1, \ldots, \xi_n)\in X_0(T)$, where  $\xi_i$ are unramified characters of $F^{\times}$, we define the local $L$-function
\[
L(\xi,\chi,s) := \prod_{i=1}^{n} (1-\xi_i(\pi) \chi(\pi) q^{-s})^{-1} (1-\xi_i^{-1}(\pi) \chi(\pi) q^{-s})^{-1}.
\]
In order to state the main theorem of this section we need to introduce a bit more notation. We write $\alpha_S(s,\chi)$ for the Siegel series attached to the symmetric matrix $S$ and to the character $\chi$, as defined for example in \cite[Chapter III]{Sh96}. Moreover, by \cite[Theorem 13.6]{Sh96}, we have
\begin{equation}\label{Siegel_series}
\alpha_S (s,\chi) = \left(L(s,\chi) \prod_{i=1}^{[l/2]} L(2s-2i,\chi^{2})\right)^{-1} g_S(s,\chi) 
\end{equation}
for some analytic function $g_S(s,\chi)$ of the form $g_S(s,\chi) = G(\chi(\pi)q^{-s})$ for some polynomial $G(X) \in \mathbb{Z}[X]$ of constant term one. Moreover if $S$ is regular, that is, $\det(2S) = \mathfrak{o}^{\times}$ for $l$ even and $\det(2S) = 2\mathfrak{o}^{\times}$ for $l$ odd, then $g_S(s,\chi) = 1$. \newline

The following theorem generalizes a result due to Murase and Sugano \cite{MS}, where the case of $l=1$ and $\chi$ trivial is considered.

\begin{thm}\label{good Euler factor}
With the notation as above,
$$L(\xi,\chi,s) = \frac{g_S(s+n+l/2,\chi)}{g_S(s+l/2,\chi)} \Lambda(\chi,s)\int_{\b{Z}\setminus \b{G}}  \nu_{\chi,s+n+l/2}(\b{g})\phi_{\xi}(\b{g})d\b{g}\Lambda(\chi,s),$$
where
$$\Lambda(\chi,s):=\begin{cases}  \prod_{i=1}^{n} L(2s+2n-2i,\chi^{2}) &\mbox{if } l \in 2\mathbb{Z},  \\ 
\prod_{i=1}^{n} L(2s+2n-2i+1,\chi^{2})  & \mbox{if }  l \not \in 2\mathbb{Z}.\end{cases} $$
In particular,
$$L(\xi,\chi,s) =  B(\xi,\chi,s+n+l/2) \frac{g_S(s+n+l/2,\chi)}{g_S(s+l/2,\chi)}\Lambda(\chi,s).$$
\end{thm}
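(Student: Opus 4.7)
The second displayed identity is an immediate consequence of the first: applying Murase's lemma (stated just before the theorem) with the shift $s \mapsto s+n+l/2$ converts the integral on the right-hand side into $B(\xi,\chi,s+n+l/2)$. Thus the heart of the argument is the evaluation of the integral
$$I(\xi,\chi,s) := \int_{\mathcal{Z}\setminus\b{G}} \nu_{\chi,s+n+l/2}(\b{g})\,\phi_{\xi}(\b{g})\,d\b{g}.$$
My plan is to unfold $I(\xi,\chi,s)$ along an Iwasawa-type decomposition of $\b{G}$ adapted to the Siegel parabolic of the Jacobi group and to identify the resulting $T$-sum with a standard spherical function expansion and the remaining integration over the Heisenberg part with a local Siegel series.

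\textbf{Step 1 (unfolding).} I would first use a Jacobi analogue of the Iwasawa decomposition $\b{G} = \b{N}_0 \, T\, \b{K}$ (with $\b{N}_0 = V_0 N_0$ and $\b{K}$ an appropriate maximal compact containing $\b{D}$), together with the fact that $\phi_{\xi}$ is right $\b{D}$-invariant and left $(\b{N}_0,\psi_S)$-equivariant, to reduce $I(\xi,\chi,s)$ to an iterated integral over $T \setminus T \cdot (\mathcal{Z} V_0) \setminus \b{G}$. The transformation law $\nu_{\chi,s}((0,0,\kappa)\b{d}g\b{d}') = \psi_S(-\kappa)\nu_{\chi,s}(g)$ forces the $\psi_S$-characters of $\phi_{\xi}$ and $\nu_{\chi,s}$ to cancel after a change of variables, leaving a clean double integral.

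\textbf{Step 2 (torus contribution).} Restricting to the torus representatives $t = d_n(\pi_{\alpha})$, $\alpha \in \Lambda^{+}$, the modular character $\delta_{\b{N}_0}^{1/2}$ combines with $\xi$ to produce the geometric sum
$$\sum_{\alpha\in\Lambda^{+}}(\xi\delta_{\b{N}_0}^{1/2})(d_n(\pi_{\alpha}))\,\chi(\pi)^{\ell(\alpha)}q^{-(s+n+l/2)\ell(\alpha)}.$$
A standard Weyl-group symmetrization, analogous to the Gindikin--Karpelevich formula in Macdonald's theory of spherical functions on $\mathrm{Sp}_n$, rewrites this as $L(\xi,\chi,s)$ divided by the product $\prod_{i=1}^{n}L(2s+2n-2i+\varepsilon,\chi^{2}) = \Lambda(\chi,s)$, where $\varepsilon \in \{0,1\}$ depends on the parity of $l$ (this dependence arises because the shift $s\mapsto s+n+l/2$ introduces a half-integer depending on $l$).

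\textbf{Step 3 (Siegel-series contribution).} The remaining integration is over the opposite Heisenberg factor $V_0 \simeq M_{l,n}$ and the unipotent $N_0 \simeq Sym_n$, twisted by $\psi_S$. Under the maximality condition $M_{\f{p}}^{+}$ on $S$, this integral unfolds to a local density count that precisely reproduces the reciprocal Siegel series $\alpha_S(s+l/2,\chi)^{-1}$ times the shifted factor $\alpha_S(s+n+l/2,\chi)$. Invoking the explicit formula \eqref{Siegel_series} to write each $\alpha_S$ as a ratio of Dirichlet $L$-functions and the polynomial factor $g_S$, the $L$-function part combines with the residual $\Lambda(\chi,s)$ from Step 2 with the right indexing, and the polynomial parts collapse to $g_S(s+n+l/2,\chi)/g_S(s+l/2,\chi)$.

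\textbf{Step 4 (assembly).} Putting the three contributions together yields
$$I(\xi,\chi,s) = L(\xi,\chi,s)\,\Lambda(\chi,s)^{-1}\,\frac{g_S(s+l/2,\chi)}{g_S(s+n+l/2,\chi)},$$
which rearranges to the first displayed identity of the theorem. The main technical obstacle I anticipate is Step 3: for $l=1$ (the Murase--Sugano and Arakawa case) the condition $M_{\f{p}}^{+}$ forces $S$ to be regular and the Siegel series is essentially trivial, so the factor $g_S$ disappears; for $l\geq 2$ the local density calculation is genuinely nontrivial and must be performed carefully using the identification $L=L'$ to guarantee that the count of integral solutions of $S[x]\equiv t\pmod{\f{p}^{r}}$ assembles into the correct form of $\alpha_S$. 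The two $L$-function book-keeping step that turns the Weyl-symmetrization output into $\Lambda(\chi,s)$ with the stated parity of $l$ also requires care, as the two cases (even/odd $l$) arise precisely from whether the half-integer shift $l/2$ lands on an integer or half-integer in the induced character on $T$.
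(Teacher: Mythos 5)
Your reduction of the ``In particular'' statement to the first display via the quoted lemma of Murase (so that the integral is just $B(\xi,\chi,s+n+l/2)$) is correct and is exactly what the paper does; note in passing that the extra factor $\Lambda(\chi,s)$ appearing twice in the displayed formula is a typo in the statement and should occur only once, as you implicitly assume.

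The core of your plan, however, has a genuine gap. Once you invoke Murase's lemma, the integral $I(\xi,\chi,s)=\int_{\mathcal{Z}\setminus\b{G}}\nu_{\chi,s+n+l/2}\,\phi_\xi$ \emph{is} the Dirichlet series $B(\xi,\chi,s+n+l/2)=\sum_\alpha \lambda_\xi(\phi_\alpha)\chi(\pi)^{\ell(\alpha)}q^{-(s+n+l/2)\ell(\alpha)}$, so the entire content of the theorem is the closed formula for the generating series of the Satake-type coefficients $\lambda_\xi(\phi_\alpha)=\sum_\beta\xi^{-1}(d_n(\pi_\beta))\hat\phi_\alpha(d_n(\pi_\beta))$. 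Your Step 2 replaces $\lambda_\xi(\phi_\alpha)$ by the single character value $(\xi\delta_{\b{N}_0}^{1/2})(d_n(\pi_\alpha))$, which amounts to assuming that the double coset $\b{D}d_n(\pi_\alpha)\b{D}$ meets $\b{N}_0T$ in a single coset; this is false, and the resulting sum would be a product of geometric series, not something a Weyl symmetrization turns into $L(\xi,\chi,s)/\Lambda(\chi,s)$. Your Step 3 then asserts that the ratio $\alpha_S(s+n+l/2,\chi)/\alpha_S(s+l/2,\chi)$ emerges from a local density count inside $I$; but no such ratio can arise there, since $I$ is a pure eigenvalue series. In the paper the two Siegel series enter from two distinct sources: $\alpha_S(s+l/2,\chi)^{-1}$ comes from Murase's Theorem 2.12 of \cite{Mu91} (the basic identity of the doubling method, quoted as an external input), which expresses $L(\xi,\chi,s)$ as an integral of the auxiliary function $\mathcal{F}(s,\chi,\cdot)$ against $\phi_\xi$, while $\alpha_S(s+n+l/2,\chi)$ comes from the explicit evaluation of $\mathcal{F}$ (Lemma \ref{additive series}), carried out by splitting a $\GL_{2n+l}$ zeta integral into three pieces and applying Shimura's formula \eqref{integral from Shimura}; the factor $I_1$ there literally reproduces the Siegel series. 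Your plan omits both of these inputs, so Steps 2--4 assert the conclusion rather than derive it. A direct spherical-function computation of $\lambda_\xi(\phi_\alpha)$ along Macdonald's lines is not known to be routine here precisely because the Weyl symmetry is twisted by the Weil representation attached to $S$ (this is where the hypothesis $M_{\f{p}}^{+}$ and the $\xi_i^{-1}$ factors in $L(\xi,\chi,s)$ would have to come from), and your plan gives no mechanism for either.
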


The rest of this subsection is devoted to a proof of this theorem. First we extend some calculations of Murase and Sugano \cite{MS}. Denote by $\sigma_{n_1,n_2}$ the characteristic function of $M_{n_1,n_2}(\mathfrak{o})$ and let
\begin{multline*}
F(s,\chi,\b{g}) := F(s,\chi, h g ) :=\\
\int_{GL_{2n+l}(F_v)} \sigma_{2n+l,4n+2l} \left( \left( y \left(\begin{matrix} 1_l & 0 \\ 0 & g \end{matrix} \right), y \alpha(h) \right) \right) \chi(\det(y)) |\det(y)|^{s+n + l/2} d^*y, 
\end{multline*}
where for $h = (\lambda,\mu,\kappa) \in H$ we set
\[
\alpha(h) := \left( \begin{matrix}   \kappa - \lambda \transpose{\mu} & - \lambda & -\mu \\                           
\transpose{\mu} & 1_n & 0 \\
\transpose{\lambda} & 0 & 1_n  \end{matrix} \right).
\]
Define also
\[
\mathcal{F}(s,\chi,\b{g}) := \int_{\mathcal{Z}} F(s,\chi,(0,0,\kappa) \b{g}) \psi_S(\kappa) d \kappa .
\]
We now recall a theorem of Murase in \cite[Theorem 2.12]{Mu91}.
\begin{thm}[Murase] We have the equality:
\begin{align*}
L(\xi,\chi,s) &= \alpha_S(s+l/2,\chi)^{-1} L(s+l/2,\chi)^{-1} \frac{\prod_{i=1}^{n} L(2s+2n+l-2i,\chi^2)}{\prod_{i=1}^{2n+l-1} L(s+n+l/2-i,\chi)} \\
&\hspace{0.4cm}\cdot\int_{\mathcal{Z}\setminus \b{G}} \mathcal{F}(s,\chi,\b{g}) \phi_{\xi}(\b{g})d\b{g}.
\end{align*}
\end{thm}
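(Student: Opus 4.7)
The identity is a local doubling identity of Piatetski-Shapiro--Rallis type, adapted to the Jacobi group by Murase. My plan is to view $\mathcal{F}(s,\chi,\b{g})$ as a distinguished spherical vector in a degenerate principal series on $\b{G}$ obtained by pulling back a Siegel-type section on $\Sp_{2n+l}$, then to unfold the pairing $\int_{\mathcal{Z}\setminus\b{G}}\mathcal{F}(s,\chi,\b{g})\phi_\xi(\b{g})\,d\b{g}$ using the integral representation for $B(\xi,\chi,s)$ already established, and finally to identify $B(\xi,\chi,\cdot)$ with $L(\xi,\chi,\cdot)$ via the spherical Hecke algebra computation on $\b{G}$.

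The first step is to exploit the equivariance of $\mathcal{F}$. A change of variable $y\mapsto y\cdot \b{g}'$ in the defining integral of $F(s,\chi,\b{g})$, together with the block structure of the matrix $\alpha(h)$ under the Heisenberg action, shows that $\mathcal{F}(s,\chi,\cdot)$ is left $\b{P}_0$-equivariant with the expected character $\chi\cdot|\det|^{s+n+l/2}$ on the torus, right $\b{D}$-invariant, and transforms as $\psi_S(-\kappa)$ under the central $Sym_l$. Uniqueness of the spherical vector in the induced representation then forces
\begin{equation*}
\mathcal{F}(s,\chi,\b{g}) \;=\; c(s,\chi)\,\nu_{\chi,s+n+l/2}(\b{g})
\end{equation*}
for some scalar $c(s,\chi)$ depending only on $s$ and $\chi$. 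Plugging this into the integral and invoking Murase's integral representation lemma yields
\begin{equation*}
\int_{\mathcal{Z}\setminus\b{G}}\mathcal{F}(s,\chi,\b{g})\,\phi_\xi(\b{g})\,d\b{g} \;=\; c(s,\chi)\,B(\xi,\chi,s+n+l/2).
\end{equation*}

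The second step is to pin down $c(s,\chi)$ by evaluating at $\b{g}=1$. The inner $GL_{2n+l}$-integral becomes a Godement--Jacquet integral of the characteristic function of $M_{2n+l,4n+2l}(\mathfrak{o})$ against $\chi(\det y)|\det y|^{s+n+l/2}$, and evaluates to $\prod_{i=0}^{2n+l-1}L(s+n+l/2-i,\chi)$. The subsequent integration of the central variable $\kappa$ against $\psi_S$ introduces the Siegel series $\alpha_S(s+l/2,\chi)$ together with the factor $L(s+l/2,\chi)$; here the hypothesis $M^+_{\f{p}}$ is what allows the Siegel series to appear in its standard regular form, without extraneous polynomial corrections. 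The remaining ingredient is the spherical Hecke computation
\begin{equation*}
B(\xi,\chi,s+n+l/2) \;=\; L(\xi,\chi,s)\,\prod_{i=1}^{n} L(2s+2n+l-2i,\chi^2)^{-1},
\end{equation*}
a Macdonald-type formula derived from the Satake transform for the Jacobi Hecke algebra and computed in \cite{MS} for $l=1$; extending this unramified computation to general $l$ under $M^+_{\f{p}}$ produces the numerator $\prod_{i=1}^{n}L(2s+2n+l-2i,\chi^2)$. Multiplying the three ingredients and rearranging gives the theorem. The main obstacle is the second step: one must carefully track $L$-factor normalizations so that the $GL_{2n+l}$ and Heisenberg contributions combine precisely to $\alpha_S(s+l/2,\chi)^{-1}L(s+l/2,\chi)^{-1}\prod L(2s+2n+l-2i,\chi^2)/\prod L(s+n+l/2-i,\chi)$, rather than this expression up to an ambiguous scalar — this is exactly the place where the non-reductive nature of $\b{G}$ manifests itself and distinguishes the Jacobi doubling computation from the purely symplectic one.
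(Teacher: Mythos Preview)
The paper does not prove this theorem: it is quoted verbatim from Murase \cite[Theorem 2.12]{Mu91} and used as a black box. So there is no ``paper's proof'' to compare against beyond noting that Murase's original argument is a local doubling computation on $\Sp_{2n+l}$, where $\mathcal{F}(s,\chi,\cdot)$ is the local section of a pulled-back Siegel Eisenstein series and the integral is the unramified local zeta integral.

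Your sketch has two genuine gaps. First, the ``Macdonald-type formula''
\[
B(\xi,\chi,s+n+l/2)\;=\;L(\xi,\chi,s)\,\prod_{i=1}^{n}L(2s+2n+l-2i,\chi^2)^{-1}
\]
that you invoke in the last step is, in the logical architecture of this paper (and of \cite{MS}), a \emph{consequence} of the Murase theorem you are trying to prove: it is exactly Theorem~\ref{good Euler factor}, obtained by combining the Murase identity with Lemma~\ref{additive series}. Using it here is circular unless you supply an independent spherical-function computation, which you do not.

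Second, even granting your steps (a) and (c), your evaluation of $c(s,\chi)$ in step (b) is off. Computing $\mathcal{F}(s,\chi,1)$ as in Lemma~\ref{additive series} gives the Siegel series at the shift $s+n+l/2$, not $s+l/2$:
\[
c(s,\chi)\;=\;\alpha_S(s+n+l/2,\chi)\,\prod_{j=0}^{2n+l-1}L(s+n+l/2-j,\chi).
\]
The factor $\alpha_S(s+l/2,\chi)^{-1}L(s+l/2,\chi)^{-1}$ in the theorem does \emph{not} arise from evaluating $\mathcal{F}$ at the identity; in Murase's argument it comes from the other half of the doubling integral on $\Sp_{2n+l}$ (the contribution of the second Jacobi factor through a Whittaker/Siegel--Weil type computation). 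Your outline collapses two distinct Siegel-series contributions at different shifts into one, which is why the bookkeeping does not close. The hypothesis $M^+_{\mathfrak{p}}$ controls the support of Hecke functions (Murase \cite[Lemma 4.4]{Mu89}); it does not force $g_S=1$, so it cannot be invoked to make the Siegel-series ratio disappear.
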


The following lemma extends a result of Murase and Sugano in \cite[Lemma 6.8]{MS}
from the case of index one ($l=1$) to any index. 
\begin{lem} \label{additive series} We have the following equality:
\begin{align*}
\mathcal{F}(s,\chi,\b{g}) &=\left(\prod_{i=1}^l L(s+n+l/2-i+1,\chi)\right)  \alpha_S(s+n+l/2,\chi)\\
&\hspace{0.4cm}\cdot\left(\prod_{i=1}^{2n} L(s+n-l/2-i+1,\chi)\right)\nu_{s+n+l/2,\chi}(\b{g}). 
\end{align*}
\end{lem}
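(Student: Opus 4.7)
The strategy is to realize $\mathcal{F}(s,\chi,\b{g})$ as a scalar multiple of $\nu_{s+n+l/2,\chi}(\b{g})$ via an equivariance argument, and then to pin down the scalar by direct computation at $\b{g}=1$. This is the natural generalization of the approach used by Murase and Sugano in the index-one case \cite{MS}.

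First, a sequence of changes of variables $y\mapsto y\,\alpha(\cdot)^{-1}$ and $y\mapsto y\,\diag[1_l,\cdot]^{-1}$ in the defining integral for $F(s,\chi,\b{g})$, together with a shift in the $\kappa$-integral defining $\mathcal{F}$, shows that $\mathcal{F}(s,\chi,\b{g})$ transforms under the left action of $\mathcal{Z}\mathbf{N}_0 T$ and the right action of $\b{D}$ in exactly the same way as $\nu_{s+n+l/2,\chi}$. The right $\b{D}$-invariance uses that both $\alpha(\b{D}\cap H)$ and the image of $D$ in $GL_{2n+l}$ lie in $GL_{2n+l}(\mathfrak{o})$ (combined with $\chi$ being unramified); the $\psi_S(-\kappa)$-equivariance on the center is an immediate translation in the $Sym_l$-integral; the Levi/unipotent transformation is a routine scaling computation. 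Since a function on $\b{G}$ with the prescribed central character, left $\mathbf{P}_0$-transformation and right $\b{D}$-invariance is unique up to a scalar (an open-density argument for $\mathbf{P}_0\b{D}\subset \b{G}$, or the uniqueness statement underpinning the definition of $\nu_{s,\chi}$ in \cite{MS}), one concludes
$$\mathcal{F}(s,\chi,\b{g}) \;=\; c(s,\chi,S)\cdot \nu_{s+n+l/2,\chi}(\b{g}).$$

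Next, I evaluate $c(s,\chi,S)=\mathcal{F}(s,\chi,1)$. At $\b{g}=1$ the only $\kappa$-dependence enters through $\alpha((0,0,\kappa))=\diag[\kappa,1_n,1_n]$, so if $y=(y_1\ y_2\ y_3)$ is split into column blocks of widths $l,n,n$, the integrand is supported where $y\in M_{2n+l}(\mathfrak{o})$ and $y_1\kappa\in M_{2n+l,l}(\mathfrak{o})$. Iwasawa-decompose $GL_{2n+l}=P K$ with $P$ the $(l,2n)$-parabolic and $K=GL_{2n+l}(\mathfrak{o})$; the $K$-integral is trivial because $\chi$ is unramified. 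Writing a representative of $P$ as $\diag[y_1,y']\cdot u$ with $y'\in GL_{2n}(F)$ and $u$ in the unipotent radical, the integrality constraints separate, and the $y$-integral factors into a $GL_{2n}$-piece and a coupled $(GL_l,Sym_l)$-piece. The $GL_{2n}$-piece is a standard Godement--Jacquet integral contributing $\prod_{i=1}^{2n}L(s+n-l/2-i+1,\chi)$; after the change of variables $\kappa\mapsto y_1^{-1}\kappa$ and use of the integral representation of the local Siegel series recorded in \eqref{Siegel_series} and \cite[Ch.~III]{Sh96}, the $(GL_l,Sym_l)$-piece becomes $\alpha_S(s+n+l/2,\chi)\prod_{i=1}^{l}L(s+n+l/2-i+1,\chi)$. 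Assembling the three factors gives the claimed formula.

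The main technical obstacle will be this last identification, namely matching the coupled $(GL_l,Sym_l)$-integral with $\alpha_S(s+n+l/2,\chi)$ times the anticipated $GL_l$ factor. For $l=1$ one can reduce to an elementary one-variable Gauss sum as in \cite{MS}; for general $l$ one must invoke Shimura's explicit formula \eqref{Siegel_series} for $\alpha_S$ and keep careful track of the stratification of $Sym_l(F)$ by the elementary-divisor type of $\kappa$ modulo $\mathfrak{o}$, which is precisely where the non-trivial polynomial factor $g_S(s,\chi)$ enters (and explains the eventual appearance of $G_{\f{p}}(\chi,s)$ in Theorem \ref{Euler Product Representation}). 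Once this identification is achieved, the equivariance step guarantees that the scalar $c(s,\chi,S)$ extends to the desired global identity for all $\b{g}\in\b{G}$.
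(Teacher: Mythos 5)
Your overall architecture --- deduce $\mathcal{F}=c\cdot\nu_{s+n+l/2,\chi}$ from an equivariance/uniqueness principle and then compute the scalar $c$ at the single point $\b{g}=1$ --- rests on a claim that is false, and this is a genuine gap. The functions $\mathcal{F}(s,\chi,\cdot)$ and $\nu_{s+n+l/2,\chi}$ are \emph{bi}-$\b{D}$-invariant with central character $\psi_S(-\kappa)$ (this is \cite[Proposition 2.3]{Mu91}, quoted in the paper); they are \emph{not} left $\mathcal{Z}\mathbf{N}_0T$-equivariant for the non-unitary character you would need. Indeed, $\b{D}\,d_n(a)\,\b{D}=\b{D}\,d_n(a)^{-1}\,\b{D}$ (conjugate by the Weyl element, as the paper itself notes at the end of the good-place subsection), so bi-$\b{D}$-invariance forces $\mathcal{F}(d_n(a))=\mathcal{F}(d_n(a)^{-1})$, whereas a left $T$-equivariance $\mathcal{F}(t\b{g})=\omega(t)\mathcal{F}(\b{g})$ with $\omega=\chi(\det)\,|{\det}|^{s+n+l/2}$ would force $\omega^2=1$; a similar argument rules out left $\mathbf{N}_0$-invariance. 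You appear to have conflated the bi-invariant objects $\nu_{s,\chi},\phi_\alpha$ with the Iwasawa-type section $\phi_\xi$ of \cite[(4.11)]{Mu89}, which is the only function in this circle of ideas carrying a left $\mathbf{N}_0T$-transformation law (and even there the space of such sections is not one-dimensional: one must also prescribe the restriction $\Phi_L(\lambda)$ to the $\{(\lambda,0,0)\}$-part, since $\mathcal{Z}\mathbf{N}_0T\b{D}\subsetneq\b{G}$). The uniqueness statement you want to import from \cite{MS} is that bi-$\b{D}$-equivariance with central character together with the values at \emph{all} Cartan representatives $\pi_\alpha$ determine $\nu_{s,\chi}$; it does not reduce anything to the value at $\b{g}=1$. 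Since the entire content of the lemma is precisely the $\b{g}$-dependence, evaluating at $\b{g}=1$ cannot prove it.

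The correct reduction, which is what the paper does, is: bi-$\b{D}$-invariance, the central character, and the hypothesis $M^+_{\f{p}}$ imply via \cite[Lemma 4.4]{Mu89} that $\mathcal{F}(s,\chi,\cdot)$ is supported on $\bigcup_{m\in\Lambda^+}\mathcal{Z}\b{D}\pi_m\b{D}$, so it suffices to compute $\mathcal{F}(s,\chi,\pi_m)$ for \emph{every} $m\in\Lambda^+$. Your computation of the constant is then essentially the paper's computation of the factor $I_1$ in the decomposition $\mathcal{F}(s,\chi,\pi_m)=I_1I_2I_3$ obtained from writing $y=k\left(\begin{smallmatrix}a&b\\&d\end{smallmatrix}\right)$ --- and it is easier than you anticipate: after applying \eqref{integral from Shimura} to the $\GL_l$-variable, the remaining $\kappa$-integral $\int_{\mathcal{Z}}\psi_S(\kappa)\chi(\nu_0(\kappa))\nu(\kappa)^{-s-n-l/2}\,d\kappa$ is the Siegel series $\alpha_S(s+n+l/2,\chi)$ \emph{by definition}, with no stratification by elementary divisors required (the polynomial $g_S$ only enters later, when Theorem \ref{good Euler factor} invokes Shimura's evaluation \eqref{Siegel_series}). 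What your proposal omits entirely is the $m$-dependence: the factor $\nu_{s+n+l/2,\chi}(\pi_m)=\chi(\pi)^{\ell(m)}q^{-\ell(m)(s+n+l/2)}$ arises from combining $I_2=q^{-\ell(m)l}$ with the denominator-ideal term $\chi(\nu_0(\pi_m))\nu(\pi_m)^{-(\cdot)}$ produced by \eqref{integral from Shimura} in the $\GL_{2n}$-integral $I_3$. Without carrying out that computation at general $\pi_m$, the lemma is not established.
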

\begin{proof}
We recall first a result of Shimura.  By \cite[Lemma 3.13]{Sh96}, for any $g \in M_m(F)$,
\begin{equation}\label{integral from Shimura}
\int_{GL_m(F)} \sigma_{m,2m} (yg,y) \chi(\det(y)) |\det(y)|^{s} d^{*}y = \prod_{i=1}^m L(s-i+1,\chi) \chi(\nu_0(g)) \nu(g)^{-s},
\end{equation}
where $\nu_0(g)$ and $\nu(g)$ denote the denominator ideal of $g$ and its norm respectively, as defined for example in \cite[page 19]{Sh96}. 

By \cite[Proposition 2.3 ]{Mu91},
\[
\mathcal{F}(s,\chi,(0,0,\kappa) \b{d} \b{g} \b{d}') = \psi_S(-\kappa) \mathcal{F}(s,\chi,\b{g} )
\]
for all $\kappa \in \mathcal{Z}$ and $\b{d},\b{d}' \in \b{D}$. That is, thanks to \cite[Lemma 4.4]{Mu89}, for a fixed $s$ the function $\mathcal{F}(s,\chi,\b{g} )$ is supported on $\bigcup_{m \in \Lambda_n^+} \mathcal{Z} \b{D} \pi_m \b{D}$. Hence, it is enough to prove the equality of the Lemma for $\b{g} = \pi_m$ for an $m \in \Lambda^+_n$. We have
\begin{align*}
\mathcal{F}(s,\chi,\pi_m) &= 
\int_{GL_{2n+l}(F)} \sigma_{2n+l,4n+2l} \( y \spmatrix{ 1_l &  \\  & \pi_m } , y \spmatrix{  \kappa &  \\   & 1_{2n} }\) \chi(\det(y)) |\det(y)|^{s+n + l/2} d^*y\\
&\hspace{0.4cm}\cdot\int_{\mathcal{Z}} \psi_S(\kappa) d \kappa 
\end{align*}

Write $y = k \(\begin{smallmatrix}  a & b \\  & d \end{smallmatrix}\)$, where $ k \in \GL_{2n+l}(\f{o})$, $a \in \GL_{l}(F)$, $d \in \GL_{2n}(F)$ and $b \in M_{l,2n}(F)$. Then $\mathcal{F}(s,\chi,\pi_m) = I_1 \cdot I_2 \cdot I_3$, where
\[
I_1 = \int_{\mathcal{Z}} \psi_S(\kappa) \int_{GL_l(F)} \sigma_{l,l}(a) \sigma_{l,l}(a \kappa) \chi(\det(a)) |\det(a)|^{s+n + l/2} d^*a,
\]
\[
I_2 = \int_{M_{l,2n}(F)} \sigma_{l,2n}(b \pi_m) \sigma_{l,2n}(b) db 
\]
and 
\[
I_3 = \int_{\GL_{2n}(F)} \sigma_{2n,2n}(d) \sigma_{2n,2n}(d \pi_m) \chi(\det(d))|\det(d)|^{s+n+l/2}|\det(d)|^{-l}d^*d. 
\]

We compute first the integral $I_1$. By the equation \eqref{integral from Shimura},
\begin{multline*}
\int_{\GL_l(F)} \sigma_{l,l}(a) \sigma_{l,l}(a \kappa) \chi(\det(a)) |\det(a)|^{s+n + l/2} d^*a\\ 
=\prod_{i=1}^l L(s+n+l/2-i+1,\chi) \chi(\nu_0(\kappa)) \nu(\kappa)^{-s-n-l/2} ,
\end{multline*}
and hence 
\[
I_1 = \prod_{i=1}^l L(s+n+l/2-i+1,\chi) \int_{\mathcal{Z}} \psi_S(\kappa)\chi(\nu_0(\kappa)) \nu(\kappa)^{-s-n-l/2} d \kappa .
\]
But the last integral is nothing else than the Siegel series $\alpha_S(s+n+l/2,\chi)$, and thus 
\[
I_1 = \prod_{i=1}^l L(s+n+l/2-i+1,\chi) \alpha_S(s+n+l/2,\chi).
\]

Finally, it is easy to see that $I_2 = q^{-(m_1+\ldots+m_n)l}$, and that by the equation \eqref{integral from Shimura} again,
$$I_3 = \prod_{i=1}^{2n} L(s+n-l/2-i+1,\chi) \chi(\nu_0(\pi_m)) \nu(\pi_m)^{-s-n-l/2} .$$
\end{proof}

\begin{proof}[Proof of Theorem \ref{good Euler factor}]
By Lemma \ref{additive series}, 
\begin{align*}
L(\xi,\chi,s) &= \alpha_S(s+l/2,\chi)^{-1} L(s+l/2,\chi)^{-1} \left(\prod_{i=1}^{2n+l-1} L(s+n+l/2-i,\chi)\right)^{-1}\\ 
&\hspace{0.4cm}\cdot\prod_{i=1}^{n} L(2s+2n+l-2i,\chi^2) 
\prod_{i=1}^l L(s+n+l/2-i+1,\chi) \alpha_S(s+n+l/2,\chi)\\ 
&\hspace{0.4cm}\cdot\prod_{i=1}^{2n} L(s+n-l/2-i+1,\chi) \int_{\b{Z}\setminus \b{G}}  \nu_{s+n+l/2,\chi}(\b{g})\phi_{\xi}(\b{g})d\b{g}\\
&=\alpha_S(s+l/2,\chi)^{-1} L(s+l/2,\chi)^{-1} \prod_{i=1}^{n} L(2s+2n+l-2i,\chi^2)\\  
&\hspace{0.4cm}\cdot L(s+n+l/2,\chi) \alpha_S(s+n+l/2,\chi)
\int_{\b{Z}\setminus \b{G}}  \nu_{s+n+l/2,\chi}(\b{g})\phi_{\xi}(\b{g})d\b{g}\\
&=\frac{\alpha_S(s+n+l/2,\chi)}{\alpha_S(s+l/2,\chi)} \frac{ L(s+n+l/2,\chi)}{L(s+l/2,\chi)} \prod_{i=1}^{n} L(2s+2n+l-2i,\chi^2) \\
&\hspace{0.4cm}\cdot \int_{\b{Z}\setminus \b{G}}  \nu_{s+n+l/2,\chi}(\b{g})\phi_{\xi}(\b{g})d\b{g}.
\end{align*}
If we now plug in the expression \eqref{Siegel_series} for the Siegel series, we obtain
\begin{align*}
L(\xi,\chi,s) &= \frac{g_S(s+n+l/2,\chi)}{g_S(s+l/2,\chi)} \frac{\prod_{i=1}^{[l/2]} L(2s+l-2i,\chi^{2})}{\prod_{i=1}^{[l/2]} L(2s+2n+l-2i,\chi^{2})} \prod_{i=1}^{n} L(2s+2n+l-2i,\chi^2) \\
&\hspace{0.4cm}\cdot \int_{\b{Z}\setminus \b{G}}  \nu_{s+n+l/2,\chi}(\b{g})\phi_{\xi}(\b{g})d\b{g}\\
&=\frac{g_S(s+n+l/2,\chi)}{g_S(s+l/2,\chi)} \prod_{i=[l/2]+1}^{[n+l/2]}\hspace{-0.3cm} L(2s+2n+l-2i,\chi^{2})
\int_{\b{Z}\setminus \b{G}} \hspace{-0.3cm}\nu_{s+n+l/2,\chi}(\b{g})\phi_{\xi}(\b{g})d\b{g} ,
\end{align*}
which finishes the proof.
\end{proof}

Given a cusp form $0 \neq \mathbf{f} \in S_{k,S}^n(\b{D},\psi)$ we can define an action of an element $\phi$ in the Hecke algebra 
$\mathfrak{X}$ by 
\[
(\mathbf{f} \star \phi) (\b{g}) = \int_{\mathcal{Z} \setminus \b{G}} \mathbf{f}(\b{g} x^{-1}) \phi(x) dx.
\]
If now $\mathbf{f}$ is a common eigenform for all $\phi\in\f{X}$, that is, $\mathbf{f} \star \phi = \lambda_{\mathbf{f}}(\phi) \mathbf{f}$ for all $\phi$, then we obtain a $\mathbb{C}$-algebra homomorphism $\lambda_{\mathbf{f}} : \mathfrak{X} \rightarrow \mathbb{C}$. 
Thanks to \cite[Theorem 4.15]{Mu89} we know that this homomorphism is of the form
\[
\lambda_{\mathbf{f}}(\phi) = \lambda_{\xi_{\mathbf{f}}}(\phi)
\]
for some character $\xi_{\mathbf{f}} \in X_0(T)$, and thus, as it is explained in \cite[Lemma 5.4]{Ar94}, 
\[
\mathbf{f} \star \phi_{\alpha} = \mathbf{f} | T_{\pi^{-1}_\alpha,\psi_S}\quad\mbox{for every }\alpha \in \Lambda_n^+. 
\]
Note here that since $\b{D} d_n(\pi_{\alpha}) \b{D} = \b{D} d_n(\pi^{-1}_{\alpha}) \b{D}$, we obtain  
\[
B(\xi_{\mathbf{f}},\chi,s) = D_{\mathfrak{p}}(s,\mathbf{f},\chi).
\]
In this way we can conclude Theorem \ref{Euler Product Representation} in the case when $v$ is a good prime by taking $\mu_{\f{p},i} :=\xi_i(\pi)$ if $\xi_{\mathbf{f}} = (\xi_1,\ldots,\xi_n)$.
\subsection{The bad places}

We now consider the case of $(\mathfrak{p},\mathfrak{c}) \neq 1$. If $(\mathfrak{p},\mathfrak{e}) \neq 1$, then there is nothing to show, because in this case each Hecke operator is just the identity. Hence we consider the case of $(\mathfrak{p},\mathfrak{e}^{-1}\f{c}) \neq 1$. In this section we set $E:= \GL_n(\mathfrak{o})$ and $\mathcal{S} := S(\mathfrak{b}^{-1}) :=  Sym_n(F) \cap M_n(\mathfrak{b}_v^{-1})$. \newline

First we work out the decomposition of the double cosets $\b{D} \diag[\tilde{\xi}, \xi] \b{D}$. Recall that we write $\b{D} = CD$ with $C = C_v[\mathfrak{o},\mathfrak{b}^{-1},\mathfrak{b}^{-1}] \subset H$ and $D = D_v[\mathfrak{b}^{-1},\mathfrak{bc}]\subset G$. By \cite[Lemma 19.2]{Sh00} we know that
\[
D \diag[\tilde{\xi}, \xi] D = \bigsqcup_{d,b} D \left( \begin{matrix}  \tilde{d} & \tilde{d}b \\  & d \end{matrix} \right), 
\]
where $d \in E \setminus E\xi E$ and $b \in \mathcal{S}/\transpose{d}\mathcal{S}d$, and thus 
\[
\b{D} \diag[\tilde{\xi}, \xi] \b{D} = CD \diag[\tilde{\xi}, \xi] DC = \bigsqcup_{d,b} \b{D} \left( \begin{matrix}  \tilde{d} & \tilde{d}b \\  & d \end{matrix} \right) C.
\]
Observe that for elements $(\lambda,\mu,\kappa) \in C$ and $\left( \begin{matrix}  \tilde{d} & \tilde{d}b \\  & d \end{matrix} \right)$ as above we have
\[
\left( \begin{matrix}  \tilde{d} & \tilde{d}b \\  & d \end{matrix} \right)(\lambda,\mu,\kappa)  = (\lambda \transpose{d}, (-\lambda b + \mu) d^{-1}, \kappa + \lambda \transpose{d} \transpose{d}^{-1}\transpose{(-\lambda b + \mu)} -\lambda \transpose{\mu} )\left( \begin{matrix}  \tilde{d} & \tilde{d}b \\  & d \end{matrix} \right).
\] 
In particular, 
\begin{equation}\label{double_coset_dec}
 \b{D} \diag[\tilde{\xi}, \xi] \b{D} = \bigsqcup_{d,b,\mu} \b{D} (0,\mu,0)  \left( \begin{matrix}  \tilde{d} & \tilde{d}b \\  & d \end{matrix} \right),
\end{equation}
where $d \in E \setminus E\xi E$, $b \in \mathcal{S} / \transpose{d} \mathcal{S} d$ and $\mu \in M_{l,n}(\mathfrak{b}^{-1}_v) d^{-1} / M_{l,n}(\mathfrak{b}^{-1}_v)$.

We will show that the set $\b{D} X \b{D}$, with $X = \{ \diag(\tilde{\xi} , \xi) : \xi \in M_n(\mathfrak{o}_v) \cap GL_n(F_v) \}$ is closed under multiplication. For $ \b{D} \diag[\tilde{\xi}_i, \xi_i] \b{D} = \bigsqcup_{d_i,b_i,\mu_i} (0,\mu_i,0)  \left( \begin{matrix}  \tilde{d}_i & \tilde{d}_ib_i \\  & d_i \end{matrix} \right)$, $i=1,2$, we have
\begin{align*}
\b{D} &\diag[\tilde{\xi}_1, \xi_1] \b{D}\diag[\tilde{\xi}_2, \xi_2] \b{D}\\
&= \bigsqcup_{d_1,b_1,\mu_1,d_2,b_2,\mu_2} \b{D} (0,\mu_1,0)  \left( \begin{matrix}  \tilde{d}_1 & \tilde{d}_1b_1 \\  & d_1 \end{matrix} \right)(0,\mu_2,0)  \left( \begin{matrix}  \tilde{d}_2 & \tilde{d}_2b_2 \\  & d_2 \end{matrix} \right)\\
&=\bigsqcup_{d_1,b_1,\mu_1,d_2,b_2,\mu_2} \b{D} (0,\mu_1,0)  \left( \begin{matrix}  \tilde{d}_1 & \tilde{d}_1b_1 \\  & d_1 \end{matrix} \right) \left( \begin{matrix}  \tilde{d}_2 & \tilde{d}_2b_2 \\  & d_2 \end{matrix} \right)(0,\mu_2d_2,0)\\
&=\bigsqcup_{d_1,b_1,\mu_1,d_2,b_2,\mu_2} \b{D} (0,\mu_1,0)  \diag[\tilde{d}_1 \tilde{d}_2, d_1 d_2 ]\left( \begin{matrix}  1 &  b_2 + \transpose{d}_2 b_1 d_2 \\  & 1 \end{matrix} \right)(0,\mu_2d_2,0)\\
&=\bigsqcup_{d_1,b_1,\mu_1,d_2,b_2,\mu_2} \b{D}\diag[\tilde{d}_1 \tilde{d}_2, d_1 d_2 ](0,\mu_1 d_1 d_2,0) \left( \begin{matrix}  1 &  b_2 + \transpose{d}_2 b_1 d_2 \\  & 1 \end{matrix} \right)(0,\mu_2d_2,0).
\end{align*}
Hence, because $(0,\mu_1 d_1 d_2,0), (0,\mu_2d_2,0) \in C$, $\spmatrix{  1 &  b_2 + \transpose{d}_2 b_1 d_2 \\  & 1 } \in D$ and $\tilde{d}_1 \tilde{d}_2 = \widetilde{d_1 d_2}$, we have shown that
\[
\b{D} \diag[\tilde{\xi}_1, \xi_1] \b{D}\diag[\tilde{\xi}_2, \xi_2] \b{D} \subset \b{D} X \b{D}.
\]

We define the Hecke algebra $\mathfrak{X} := \mathfrak{X}_v$ for $v | \mathfrak{e}^{-1}\mathfrak{c}$ to be the algebra generated by the double cosets $\b{D} X \b{D}$. \newline

In order to define the Satake parameters associated to an eigenform of this algebra we need to define an injective algebra homomorphism $\omega : \mathfrak{X} \rightarrow \mathbb{Q}[t_1,\ldots, t_n ]$. We will do this by reducing everything to the theory of $\GL_n$, very much in the spirit of Shimura in \cite[Theorem 19.8]{Sh00}. \newline

Given an element 
\[
\b{D} \diag[\tilde{\xi}, \xi] \b{D} = \bigsqcup_{d,b,\mu} (0,\mu,0)  \left( \begin{matrix}  \tilde{d} & \tilde{d}b \\  & d \end{matrix} \right),
\]
where $d \in E \setminus E\xi E$, $b \in \mathcal{S} / \transpose{d} \mathcal{S} d$ and $\mu \in M_{l,n}(\mathfrak{b}^{-1}_v) d^{-1} / M_{l,n}(\mathfrak{b}^{-1}_v)$, we set 
\[
\omega_0 \left( (0,\mu,0)  \left( \begin{matrix}  \tilde{d} & \tilde{d}b \\  & d \end{matrix} \right)\right):= \omega_0(E d),
\]
where $\omega_0$  is the classical map of the spherical Hecke algebra of $\GL_n$ defined as $\omega_0(Ed) := \prod_{i=1}^{n} (\xi^{-i} t_i)^{e_i}$ if an upper triangular representative of  $Ed$ has the diagonal entries $\pi^{e_1}, \pi^{e_2}, \ldots, \pi^{e_n}$ with $e_i \in \mathbb{Z}$. Further, let
\[
\omega(\b{D} \diag[\tilde{\xi}, \xi] \b{D}):= \sum_{d,b,\mu} \omega_0 \left( (0,\mu,0)  \left( \begin{matrix}  \tilde{d} & \tilde{d}b \\  & d \end{matrix} \right)\right) .
\] 
An identical argument to the one in \cite[Proposition 16.14]{Sh96} shows that $\omega : \mathfrak{X} \rightarrow \mathbb{Q}[[t_1^{\pm},t_2^{\pm},\ldots,t_n^{\pm}]]$ is an injective algebra homomorphism.

For a finite unramified character $\chi$ and for $s \in \mathbb{C}$ consider the formal series 
\[
B(\chi,s):=\sum_{\xi \in E \setminus B / E} (\b{D} \diag[\tilde{\xi}, \xi ] \b{D}) \chi(\det(\xi)) N(\det(\xi))^{-s},
\]
where $B := \GL_n(F) \cap M_n(\f{o})$. Then, if we define 
\[
\omega(B(\chi,s)):=\sum_{\xi \in E \setminus B / E} \omega(\b{D} \diag[\tilde{\xi}, \xi ] \b{D}) \chi(\det(\xi)) N(\det(\xi))^{-s},
\]
we have that
\[
\omega(B(\chi,s)) = \sum_{d \in E \setminus B} \omega_0(Ed) |\det(d)|^{-n-l} \chi(\det(d)) N(\det(d))^{-s}.
\]
Hence, by an argument similar to the one in \cite[Theorem 19.8]{Sh00}, we get
\[
\omega(B(\chi,s)) = \prod_{i=1}^n (1-q^{n+l} t_i \chi(\pi) q^{-s})^{-1} \in \mathbb{Q}[[t_1,\ldots,t_n]].
\]
Now \cite[Lemma 19.9]{Sh00} states that if we have a $\mathbb{Q}$-linear homomorphism $\lambda\colon\mathfrak{X} \rightarrow \mathbb{C}$ which maps the identity element to 1, then there exist Satake parameters $\mu_1,\ldots, \mu_n \in \mathbb{C}$ such that 
\[
\sum_{\xi \in E \setminus B / E} \lambda(\b{D} \diag[\tilde{\xi}, \xi ] \b{D}) \chi(\det(\xi)) N(\det(\xi))^{-s} = \prod_{i=1}^n (1-q^{n+l}\mu_i \chi(\pi) q^{-s})^{-1}
\]
or, equivalently,
\[
\sum_{\xi \in E \setminus B / E} \lambda(\b{D} \diag[\tilde{\xi}, \xi ] \b{D}) \chi(\det(\xi)) N(\det(\xi))^{-(s+n+l/2)} = \prod_{i=1}^n (1-q^{-l/2}\mu_i \chi(\pi) q^{-s})^{-1}
\]
as an equality of formal series in $\mathbb{C}[[q^{-s}]]$. Hence, if we take as $\lambda$ the homomorphism obtained from the eigenform $\mathbf{f}$ and let $\mu_{\f{p},i} := \mu_i q^{-l/2}$, we establish the rest of Theorem \ref{Euler Product Representation}, as in this case   
$$D_{\f{p}}(s,\mathbf{f},\chi) = \sum_{\xi \in E \setminus B / E} \lambda(\b{D} \diag[\tilde{\xi}, \xi ] \b{D}) \chi(\det(\xi)) N(\det(\xi))^{-s}.$$

\subsection{A $\psi$-twisted $L$-function}\label{The twisted L-function}
To an eigenform $\mathbf{f} \in S_{k,S}^n(\b{D},\psi)$ we can associate yet another $L$-function. It appears naturally in the doubling method when the form $\mathbf{f}$ has a non-trivial nebentype. For a character $\chi$ of conductor $\f{f}$ we define
\begin{multline*}
L_{\psi}(s,\mathbf{f},\chi) := \prod_{\f{p}} L_{\f{p}}(\chi^*(\mathfrak{p}) (\psi/\psi_{\mathfrak{c}})(\pi_{\mathfrak{p}}) N(\f{p})^{-s} )\\
=\left(\prod_{(\f{p},\f{c})=1} L_{\f{p}}((\chi\psi)^*(\mathfrak{p}) N(\f{p})^{-s} ) \right) \left(\prod_{\f{p} | \f{c}} L_{\f{p}}(\chi^*(\mathfrak{p})  N(\f{p})^{-s} ) \right),
\end{multline*}
where $\psi_{\f{c}} = \prod_{v | \f{c}} \psi_v$, $\pi_{\f{p}} \in \f{o}_{\f{p}}$ is a uniformizer of the ring of integers $\f{o}_{\f{p}}$, and the factors $L_{\f{p}}(X)$ are as in Theorem \ref{Euler Product Representation}.  We also define the series
\[
D_{\psi}(s,\mathbf{f},\chi) := \sum_{\f{a}} \lambda(\f{a}) \chi^{*}(\f{a}) \psi(\f{a}')N(\f{a})^{-s},
\]
where for an ideal $\f{a}$ with prime decomposition $\prod_{\f{p}}\f{p}^{n_{\f{p}}}$ we put $\f{a}' :=\prod_{(\f{p},\f{c})=1}\f{p}^{n_{\f{p}}}$. Then:
\[
D_{\psi}(s,\mathbf{f},\chi) = \prod_{(\f{p},\f{c})=1}\!\!\! D_{\f{p}}(s, \mathbf{f}, \chi\psi )\, \prod_{\f{p} | \f{c}} D_{\f{p}}(s,\mathbf{f},\chi).
\]
In particular, by Theorem \ref{Euler Product Representation},
\[
\f{L}_{\psi}(\chi,s) D_{\psi}(s+n+l/2,\mathbf{f},\chi) = L_{\psi}(s,\mathbf{f},\chi),
\]
where 
$\f{L}_{\psi}(\chi,s) = \prod_{(\f{p},\f{c})=1} \f{L}_{\f{p}}(\chi\psi,s)$, and
\[
\f{L}_{\f{p}}(\chi\psi,s) := G_{\f{p}}(\chi\psi,s) \begin{cases}  \prod_{i=1}^{n} L_{\f{p}}(2s+2n-2i,(\chi\psi)^{2}) &\mbox{if } l \in 2\mathbb{Z} \\ 
\prod_{i=1}^{n} L_{\f{p}}(2s+2n-2i+1,(\chi\psi)^{2})  & \mbox{if }  l \not \in 2\mathbb{Z}\end{cases} .
\]
Finally, for any given integral ideal $\f{x}$ we define the function
\[
L_{\psi,\f{x}}(s,\mathbf{f},\chi) := \prod_{(\f{p},\f{x})=1} L_{\f{p}}(\chi^*(\mathfrak{p}) (\psi/\psi_{\mathfrak{c}})(\pi_{\mathfrak{p}}) N(\f{p})^{-s} ),
\]
that is, we remove the Euler factors at the primes which divide $\f{x}$.
\subsection{The global Hecke algebra} \label{Normal Operators} Now let $\mathfrak{X} := \bigotimes_v \mathfrak{X}_v$ be the global Hecke algebra. Since every local Hecke algebra $\mathfrak{X}_v$ can be embedded in a power series ring (for the good places this has been established in \cite[Theorem 4.14]{Mu89} and for the bad places above), and thus is commutative, we can conclude that the global Hecke algebra $\mathfrak{X}$ is also commutative.  Moreover, if $T_{r,\psi}$ is the Hecke operator where $r_v = 1_n$ at $v | \f{c}$, then 
\[
< f|T_{r,\psi},g> = <f,g|T_{r,\psi}>.
\]
Indeed, this follows from the fact that $<f |_{S,k} \alpha, g|_{S,k}\alpha>=<f,g>$ for any $\alpha \in G^n$ and that for any $r$ as above we have
\[
\b{D}\diag [\tilde{r},r]\b{D} = CD\diag [\tilde{r},r] DC =  CD\diag [\transpose{r},r^{-1}]CD =\b{D}\diag [\transpose{r},r^{-1}]\b{D},
\]
where the second equality follows from \cite[Remark on page 89]{Sh96}. In particular, it follows that the Hecke operators $T(\f{a})$ with $(\f{a},\f{c})=1$ are normal, and thus can be simultaneously diagonalized.

We finish this section by obtaining a result which will be useful for our later considerations. We first recall that we have defined $f^c(z) = \overline{f(-\overline{z})}$. Now set $\epsilon := \diag[1_n, -1_n]$ and define
\begin{equation}\label{epsilon_automorphism}
\b{\epsilon}((\lambda,\mu,\kappa) \gamma) \b{\epsilon}:= (\lambda,-\mu,-\kappa) \epsilon \gamma \epsilon .
\end{equation}
We will check that this is a group automorphism of the Jacobi group. For any $\b{\gamma}_1 = (\lambda_1,\mu_1,\kappa_1) g_1^{-1}$ and $\b{\gamma}_2 = (\lambda_2,\mu_2,\kappa_2) g_2$, where $g_1 = \left(\begin{smallmatrix} a_1 & b_1 \\ c_1 & d_1\end{smallmatrix} \right)$ and  $g_2 = \left(\begin{smallmatrix} a_2 & b_2 \\ c_2 & d_2 \end{smallmatrix} \right)$ we have 
\begin{align*}
\b{\epsilon} \left(  \b{\gamma}_1 \b{\gamma}_2\right) \b{\epsilon} &= \b{\epsilon} ( (\lambda_1 + \lambda_2 a_1 + \mu_2 c_1, \mu_1 + \lambda_2 b_1 + \mu_2 d_1, \kappa_1 + \kappa_2 -\lambda_2 \transpose{\mu_2} +\lambda_1 \transpose{(\lambda_2 b_1 + \mu_2 d_1)}\\
& \hspace{0.4cm} + (\lambda_2 a_1 + \mu_2 c_1) \transpose{(\lambda_2 b_1 + \mu_2 d_1)} + (\lambda_2 b_1 + \mu_2 d_1) \transpose{\lambda_1})  g_1^{-1} g_2) \b{\epsilon}\\
&=(\lambda_1 + \lambda_2 a_1 + \mu_2 c_1, -(\mu_1 + \lambda_2 b_1 + \mu_2 d_1),
-( \kappa_1 + \kappa_2 +\lambda_1 \transpose{(\lambda_2 b_1 + \mu_2 d_1)}\\
&\hspace{0.4cm} + (\lambda_2 a_1 + \mu_2 c_1) \transpose{(\lambda_2 b_1 + \mu_2 d_1)} -\lambda_2 \transpose{\mu_2}  + (\lambda_2 b_1 + \mu_2 d_1) \transpose{\lambda_1})) \epsilon g_1^{-1} g_2 \epsilon.
\end{align*}

On the other hand,
\[
\b{\epsilon} (  \b{\gamma}_1) \b{\epsilon} = (\lambda_1, - \mu_1 ,-\kappa_1) \epsilon g_1^{-1}\epsilon\quad\mbox{ and }\quad
\b{\epsilon} (  \b{\gamma}_2) \b{\epsilon} = (\lambda_2, - \mu_2 ,-\kappa_2) \epsilon g_2\epsilon,
\]
that is,
\[
(\b{\epsilon} (  \b{\gamma}_1) \b{\epsilon})( \b{\epsilon} (  \b{\gamma}_2) \b{\epsilon}) = (\lambda_1, - \mu_1 ,-\kappa_1) \epsilon g_1^{-1}\epsilon (\lambda_2, - \mu_2 ,-\kappa_2) \epsilon g_2\epsilon .
\]
Now note that $(\epsilon g_1^{-1} \epsilon)^{-1} = \epsilon g_1 \epsilon = \left(\begin{smallmatrix} a_1 & -b_1 \\ -c_1 & d_1\end{smallmatrix} \right)$, and so
\begin{align*}
(\lambda_1, - \mu_1 ,-\kappa_1&) \epsilon g_1^{-1}\epsilon (\lambda_2, - \mu_2 ,-\kappa_2) \epsilon g_2\epsilon\\
&=( (\lambda_1 + \lambda_2 a_1 + (-\mu_2)(- c_1), (-\mu_1) + \lambda_2 (-b_1) + (-\mu_2) d_1,\\
&\hspace{0.4cm} (-\kappa_1) + (-\kappa_2) + (\lambda_2 a_1 + (-\mu_2) (-c_1)) \transpose{(\lambda_2 (-b_1) + (-\mu_2) d_1)}-\lambda_2 \transpose{(-\mu_2)}\\
&\hspace{0.4cm} + \lambda_1 \transpose{(\lambda_2 (-b_1) + (-\mu_2) d_1)} +(\lambda_2 (-b_1) + (-\mu_2) d_1) \transpose{\lambda_1})  \epsilon g_1^{-1} g_2 \epsilon,
\end{align*}
which shows that the map is a group automorphism of the group $\b{G}^n$. 

\begin{prop}\label{Behaviour under complex conjugation} Let $\b{\gamma} = (\lambda,\mu,\kappa) \gamma \in \b{G}$. Then
\[
(f|_{k,S} \b{\gamma})^c = f^c|_{k,S}\b{\epsilon} \b{\gamma} \b{\epsilon} .
\]
Moreover, if $f$ is an eigenform with $f| T_{\psi}(\f{a}) = \lambda(\f{a}) f$ for all fractional ideals $\f{a}$ prime to $\f{c}$, then so is $f^c$. In particular, $f^c |T_{\psi}(\f{a}) = \lambda(\f{a}) f^c$ and $L_{\psi,\f{c}}(s,f,\chi) = L_{\psi,\f{c}}(s,f^c,\chi)$. 
\end{prop}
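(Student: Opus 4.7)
For the first identity, the plan is a direct computation from the definitions. Write $\gamma=\spmatrix{a & b\\c & d}$ and $\gamma':=\epsilon\gamma\epsilon=\spmatrix{a & -b\\-c & d}$, and note that $\lambda,\mu,\kappa,S$ are real. A short computation using $g(-\bar{\tau})=(-a\bar{\tau}+b)(-c\bar{\tau}+d)^{-1}$ shows that $-\overline{g(-\bar\tau)}=\gamma'\tau$ and $\overline{\lambda(g,-\bar\tau)}=\lambda(\gamma',\tau)$; checking the $w$-component analogously, we obtain $\b{\epsilon}\b{\gamma}\b{\epsilon}\cdot z=-\overline{\b{\gamma}(-\bar z)}$. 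A parallel but longer bookkeeping, comparing each of the three trace terms in $\mathcal{J}_S$ one by one and using the identity $\overline{e(x)}=e(-\bar x)$, gives $J_{k,S}(\b{\epsilon}\b{\gamma}\b{\epsilon},z)=\overline{J_{k,S}(\b{\gamma},-\bar z)}$ (the sign flips on $\mu,\kappa,b,c$ in the definition of $\b{\epsilon}\b{\gamma}\b{\epsilon}$ are precisely what is needed to match the conjugates). Combining these two facts with the definition $f^c(w)=\overline{f(-\bar w)}$ yields the displayed identity.

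Next, I would show that the map $\b{\gamma}\mapsto\b{\epsilon}\b{\gamma}\b{\epsilon}$ is an involutive automorphism of $\b{G}^n$ preserving $\b{D}$. Indeed, the conditions defining the symplectic part $D$ are invariant under $b\mapsto -b$, $c\mapsto -c$, and the Heisenberg part $C=C[\f{o},\f{b}^{-1},\f{b}^{-1}]$ is stable under $(\lambda,\mu,\kappa)\mapsto(\lambda,-\mu,-\kappa)$. Hence $\b{\Gamma}=\b{G}^n(F)\cap\b{D}$ is $\b{\epsilon}$-stable, and since $\diag[\tilde r,r]$ has $\kappa=0$ and diagonal symplectic part it is pointwise fixed by the $\b{\epsilon}$-action. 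It follows that if $\b{G}^n(F)\cap\b{D}\diag[\tilde r,r]\b{D}=\bigsqcup_{\b{\alpha}\in\b{A}}\b{\Gamma}\b{\alpha}$ is a coset decomposition, then so is the one with representatives $\{\b{\epsilon}\b{\alpha}\b{\epsilon}:\b{\alpha}\in\b{A}\}$.

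Applying the first identity term by term to \eqref{T_r} and reindexing by the involution gives $(f|T_{r,\psi})^c=f^c|T_{r,\bar\psi}$. For $r\in Q(\f{e})$ with $\det(r)\f{o}$ prime to $\f{c}$, we may choose representatives $\b{\alpha}$ with $a_\alpha=1_n$ at every $v|\f{c}$ (since at such places $r_v\in\GL_n(\f{o}_v)$ contributes trivially to the double-coset decomposition); consequently $\psi_{\f{c}}(\det(a_\alpha)_{\f{c}})^{-1}=1$ for all representatives, so $T_{r,\psi}=T_{r,\bar\psi}$ on such primes. Therefore $(f|T_\psi(\f{a}))^c=f^c|T_\psi(\f{a})$ for every $(\f{a},\f{c})=1$, and the eigenvalue equation for $f$ yields $f^c|T_\psi(\f{a})=\overline{\lambda(\f{a})}f^c$. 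Finally, the self-adjointness relation $<f|T_{r,\psi},g>=<f,g|T_{r,\psi}>$ recalled in Section \ref{Normal Operators} for $r$ trivial at $\f{c}$, applied with $g=f$, forces $\lambda(\f{a})\in\R$, so $\overline{\lambda(\f{a})}=\lambda(\f{a})$.

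The $L$-function equality is then immediate: by Theorem \ref{Euler Product Representation}, the local Euler factors of $L_{\psi,\f{c}}(s,f,\chi)$ at primes $(\f{p},\f{c})=1$ are determined by the Satake parameters $\mu_{\f{p},i}$, which in turn are determined by the Hecke eigenvalues $\lambda(\f{p}^j)$ through the identity $B(\xi_{\mathbf{f}},\chi,s)=D_{\f{p}}(s,\mathbf{f},\chi)$; since these eigenvalues coincide for $f$ and $f^c$, the corresponding Euler factors agree. The main obstacle in this plan is really only the careful bookkeeping required to verify the automorphy-factor identity $J_{k,S}(\b{\epsilon}\b{\gamma}\b{\epsilon},z)=\overline{J_{k,S}(\b{\gamma},-\bar z)}$; conceptually, it just records the fact that $z\mapsto -\bar z$ is an anti-holomorphic involution of $\H_{n,l}$ intertwining the $\b{\gamma}$-action with the $\b{\epsilon}\b{\gamma}\b{\epsilon}$-action.
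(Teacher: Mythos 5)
Your proposal is correct and follows essentially the same route as the paper: a direct verification of the automorphy-factor identity under $z\mapsto-\bar z$, the observation that $\b{\gamma}\mapsto\b{\epsilon}\b{\gamma}\b{\epsilon}$ is an automorphism permuting the coset representatives of $\b{\Gamma}\backslash(\b{G}^n(F)\cap\b{D}\diag[\tilde r,r]\b{D})$, and self-adjointness of the Hecke operators to force $\overline{\lambda(\f{a})}=\lambda(\f{a})$. Your extra care about the conjugation of the nebentypus factor $\psi_{\f{c}}(\det(a_\alpha)_{\f{c}})^{-1}$ (which the paper passes over silently) is a welcome refinement but does not change the argument.
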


\begin{proof} Write $\b{\gamma} = (\lambda,\mu,\kappa) \left(\begin{smallmatrix} a& b \\ c & d \end{smallmatrix}\right)$, so that $\b{\epsilon} \b{\gamma} \b{\epsilon}  = (\lambda, -\mu,-\kappa ) \left(\begin{smallmatrix} a& -b \\- c & d \end{smallmatrix}\right)$. Then
\begin{align*}
(f&|_{k,S} \b{\gamma})(z) = 
\det(c\tau+d)^{-k} f((a\tau+b)(c\tau+d)^{-1}, w (c\tau+d)^{-1} + \lambda (a \tau+b)(c\tau+d)^{-1} + \mu)\\
&\hspace{-0.4cm}\cdot\mathbf{e_a}(-\tr(S\kappa) + \tr(S[w](c\tau+d)^{-1}c) - 2 \tr(S(\lambda,w)(c\tau+d)^{-1}) - \tr(S[\lambda] (a\tau+b)(c\tau+d)^{-1}) )^{-1} ,
\end{align*}
and so 
\begin{align*}
(&f|_{k,S} \b{\gamma})^c(z)=\overline{\det(c(-\overline{\tau}+d)^{-k}}
\overline{ \mathbf{e_a}(-\tr(S\kappa) + \tr(S[-\overline{w}](c(-\overline{\tau})+d)^{-1}c)}\\
& \hspace{0.4cm}\overline{- 2 \tr(S(\lambda,-\overline{w})(c(-\overline{\tau})+d)^{-1})- \tr(S[\lambda] (a(-\overline{\tau})+b)(c(-\overline{\tau}+d)^{-1}) )^{-1} } \\
&\hspace{0.2cm}\cdot\overline{f((a(-\overline{\tau})+b)(c(-\overline{\tau})+d)^{-1}, -\overline{w} (c(-\overline{\tau})+d)^{-1} + \lambda (a (-\overline{\tau})+b)(c(-\overline{\tau})+d)^{-1} + \mu)}\\
&=\overline{f((a(-\overline{\tau})+b)(c(-\overline{\tau})+d)^{-1}, -\overline{w} (c(-\overline{\tau})+d)^{-1} + \lambda (a (-\overline{\tau})+b)(c(-\overline{\tau})+d)^{-1} + \mu)}\\
&\hspace{0.4cm}\cdot\det(-c\tau+d)^{-k} \mathbf{e_a}(\tr(S\kappa) -\tr(S[w](-c\tau+d)^{-1}c) + 2 \tr(S(\lambda,-w)(-c\tau+d)^{-1})\\
&\hspace{0.4cm} + \tr(S[\lambda] (-a\tau+b)(-c\tau+d)^{-1}) )^{-1}.
\end{align*}
On the other hand
\begin{align*}
f^c&|_{k,S}\b{\epsilon} \b{\gamma} \b{\epsilon}=\det(-c\tau+d)^{-k} \mathbf{e_a}(\tr(S\kappa ) + \tr(S[w](-c\tau+d)^{-1}(-c))\\
&\hspace{0.4cm} - 2 \tr(S(\lambda,w)(-c\tau+d)^{-1}) - \tr(S[\lambda] (a\tau-b)(-c\tau+d)^{-1}) )^{-1}\\
&\hspace{0.4cm}\cdot\overline{f(-\overline{(a\tau-b)(-c\tau+d)^{-1}}, -\overline{(w (-c\tau+d)^{-1} + \lambda (a \tau-b)(-c\tau+d)^{-1} - \mu))}}\\
&= \overline{f((a(-\overline{\tau})+b)(c(-\overline{\tau})+d)^{-1}, -\overline{w} (c(-\overline{\tau})+d)^{-1} + \lambda (a (-\overline{\tau})+b)(c(-\overline{\tau})+d)^{-1} + \mu))}\\
&\hspace{0.4cm}\det(-c\tau+d)^{-k} \mathbf{e_a}( \tr(S \kappa) + \tr(S[w](-c\tau+d)^{-1}(-c)) - 2 \tr(S(\lambda,w)(-c\tau+d)^{-1})\\
&\hspace{0.4cm} - \tr(S[\lambda] (a\tau-b)(-c\tau+d)^{-1}) )^{-1}, 
\end{align*}
which establishes the first statement of the proposition. 

Now assume that $f$ is an eigenform of $T(\f{a})$ with eigenvalues $\lambda(\f{a})$ for all integral ideals $\f{a}$. Because the map \eqref{epsilon_automorphism} is a group automorphism, we see that for any $r\in Q(\f{e})$ if $\b{G}^n(F) \cap \b{D}\diag[\tilde{r},r]\b{D} = \coprod_{\b{\gamma}} \b{\Gamma} \b{\gamma}$, then also
$\b{G}^n(F) \cap \b{D}\diag[\tilde{r},r]\b{D} = \coprod_{\b{\gamma}} \b{\Gamma} \b{\epsilon} \b{\gamma} \b{\epsilon}.$
This means that $f^c|T_{r,\psi}= (f|T_{r,\psi})^c$. In particular,
\[
f^c|T_{\psi}(\f{a})= (f|T_{\psi}(\f{a}))^c = (\lambda(\f{a}) f)^c = \overline{\lambda(\f{a})} f^c
\] 
for all integral ideals $\f{a}$. 
However, since $0 \neq f$, then $<f,f> \neq 0$ and thus the equality
\[
\lambda(\f{a})<f,f> = <f|T_{\psi}(\f{a}),f> = <f,f|T_{\psi}(\f{a})> = <f,f> \overline{\lambda(\f{a})}
\] 
implies that the eigenvalues $\lambda(\f{a})$ are totally real. The last statement regarding the $L$-functions is now obvious. 
\end{proof}
\section{Analytic properties of Siegel-type Jacobi Eisenstein series} \label{section of analytic properties of Eisenstein series}

In the previous section we introduced the standard $L$-function attached to a Siegel-Jacobi eigenfunction. Our first aim is to study its analytic properties using the identity \eqref{main_inner_product}. However, in order to do this we need to establish first the analytic properties of the Siegel-type Jacobi Eisenstein series with respect to the parameter $s$. This is the subject of this section. More precisely, we will establish the analytic continuation and detect possible poles of this Eisenstein series. The main idea of our method goes back to B\"{o}cherer \cite{B83}, which was further extended by Heim in \cite{Heim}, and its aim is to relate Jacobi Eisenstein series of Siegel type to symplectic Eisenstein series (of Siegel type). We extend their results to include level, character and - more importantly - we deal also with the case of totally real field. This last generalization requires development of some new techniques in case the class number is not trivial. In this section the Jacobi Eisenstein series is denoted by a bold $\b{E}$, and the symplectic by a normal $E$.\newline

We start with the following lemma, which gives us good representatives for the sets $(\b{P}^n \cap \zeta \b{\Gamma} \zeta^{-1}) \setminus \zeta \b{\Gamma}$, where $\zeta \in \Sp_n(F)$, and $\b{\Gamma}$ is a congruent subgroup of the form $H \rtimes \Gamma_0(\mathfrak{b},\mathfrak{c})$.  

\begin{lem}\label{lem:representatives_for_ES} A set of representatives for the left cosets $(\b{P}^n \cap \zeta \b{\Gamma} \zeta^{-1}) \setminus \zeta \b{\Gamma}$ is given by 
\[
(\lambda, 0 ,0) \gamma,\,\,\,\, \lambda \in M_{l,n}(\mathfrak{o}),\,\,\,\gamma \in P \cap \zeta  \Gamma_0(\mathfrak{b},\mathfrak{c}) \zeta^{-1} \setminus \zeta \Gamma_0(\mathfrak{b},\mathfrak{c}).
\]
\end{lem}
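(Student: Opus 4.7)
The plan is to exploit the semi-direct product structure $\b{\Gamma} = H \rtimes \Gamma_0(\f{b},\f{c})$ together with the analogous decomposition $\b{P}^n = H_0 \rtimes P$, where $H_0 := \{(0,\mu,\kappa) : \mu \in M_{l,n}(F), \kappa \in Sym_l(F)\}$ is the Heisenberg subgroup with vanishing $\lambda$-component. In this sense the lemma is the natural Heisenberg extension of the purely symplectic coset decomposition used by Shimura in \cite{Sh95}; the extra parameter $\lambda \in M_{l,n}(\f{o})$ records precisely the ``$\lambda$-direction'' in the Heisenberg fiber that is not absorbed into $\b{P}^n \cap \zeta \b{\Gamma} \zeta^{-1}$.

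The key algebraic identity in $H$ is $(0,\mu,\kappa)(\lambda,0,0) = (\lambda,\mu,\kappa)$, obtained directly from the Heisenberg group law with trivial symplectic part. Thus every $(\lambda,\mu,\kappa) \in H$ factors uniquely as a product of an element of $H_0$ and an element of $\{(\lambda,0,0) : \lambda \in M_{l,n}(F)\}$. For surjectivity I would start from $\zeta \b{\gamma} \in \zeta \b{\Gamma}$ with $\b{\gamma} = (\lambda_0,\mu_0,\kappa_0) g_0 \in \b{\Gamma}$ and push the Heisenberg part through $\zeta$ using normality of $H$:
\[
\zeta \b{\gamma} = \zeta (\lambda_0,\mu_0,\kappa_0) \zeta^{-1} \cdot \zeta g_0 = (\tilde\lambda,\tilde\mu,\tilde\kappa) \cdot \zeta g_0,
\]
where $(\tilde\lambda,\tilde\mu) = (\lambda_0,\mu_0)\zeta^{-1}$. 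Applying the factorization above I rewrite this as
\[
\zeta \b{\gamma} = (0,\tilde\mu,\tilde\kappa) \cdot (\tilde\lambda,0,0) \cdot \zeta g_0.
\]
Then I would set $\gamma := \zeta g_0$, which represents a coset of $P \cap \zeta \Gamma_0(\f{b},\f{c}) \zeta^{-1}$ in $\zeta \Gamma_0(\f{b},\f{c})$, $\lambda := \tilde\lambda$, and $\b{p} := (0,\tilde\mu,\tilde\kappa) \in \b{P}^n$. The nontrivial check is that $\b{p} \in \zeta \b{\Gamma} \zeta^{-1}$ and that $\tilde\lambda \in M_{l,n}(\f{o})$: for this I would compute $\zeta^{-1}(0,\tilde\mu,\tilde\kappa)\zeta$ and exploit the block identities satisfied by $\zeta$ and $\zeta^{-1}$ (coming from the symplectic relation, e.g.\ $a\alpha + b\gamma_0 = 1_n$ and $c\alpha + d\gamma_0 = 0$, with the usual notation for the blocks) in order to identify the resulting Heisenberg components with the ``complementary'' lattice data $(\lambda_0 - \tilde\lambda\alpha, \mu_0 - \tilde\lambda\beta, \ldots)$ built from $\b{\gamma}$; matching these with the lattice conditions defining $\b{\Gamma}$ yields the required membership.

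For injectivity, suppose $(\lambda,0,0)\gamma$ and $(\lambda',0,0)\gamma'$ determine the same left coset of $\b{P}^n \cap \zeta \b{\Gamma} \zeta^{-1}$. Projecting modulo $H$ onto the symplectic quotient $G$, the relation reduces to $\gamma = p \gamma'$ for some $p \in P \cap \zeta \Gamma_0(\f{b},\f{c}) \zeta^{-1}$; by the choice of coset representatives this forces $\gamma = \gamma'$ and $p = 1$. Substituting back, the remaining identity $(\lambda - \lambda',0,0) \in \b{P}^n \cap \zeta \b{\Gamma} \zeta^{-1}$ is incompatible with $\b{P}^n$ unless $\lambda = \lambda'$, since every element of $\b{P}^n$ has vanishing $\lambda$-component.

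The main obstacle I expect is the surjectivity verification, namely confirming that the Heisenberg factor $\b{p} = (0,\tilde\mu,\tilde\kappa)$ extracted above genuinely lies in $\zeta \b{\Gamma} \zeta^{-1}$, together with the claim that $\tilde\lambda \in M_{l,n}(\f{o})$. This demands a careful bookkeeping of how the action of $\zeta$ on the Heisenberg data interacts with the integrality conditions on the blocks of $\b{\gamma}$; the computation mirrors the one underlying Shimura's symplectic analogue but must now be carried out simultaneously for all three Heisenberg components, and the symplectic block relations are essential for closing the argument.
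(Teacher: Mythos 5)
Your overall strategy coincides with the paper's: both proofs exploit the semidirect decompositions $\b{\Gamma}=H\rtimes\Gamma_0(\f{b},\f{c})$ and $\b{P}^n=H_0\rtimes P$, reduce modulo $H$ to Shimura's symplectic coset decomposition, and parametrize the Heisenberg fibre via the factorization $(0,\mu,\kappa)(\lambda,0,0)=(\lambda,\mu,\kappa)$. Your injectivity argument is correct and is essentially what the paper's proof contains implicitly.

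The problem is the surjectivity step, which you explicitly defer; it is not bookkeeping but the entire content of the lemma, and as you have set it up it does not close. You move the Heisenberg part of $\b{\gamma}=(\lambda_0,\mu_0,\kappa_0)g_0$ through $\zeta$, obtaining $\tilde\lambda=\lambda_0\T{d_\zeta}-\mu_0\T{c_\zeta}$, and then need $\tilde\lambda\in M_{l,n}(\f{o})$. For the $\zeta$ that actually occur here (one may take $\zeta=\diag[1_{n-1},a_\zeta,1_{n-1},a_\zeta^{-1}]$, one for each ideal class, as recalled just after the lemma) this gives $\tilde\lambda=\lambda_0\,\diag[1_{n-1},a_\zeta^{-1}]$, which lies in $M_{l,n}(\f{o})$ only when $a_\zeta\in\f{o}^{\times}$: conjugation by $\zeta$ rescales the last column of $\lambda$ by $a_\zeta^{-1}$ (and of $\mu$ by $a_\zeta$), so $\zeta H\zeta^{-1}\neq H$ in general, and no symplectic block identity repairs this. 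Your route therefore only produces representatives with $\lambda$ in the twisted lattice $M_{l,n}(\f{o})\,\diag[1_{n-1},a_\zeta^{-1}]$. The paper's proof avoids conjugating the $(\lambda,0,0)$-factor altogether: it asserts the set identities $\zeta\b{\Gamma}=H\rtimes\zeta\Gamma_0(\f{b},\f{c})$ and $\b{P}^n\cap\zeta\b{\Gamma}\zeta^{-1}=(H^n_0\cap H)\rtimes(P\cap\zeta\Gamma_0(\f{b},\f{c})\zeta^{-1})$ together with the commutation $(P\cap\zeta\Gamma_0\zeta^{-1})H=H(P\cap\zeta\Gamma_0\zeta^{-1})$, so that representatives of $(H^n_0\cap H)\setminus H$ and of $(P\cap\zeta\Gamma_0\zeta^{-1})\setminus\zeta\Gamma_0$ simply multiply. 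Note that these identities themselves quietly use the compatibility of $\zeta$ with the integral Heisenberg lattice, i.e.\ exactly the point your computation exposes; so to complete your proof you must either restrict to $a_\zeta\in\f{o}^{\times}$ (e.g.\ class number one, where your argument and the paper's coincide and the lemma is immediate) or restructure the surjectivity step so that the $(\lambda,0,0)$-factor is never pushed through $\zeta$, and in either case the interaction of $\zeta$ with $H$ must be confronted rather than deferred.
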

\begin{proof}
First note that $\zeta \b{\Gamma} = \zeta (H \rtimes \Gamma_0(\mathfrak{b},\mathfrak{c})) = H \rtimes \zeta \Gamma_0(\mathfrak{b},\mathfrak{c})$ and, similarly, $\b{P}^n \cap \zeta \b{\Gamma} \zeta^{-1} 
= \b{P}^n \cap (H \rtimes \zeta \Gamma_0(\mathfrak{b},\mathfrak{c}) \zeta^{-1})$, which is nothing else than the set $(H^n_0 \cap H) \rtimes (P \cap \zeta \Gamma_0(\mathfrak{b},\mathfrak{c}) \zeta^{-1})$. Now, since 
\[
(P \cap \zeta \Gamma_0(\mathfrak{b},\mathfrak{c}) \zeta^{-1}) H = H (P \cap \zeta \Gamma_0(\mathfrak{b},\mathfrak{c}) \zeta^{-1}),
\]
a set of representatives for the cosets is given by a product of representatives for $(H^n_0 \cap H) \setminus H$ and for $(P \cap \zeta \Gamma_0(\mathfrak{b},\mathfrak{c}) \zeta^{-1}) \setminus \zeta \Gamma_0(\mathfrak{b},\mathfrak{c}).$ This is precisely the statement of the lemma.
\end{proof}

Now recall the expression \eqref{eq:SiegelES} for an Eisenstein series of Siegel type:
\[
\b{E}(z,s) =\sum_{\zeta \in Z} N(\mathfrak{a}(\zeta))^{2s}\sum_{\gamma \in Q_{\zeta}}\chi[\gamma] \delta(z)^{s-k/2}|_{k,S} \gamma,
\]
where $Q_{\zeta} = (P \cap \zeta \Gamma_0(\mathfrak{b},\mathfrak{c}) \zeta^{-1}) \setminus \zeta \Gamma_0(\mathfrak{b},\mathfrak{c})$.

We set $\b{E}_{\zeta} (z,s) := \sum_{\gamma \in Q_{\zeta}}\chi[\gamma] \delta(z)^{s-k/2}|_{k,S} \gamma.$ Clearly, the analytic continuation of $\b{E}(z,s)$ and its set of possible poles would follow by establishing such a result for all the  $\b{E}_{\zeta} (z,s)$, as $\zeta \in Z$. \newline

If we write $\gamma = hg$ and $z=(\tau,w)$, then
\[
\b{E}_{\zeta}(z,s) = \sum_{\gamma \in Q_{\zeta}}\chi[\gamma] \delta(z)^{s-k/2}|_{k,S} \gamma = \sum_{\gamma \in Q_{\zeta}}\chi[\gamma] J_{k,S}(\gamma,z)^{-1} \delta(g \tau)^{s-k/2}.
\]
Further, by Lemma \ref{lem:representatives_for_ES}, 
\begin{align*}
\b{E}_{\zeta}(z,s) &= \sum_{g \in Q_{\zeta}} \chi[g] j(g,\tau)^{-k} \delta(g \tau)^{s-k/2} \mathbf{e_a}(-\tr(S[w](c_g \tau + d_g)^{-1} c_g))\\
&\hspace{0.4cm}\cdot\sum_{\lambda \in M_{l,n}(\mathfrak{o})} \mathbf{e_a}( 2\tr(\T{\l}Sw (c_g\tau +d_g)^{-1}) +\tr(S[\lambda] g \cdot \tau)).
\end{align*}

For a lattice $L$ in $M_{l,n}(F)$ we define the Jacobi theta series
\[
\Theta_{S,L}(z) = \Theta_{S,L}(\tau,w) := \sum_{\lambda \in L} \mathbf{e_a}( 2\tr(\T{\l}Sw) +\tr(S[\lambda] \tau)).
\]
Recall (Lemma \ref{Parabolic decomposition}) that the elements $\zeta$ may be selected in the form $\diag[1_{n-1}, a_\zeta, 1_{n-1}, a_\zeta^{-1}]$. In particular, for an element $ g \in Q_{\zeta}$ of the form $g = \zeta g_1$,
\[
c_g\tau +d_g = (c_{\zeta} (g_1 \tau) + d_{\zeta})(c_{g_1}\tau + d_{g_1}) = \(\begin{smallmatrix} 1_{n-1} & \\ & a_{\zeta}^{-1}\end{smallmatrix}\) (c_{g_1}\tau + d_{g_1})
\]
and 
\[
g \cdot \tau = \zeta g_1 \cdot \tau = \(\begin{smallmatrix} 1_{n-1} & \\ & a_{\zeta}\end{smallmatrix}\) (g_{1} \cdot \tau)\(\begin{smallmatrix} 1_{n-1} & \\ & a_{\zeta}\end{smallmatrix}\). 
\]
That is, we may write 
$$\sum_{\lambda \in M_{l,n}(\mathfrak{o})} \mathbf{e_a}( 2\tr(\T{\l}Sw (c_g\tau +d_g)^{-1}) +\tr(S[\lambda] g \cdot \tau))=
\Theta_{S,\Lambda_{a_\zeta}} (g_1 \cdot \tau, w (c_{g_1} \tau + d_{g_1})^{-1}),$$
where $\Lambda_{a_\zeta} := M_{l,n}(\mathfrak{o})\(\begin{smallmatrix} 1_{n-1} & \\ & a_{\zeta}\end{smallmatrix}\)$ and $g = \zeta g_1$. Moreover, because $c_g = \(\begin{smallmatrix} 1_{n-1} & \\ & a_{\zeta}^{-1}\end{smallmatrix}\) c_{g_1}$,
\[
\mathbf{e_a}(\tr(S[w](c_g \tau + d_g)^{-1} c_g)) = \mathbf{e_a}(\tr(S[w](c_{g_1} \tau + d_{g_1})^{-1} c_{g_1})).
\]
Hence,
\[
\b{E}_{\zeta}(z,s) = \sum_{g \in Q_{\zeta}} \chi[g] j(g,\tau)^{-k} \delta(g \tau)^{s-k/2} \mathbf{e_a}(-\tr(S[w](c_{g_1} \tau + d_{g_1})^{-1} c_{g_1})) \Theta_{S,\Lambda_{a_\zeta}} (g_1 z).
\]

We now set $\Gamma^{\theta} := \Sp_{n}(F) \cap D^{\theta}$, where $D^{\theta} := D[\mathfrak{b}^{-1},\mathfrak{b}]$,  if $l$ is even, and $D^{\theta} := D[\mathfrak{b}^{-1},\mathfrak{b}] \cap D[2 \mathfrak{d}^{-1}, 2 \mathfrak{d}]$ if $l$ is odd. For $\gamma \in \Gamma^{\theta}$, $\tau \in \mathbb{H}^{\mathbf{a}}$ let $j(\gamma,\tau)^{1/2} : = h(\gamma,\tau)$, where $h$ is the half-integral factor of automorphy as defined for example in \cite[page 180]{Sh00}. Then for $l$ odd and $\gamma \in \Gamma^{\theta}$ we have
\[
j(\gamma,\tau)^{l/2} = h(\gamma,\tau) j(\gamma,\tau)^{[l/2]}.
\]
Therefore it makes sense to define 
\[
 \Theta_{S,\Lambda_{a_\zeta}} (z)|_{S,l/2} \gamma := h(\gamma,\tau)^{-1} J_{S, [l/2]} (\gamma,z)^{-1} \Theta_{S,\Lambda_{a_\zeta}}(\gamma z),\quad \gamma \in \Gamma^{\theta} .
\]
In fact, for a sufficiently deep subgroup $\Gamma_{a_\zeta}$ of finite index in $\Gamma_0(\mathfrak{b},\mathfrak{c})) \cap D^{\theta}$ we have that (see \cite{Sh00})
\[
 \Theta_{S,\Lambda_{a_\zeta}} (z)|_{S,l/2} g_1 = \psi_{S}(g_1) \Theta_{S,\Lambda_{a_\zeta} }(z) ,\quad \mbox{for all } g_1 \in \Gamma_{a_\zeta}, 
 \]
where $\psi_S$ is the Hecke character of $F$ corresponding to the extension $F(\det(2S)^{1/2})/F$ if $l$ is odd, and to the extension $F((-1)^{l/4} \det(2S)^{1/2})/F$ if $l$ is even. 

Moreover, for every $g \in Q_{\zeta}$ such that $g = \zeta g_1$, $g_1 \in\Gamma_{0}(\f{b},\f{c})$, we have
\begin{multline*}
\chi[g] j(g,\tau)^{-k} \delta(g \tau)^{s-k/2}\mathbf{e_a}(-\tr(S[w](c_{g_1} \tau + d_{g_1})^{-1} c_{g_1})) \Theta_{S,\Lambda_{a_\zeta}} (g_1 z)\\
= N_{F/\mathbb{Q}}(a_\zeta)^{l/2} \psi_S(a_{\zeta})\phi[g] j(g,\tau)^{-(k-l/2)} \delta(g \tau)^{s-k/2}  (\Theta_{S,\Lambda_{a_\zeta}} (z)|_{S,l/2} g_1),
\end{multline*}
where $\phi : = \chi \psi_S$, and we have used the fact that 
\[
j(g,\tau) =j(\zeta g_1, \tau) = j(\zeta, g_1\cdot\tau) j(g_1,\tau) =N_{F/\mathbb{Q}}(a_{\zeta})^{-1} j(g_1,\tau).  
\] 
In particular, if we set $Q'_\zeta := \zeta \Gamma_{a_\zeta}$, we obtain 
\[
\b{E}_{\zeta}(z,s) = N_{F/\mathbb{Q}}(a)^{l/2} \psi_S(a_{\zeta})\sum_{\gamma \in \Gamma_{a_\zeta} \setminus \Gamma_0(\mathfrak{b},\mathfrak{c})} \overline{\chi[\gamma]} (E_{\zeta}(\tau,s-l/4) \Theta_{S,\Lambda_{a_\zeta} }(z) )|_{S,k} \gamma,
\]
where $E_{\zeta}(\tau,s)=\sum_{g \in Q'_{\zeta}} \phi[g] j(g,\tau)^{-(k-l/2)} \delta(g \tau)^{s-k/2+l/4}$ is a symplectic Eisenstein series of Siegel type of weight $k-l/2$. Since the above sum is finite, it follows that the series $\b{E}_{\zeta}(z,s)$ has poles at most at the same places where $E_{\zeta}(\tau,s-l/4)$ may have.

Hence our focus now moves to detect the poles of the series $E_{\zeta}(\tau,s)$. Series of this form appear as summands of the classical (i.e. symplectic) Siegel Eisenstein series of some (perhaps half-integral) weight $k$ and character $\chi$, namely
\[
E(\tau,\chi,s) :=E(\tau,s) =\sum_{\zeta \in Z} N(\mathfrak{a}(\zeta))^{2s}\sum_{\gamma \in R_{\zeta}}\chi[\gamma] \delta(\tau)^{s-k/2}|_{k} \gamma ,
\] 
where 
\[
E_{\zeta}(\tau,\chi, s) := E_{\zeta}(\tau,s) := \sum_{\gamma \in R_{\zeta}}\chi[\gamma] \delta(\tau)^{s-k/2}|_{k} \gamma.
\]

The analytic properties of $E(\tau,s)$ are well known, and thus we may use them to derive similar properties for $E_{\zeta}(\tau,s)$. 

We will use discrete Fourier analysis on the class group $Cl(F)$ of $F$. Recall that $Cl(F) \cong \mathbb{A}^{\times}_{F} /F^{\times}U$, where $U = F^{\times}_{\infty} \prod_v \mathfrak{o}_v^{\times}$.
Moreover, we may pick the representatives $\mathfrak{a}(\zeta)$ for $Cl(F)$ in such a way that the $\zeta$'s form the set of representatives for the set $Z$ (see \cite[Lemma 3.2]{Sh95}). 

Note that for any character $\chi$ and any character $\psi$ of $Cl(F)$, 
\[
E(\tau,\chi \psi, s) =\sum_{\zeta \in Z} \psi(\zeta) N(\mathfrak{a}(\zeta))^{2s}\sum_{\gamma \in R_{\zeta}}\chi[\gamma] \delta(\tau)^{s-k/2}|_{k} \gamma = \sum_{\zeta \in Z} \psi(\zeta) N(\mathfrak{a}(\zeta))^{2s}E_{\zeta}(\tau,s), 
\]
that is, for every character $\psi_i$ of $Cl(F)$,  
\[
E(\tau,\chi \psi_i, s) = \sum_{\zeta \in Z} \psi_i(\zeta) N(\mathfrak{a}(\zeta))^{2s}E_{\zeta}(\tau,s),\,\,\,i = 1,2, \ldots, cl(F),
\]
where $cl(F)$ denotes the cardinality of $Cl(F)$. Since the characters $\psi_i$ are linearly independent over the group $Cl(F)$, we can solve the linear system of equations with respect to the unknowns $N(\mathfrak{a}(\zeta))^{2s}E_{\zeta}(\tau,s)$. In particular, the analytic properties of $E_{\zeta}(\tau,s)$ can be read off from the ones of $E(\tau,\chi \psi_i, s), i = 1,2, \ldots, cl(F)$. 
Hence, since
\[
\b{E}_{\zeta}(z,s) = N_{F/\mathbb{Q}}(a)^{l/2} \sum_{\gamma \in \Gamma_{a_\zeta} \setminus \Gamma_0(\mathfrak{b},\mathfrak{c}))} (E_{\zeta}(\tau,s-l/4) \Theta_{S,\Lambda_{a_\zeta} }(z) )|_{S,k} \gamma,
\]
we see that the analytic properties of $\b{E}$ can be obtained from those of $E(\tau,\chi \psi_i, s)$ for the various $\psi_i$'s. To do that we will employ the following theorem of Shimura \cite{Sh00} on the analytic properties of symplectic Siegel type Eisenstein series, where 
\[
\Gamma_n(s) := \pi^{n(n-1)/4} \prod_{j=0}^{n-1} \Gamma(s- j/2).
\]

\begin{thm}[Shimura, Theorem 16.11 in \cite{Sh00}] \label{analytic properties of Siegel Eisenstein series} For a weight $ k \in \frac{1}{2}\mathbb{Z}^{\mathbf{a}}$ we define 
\[
\mathcal{G}_{k,n}(s) := \prod_{v \in \mathbf{a}} \gamma(s,|k_v|),
\]
where
\[
\gamma(s,h) := \begin{cases} \Gamma\left(s + \frac{h}{2} - \left[\frac{2h+n}{4}\right] \right) \Gamma_n(s + \frac{h}{2}) & \hbox{if } n/2 \leq h \in \mathbb{Z},\,\, n \in 2 \mathbb{Z}, \\
\Gamma_n(s+\frac{h}{2}) & \hbox{if } n/2 < h \in \mathbb{Z},\,\,n \in 2\mathbb{Z} + 1, \\
\Gamma_{2h+1}(s + \frac{h}{2}) \prod_{i = h+1}^{[n/2]} \Gamma(2s-i) & \hbox{if } 0 \leq h < n/2,\,\, h \in \mathbb{Z}, \\
\Gamma \left(s+ \frac{h-1}{2} - \left[\frac{2h+n-2}{4}\right]\right) \Gamma_n(s+h/2) & \hbox{if } n/2 < h \not \in \mathbb{Z},\,\,n \in 2\mathbb{Z} + 1, \\  
\Gamma_n(s + h/2) & \hbox{if } n/2 < h \in \mathbb{Z},\,\,n \in 2\mathbb{Z}, \\
\Gamma_{2h+1}(s + \frac{h}{2}) \prod_{i = [h]+1}^{[(n-1)/2]} \Gamma(2s-i-\frac{1}{2}) & \hbox{if } 0 < h \leq n/2,\,\, h \not \in \mathbb{Z}. 
\end{cases}
\]
We also set $\mathcal{E}(s) := \mathcal{G}(s) \Lambda_{k,\mathfrak{c}}^n(s,\chi) E(z,\chi,s)$, where 
\[
\Lambda^{n}_{k,\mathfrak{c}}(s,\chi) := \begin{cases} L_{\mathfrak{c}}(2s,\chi) \prod_{i=1}^{[n/2]} L_{\mathfrak{c}}(4s-2i,\chi^{2}) & \hbox{if } k \in \mathbb{Z}^{\mathbf{a}},\\
 \prod_{i=1}^{[(n+1)/2]} L_{\mathfrak{c}}(4s-2i+1,\chi^{2})  & \hbox{if } k \not \in \mathbb{Z}^{\mathbf{a}}.
 \end{cases}
\] 
The function $\mathcal{E}(s)$ has a meromorphic continuation to the whole of $\mathbb{C}$ and is entire if $\chi^{2} \neq 1$. If $\chi^{2}=1$, we distinguish two cases:
\begin{enumerate}
\item if $\chi^2=1$ and $\mathfrak{c} \neq \mathfrak{o}$. Set $m := \min_{v \in \mathbf{a}}\{ k_v\}$. Then if $m > n/2$, the function $\mathcal{E}(s)$ has no poles except for a possible simple pole at $s= (n+2)/4$, which occurs only if $2|k_v| - n \in 4 \mathbb{Z}$ for every $v$ such that $2|k_v| > n$. If $m \leq n/2$, then $\mathcal{E}$ has possible poles, which are all simple, in the set
\[
S^{(1)}_k := \begin{cases} \{j/2 : j \in \mathbb{Z}, [(n+3)/2]\leq j \leq n+1-m \} & \hbox{if } k \in \mathbb{Z}^{\mathbf{a}}, \\ 
\{ (2j+1)/4 : j \in \mathbb{Z}, 1 + [n/2] \leq j \leq n + 1/2 - m \} & \hbox{if } k \not \in \mathbb{Z}^{\mathbf{a}}.
 \end{cases}
\]
\item if $\chi^2 = 1$, $\mathfrak{c} = \mathfrak{o}$, and $ k \in \mathbb{Z}^{\mathbf{a}}$.  In this case each pole, which is simple, belongs to the set of poles described in (1) or to 
\[
S_k^{(2)} := \{j/2 : j \in \mathbb{Z}, 0 \leq j \leq [n/2] \},
\]
where $j=0$ is unnecessary if $\chi \neq 1$.
\end{enumerate}

\end{thm}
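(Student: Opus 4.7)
The plan is to follow Shimura's original strategy from \cite{Sh00}, which is essentially a case-by-case analysis built on the Fourier expansion of $E(\tau,\chi,s)$ and on the archimedean confluent hypergeometric analysis developed in Chapter III of loc.\ cit. First, I would write each Fourier coefficient of $E(\tau,\chi,s)$ as a product of a Siegel series at the finite places (whose explicit Euler product, for a nondegenerate index, is governed by the normalizing factor $\Lambda^{n}_{k,\mathfrak{c}}(s,\chi)$) and, at each archimedean place, a confluent hypergeometric function of the form $\xi(y,h;\alpha,\beta)$ on $\mathbb{H}_n$. The normalizing choice of $\mathcal{G}_{k,n}(s)$ is exactly the product of Gamma factors needed so that these archimedean integrals have a Gamma-free analytic continuation, and the choice of $\Lambda^{n}_{k,\mathfrak{c}}(s,\chi)$ absorbs the denominators appearing in the Euler factors of the Siegel series.

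Next, I would establish the meromorphic continuation of $\mathcal{E}(s)$ in two steps. The nondegenerate (positive definite) Fourier coefficients inherit their meromorphic continuation directly from the Gamma-function identities together with the polynomial nature of the finite Siegel series, so only the coefficients with degenerate index carry potential poles. These degenerate coefficients factor through lower rank Eisenstein series, and after interchanging sums and invoking the functional equation one reduces the problem to controlling the poles of a finite family of Hecke $L$-functions $L_{\mathfrak{c}}(2s-j,\chi)$ and $L_{\mathfrak{c}}(4s-2i,\chi^2)$ (and their half-integral analogues). When $\chi^2\neq 1$ these are entire, which gives the entireness assertion; when $\chi^2=1$ the only potential poles come from the appearance of $\zeta$-factors of the form $L_\mathfrak{c}(1,\mathbf{1})$ after cancellation with the normalizing $\Lambda^n_{k,\mathfrak{c}}$.

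The third step is to extract the precise location of the surviving poles. Here one uses the non-vanishing of certain Fourier coefficients (which forces a genuine pole at a putative value of $s$) combined with the explicit zero locus of $\Lambda^n_{k,\mathfrak{c}}(s,\chi)$. This gives the set $S_k^{(1)}$ when $\mathfrak{c}\neq\mathfrak{o}$, which is the generic case and where the analysis simplifies since the Dirichlet $L$-values at $0$ cannot vanish by cancellation. In the exceptional case $\mathfrak{c}=\mathfrak{o}$, $\chi^2=1$, $k\in\mathbb{Z}^{\mathbf{a}}$, the additional possible poles $S_k^{(2)}$ arise because the completed Dedekind zeta function genuinely has a pole at $s=1$, contributing extra poles after normalization; the exclusion of $j=0$ when $\chi\neq 1$ reflects that $L_{\mathfrak{c}}(0,\chi)$ is a finite nonzero number.

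The main obstacle will be the bookkeeping across the many cases distinguished by parity of $n$, half-integrality of $k$, and the size of $\min_v |k_v|$ relative to $n/2$; the archimedean Gamma factor $\mathcal{G}_{k,n}(s)$ must be defined in a piecewise manner (as in the statement) precisely to handle the resonance between the confluent hypergeometric zeros at $k_v < n/2$ and the poles coming from the Siegel series. A secondary technical issue is the half-integral weight case, where one works with the metaplectic cover and the theta multiplier $h(\gamma,\tau)$; here the factor $\prod_i L_{\mathfrak{c}}(4s-2i+1,\chi^2)$ (rather than $L_{\mathfrak{c}}(2s,\chi)$) appears because of the shift induced by the theta transformation law. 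Since the full argument is precisely the content of \cite[Theorem~16.11]{Sh00}, I would simply cite it rather than reproduce the calculation.
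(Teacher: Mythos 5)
Your proposal is correct and follows the same route as the paper: this theorem is quoted verbatim from Shimura (Theorem 16.11 of \cite{Sh00}) and the paper offers no proof beyond the citation, which is exactly what you do after sketching Shimura's Fourier-expansion/confluent-hypergeometric argument. Your sketch of that argument is an accurate summary of the strategy in Chapter III of \cite{Sh00}, so citing the result is the appropriate conclusion.
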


We can now state a theorem regarding the analytic properties of the Eisenstein series $\b{E}(z,\chi,s)$, which extends a previous theorem due to Heim \cite[Theorem 4.1]{Heim}. Recall that $\psi_S$ is the Hecke character of $F$ corresponding to the extension $F(\det(2S)^{1/2})/F$ if $l$ is odd, and to the extension $F((-1)^{l/4} \det(2S)^{1/2})/F$ if $l$ is even.
\newline

\begin{thm}\label{analytic properties of Eisenstein series} With notation as above, let
\[
\b{\mathcal{E}}(s) := \mathcal{G}_{k-l/2,n}(s-l/4) \Lambda_{k-l/2,\mathfrak{c}}^n(s-l/4,\chi \psi_S) \b{E}(z,\chi,s).
\]
The function $\b{\mathcal{E}}$  has a meromorphic continuation to the whole of $\mathbb{C}$, and its poles are caused by the functions
\[
\frac{\Lambda_{k-l/2,\mathfrak{c}}^n(s-l/4,\chi \psi_S)}{\Lambda_{k-l/2,\mathfrak{c}}^n(s-l/4,\chi \psi_S \psi_i)},\,\,\,i = 1,\ldots,cl(F).
\] 
These poles may appear only when $F$ has class number larger than one and $supp(\mathfrak{c}) \neq supp(cond(\chi\psi_S))$. More precisely:
\begin{enumerate}
\item Assume that $\chi^2 \psi^2_i \neq 1$ for all $i=1,\ldots,cl(F)$. Then $\b{\mathcal{E}}(s)$ has no extra poles.
\item Assume that there exist $\psi_i$ such that $\chi^2 \psi_i^2 =1$. Then we consider the following cases.
\begin{enumerate}
\item $\mathfrak{c} \neq \mathfrak{o}$. Set $m := \min_{v \in \mathbf{a}}\{ k_v - l/2\}$. If $m > n/2$, then the function $\b{\mathcal{E}}(s)$ has no extra poles except for a possible simple pole at $s= (n+2)/4$, which occurs only if $2|k_v -l/2| - n \in 4 \mathbb{Z}$ for every $v$ such that $2|k_v - l/2| > n$. If $m \leq n/2$, then all possible poles of $\mathcal{E}$ are simple and belong to the set $S^{(1)}_{k-l/2}$.

\item $\mathfrak{c} = \mathfrak{o}$, and $ k-l/2 \in \mathbb{Z}^{\mathbf{a}}$.  In this case each extra pole is simple and belongs to the set described in (a) or to 
\[
S_{k-l/2}^{(2)} := \{j/2 : j \in \mathbb{Z}, [0 \leq j \leq [n/2] \},
\]
where $j=0$ is unnecessary if $\chi \psi\neq 1$. 
\end{enumerate}
\end{enumerate}
\end{thm}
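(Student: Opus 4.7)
The plan is to reduce the analytic properties of $\b{E}(z,\chi,s)$ to those of symplectic Siegel-type Eisenstein series of weight $k-l/2$ and character $\chi\psi_S$ (twisted by characters of the class group), and then invoke Shimura's Theorem \ref{analytic properties of Siegel Eisenstein series}. The starting point is the identity derived above,
\[
\b{E}_\zeta(z,s) \;=\; N_{F/\mathbb{Q}}(a_\zeta)^{l/2}\psi_S(a_\zeta) \sum_{\gamma \in \Gamma_{a_\zeta}\backslash \Gamma_0(\f{b},\f{c})} \overline{\chi[\gamma]}\bigl(E_\zeta(\tau,s-l/4)\,\Theta_{S,\Lambda_{a_\zeta}}(z)\bigr)\big|_{S,k}\gamma,
\]
so that $\b{E}(z,\chi,s)=\sum_{\zeta\in Z}N(\f{a}(\zeta))^{2s}\b{E}_\zeta(z,s)$ is, after summing over $\zeta$ (a finite set of class-group representatives), a finite linear combination of $|_{S,k}$-translates of products of the theta series $\Theta_{S,\Lambda_{a_\zeta}}$ (which are independent of $s$) with partial symplectic Eisenstein series $E_\zeta(\tau,s-l/4)$ of weight $k-l/2$ and character $\chi\psi_S$.

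Next, I would employ the Fourier inversion on $Cl(F)$ described in the excerpt. For each character $\psi_i$ of $Cl(F)$, $i=1,\dots,cl(F)$, one has
\[
E(\tau,\chi\psi_S\psi_i,s)=\sum_{\zeta\in Z}\psi_i(\zeta)\,N(\f{a}(\zeta))^{2s}E_\zeta(\tau,s),
\]
and inverting the resulting invertible system over the finite abelian group $Cl(F)$ gives
\[
N(\f{a}(\zeta))^{2s}E_\zeta(\tau,s)=\frac{1}{cl(F)}\sum_{i=1}^{cl(F)}\overline{\psi_i(\zeta)}\,E(\tau,\chi\psi_S\psi_i,s).
\]
Substituting this (with $s$ shifted to $s-l/4$) into the formula for $\b{E}(z,\chi,s)$ yields an expression for $\b{E}(z,\chi,s)$ as a finite linear combination of slash-translates of products $E(\tau,\chi\psi_S\psi_i,s-l/4)\Theta_{S,\Lambda_{a_\zeta}}(z)$. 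Multiplying by the normalization $\mathcal{G}_{k-l/2,n}(s-l/4)\Lambda^n_{k-l/2,\f{c}}(s-l/4,\chi\psi_S)$ and rewriting the product using the Shimura-normalized symplectic Eisenstein series
$$\mathcal{E}(\chi\psi_S\psi_i,s-l/4):=\mathcal{G}_{k-l/2,n}(s-l/4)\,\Lambda^n_{k-l/2,\f{c}}(s-l/4,\chi\psi_S\psi_i)\,E(\tau,\chi\psi_S\psi_i,s-l/4)$$
one obtains
\[
\b{\mathcal{E}}(s)\;=\;\sum_{i,\zeta,\gamma} c_{i,\zeta,\gamma}\cdot\frac{\Lambda^n_{k-l/2,\f{c}}(s-l/4,\chi\psi_S)}{\Lambda^n_{k-l/2,\f{c}}(s-l/4,\chi\psi_S\psi_i)}\cdot\bigl(\mathcal{E}(\chi\psi_S\psi_i,s-l/4)\,\Theta_{S,\Lambda_{a_\zeta}}(z)\bigr)\big|_{S,k}\gamma
\]
for explicit constants $c_{i,\zeta,\gamma}$ and a finite range of summation.

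The meromorphic continuation of $\b{\mathcal{E}}(s)$ to all of $\mathbb{C}$ now follows at once from Theorem \ref{analytic properties of Siegel Eisenstein series} applied to each character $\chi\psi_S\psi_i$. To localize the poles I would analyze the two sources in the displayed expression. Since $\psi_S^2=1$, the square $(\chi\psi_S\psi_i)^2=\chi^2\psi_i^2$, so Shimura's theorem tells us that $\mathcal{E}(\chi\psi_S\psi_i,s-l/4)$ is entire precisely when $\chi^2\psi_i^2\neq 1$. This immediately yields part (1): if $\chi^2\psi_i^2\neq 1$ for every $i$, each $\mathcal{E}(\chi\psi_S\psi_i,s-l/4)$ is entire, and the potential poles of $\b{\mathcal{E}}$ can only come from the ratios $\Lambda^n(s-l/4,\chi\psi_S)/\Lambda^n(s-l/4,\chi\psi_S\psi_i)$. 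When $cl(F)=1$ only $\psi_i=1$ occurs and the ratio is identically one; when $\mathrm{supp}(\f{c})=\mathrm{supp}(\mathrm{cond}(\chi\psi_S))$ the $L$-factors in numerator and denominator all coincide with completed $L$-functions and one can verify place by place that they cancel, leaving the ratio holomorphic. In case (2), for those $i$ with $\chi^2\psi_i^2=1$ the bracket $\mathcal{E}(\chi\psi_S\psi_i,s-l/4)$ can acquire the simple poles listed in $S^{(1)}_{k-l/2}$, $S^{(2)}_{k-l/2}$, and these propagate to $\b{\mathcal{E}}(s)$ as stated.

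The technical heart of the argument, and the main obstacle, is the place-by-place verification that when $\mathrm{supp}(\f{c})=\mathrm{supp}(\mathrm{cond}(\chi\psi_S))$ the ratios of Hecke $L$-factors do not introduce poles beyond those accounted for by the Shimura-normalized summands, and conversely that when $cl(F)>1$ and the supports differ, one can actually exhibit the extra pole contributions coming from these ratios. The rest of the argument—namely, the Fourier inversion on $Cl(F)$, the finiteness of all the sums involved, and the invocation of Shimura's theorem—is straightforward given the material developed in the excerpt.
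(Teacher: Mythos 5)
Your proposal follows essentially the same route as the paper: decompose $\b{E}$ via the theta series into partial symplectic Eisenstein series $E_\zeta$ of weight $k-l/2$ and character $\chi\psi_S$, invert the finite Fourier transform over $Cl(F)$ to express each $E_\zeta$ through the full series $E(\tau,\chi\psi_S\psi_i,\cdot)$, and then quote Theorem \ref{analytic properties of Siegel Eisenstein series} together with an analysis of the ratios $\Lambda^n_{k-l/2,\f{c}}(s-l/4,\chi\psi_S)/\Lambda^n_{k-l/2,\f{c}}(s-l/4,\chi\psi_S\psi_i)$. The one step you flag as the remaining obstacle --- showing these ratios contribute no unexpected poles --- is exactly what the paper settles in the discussion preceding its proof, by observing that the class-group characters $\psi_i$ have trivial signature at infinity, so $\chi\psi_S$ and $\chi\psi_S\psi_i$ share the same infinite type, their $L$-functions have identical zeros (with multiplicities) at the real integers, and hence only the finite Euler factors over $\f{q}\mid\f{c}$ can produce poles of the ratio.
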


Before we proceed to the proof of the theorem we recall the following fact regarding zeros of Dirichlet series. For a Hecke character $\psi$ of $F$ and an integral ideal $\mathfrak{c}$ we considered the series 
\[
L_{\mathfrak{c}}(s,\psi) := \prod_{\mathfrak{q} | \mathfrak{c}} (1- \psi(\mathfrak{q}) N(\mathfrak{q})^{-s}) L(s,\psi)  
\]
with functional equation
\[
\prod_{v \in \mathbf{a}} \Gamma((s+t_v)/2) L(s,\psi) = W(\psi,s) \prod_{v \in \mathbf{a}} \Gamma((1-s+t_v)/2) L(1-s,\psi),
\]
where $W(\psi,s)$ is a non-vanishing holomorphic function, and $t_v \in \{ 0,1\}$ is the infinite type of the character. It is well known that if $\psi \neq 1$, then $L(s,\psi) \neq 0$ for $\Re(s) \geq 1$, and  $\prod_{v \in \mathbf{a}} \Gamma((s+k_v)/2) L(s,\psi)$ is entire. If $\psi = 1$, then this function is meromorphic with simple poles at $s=0$ and $s=1$, and $L(s,\psi) \neq 0$ for $\Re(s) > 1$. 

The absolute convergence and the functional equation imply that if two non-trivial characters $\psi_1$ and $\psi_2$ have the same infinite type, then the zeros of $L(s,\psi_1)$ and $L(s,\psi_2)$ as well as their orders are the same at the integers of the real axis. Namely, for any $ 0 \leq m \in \mathbb{Z}$, $L(-m,\psi_1) = L(-m,\psi_2) = 0$ if and only if there exists $v \in \mathbf{a}$ such that $\psi_1(x_v) = \psi_2(x_v) = \sgn(x_v)^m$. Moreover, the order of the zero equals precisely the number of places where this is happening. In particular, the function
\[
\frac{L_{\mathfrak{c}}(s,\psi_1)}{L_{\mathfrak{c}}(s,\psi_2) } = \left(\prod_{\mathfrak{q} | \mathfrak{c}} \frac{(1- \psi_1(\mathfrak{q})N(\mathfrak{q})^{-s})}{(1- \psi_2(\mathfrak{q})N(\mathfrak{q})^{-s})}\right) \frac{L(s,\psi_1)}{L(s,\psi_2) }
\]  
may have poles only at the integers where $\prod_{\mathfrak{q} | \mathfrak{c}} \frac{(1- \psi_1(\mathfrak{q})N(\mathfrak{q})^{-s})}{(1- \psi_2(\mathfrak{q})N(\mathfrak{q})^{-s})}$ has poles. \newline

If the characters $\psi_1 =1$ and $\psi_2$ have trivial type at infinity, then the same argument as above shows that the function
\[
\frac{L_{\mathfrak{c}}(s,\psi_1)}{L_{\mathfrak{c}}(s,\psi_2) }
\]
may have poles at the integers where the function $\prod_{\mathfrak{q} | \mathfrak{c}} \frac{(1- \psi_1(\mathfrak{q})N(\mathfrak{q})^{-s})}{(1- \psi_2(\mathfrak{q})N(\mathfrak{q})^{-s})}$ has poles. However, this time there may be an additional zero also at $s=0$. This is because at this point the order of vanishing of $L(s,\psi_1)$ is smaller by one from the order of vanishing of $L(s,\psi_2)$. 

\begin{proof}[Proof of Theorem \ref{analytic properties of Eisenstein series}]
First note that since $\psi_i$'s are the characters of $Cl(F) \equiv \mathbb{A}_F^{\times}/F^{\times} U$, where $U=F^{\times}_{\mathbf{a}} \prod_v \mathfrak{o}_v^{\times}$, their signature is trivial, that is, ${\psi_i}_{\infty}(x) = 1$ for all $x \in F_{\mathbf{a}}^{\times}$. In particular, the characters $\chi \psi_S$ and $\chi \psi_S \psi_i$, $i = 1,\ldots,cl(F)$, have the same signature at infinity. The discussion above implies that the functions $\Lambda_{k-l/2,\mathfrak{c}}^n(s-l/2,\chi \psi_S)$ and $\Lambda_{k-l/2,\mathfrak{c}}^n(s-l/2,\chi \psi_S \psi_i)$ have the same zeros on the integers at the real line, and the ratio
\[
\frac{\Lambda_{k-l/2,\mathfrak{c}}^n(s-l/4,\chi \psi_S)}{\Lambda_{k-l/2,\mathfrak{c}}^n(s-l/4,\chi \psi_S \psi_i)}
\]
may have poles in cases indicated in the theorem. However, then (Theorem \ref{analytic properties of Siegel Eisenstein series}) the series 
\[
 \frac{\Lambda_{k-l/2,\mathfrak{c}}^n(s-l/4,\chi \psi_S)}{\Lambda_{k-l/2,\mathfrak{c}}^n(s-l/2,\chi \psi_S \psi_i)}\mathcal{G}_{k-l/2,n}(s-l/4)
\Lambda_{k-l/2,\mathfrak{c}}^n(s-l/4,\chi \psi_S \psi_i) E(\tau,\chi \psi_i \psi_S,s-l/4)
\]
does not have any more poles unless $\chi^2 \psi_i^2 =1$ for some $i$, in which case the poles are as described in the theorem.
\end{proof}

\begin{rem} 
The analytic properties of Jacobi Eisenstein series presented in Theorem \ref{analytic properties of Eisenstein series} were obtained from the well-studied symplectic Eisenstein series via establishing the link between these two kinds of Eisenstein series. However, perhaps one could also try to use the results of Arakawa in \cite{Ar93} on the Fourier coefficients of Jacobi Eisenstein series.
\end{rem}

\section{Analytic continuation of the standard $L$-function} \label{Analytic Continuation}
\newcommand{\ord}{\mathrm{ord}}

We are now ready to establish two main theorems regarding the analytic properties of the standard $L$-function and the Klingen-type Jacobi Eisenstein series. The approach taken here can be regarded as an extension from the symplectic to the Jacobi setting of the method utilized in \cite{Sh95}.

We keep the notation introduced at the beginning of section \ref{Hecke algebra} and additionally we define groups
$$\b{D}':=\{ (\l,\mu,\kappa)x\in C[\f{o}, \f{b}^{-1}, \f{b}^{-1}]D[\f{b}^{-1}\f{c},\f{be}]:(a_x-1_n)_v\in M_n(\f{e}_v)\mbox{ for every } v|\f{e}\} ,$$
$$\b{\Gamma}' :=\b{G}^n(F)\cap \b{D}'$$
and
$$R(\f{e},\f{c}):=\{ \diag [\tilde{q},q]:q\in Q(\f{e}), q_v\in M_n(\f{c}_v) \mbox{ for every } v|\f{e}^{-1}\f{c}\} .$$
For $\diag [\tilde{q},q]\in R(\f{e},\f{c})$ and $f\in M^n_{k,S}(\b{\Gamma},\psi)$, in a manner similar to $f|T_{r,\psi}$, we define
%
%
%
%
\begin{equation}
f|U_{q,\psi}:=\sum_{\b{\beta}\in\b{B}} \psi_{\f{c}}(\det (a_{\beta})_{\f{c}})^{-1} f|_{k,S}\b{\beta},
\end{equation}
where $\b{B}\subset\b{G}^n(F)$ is such that $\b{G}^n(F)\cap\b{D}\diag [\tilde{q},q]\b{D}'=\coprod_{\b{\beta}\in\b{B}}\b{\Gamma\beta}$. As in section \ref{Hecke algebra}, if we write $\mathbf{f}|U_{q,\psi}$ for the  adelic Jacobi form associated to $f|U_{q,\psi}$ (with $\b{g} =1$) and $\b{D}\diag [\tilde{q},q]\b{D}'=\coprod_{\b{\beta}\in\b{B}}\b{D\beta}$ with $\b{B}\subset\b{G}_{\h}$, then 
$$(\mathbf{f}|U_{q,\psi})(x)=\sum_{\b{\beta}\in\b{B}} \psi_{\f{c}}(\det (a_{\beta})_{\f{c}})^{-1} \mathbf{f}(x\b{\beta}^{-1}),\qquad x\in\b{G}^n(\A).$$

For the rest of this section we assume that $f\in S^n_{k,S}(\b{\Gamma},\psi)$ is a non-zero eigenfunction of $T_{\psi}(\f{a})$ for every $\f{a}$ with eigenvalues $\l(\f{a})$. Note that $T_{\psi}(\f{a})\neq 0$ only if $\f{a}$ is coprime to $\f{e}$. 

We start with a version of \cite[Lemma 6.2]{Sh95} for Hecke operators in our Jacobi setting.

\begin{lem}\label{lem:mixed_cosets}
Let $h$ be an element of $\mathbb{A}^{\times}_{\h}$ such that its corresponding ideal is $\f{e}^{-1}\f{c}$ and $h_v=1$ for $v\nmid\f{e}^{-1}\f{c}$. Then $U_{hr,\psi}=T_{r,\psi}U_{h1_n,\psi}$ for every $r\in Q(\f{e})$. Moreover, for $f\in M^n_{k,S}(\b{\Gamma},\psi)$ we have $f|T_{h1_n,\psi}\neq 0$ only if $f|U_{h1_n,\psi}\neq 0$.
\end{lem}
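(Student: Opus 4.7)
The plan for the first assertion $U_{hr,\psi}=T_{r,\psi}U_{h1_n,\psi}$ is to reduce it to a coset-multiplication identity. If $\{\alpha_i\}$ and $\{\beta_j\}$ denote left $\b{\Gamma}$-coset representatives of $\b{G}^n(F)\cap\b{D}\diag[\tilde r,r]\b{D}$ and $\b{G}^n(F)\cap\b{D}\diag[\tilde h,h]\b{D}'$ respectively, I would first establish that the products $\{\alpha_i\beta_j\}_{i,j}$ form a complete, non-redundant set of left $\b{\Gamma}$-coset representatives of $\b{G}^n(F)\cap\b{D}\diag[\widetilde{hr},hr]\b{D}'$. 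Combined with the multiplicativity $\psi_{\f{c}}(\det a_{\alpha_i\beta_j})=\psi_{\f{c}}(\det a_{\alpha_i})\psi_{\f{c}}(\det a_{\beta_j})$, which holds because the representatives may be taken upper-triangular in the Siegel parabolic, this yields the desired identity $f|(T_{r,\psi}U_{h1_n,\psi})=f|U_{hr,\psi}$ directly from the defining sums.

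To establish the coset identity I would work place-by-place, exploiting that the supports of the non-trivial local components of $r$ and $h$ are controlled. At $v\mid\f{e}$ both $r_v=h_v=1$, so there is nothing to check. At $v\nmid\f{c}$ we have $h_v=1$ and $\b{D}_v=\b{D}'_v$, so the identity collapses to a tautology. The substantive case is $v\mid\f{e}^{-1}\f{c}$: here one combines the explicit Bruhat-type decomposition \eqref{double_coset_dec} of $\b{D}_v\diag[\tilde r_v,r_v]\b{D}_v$ with the analogous decomposition of $\b{D}_v\diag[\tilde h_v,h_v]\b{D}'_v$, computed in the same spirit as in the proof of Lemma \ref{key_lemma} at bad places. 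The fact that $h_v$ is a scalar matrix, hence central, means that multiplying the two decompositions in either order yields upper-triangular representatives with diagonal block $hr_v$; disjointness follows by a counting argument comparing $[\b{D}:\b{D}\cap\diag[\widetilde{hr},hr]^{-1}\b{D}'\diag[\widetilde{hr},hr]]$ with the product of the corresponding indices for $r$ and $h$ separately.

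For the final implication $f|T_{h1_n,\psi}\neq 0\Rightarrow f|U_{h1_n,\psi}\neq 0$, the plan is to express $f|T_{h1_n,\psi}$ as a finite sum of $|_{k,S}$-translates of $f|U_{h1_n,\psi}$. Concretely, one seeks a finite collection $\{\delta\}\subseteq\b{G}^n(F)$ with the property that
$$\b{G}^n(F)\cap\b{D}\diag[\tilde h,h]\b{D}=\coprod_\delta\bigl(\b{G}^n(F)\cap\b{D}\diag[\tilde h,h]\b{D}'\bigr)\delta;$$
granting this,
$$f|T_{h1_n,\psi}=\sum_\delta\psi_{\f{c}}(\det a_\delta)^{-1}\,(f|U_{h1_n,\psi})|_{k,S}\delta,$$
and vanishing of $f|U_{h1_n,\psi}$ would force vanishing of $f|T_{h1_n,\psi}$, giving the contrapositive. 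The main obstacle is that $\b{D}'$ is not a subgroup of $\b{D}$: at $v\mid\f{e}^{-1}\f{c}$ the two Iwahori-type subgroups $\b{D}_v$ and $\b{D}'_v$ impose opposite strength conditions on $b_x$ and $c_x$, so neither contains the other and the $\delta$-decomposition cannot be read off from a plain subgroup index. The remedy is once again local: both $\b{D}_v$ and $\b{D}'_v$ sit inside the common maximal parahoric $C_v[\f{o},\f{b}^{-1},\f{b}^{-1}]\rtimes D_v[\f{b}^{-1},\f{b}]$, and conjugation by $\diag[\tilde h_v,h_v]$ scales the off-diagonal blocks by $h_v^{\pm 2}$, so the required $\delta$-representatives can be produced explicitly as a finite set inside that parahoric, after which strong approximation patches the local data into the desired global decomposition.
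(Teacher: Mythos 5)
Your proposal is correct, and it splits naturally into two halves of different character. For the identity $U_{hr,\psi}=T_{r,\psi}U_{h1_n,\psi}$ you follow essentially the same route as the paper: reduce to the adelic coset identity $\b{D}\diag[\widetilde{hr},hr]\b{D}'=\b{D}\diag[\tilde r,r]\b{D}\cdot\b{D}\diag[\tilde h,h]\b{D}'$, check it place by place (the only substantive places being $v\mid\f{e}^{-1}\f{c}$, since $\b{D}_v=\b{D}'_v$ and $h_v=1$ everywhere else), and multiply the explicit upper-triangular representatives coming from \eqref{double_coset_dec} and its mixed-coset analogue. For the implication $f|T_{h1_n,\psi}\neq 0\Rightarrow f|U_{h1_n,\psi}\neq 0$, however, you take a genuinely different route. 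The paper proves this via the adelic Fourier expansion of Proposition \ref{prop:adelic_Fourier_expansion}: it computes the $(t,r)$-coefficients of both $\mathbf{f}|T_{h1_n,\psi}$ and $\mathbf{f}|U_{h1_n,\psi}$ as explicit nonzero multiples of the same coefficient $c(t,r;hq,\lambda)$, subject to support conditions involving $Sym_n(\f{b}^{-1})$ and $Sym_n(\f{b}^{-1}\f{c})$ respectively, and then observes that the $T$-support condition is strictly stronger, so vanishing of all $U$-coefficients forces vanishing of all $T$-coefficients. Your coset-theoretic alternative does go through: at $v\mid\f{e}^{-1}\f{c}$ the local representatives of $\b{D}_v\diag[\tilde h_v,h_v]\b{D}'_v$ have upper-right block ranging over $Sym_n(\f{b}_v^{-1}\f{c}_v)/h_v^2Sym_n(\f{b}_v^{-1})$ while those of $\b{D}_v\diag[\tilde h_v,h_v]\b{D}_v$ range over $Sym_n(\f{b}_v^{-1})/h_v^2Sym_n(\f{b}_v^{-1})$, with identical Heisenberg ranges, so the unipotent elements $\delta_\beta=\spmatrix{1&\beta\\ &1}$ with $\beta\in Sym_n(\f{b}^{-1}\f{e})$ running over representatives of $Sym_n(\f{b}_v^{-1})/Sym_n(\f{b}_v^{-1}\f{c}_v)$ at $v\mid\f{e}^{-1}\f{c}$ realize your claimed disjoint decomposition, and $\psi_{\f{c}}(\det a_{\delta_\beta})=1$. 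This yields the cleaner identity $f|T_{h1_n,\psi}=\sum_{\delta}(f|U_{h1_n,\psi})|_{k,S}\delta$, which is stronger than the stated implication and avoids Fourier analysis entirely; what it does not give you, and what the paper's computation does, is the explicit relation between the coefficients of $\mathbf{f}|T_{h1_n,\psi}$ and the coefficients $c(t,r;hq,\lambda)$ of $\mathbf{f}$ itself, which is the form in which the statement is actually consumed later. One small caution: your appeal to a "counting argument comparing indices" for disjointness is dispensable and slightly fragile as phrased; disjointness is better read off directly from the distinctness of the upper-right blocks modulo $h_v^2Sym_n(\f{b}_v^{-1})$, exactly as in the explicit decompositions you already invoke.
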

\begin{proof}
To prove the first statement it suffices to show that 
$$\b{D}\(\begin{smallmatrix} h^{-1}\tilde{r} & \\ & hr\end{smallmatrix}\)\b{D}'=\b{D}\(\begin{smallmatrix}\tilde{r} & \\ & r\end{smallmatrix}\)\b{D}\cdot \b{D}\(\begin{smallmatrix} h^{-1}1_n & \\ & h1_n\end{smallmatrix}\)\b{D}'.$$
This may be done place by place. As we established in \eqref{double_coset_dec},
$$\b{D}_v\begin{pmatrix} \tilde{r}_v & \\ & r_v\end{pmatrix}\b{D}_v = \bigsqcup_{d,b,\mu} \b{D}_v (0,\mu,0)\begin{pmatrix}  \tilde{d} & \tilde{d} b \\  & d \end{pmatrix}$$
at each place $v|\f{c}$, where $d \in\GL_n(\f{o}_v) \back\GL_n(\f{o}_v) r_v\GL_n(\f{o}_v)$, $b \in Sym_n(\f{b}_v^{-1}) / \transpose{d} Sym_n(\f{b}_v^{-1}) d$ and $\mu \in M_{l,n}(\f{b}^{-1}_v) d^{-1} / M_{l,n}(\f{b}^{-1}_v)$. Using the same argument and a double coset decomposition for symplectic groups, we get
$$\b{D}_v\begin{pmatrix} h^{-1}_v\tilde{r}_v & \\ & h_vr_v\end{pmatrix}\b{D}'_v = \bigsqcup_{d_1,b_1,\upsilon_1} \b{D}_v (0,\upsilon_1,0)\begin{pmatrix}  \tilde{d}_1 & \tilde{d}_1 b_1 \\  & d_1 \end{pmatrix},$$
where $d_1 \in\GL_n(\f{o}_v) \back\GL_n(\f{o}_v) h_v r_v\GL_n(\f{o}_v)$, $b_1 \in Sym_n(\f{b}_v^{-1}\f{c}_v) / \transpose{d_1} Sym_n(\f{b}_v^{-1}) d_1$ and $\upsilon_1 \in M_{l,n}(\f{b}^{-1}_v) d_1^{-1} / M_{l,n}(\f{b}^{-1}_v)$. In particular, if we take $r=1_n$ and a coset decomposition over $d_2,b_2,\upsilon_2$, then we can take $d_2=h_v1_n$ and it is easy to see that the set
\begin{multline*}
\{(0,\mu,0)\(\begin{smallmatrix} \tilde{d} & \tilde{d} b \\  & d \end{smallmatrix}\) (0,\upsilon_2,0)\(\begin{smallmatrix} h_v^{-1}1_n & h_v^{-1} b_2 \\  & h_v1_n\end{smallmatrix}\)\colon\mu, \upsilon_2, b, b_2, d\} \\
=\{(0,\mu +\upsilon_2 d^{-1},0)\(\begin{smallmatrix} h_v^{-1}\tilde{d} & h_v^{-1}\tilde{d} (b_2+h_v^2b) \\  & h_vd \end{smallmatrix}\)\colon\mu, \upsilon_2, b, b_2, d\}
\end{multline*} 
represents 
$\b{D}_v\back (\b{D} \(\begin{smallmatrix} h^{-1}\tilde{r} & \\ & hr\end{smallmatrix}\)\b{D}')_v$ for each $v|\f{c}$.

To prove the second statement we use Proposition \ref{prop:adelic_Fourier_expansion}. We recall that the Siegel-Jacobi modular form $f$ and its adelic counterpart are related by $\mathbf{f}(\b{y})=J_{k,S}(\b{y}, \b{i}_0)^{-1}f(\b{y}\cdot\b{i}_0)$, for every $\b{y}\in\Jac_{\a}$. Moreover, recall that the symmetric space $\H_{n,l}$ is contained in $\{\b{y}\cdot\b{i}_0:\b{y}\in\Jac_{\a}\mbox{ of the form } (\l,\mu,0)\(\begin{smallmatrix} q & \sigma\tilde{q}\\ & \tilde{q}\end{smallmatrix}\)\}$.

For an $\b{\alpha}$ of the form $(0,\nu,0)\(\begin{smallmatrix} h^{-1}1_n & h^{-1}b \\  & h1_n \end{smallmatrix}\)$, with $\nu_\mathbf{a}=0$, $b_{\mathbf{a}} = 0$, and $\b{y} \in \b{G}(\mathbb{A})$ such that $\b{y}_{\h}=(0,0,0)1_{2n}$ and $\b{y}_{\mathbf{a}}$ as above, we have
\begin{align*}
\b{y}\b{\alpha}^{-1}&=(\l,\mu,0)(0,-h\nu\T{q},0)\(\begin{smallmatrix} hq & h^{-1}(-qb+\sigma\tilde{q}) \\  & h^{-1}\tilde{q} \end{smallmatrix}\)\\
&=(\l,\mu-h\nu\T{q},-h\l q\T{\nu}-h\nu\T{q}\T{\l})\(\begin{smallmatrix} hq & (-qb\T{q}+\sigma)h^{-1}\tilde{q} \\  & h^{-1}\tilde{q} \end{smallmatrix}\) ,
\end{align*}
and thus by the expansion \eqref{eq:adelic_Fourier_expansion}, 
\begin{multline*}
\mathbf{f}(\b{y}\b{\alpha}^{-1})=\sum_{t,r}c(t,r;hq,\l)\mathbf{e}_{\A}(\tr(t\sigma-tqb\T{q}))\mathbf{e}_{\A}(\tr(\T{r}(\l\sigma-\l qb\T{q}+\mu-h\nu\T{q})))=\\
 \sum_{t,r}c(t,r;hq,\l)\!\left(\prod_{v | \f{c}}\mathbf{e}_{v}(\tr(-tq_vb_v\T{q_v}))\right)\hspace{-0.2cm}\left(\prod_{v | \f{c}}\mathbf{e}_{v}(\tr(\T{r}(-h_v\nu_v\T{q_v})))\right)\! \mathbf{e}_{\mathbf{a}}(\tr(t\sigma+\T{r}(\l\sigma+\mu))).
\end{multline*}
Hence,
\begin{align*}
\mathbf{f}|T_{h1_n,\psi}(\b{y}) &= \sum_{b,\nu} \psi_{\f{c}}(h_{\f{c}})^n\sum_{t,r}c(t,r;hq,\l)\left(\prod_{v | \f{c}}\mathbf{e}_{v}(\tr(-tq_vb_v\T{q_v}+\T{r}(-h_v\nu_v\T{q_v})))\right) \\
&\hspace{0.4cm}\cdot\mathbf{e}_{\mathbf{a}}(\tr(t\sigma+\T{r}(\l\sigma+\mu))),
\end{align*}
where $b \in \prod_{v | \f{c}}Sym_n(\f{b}_v^{-1}) / h_v^2 Sym_n(\f{b}_v^{-1})$, and $\nu \in \prod_{v | \f{c}}M_{l,n}(\f{b}^{-1}_v) h_v^{-1} / M_{l,n}(\f{b}^{-1}_v)$. That is, if we write $c(\mathbf{f}|T_{h1_n,\psi};t,r;q,\lambda)$ for the $(t,r)$-coefficient of $\mathbf{f}|T_{h1_n,\psi}$, we have
\[
c(\mathbf{f}|T_{h1_n,\psi};t,r;q,\lambda) = \psi_{\f{c}}(h_{\f{c}})^n\sum_{b,\nu} \left(\prod_{v | \f{c}}\mathbf{e}_{v}(\tr(-tq_vb_v\T{q_v}+\T{r}(-h_v\nu_v\T{q_v})))\right).
\]
Therefore, if 
$$\mathbf{e}_{\h}(\tr (\T{q}tq h^{-2}Sym_n(\f{b}^{-1})))=1\quad\mbox{ and }\quad\mathbf{e}_{\h}(\tr (\T{q}\T{r}M_{l,n}(\f{b}^{-1})))=1,$$ 
then
$$c(\mathbf{f}|T_{h1_n,\psi};t,r;q,\lambda)=N(\f{e}^{-1}\f{c})^{n(l+n+1)} \psi_{\f{c}}(h_{\f{c}})^n c(t,r;hq,\lambda),$$ 
otherwise $c(\mathbf{f}|T_{h1_n,\psi};t,r;q,\lambda) = 0.$

Arguing exactly in the same way we can also conclude that if both
$$\mathbf{e}_{\h}(\tr (\T{q}tq h^{-2}Sym_n(\f{b}^{-1}\f{c})))=1\quad\mbox{ and }\quad\mathbf{e}_{\h}(\tr (\T{q}\T{r}M_{l,n}(\f{b}^{-1})))=1,$$ 
then 
$$c(\mathbf{f}|U_{h1_n,\psi};t,r;q,\lambda)=N(\f{e}^{-1}\f{c})^{nl+n(n+1)/2}\psi_{\f{c}}(h_{\f{c}})^n c(t,r;hq,\lambda),$$ 
otherwise $c(\mathbf{f}|U_{h1_n,\psi};t,r;q,\lambda)=0$, where we write $c(\mathbf{f}|U_{h1_n,\psi};t,r;q,\lambda)$ for the $(t,r)$-coefficient of $\mathbf{f}|U_{h1_n,\psi}$.

Hence, if $\mathbf{f}|U_{h1_n,\psi} = 0$, then $c(\mathbf{f}|U_{h1_n,\psi};t,r;q,\lambda) = 0$ for all $t,r$. In particular, if for a pair $t,r$ both $\mathbf{e}_{\h}(\tr (\T{q}tq h^{-2}Sym_n(\f{b}^{-1}\f{c})))=1$ and $\mathbf{e}_{\h}(\tr (\T{q}\T{r}M_{l,n}(\f{b}^{-1})))=1$, then $c(t,r;hq,\lambda)=0$ and hence also $c(\mathbf{f}|T_{h1_n,\psi};t,r;q,\lambda)=0$. If on the other hand for a pair $t,r$ either $\mathbf{e}_{\h}(\tr (\T{q}tq h^{-2}Sym_n(\f{b}^{-1}\f{c})))\neq 1$ or $\mathbf{e}_{\h}(\tr (\T{q}\T{r}M_{l,n}(\f{b}^{-1})))\neq 1$, then also either $\mathbf{e}_{\h}(\tr (\T{q}tq h^{-2}Sym_n(\f{b}^{-1})))\neq 1$ (since $Sym_n(\f{b}^{-1}\f{c}) \subset Sym_n(\f{b}^{-1})$ ) or $\mathbf{e}_{\h}(\tr (\T{q}\T{r}M_{l,n}(\f{b}^{-1})))\neq 1$, which also implies that $c(\mathbf{f}|T_{h1_n,\psi};t,r;q,\lambda) = 0$. Therefore $\mathbf{f}|T_{h1_n,\psi} = 0$.
\end{proof}

We now fix uniformizers $\pi_v \in \mathfrak{o}_v$ for every finite place $v$ in the support of $\f{e}$. Then for a fractional ideal $\f{t}$ we pick $t \in \mathbb{A}^{\times}_{\h}$, such that $\f{t}$ is the ideal corresponding to the idele $t$, and at every place $v | \mathfrak{e}$ we have $t_v = \pi_v^{\ord_v(\f{t})}$, where $\ord_v(\cdot)$ is the usual valuation at the place $v$. Further, we set $\b{\tau}:=1_H\diag [t^{-1}1_n,t1_n]$ and define an isomorphism
$$I_{\f{t}}\colon\mathcal{M}^n_{k,S}(\b{D},\psi)\rightarrow\mathcal{M}^n_{k,S}(\b{\tau}^{-1}\b{D}\b{\tau},\psi),\quad \mathbf{f}|I_{\f{t}}(x):=\psi(t^n)\mathbf{f}(x\b{\tau}^{-1})\quad (x\in\b{G}^n(\A)).$$ 

\begin{lem}
The map $I_{\f{t}}$ has the following properties:
\begin{enumerate}
\item it is independent of the choice of $t$,
\item it commutes with the operators $T_{r,\psi}$ and $U_{q,\psi}$,
\item $(f|I_{\f{t}})^c=f^c|I_{\f{t}}$, where $f$ is the Siegel-Jacobi form corresponding to $\mathbf{f}$.
\end{enumerate}
\end{lem}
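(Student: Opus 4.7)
The plan is to prove each of the three properties separately, with all three reducing to the observation that $\b{\tau}=1_H\diag[t^{-1}1_n,t1_n]$ is a diagonal finite idele that commutes in $\b{G}$ with the other diagonal objects appearing in the definitions of $I_{\f{t}}$, the Hecke operators, and the element realising complex conjugation.

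For (1), I would start by observing that two valid choices $t,t'$ of idele necessarily differ by an element $u\in\prod_{v\in\h}\f{o}_v^{\times}$ with $u_v=1$ at every $v\mid\f{e}$. Setting $\b{u}:=1_H\diag[u^{-1}1_n,u1_n]$, the defining congruence $(a_x-1)_v\in M_n(\f{e}_v)$ is automatic at $v\mid\f{e}$ (since $u_v^{-1}-1=0$ there) and the remaining conditions on $\b{D}$ are trivial, so $\b{u}\in\b{D}$. Applying the transformation law for $\mathbf{f}$ under $\b{D}$ yields $\mathbf{f}(x\b{\tau}^{-1}\b{u}^{-1})=\psi(\b{u}^{-1})\mathbf{f}(x\b{\tau}^{-1})$ with $\psi(\b{u}^{-1})=\prod_{v\mid\f{c}}\psi_v(u_v)^{-n}$. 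Because $u_{\b{a}}=1$ and $\psi$ is trivial outside $\f{c}$, one has $\psi(u^n)=\prod_{v\mid\f{c}}\psi_v(u_v)^n$, which exactly cancels the extra factor picked up by passing from $\psi(t^n)$ to $\psi((tu)^n)$.

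For (2), the key input is that $\b{\tau}$ commutes in $\b{G}$ with both $\diag[\tilde{r},r]$ for $r\in Q(\f{e})$ and $\diag[\tilde{q},q]$ for $\diag[\tilde{q},q]\in R(\f{e},\f{c})$. Given a decomposition $\b{D}\diag[\tilde{r},r]\b{D}=\bigsqcup_{\b{\alpha}}\b{D}\b{\alpha}$, conjugation by $\b{\tau}$ produces the decomposition $(\b{\tau}^{-1}\b{D}\b{\tau})\diag[\tilde{r},r](\b{\tau}^{-1}\b{D}\b{\tau})=\bigsqcup_{\b{\alpha}}(\b{\tau}^{-1}\b{D}\b{\tau})(\b{\tau}^{-1}\b{\alpha}\b{\tau})$. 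A direct block computation gives $a_{\b{\tau}^{-1}\b{\alpha}\b{\tau}}=a_{\alpha}$, so the Hecke weights $\psi_{\f{c}}(\det a_{\alpha})^{-1}$ are preserved. Substituting into the definition leads to
\[
T_{r,\psi}(\mathbf{f}|I_{\f{t}})(x)=\sum_{\b{\alpha}}\psi_{\f{c}}(\det a_{\alpha})^{-1}\psi(t^n)\mathbf{f}(x\b{\tau}^{-1}\b{\alpha}^{-1})=\psi(t^n)(T_{r,\psi}\mathbf{f})(x\b{\tau}^{-1})=((T_{r,\psi}\mathbf{f})|I_{\f{t}})(x).
\]
The argument for $U_{q,\psi}$ is identical, with $\b{D}'$ replacing one copy of $\b{D}$; since $\b{\tau}$ conjugates $\b{D}'$ in the same diagonal fashion, the parallel coset decomposition and matrix identity $a_{\b{\tau}^{-1}\b{\beta}\b{\tau}}=a_{\beta}$ carry through unchanged.

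For (3), the crucial observation is that $\b{\tau}$ and $\b{\epsilon}:=1_H\diag[1_n,-1_n]$ commute in $\b{G}$ (both are diagonal with scalar blocks) and that $\b{\tau}_{\b{a}}=1$. Using the bijection \eqref{bijection between classical and adelic} I would first identify $f|I_{\f{t}}=\psi(t^n)f_{\b{\tau}^{-1}}$, where $f_{\b{\tau}^{-1}}$ denotes the classical form attached to $\mathbf{f}$ via $\b{g}=\b{\tau}^{-1}$. Writing $-\bar{z}=\b{\epsilon}\b{y}\b{\epsilon}\cdot\b{i}_0$ whenever $\b{y}\cdot\b{i}_0=z$, and exploiting $\b{\tau}^{-1}\b{\epsilon}=\b{\epsilon}\b{\tau}^{-1}$ together with $J_{k,S}(\b{\epsilon}\b{y}\b{\tau}^{-1}\b{\epsilon},\b{i}_0)=J_{k,S}(\b{\epsilon}\b{y}\b{\epsilon},\b{i}_0)$ (since $\b{\tau}^{-1}$ is trivial at infinity), a short computation yields $(f_{\b{\tau}^{-1}})^c=(f^c)_{\b{\tau}^{-1}}$. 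By Proposition \ref{Behaviour under complex conjugation}, $f^c$ lives in $M^n_{k,S}(\b{\Gamma},\psi^{-1})$, so applying $I_{\f{t}}$ to it introduces the factor $\psi^{-1}(t^n)$ rather than $\psi(t^n)$. Since $\psi$ is of finite order, $\overline{\psi(t^n)}=\psi^{-1}(t^n)$, and the chain
\[
(f|I_{\f{t}})^c=\overline{\psi(t^n)}(f_{\b{\tau}^{-1}})^c=\psi^{-1}(t^n)(f^c)_{\b{\tau}^{-1}}=f^c|I_{\f{t}}
\]
closes the argument. The main obstacle will be the bookkeeping in (3): one must correctly track which character is used when applying $I_{\f{t}}$ to $f^c$ (which now lives in a space of opposite character) and carefully trace the identity $(f_{\b{\tau}^{-1}})^c=(f^c)_{\b{\tau}^{-1}}$ through the bijection between classical and adelic Siegel-Jacobi forms.
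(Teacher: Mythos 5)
Your proposal is correct, and for parts (1) and (2) it is essentially identical to the paper's argument: the paper likewise writes two admissible ideles as $t=t'l$ with $l$ a unit that is $1$ at $v\mid\f{e}$, absorbs $\diag[l1_n,l^{-1}1_n]$ into $\b{D}$ so that the character factors cancel, and for (2) conjugates the double coset by $\b{\tau}$, recording the conjugated group $\b{D}_{\f{t}}=\b{\tau}^{-1}\b{D}\b{\tau}$ and using that $\b{\tau}$ commutes with $\diag[\tilde r,r]$. The only real divergence is in (3). The paper first uses strong approximation to write $\b{\tau}=\gamma\b{d}$ with $\gamma\in G(F)$ and $\b{d}\in D$, so that $f|I_{\f{t}}=f|_{k,S}\gamma$ (after reducing to $\psi=1$) and Proposition \ref{Behaviour under complex conjugation} — which is stated for \emph{global} elements $\b{\gamma}\in\b{G}$ — applies verbatim, together with the observation that $\epsilon\b{\tau}\epsilon^{-1}=\b{\tau}$ forces $\epsilon\gamma\epsilon^{-1}$ to be another admissible representative modulo $\b{D}$. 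You instead work directly with the adelic element $\b{\tau}^{-1}$; the identity $(f_{\b{\tau}^{-1}})^c=(f^c)_{\b{\tau}^{-1}}$ you assert then requires an adelic expression for the lift of $f^c$ in terms of $\mathbf{f}$ (essentially $\mathbf{f}^c(\b{g})=\overline{\mathbf{f}(\b{\epsilon}\b{g}\b{\epsilon})}$), which the paper never proves and which your ``short computation'' would have to supply; the strong-approximation reduction is precisely the device that avoids this. On the other hand, your explicit tracking of the nebentype — $f^c$ having character $\psi^{-1}$ so that $I_{\f{t}}$ contributes $\psi^{-1}(t^n)=\overline{\psi(t^n)}$ — is more careful than the paper's bare ``without loss of generality $\psi=1$'', and is a welcome clarification.
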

\begin{proof}
\begin{enumerate}
\item  If $t'\in \mathbb{A}^{\times}_{\h}$ is another idele that corresponds to the ideal $\f{t}$, then  $t=t'l$ for some $l\in \prod_{v \in \mathbf{h}}\f{o}_v^{\times}$.
\begin{align*}
\psi(t^n)\mathbf{f}(x\b{\tau}^{-1})&=\psi((lt')^n)\mathbf{f}(x\diag[t' 1_n, t'^{-1}1_n]\diag[l 1_n, l^{-1}1_n]1_H)\\
&=\psi(t'^n) \mathbf{f}(x\diag[t'1_n, t'^{-1}1_n]1_H),
\end{align*}
where we have used the fact that $\diag[l 1_n, l^{-1}1_n] \in \b{D}$ since $l_v =1$ if $v | \f{e}$.
\item  This follows from direct computation, e.g. in case of $T_{r,\psi}$:
$$\b{\tau}^{-1}\b{D}\diag [\tilde{r},r]\b{D}=
\b{D}_{\f{t}}\diag [\tilde{r},r]\b{D}_{\f{t}}\b{\tau}^{-1},$$
where $$\b{D}_{\f{t}}:=\{ (\l,\mu,\kappa)x\in C[\f{t}^{-1}, \f{tb}^{-1}, \f{tb}^{-1}]D[\f{b}^{-1}\f{e}\f{t}^2,\f{bc}\f{t}^{-2}]\colon (a_x-1_n)_v\in M_n(\f{e}_v)\mbox{ for } v|\f{e}\} .$$
\item  By strong approximation we may write $\tau = \b{\gamma} \b{d}$ for some $\b{\gamma} \in \b{G}(F)$ and $\b{d} \in \b{D}$. We moreover notice that since $\b{\tau}$ has no Heisenberg part we may take $\b{\gamma} = \gamma \in G(F) \hookrightarrow \b{G}(F)$, and $\b{d} \in D \hookrightarrow \b{D}$. Furthermore, for $\epsilon := \diag[1_n , -1_n]$, $\epsilon \b{\tau} \epsilon^{-1} = \epsilon \gamma \epsilon^{-1} \epsilon \b{d} \epsilon^{-1}$ as elements of $G(F)$. Note that  $\epsilon \b{d} \epsilon^{-1} \in \b{D}$ and $\epsilon \b{\tau} \epsilon^{-1} = \b{\tau}$.

\noindent Clearly, without loss of generality we may assume that $\psi = 1$. Then $(f |I_{\f{t}})^c = (f|_{k,S} \b{\gamma})^c = f^c|_{k,S}\epsilon \gamma \epsilon^{-1}  = f^c|I_{\f{t}}$, where for the second equality we have used Proposition \ref{Behaviour under complex conjugation}.
\end{enumerate}
\end{proof}

Let $\chi$ be a Hecke character as in subsection \ref{sec:adelic_ES} and assume that $\chi =\psi$ on $\prod_{v\nmid\f{e}}\f{o}_v\ti$. Then $\mathcal{S}^n_{k,S}(\b{D},\psi)=\mathcal{S}^n_{k,S}(\b{D},\chi)$ since the nebentype depends only on the finite places that divide $\mathfrak{c}$ and is trivial on places that divide $\f{e}$ ($\det(a_g) \equiv 1\hspace{-0.1cm} \mod{\f{e}_v}$ for $hg \in \b{D}$). Moreover, the Hecke operators are related via:
$$(\chi/\psi)^*(\f{a})\psi^*(\f{a}')T_{\psi}(\f{a})=\chi^*(\f{a}')T_{\chi}(\f{a}),\quad (\chi/\psi)^*(\f{e}^{-1}\f{c})^nU_{h1_n,\psi}=U_{h1_n,\chi} ,$$
where $\f{a}':=\prod_{v\nmid\f{c}}\f{a}_v$. Put $\b{\tau}:=1_H\diag [\t^{-1}1_n,\t1_n]$ with $\t$ as in Lemma \ref{key_lemma}. Then the set $\b{Y}_v$ is equal to the set $(\b{\tau}^{-1}\b{D}R(\f{e},\f{c})\b{D}'\b{\tau})_v$ at every place $v$. Put
$$\Delta (q):=\(\b{G}^n(F)\cap\b{\tau}^{-1}\b{D}\b{\tau}\) \back\(\b{G}^n(F)\cap\b{G}^n_{\a}\prod_{v\in\h}(\b{\tau}^{-1}\b{D}\(\begin{smallmatrix} \tilde{q} & \\ & q\end{smallmatrix}\)\b{D}'\b{\tau})_v\) .$$
For $f\in S^n_{k,S}(\b{\Gamma},\psi)$ such that $f|T_{\psi}(\f{a})=\l(\f{a})f$ and for $\mathcal{D}$ defined as in \eqref{eq:D(z,s,g)} we have: 
\begin{align*}
\mathcal{D}(z ,s,f|I_{\f{b}}) &=
\sum_{\b{\xi} \in\b{Y}} \ell'(\xi)^{-s}\chi^{*} (\ell'_1(\xi)) \chi_{\f{c}}(\det(a_{\xi}))^{-1} (f|I_{\f{b}})|_{k,S}\b{\xi}(z)\\
&=\sum_{q\in R(\f{e},\f{c})}\sum_{\b{\beta}\in\Delta(q)} N(\det (q)\f{o})^{-s}\chi^{*} (\prod_{v\nmid\f{c}}(\det (q)\f{o})_v)\chi_{\f{c}}(\det(a_{\beta}))^{-1}(f|I_{\f{b}})|_{k,S}\b{\beta}(z)\\
&\hspace{-0.6cm}\stackrel{Lemma\,\ref{lem:mixed_cosets}}{=} N(\f{e}^{-1}\f{c})^{-ns}\sum_{\f{a}}N(\f{a})^{-s}\chi^{*}(\f{a}')(f|I_{\f{b}})|T_{\chi}(\f{a})U_{h1_n,\chi}(z)\\
&=N(\f{e}^{-1}\f{c})^{-ns}\sum_{\f{a}}N(\f{a})^{-s}(\chi/\psi)^*(\f{a})\psi^*(\f{a}')\l(\f{a})f|U_{h1_n,\chi}I_{\f{b}}(z).
\end{align*}

Joining the above formula for $\mathcal{D}(z ,s,f|I_{\f{b}})$ together with \eqref{main_inner_product}, after setting $f^c|I_{\f{b}}$ for $f$ there, we obtain
\begin{align*}
N&(\f{b}\f{e}^{-1}\f{c})^{2ns}\chi_{\h}(\theta)^{-n}(-1)^{n(s-k/2)}vol(A)<(E |_{k,S} \b{\rho}) (\diag[z_1,z_2],s),(f^c|I_{\f{b}})(z_2)>\\
&=\nu_{\mathfrak{e}}c_{S,k}(s-k/2)E(z_1,s;(f|U_{h1_n,\chi}I_{\f{b}})|_{k,S}\b{\eta}_n^{-1}) \sum_{\f{a}}N(\f{a})^{-2s}(\chi/\psi)^*(\f{a})\psi^*(\f{a}')\l(\f{a}),
\end{align*}
where we have used the fact that $(f^c|I_{\f{b}})^c = f|I_{\f{b}}$. \newline

After multiplying both sides of the above equation with $\mathcal{G}_{k-l/2,n+m}(s-l/4) \Lambda_{k-l/2,\mathfrak{c}}^{n+m}(s-l/4,\chi \psi_S)$ with notation as in Theorem \ref{analytic properties of Eisenstein series} and setting $\mathcal{E}(z,s) := \mathcal{G}_{k-l/2,n+m}(s-l/4) \Lambda_{k-l/2,\mathfrak{c}}^{n+m}(s-l/4,\chi \psi_S) E(z,s)$, we obtain
\begin{align*}
N(\f{b}\f{e}^{-1}\f{c})^{2ns}\chi_{\h}(\theta)^{-n}&(-1)^{n(s-k/2)}vol(A) <(\mathcal{E} |_{k,S} \b{\rho}) (\diag[z_1,z_2],s),(f^c|I_{\f{b}})(z_2)>\\
&=\nu_{\mathfrak{e}}c_{S,k}(s-k/2) \mathcal{G}_{k-l/2,n+m}(s-l/4) E(z_1,s;(f|U_{h1_n,\chi}I_{\f{b}})|_{k,S}\b{\eta}_n^{-1})\\
&\hspace{0.4cm}\cdot \Lambda_{k-l/2,\mathfrak{c}}^{n+m}(s-l/4,\chi \psi_S)\sum_{\f{a}}N(\f{a})^{-2s}(\chi/\psi)^*(\f{a})\psi^*(\f{a}')\l(\f{a}),
\end{align*}
where we recall that 
\[
\Lambda^{n+m}_{k-l/2,\mathfrak{c}}(s,\chi\psi_S) := \begin{cases} L_{\mathfrak{c}}(2s-l/2,\chi \psi_S) \prod_{i=1}^{[(n+m)/2]} L_{\mathfrak{c}}(4s-l-2i,\chi^{2}) & \hbox{if } l \in 2\mathbb{Z},\\
 \prod_{i=1}^{[(n+m+1)/2]} L_{\mathfrak{c}}(4s-l-2i+1,\chi^{2}) & \hbox{if } l \notin 2\mathbb{Z}.
 \end{cases}
\] 
By the discussion in subsection \ref{The twisted L-function}, we have that 
\[
\f{L}_{\psi}(\chi \psi^{-1},2s-n-l/2) \sum_{\f{a}}N(\f{a})^{-2s}(\chi/\psi)^*(\f{a})\psi^*(\f{a}')\l(\f{a}) =  L_{\psi}(2s-n-l/2,\mathbf{f},\chi \psi^{-1}) 
\]
with 
$\f{L}_{\psi}(\chi \psi^{-1},2s-n-l/2) = \prod_{(\f{p},\f{c})=1} \f{L}_{\f{p}}(\chi,2s-n-l/2)$, where
\[
\f{L}_{\f{p}}(\chi,2s) := G_{\f{p}}(\chi,2s-n-l/2) \begin{cases}  \prod_{i=1}^{n} L_{\f{p}}(4s-l-2i,\chi^{2}) &\mbox{if } l \in 2\mathbb{Z}, \\ 
\prod_{i=1}^{n} L_{\f{p}}(4s-l-2i+1,\chi^{2})  & \mbox{if }  l \not \in 2\mathbb{Z}.\end{cases}
\]
That is, we obtain
\begin{align} \label{Main identity 1}\nonumber
N(\f{b}\f{e}^{-1}\f{c})^{2ns}&\chi_{\h}(\theta)^{-n}(-1)^{n(s-k/2)} vol(A) <(\mathcal{E}|_{k,S} \b{\rho}) (\diag[z_1,z_2],s),(f^c|I_{\f{b}})(z_2)>\\\nonumber
&=\nu_{\f{e}}c_{S,k}(s-k/2)\mathcal{G}_{k-l/2,n+m}(s-l/4)  E(z_1,s;(f|U_{h1_n,\chi}I_{\f{b}})|_{k,S}\b{\eta}_n^{-1}) \\
&\hspace{0.4cm}\cdot G(\chi,2s-n-l/2)^{-1} L_{\psi}(2s-n-l/2,\mathbf{f},\chi \psi^{-1})\\\nonumber
&\cdot\begin{cases} L_{\mathfrak{c}}(2s-l/2,\chi \psi_S) \prod_{i=n+1}^{[(n+m)/2]} L_{\mathfrak{c}}(4s-l-2i,\chi^{2}) & \hbox{if } l \in 2\mathbb{Z},\\
\prod_{i=n+1}^{[(n+m+1)/2]} L_{\mathfrak{c}}(4s-l-2i+1,\chi^{2}) & \hbox{if } l \not \in 2\mathbb{Z},
 \end{cases}
\end{align}
where we have set 
\begin{equation}\label{definition of $G$}
G(\chi,2s-n-l/2) = \prod_{(\f{p},\f{c})=1} G_{\f{p}}(\chi,2s-n-l/2). 
\end{equation}
In particular, if $m=n$, we obtain
\begin{align}\label{Main identity 2}
N&(\f{b}\f{e}^{-1}\f{c})^{2ns}\chi_{\h}(\theta)^{-n}(-1)^{n(s-k/2)}  vol(A)<(\mathcal{E}|_{k,S} \b{\rho}) (\diag[z_1,z_2],s),(f^c|I_{\f{b}})(z_2)>\nonumber\\
&=\nu_{\f{e}}c_{S,k}(s-k/2)\mathcal{G}_{k-l/2,2n}(s-l/4)  (f|U_{h1_n,\chi}I_{\f{b}})|_{k,S}\b{\eta}_n^{-1} G(\chi,2s-n-l/2)^{-1} \\\nonumber
&\hspace{0.4cm} \cdot L_{\psi}(2s-n-l/2,\mathbf{f},\chi \psi^{-1})\begin{cases} L_{\mathfrak{c}}(2s-l/2,\chi \psi_S) ,\,\,\,\hbox{if}\,\,\, l \in 2\mathbb{Z},\\
 1, \,\,\,\hbox{if}\,\,\, l \not \in 2\mathbb{Z}.
 \end{cases}
\end{align}
We are now ready to prove our first main theorem regarding the analytic properties of the function $L_{\psi}(s,\mathbf{f},\chi)$, which should be seen as an extension of the Theorem 6.1 in \cite{Sh95} to the Siegel-Jacobi setting.

\begin{thm} \label{Main Theorem 1} Let $\mathbf{f} \in \mathcal{S}_{k,S}^n(\b{D},\psi)$ be a Hecke eigenform of index $S$ which satisfies the $M_{\f{p}}^+$ condition for every prime $\f{p} \nmid \f{c}$. Moreover, let $\phi$ be a Hecke character of $F$ of conductor $\f{f}_{\phi}$ such that $\phi_{\mathbf{a}}(x) = \sgn(x_{\mathbf{a}})^{k}$. Write $\f{x}$ for the product of all primes ideals $\f{p}$ in the support of $\mathfrak{e}^{-1}\f{c}$ such that $\mathbf{f}|T_{\pi_{\f{p}}1_n,\psi} = 0$. Then the function
\[
\Lambda_{\psi,\f{x}}(s,\mathbf{f},\phi) :=  L_{\mathbf{a}}(s,k) L_{\psi,\f{x}}(s,\mathbf{f},\phi) \cdot \begin{cases} L_{\mathfrak{c}}(s+n,\phi \psi \psi_S) ,\,\,\,\hbox{if}\,\,\, l \in 2\mathbb{Z},\\
 1, \,\,\,\hbox{if}\,\,\, l \not \in 2\mathbb{Z},
 \end{cases}
\] 
where
\[ 
L_{\mathbf{a}}(s,k):=c_{S,k}((s+n-k)/2+l/4)\mathcal{G}_{k-l/2,2n}((s+n)/2)
\]
has a meromorphic continuation to the whole complex plane. More precisely, the poles are exactly the poles of the Eisenstein series $\mathcal{E}((s+n+l/2)/2)$ as described in Theorem \ref{analytic properties of Eisenstein series} plus the poles of the function $G(\chi,s+n).$
\end{thm}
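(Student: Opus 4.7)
The plan is to derive the theorem from the master identity (\ref{Main identity 2}) of the previous section, specialised to $m=n$ with Hecke character $\chi:=\phi\psi$ and followed by the change of variable $s\mapsto (s+n+l/2)/2$. A direct factor-by-factor check shows that this substitution turns $c_{S,k}(s-k/2)\mathcal{G}_{k-l/2,2n}(s-l/4)$ into $L_{\mathbf{a}}(s,k)$, $L_{\f{c}}(2s-l/2,\chi\psi_S)$ into $L_{\f{c}}(s+n,\phi\psi\psi_S)$, and $L_\psi(2s-n-l/2,\mathbf{f},\chi\psi^{-1})$ into $L_\psi(s,\mathbf{f},\phi)$. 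Hence (\ref{Main identity 2}) acquires the schematic form
\[
(\text{pulled-back Eisenstein pairing in }s)\;=\;\text{const}\cdot L_{\mathbf{a}}(s,k)\,G(\phi\psi,s)^{-1}\,\Lambda_\psi(s,\mathbf{f},\phi)\,B(z_1),
\]
where $\Lambda_\psi$ differs from $\Lambda_{\psi,\f{x}}$ only in the local factors at primes of $\f{x}$, and
\[
B(z_1):=(f|U_{h1_n,\chi}I_{\f{b}})|_{k,S}\b{\eta}_n^{-1}(z_1).
\]

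The left-hand side of (\ref{Main identity 2}) is meromorphic in $s$: Theorem \ref{analytic properties of Eisenstein series} yields the meromorphic continuation of $\mathcal{E}(s)$, and since $f^c|I_{\f{b}}$ is a cusp form (hence rapidly decreasing on a fundamental domain), the Petersson pairing with $(\mathcal{E}|_{k,S}\b{\rho})(\diag[z_1,z_2],\cdot)$ is absolutely convergent in the right half-plane of convergence and extends termwise to all of $\C$, inheriting the poles of $\mathcal{E}((s+n+l/2)/2)$.

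The main obstacle is to isolate $\Lambda_\psi$ by dividing out $B(z_1)$: this requires $B\not\equiv 0$ in $z_1$, which can fail precisely because of the primes of $\f{x}$. The resolution is to pass to an ideal $\f{e}'$ with $\f{c}\subseteq\f{e}'\subseteq\f{e}$ whose valuation at each prime of $\f{x}$ coincides with that of $\f{c}$ and elsewhere with that of $\f{e}$; then $\mathrm{supp}((\f{e}')^{-1}\f{c})=\mathrm{supp}(\f{e}^{-1}\f{c})\setminus\mathrm{supp}(\f{x})$, and $\b{D}(\f{e}')\subset\b{D}(\f{e})$, so $f$ remains a cusp form for the smaller congruence subgroup. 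By construction $f|T_{\pi_{\f{p}}1_n,\psi}\neq 0$ at every prime $\f{p}\mid(\f{e}')^{-1}\f{c}$; commutativity of the Hecke algebra (Section \ref{Normal Operators}) gives $f|T_{h'1_n,\psi}\neq 0$ for the idele $h'$ attached to $(\f{e}')^{-1}\f{c}$, and Lemma \ref{lem:mixed_cosets} then yields $f|U_{h'1_n,\psi}\neq 0$, so that the corresponding $B(z_1)$ is not identically zero. At the same time, with $\f{e}'$ in place of $\f{e}$ every prime of $\f{x}$ divides $\f{e}'$, so the corresponding local factor of $L_\psi$ becomes $1$: this is precisely the transition from $L_\psi$ to $L_{\psi,\f{x}}$, and hence from $\Lambda_\psi$ to $\Lambda_{\psi,\f{x}}$.

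Picking any $z_1$ with $B(z_1)\neq 0$ and dividing both sides by $B(z_1)$, we deduce the stated meromorphic continuation of $\Lambda_{\psi,\f{x}}(s,\mathbf{f},\phi)$ to $\C$. Its possible poles are those inherited from the continued Eisenstein series $\mathcal{E}((s+n+l/2)/2)$, described by Theorem \ref{analytic properties of Eisenstein series}, together with those of the auxiliary factor $G(\phi\psi,s)=\prod_{(\f{p},\f{c})=1}G_{\f{p}}(\phi\psi,s)$ from (\ref{definition of $G$}), which is a finite product of rational functions in $N(\f{p})^{-s}$. This matches the list of possible poles in the statement of the theorem.
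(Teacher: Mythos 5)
Your argument reproduces the paper's proof of this theorem almost exactly: you specialise the identity \eqref{Main identity 2} to $m=n$ with $\chi=\phi\psi$, make the substitution $s\mapsto(s+n+l/2)/2$ (your factor-by-factor check of $L_{\mathbf{a}}$, $L_{\mathfrak{c}}(s+n,\phi\psi\psi_S)$ and $L_\psi(s,\mathbf{f},\phi)$ is correct), and you resolve the possible degeneration of the identity to $0=0$ by replacing $\f{e}$ with the ideal that agrees with $\f{c}$ at the primes of $\f{x}$ and with $\f{e}$ elsewhere --- this is precisely the paper's move of passing to $\f{e}\prod_{v\mid\f{x}}\f{c}_v$ and invoking Lemma \ref{lem:mixed_cosets}, and it is also the correct explanation of why $L_\psi$ becomes $L_{\psi,\f{x}}$. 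Your discussion of this step is in fact more explicit than the paper's.

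There is, however, one genuine gap: the identity \eqref{Main identity 2} is only available for Hecke characters $\chi$ satisfying the standing hypothesis of Section \ref{sec:adelic_ES}, namely $\chi_v(x)=1$ for $x\in\f{o}_v^\times$ with $x-1\in\f{c}_v$, i.e.\ $\f{f}_\chi\mid\f{c}$ (and in the proof one wants $\f{f}_\phi\mid\f{e}$). The theorem allows $\phi$ of \emph{arbitrary} conductor, so you cannot simply "specialise to $\chi:=\phi\psi$" when $\f{f}_\phi\nmid\f{c}$. The paper reduces the general case to the one you treat by deepening the level: it sets $\f{c}^0:=\f{c}\cap\f{f}_\phi$, chooses $\f{e}^0$ with $\f{e}^0_v=\f{c}^0_v$ for $v\mid\f{e}\f{x}\f{f}_\phi$ and $\f{e}^0_v=\f{o}_v$ otherwise, and observes that $\mathbf{f}\in\mathcal{S}^n_{k,S}(\b{D}^0,\psi)=\mathcal{S}^n_{k,S}(\b{D}^0,\chi)$ for the corresponding group $\b{D}^0$, after which your argument applies; one also uses that the Euler factors of $L_{\psi,\f{x}}(s,\mathbf{f},\phi)$ at primes dividing $\f{f}_\phi$ are trivial, so no information is lost. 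You need to add this reduction. A smaller point: the nonvanishing of $f|T_{h'1_n,\psi}$ does not follow from commutativity of the Hecke algebra alone; you need that at each prime of $(\f{e}')^{-1}\f{c}$ the double cosets of scalar matrices multiply as $T_{\pi^a}T_{\pi^b}=T_{\pi^{a+b}}$ (which follows from the coset computation in Section \ref{Hecke algebra}), so that the eigenvalue of $T_{h'1_n}$ is a product of nonzero local eigenvalues.
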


\begin{proof} The theorem follows now from equation \eqref{Main identity 2} and Theorem \ref{analytic properties of Eisenstein series} arguing similarly to the proof of \cite[Theorem 6.1]{Sh95}. Assume first that $\f{f}_{\phi} | \f{e}$, which is equivalent to $\phi_v (\f{o}_v^{\times}) = 1$ (i.e. $\phi_v$ is unramified) for all $v$ that do not divide $\f{e}$ and that $\f{f}_{\phi} | \f{c}$. Then we can use the equation \eqref{Main identity 2} with $\chi := \phi \psi$. We obtain the statement of the theorem by observing that the function $L_{\psi,\f{x}}(s,\mathbf{f},\phi)$ may be obtained by changing $\f{e}$ to $\f{e} \prod_{v | \f{x}} \f{c}_v$ and employing Lemma \ref{lem:mixed_cosets}. This guarantees that the equation \eqref{Main identity 2} is not trivial (0=0) and hence we can read off the analytic properties of $L_{\psi,\f{x}}(s,\mathbf{f},\phi)$ from those of $\mathcal{E}$. \newline
We also give the proof of the general case by repeating the idea which was used to show \cite[Theorem 6.1]{Sh95}. Set $\f{c}^0 := \f{c} \cap \f{f}_{\phi}$ and decompose $\f{c}^0 = \f{e}^0 \f{e}^1$ with $(\f{e}^0 ,\f{e}^1)=1$, such that $\f{e}_v^0 = \f{c}_v^0 $ for every $v | \f{e}\f{x}\f{f}_{\phi}$, and $\f{e}_v^0 =  \f{o}_v$ otherwise. Then if $\b{D}^0$ denotes the group $\b{D}$ with $\f{c}^0, \f{e}^0$ in place of $\f{c}$ and $\f{e}$, $\mathbf{f} \in \mathcal{S}_{k,S}^n(\b{D}^0, \psi) = \mathcal{S}_{k,S}^n(\b{D}^0, \chi)$. In particular, we can apply the argument of the previous paragraph with $\chi := \phi \psi$ and the group $\b{D}^0$ to conclude the proof. 
\end{proof}

\begin{rem} The proof above indicates the significance of considering in the whole paper the case of a non-trivial ideal $\f{e}$. Indeed, let us consider a cusp form $\mathbf{f} \in S_{k,S}^n (\b{D}[\f{b}^{-1}, \f{bc}],\psi)$, that is with $\f{e} = \f{o}$, and assume for simplicity that $\f{x}$ is trivial. Moreover, consider a Hecke character $\phi$ whose conductor $\f{f}_{\phi}$ - again, for simplicity - is prime to $\f{c}$. Then $\f{c}^0 = \f{c}\f{f}_{\phi}$ and $\f{e}^0 = \f{f}_{\phi}$, and thus we need to consider non-trivial $\f{e}$ even if we start with a form of trivial one.
\end{rem}

Now we can also prove a theorem regarding the analytic continuation of the Klingen-type Jacobi Eisenstein series attached to a form $f$ in the case of $\f{e}= \f{c}$. 

\begin{thm} \label{Main Theorem 2}Let $f \in S_{k,S}^n(\b{\Gamma})$ be a Hecke eigenform with $\b{\Gamma} = \b{D} \cap \b{G}$ where we take $\f{e}=\f{c}$ (i.e., in particular $\psi = 1$) and let $\chi$ be a Hecke character of $F$ such that $\chi_{\b{a}}(x) = \sgn_{\b{a}}(x)^k$. Then the Klingen-type Eisenstein series 
\[
\mathcal{E}(z,s;f, \chi)  := c_{S,k}(s-k/2)\mathcal{G}_{k-l/2,n+m}(s-l/4)  \b{\Lambda}(s,f,\chi) E(z,s;f,\chi), 
\]
where 
\[
\b{\Lambda}(s,f,\chi)\! :=\! L(2s-n-l/2,\mathbf{f},\chi)\!  
\begin{cases} L_{\mathfrak{c}}(2s-l/2,\chi \psi_S) \prod_{i=n+1}^{[(n+m)/2]}\! L_{\mathfrak{c}}(4s-l-2i,\chi^{2}), & l \in 2\Z,\\
 \prod_{i=n+1}^{[(n+m+1)/2]} L_{\mathfrak{c}}(4s-l-2i+1,\chi^{2}), & l \notin 2\Z,
 \end{cases}
\]
has a meromorphic continuation to the entire complex plane.
\end{thm}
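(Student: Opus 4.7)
The plan is to rearrange the doubling identity \eqref{Main identity 1} to express the normalized Klingen-type Eisenstein series in terms of objects whose meromorphic continuation has already been established. Under the hypothesis $\f{e} = \f{c}$, one may take $h = 1$ (since the ideal $\f{e}^{-1}\f{c}$ is trivial), and the subgroups $\b{D}$ and $\b{D}'$ coincide; consequently $U_{h\,1_n, \chi}$ acts as a nonzero scalar on $\mathcal{S}^n_{k,S}(\b{D}, \chi)$. Accordingly, the Klingen-type Jacobi Eisenstein series appearing on the right-hand side of \eqref{Main identity 1} reduces, up to a nonzero constant, to $E(z_1, s; \tilde{f}, \chi)$ for $\tilde{f} := (f|I_{\f{b}})|_{k,S}\b{\eta}_n^{-1}$. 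Moreover, since $\psi = 1$, the product of gamma and $L$-factors multiplying this Eisenstein series on the right-hand side is exactly $c_{S,k}(s - k/2)\,\mathcal{G}_{k-l/2, n+m}(s - l/4)\,\b{\Lambda}(s, f, \chi)$ divided by $G(\chi, 2s - n - l/2)$.

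Solving \eqref{Main identity 1} for the normalized Klingen Eisenstein series yields an expression of the form
\[
\mathcal{E}(z_1, s; \tilde{f}, \chi) \,=\, C(s) \cdot G(\chi, 2s - n - l/2) \cdot < (\mathcal{E}|_{k,S}\b{\rho})(\diag[z_1, z_2], s),\; (f^c|I_{\f{b}})(z_2) > ,
\]
where $C(s)$ collects the elementary entire factors $\nu_{\f{e}}^{-1}$, $N(\f{b}\f{e}^{-1}\f{c})^{2ns}$, $\chi_{\h}(\theta)^{-n}$, $(-1)^{n(s-k/2)}$, and $\mathrm{vol}(A)$. Each factor on the right-hand side admits a meromorphic continuation to all of $\mathbb{C}$: the function $G(\chi, s) = \prod_{(\f{p}, \f{c}) = 1} G_{\f{p}}(\chi, s)$ is a finite product of ratios of Euler factors, all but finitely many trivial by Theorem \ref{Euler Product Representation}; the Siegel-type Jacobi Eisenstein series on $\H_{n+m, l}$ admits meromorphic continuation with explicit pole set by Theorem \ref{analytic properties of Eisenstein series}, a property preserved by its diagonal pullback; and the Petersson pairing against the cusp form $f^c|I_{\f{b}}$ transmits meromorphic dependence on $s$.

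To conclude for the original form $f$ of the theorem, one applies the same argument with $f$ replaced by a cuspidal eigenform $f^{\dagger}$ (on the appropriately translated congruence subgroup) satisfying $(f^{\dagger}|I_{\f{b}})|_{k,S}\b{\eta}_n^{-1} = f$; such an $f^{\dagger}$ exists by invertibility of $I_{\f{b}}$ and of the right-action by $\b{\eta}_n$. The resulting identity furnishes the meromorphic continuation of $\mathcal{E}(z, s; f, \chi)$ to the entire complex plane.

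The principal technical difficulty will be rigorously justifying the exchange of Petersson integration with the analytic continuation of the pulled-back Siegel-type Jacobi Eisenstein series: identity \eqref{Main identity 1} is initially proved only for $\Re(s) \gg 0$ (the region of absolute convergence of the Siegel Eisenstein series), and transferring the meromorphic structure of the right-hand side through the inner product requires uniform growth estimates for the continued Eisenstein series on Siegel sets, offset against the rapid decay of the cusp form. Such estimates can be derived from the adelic Fourier expansion of Proposition \ref{prop:adelic_Fourier_expansion}, or by adapting standard techniques from the theory of automorphic Eisenstein series to the Jacobi setting.
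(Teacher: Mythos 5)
Your proposal follows essentially the same route as the paper: rearrange the doubling identity \eqref{Main identity 1} under the hypothesis $\f{e}=\f{c}$ (where $U_{h1_n,\chi}$ is trivial) and read off the meromorphic continuation of the Klingen-type series from that of the Siegel-type Jacobi Eisenstein series established in Theorem \ref{analytic properties of Eisenstein series}. The only cosmetic difference is that you undo the twist $(\,\cdot\,|I_{\f{b}})|_{k,S}\b{\eta}_n^{-1}$ by passing to a preimage $f^{\dagger}$, whereas the paper conjugates the Hecke operators by $\b{\eta}_n$ (via $\b{Y}_v=\b{\eta}_n\b{D}_vR_v(\f{c})\b{D}_v\b{\eta}_n^{-1}$ and $(f|T_g)|_{k,S}\b{\eta}_n=(f|_{k,S}\b{\eta}_n)|\widetilde{T_g}$) to achieve the same effect; both require the same compatibility of the Hecke eigenproperty under these isomorphisms.
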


\begin{proof}
We need to rewrite the equation \eqref{Main identity 1}. First note that since $\f{e}=\f{c}$, we have $U_{h1_n,\chi} = 1$. Now we extend an argument in \cite[page 569]{Sh96} to the Siegel-Jacobi case.\\
Observe that for every finite place $v$ we have 
$\mathbf{Y}_v = \b{\eta}_n \b{D}_vR_v(\f{c})\b{D}_v\b{\eta}_n^{-1}$.  Further, consider the isomorphism 
\[
S_{k,S}^n(\b{D}) \cong S_{k,S}^n(\widetilde{\b{D}}),\,\,\,\ \mathbf{f} \mapsto \mathbf{f} |_{k,S} \b{\eta}_n,
\]
where $\widetilde{\b{D}}:= C[\f{b}^{-1}, \f{o}, \f{b}^{-1}]D[\f{b}\f{c},\f{b}^{-1}\f{c}]$. Note that since $\f{e}=\f{c}$ we do not have any nebentype (i.e. $\psi=1$). Now note that for any $g \in R(\f{c})$ 
\[
\b{\eta}_n \widetilde{\b{D}} g \widetilde{\b{D}} \b{\eta}_n^{-1}= \b{D} g \b{D}, 
\]
and hence we can conclude that $(f | T_{g})|_{k,S}{\b{\eta}_n} = (f |_{k,S}\b{\eta}_n) | \widetilde{T_{g}}$, where $\widetilde{T_g}$ denotes the Hecke operator defined with respect to the group $\widetilde{\b{D}}$. Putting all these observations together we see that the equation \eqref{Main identity 1} can be also written as
\begin{align}\nonumber
G&(\chi,2s-n-l/2) N(\f{b}\f{e}^{-1}\f{c})^{2ns}\chi_{\h}(\theta)^{-n}(-1)^{n(s-k/2)} vol(A)\\
&\cdot <(\mathcal{E}|_{k,S} \b{\rho}) (\diag[z_1,z_2],s),(f_{k,S}|\b{\eta}_n)^c(z_2)>\nonumber\\
&\hspace{3cm}=\nu_{\f{e}}c_{S,k}(s-k/2)\mathcal{G}_{k-l/2,n+m}(s-l/4)  \b{\Lambda}(s,f,\chi) E(z_1,s;f),
\end{align}
where, recall, $G(\chi,2s-n-l/2)$ is meromorphic on $\mathbb{C}$. In particular, we can extend the Klingen-type Eisenstein series to the whole of $\mathbb{C}$ with respect to variable $s$ by using the analytic properties of the Siegel-type Eisenstein series. Moreover, we can read off the various poles from this expression.
\end{proof}

\section{Algebraicity of special $L$-values}

In the previous section we proved results on the analytic continuation of the standard $L$-function attached to a Siegel-Jacobi eigenfunction $\mathbf{f}$. Assuming that one can define a sensible algebraic structure on the space of Siegel-Jacobi modular forms, it is natural to ask whether a ``Deligne's Conjecture''-style result may hold for some values of the standard $L$-function, which are often called special $L$-values. 

As we indicated in the introduction, this is indeed the case for Siegel modular forms, as shown for example in \cite{Sturm,Sh00}. Indeed, by using the theory of canonical models for the Siegel modular varieties (as it is explained in \cite[Chapter 2]{Sh00}), one can define an algebraic structure on the space of Siegel modular forms, and for an algebraic eigenfunction establish 
algebraicity results for the special $L$-values of the attached standard $L$-function (see for example Theorem 28.8 in \cite{Sh00}). Furthermore, one can, conjecturally, attach a motive to such a Siegel modular form, such that the associated motivic $L$-function can be identified with the standard $L$-function (see for example \cite{Yoshida}). Then the special values of the standard $L$-function can be identified with the critical values of the motivic $L$-function and then the algebraicity results can be seen in the light of Deligne's Period conjectures \cite{Deligne} (up to the difficult issue of comparing motivic and automorphic period).     

The main aim of this section is to establish results indicating that the picture described above holds also for Siegel-Jacobi forms. That is, we will establish results towards the algebraicity of special $L$-values of Siegel-Jacobi modular forms. The starting point of our investigation is the paper of Shimura \cite{Sh78}, where the arithmetic nature of Siegel-Jacobi modular forms is studied. We should remark right away that the paper of Shimura is written for $F= \mathbb{Q}$, but it is not very hard to see that almost everything there can be generalized to the situation of any totally real field $F$. Indeed, in what follows, whenever we state a result from that paper, we will always comment on what is needed to extend it to the case of a totally real field. 

In this section we change our convention: we will write $\b{f}$ (instead of $f$) for Siegel-Jacobi modular forms, $\mathbf{f}$ will still denote the corresponding adelic form, and $f$ will be used for other types of forms.

\subsection{Arithmetic properties of Siegel-Jacobi modular forms}
For a congruence subgroup $\b{\Gamma}$ of $\b{G}(F)$ and a subfield $K$ of $\mathbb{C}$ we define the set
\[
M_{k,S}^{n}(\b{\Gamma},\psi, K) := \{ \b{f} \in M_{k,S}^{n}(\b{\Gamma},\psi) : \b{f}(\tau,w) = \sum_{t,r} c(t,r) \mathbf{e}_{\a}(\tr(t\tau +\T{r}w)), \,\,c(t,r) \in K \} ;
\]
the subspace $S_{k,S}^{n}(\b{\Gamma},\psi, K)$ consisting of cusp forms is defined in a similar way. Moreover, we write $M_{k,S}^{n}(K)$ for the union of all spaces $M_{k,S}^{n}(\b{\Gamma}_1(\f{b},\f{e}),K)$ for all integral ideals $\f{e}$ and fractional ideals $\mathfrak{b}$, where $\b{\Gamma}_1(\f{b},\f{e}):=\b{G}^n(F)\cap \b{D}_1(\mathfrak{b},\f{e})$, and
\[
 \b{D}_1(\f{b},\f{e}):=\{ (\l,\mu,\kappa)x\in C[\f{o}, \f{b}^{-1}, \f{b}^{-1}]D[\f{b}^{-1}\f{e},\f{be}]:(a_x-1_n)_v\in M_n(\f{e}_v)\mbox{ for every } v|\f{e}\}.
\]

For an element $\sigma \in Aut(\mathbb{C})$ and an element $k = (k_v) \in \mathbb{Z}^{\mathbf{a}}$ we define $k^{\sigma} := (k_{v\sigma}) \in \mathbb{Z}^{\mathbf{a}}$, where $v \sigma$ is the archimedean place corresponding to the embedding $ K \stackrel{\tau_v}{\hookrightarrow} \mathbb{C} \stackrel{\sigma}{\rightarrow} \mathbb{C}$, if $\tau_v$ is the embedding in $\mathbb{C}$ corresponding to the archimedean place $v$.  
\begin{prop} Let $k \in \mathbb{Z}^{\mathbf{a}}$, and let $\Phi$ be the Galois closure of $F$ in $\overline{\mathbb{Q}}$, and $\Phi_k$ the subfield of $\Phi$ such that
\[
Gal(\Phi/ \Phi_k) := \left\{ \sigma \in Gal(\Phi /F) : \,\,\,  k^{\sigma} = k \right\}.
\]
Then $M_{k,S}^{n}(\mathbb{C}) = M_{k,S}^{n}(\Phi_k) \otimes_{\Phi_k} \mathbb{C}.  $ 
\end{prop}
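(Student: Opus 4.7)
The plan is to extend Shimura's argument from \cite{Sh78} (proved there for $F=\mathbb{Q}$) to the totally real setting by constructing a natural semilinear action of $\mathrm{Aut}(\mathbb{C}/\mathbb{Q})$ on $\bigcup_{k\in\mathbb{Z}^{\a}} M_{k,S}^n(\mathbb{C})$ through Fourier coefficients and then invoking Galois descent on a fixed congruence subgroup.

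First, I would make the action precise. Any $\sigma\in\mathrm{Aut}(\mathbb{C}/\mathbb{Q})$ induces a permutation $v\mapsto v\sigma$ of $\a$ via its action on embeddings $F\hookrightarrow\mathbb{C}$ (its restriction to $\Phi$ lies in $\mathrm{Gal}(\Phi/\mathbb{Q})$). Given $\b{f}\in M^n_{k,S}(\b{\Gamma},\psi)$ with Fourier expansion
$$\b{f}(\tau,w)=\sum_{t,r} c(t,r)\,\mathbf{e}_{\a}\!\bigl(\tr(t\tau)+\tr(\T{r}w)\bigr),$$
I define $\b{f}^{\sigma}$ to be the formal series with $(t,r)$-th Fourier coefficient $c(t,r)^{\sigma}$, reinterpreted via the relabelled archimedean places: writing $z=(z_v)_{v\in\a}\in\H_{n,l}$ with $z_v=(\tau_v,w_v)$, set $\b{f}^{\sigma}(z):=\sum_{t,r} c(t,r)^{\sigma}\mathbf{e}_{\a}(\tr(tz')+\tr(\T{r}z'))$, where $z'=(z_{v\sigma})_{v\in\a}$ and $t\in Sym_n(F)$ is evaluated at the $v$-th factor using the embedding $\sigma_v$. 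Convergence and holomorphy of $\b{f}^{\sigma}$ follow from those of $\b{f}$ together with the permutation of variables.

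Next I would verify that $\b{f}^{\sigma}\in M^n_{k^{\sigma},S}(\b{\Gamma}^{\sigma},\psi^{\sigma})$ for appropriately Galois-twisted data. Invariance under the Heisenberg subgroup and under the upper-triangular parabolic is immediate from the form of the Fourier expansion. The nontrivial point is the transformation law under a Weyl element; here the cleanest route is to pass to the adelic picture via \eqref{bijection between classical and adelic} and observe that $\b{G}^{n,l}$ and the factor of automorphy $J_{k,S}$ are defined over $F$, so the Galois action on the values of the adelic Siegel-Jacobi form $\mathbf{f}$ at $F$-rational points intertwines modularity place-by-place; the resulting function is then adelic automorphic of weight $k^{\sigma}$ and matches, under the bijection \eqref{bijection between classical and adelic}, the classical $\b{f}^{\sigma}$ defined above. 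Alternatively, one can use a $q$-expansion principle: the transformation law under any $\b{\gamma}\in\b{G}(F)$ is encoded in an $F$-rational identity on Fourier expansions (after clearing denominators of $J_{k,S}(\b{\gamma},z)$), and the Galois action preserves such identities.

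Finally, for $\sigma\in\mathrm{Aut}(\mathbb{C}/\Phi_k)$ we have $k^{\sigma}=k$ by the very definition of $\Phi_k$, so the action preserves each finite-dimensional piece $M_{k,S}^n(\b{\Gamma}_1(\f{b},\f{e}),\psi,\mathbb{C})$. Since the action of $\mathrm{Gal}(\overline{\mathbb{Q}}/\Phi_k)$ on any given form is determined by countably many Fourier coefficients and factors through a finite quotient on each individual form, standard Galois descent for vector spaces (Hilbert 90) yields
$$M^n_{k,S}(\b{\Gamma}_1(\f{b},\f{e}),\psi,\mathbb{C})= M^n_{k,S}(\b{\Gamma}_1(\f{b},\f{e}),\psi,\Phi_k)\otimes_{\Phi_k}\mathbb{C},$$
and taking the union over $(\f{b},\f{e},\psi)$ gives the desired statement. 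The main obstacle is the middle step: verifying that the Galois-twisted Fourier series $\b{f}^{\sigma}$ genuinely satisfies the Jacobi modularity condition under a nontrivial Weyl element, which for general totally real $F$ requires tracking how $\sigma$ interacts with $J_{k,S}$ at the permuted archimedean places; this is precisely the place where the adelic viewpoint or the $q$-expansion principle becomes essential, whereas for $F=\mathbb{Q}$ (Shimura's original case) the permutation of $\a$ is trivial and this difficulty does not arise.
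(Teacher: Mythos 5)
There is a genuine gap at what you yourself identify as the ``main obstacle,'' and neither of your two proposed resolutions closes it. The hard content of this proposition is precisely the claim that the semilinear action $\b{f}\mapsto\b{f}^{\sigma}$ on Fourier coefficients preserves modularity under the full group, i.e.\ under elements not in the parabolic. Your first route asserts that ``the Galois action on the values of the adelic Siegel-Jacobi form $\mathbf{f}$ at $F$-rational points intertwines modularity place-by-place'' --- but the values of $\mathbf{f}$ are transcendental complex numbers obtained by evaluating a holomorphic function, and there is no a priori Galois action on them; the action is defined only on Fourier coefficients, and the fact that $\b{G}^{n,l}$ and $J_{k,S}$ are defined over $F$ does not by itself transport that action to an action on automorphic forms. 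Your second route claims the transformation law under a Weyl element ``is encoded in an $F$-rational identity on Fourier expansions''; this is false as stated, because the Fourier expansion of $\b{f}|_{k,S}\b{\gamma}$ is an expansion at a \emph{different} cusp, and the passage between expansions at distinct cusps is an analytic, not an algebraic, operation on the coefficients. The statement that $M_k^n(K)$ (already in the symplectic case) is stable under $|_k\gamma$ for $\gamma\in\Sp_n(F)$ and $K\supset\Q^{ab}$ is a deep theorem of Shimura whose proof rests on canonical models and CM points, not on formal rationality of the cocycle; a genuine $q$-expansion principle strong enough to replace it would require the arithmetic theory of the relevant (mixed) Shimura variety, which you do not supply. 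The final Galois-descent step is standard once stability is known, but it cannot be reached without it.

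The paper's route is different and avoids this entirely: it invokes the theta decomposition $\b{f}(\tau,w)=\sum_{h\in\Lambda_1/\Lambda_2}f_h(\tau)\Theta_{2S,\Lambda_2,h}(\tau,w)$, which identifies $M_{k,S}^n(K)$ with $\bigoplus_h M_{k-l/2}^n(K)$ and thereby reduces the proposition to the corresponding rationality statement for Siegel modular forms of integral (for $l$ even) or half-integral (for $l$ odd) weight over a totally real field. That statement is then quoted from Shimura's book (Theorems 10.4 and 10.7 of \cite{Sh00}), exactly as the $F=\Q$ case in \cite{Sh78} reduces to the classical symplectic case. If you want to salvage your direct approach, you would in effect have to reprove the arithmeticity of Siegel-Jacobi forms from scratch; the efficient fix is to insert the theta decomposition as the reduction step and cite the known symplectic results.
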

\begin{proof} If $F= \mathbb{Q}$, this is \cite[Proposition 3.8]{Sh78}. A careful examination of the proof \cite[page 60]{Sh78} shows that the proof is eventually reduced to the corresponding statement for Siegel modular forms of integral (if $l$ is even) or half-integral (if $l$ is odd) weight. However, in both cases the needed statement does generalize to the case of totally real fields, as it was established in \cite[Theorems 10.4 and 10.7]{Sh00}. 
\end{proof}

Given an $\b{f} \in M_{k,S}^{n}(\mathbb{C})$, we define
 \[
 \b{f}_*(\tau,w) := \mathbf{e}_{\a}(S w(\tau- \overline{\tau})^{-1}\transpose{w}) \b{f}(\tau,w)
 \]
and write $\mathbb{Q}^{ab}$ for the maximal abelian extension of $\mathbb{Q}$. Moreover, for $k \in \frac{1}{2} \mathbb{Z}^{\mathbf{a}}$ such that $k_v - \frac{1}{2} \in \mathbb{Z}$ for all $v \in \mathbf{a}$ we write $M_k^n$ for the space of Siegel modular forms of weight $k$, and of any congruence subgroup, and $M_k^n(K)$ for those with the property that all their Fourier coefficients at infinity  lie in $K$ (see for example \cite[Chapter 2]{Sh00} for a detailed study of these sets).

\begin{prop} \label{Hecke Operators preserve field of definition}Let $K$ be a field that contains $\Q^{ab}$ and $\Phi$ as above. Then  
\begin{enumerate}
\item $\b{f} \in M_{k,S}^{n}(K)$ if and only if $\b{f}_{*}(\tau, v \Omega_{\tau} ) \in M_{k}^{n}(K)$, where $\Omega_{\tau} := \transpose{(\tau \,\,\,1_n)}$, and $v \in M_{l,2n}(F)$.
\item For any element $\gamma \in \Sp_n(F) \hookrightarrow \b{G}^n(F)$ and $\b{f} \in M_{k,S}^{n}(K)$,  we have 
\[
\b{f}|_{k,S} \gamma \in M_{k,S}^{n}(K).
\]
Moreover, if $K$ contains the values of the character $\psi$, then if $\b{f} \in M_{k,S}^n(\b{\Gamma},\psi,K)$, it follows that $\b{f}|T_{r,\psi} \in M_{k,S}^n(\b{\Gamma},\psi,K)$ for any $r \in Q(\f{e})$.
\end{enumerate}
\end{prop}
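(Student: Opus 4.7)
The plan is to reduce both assertions to the corresponding arithmeticity results for Siegel modular forms over the totally real field $F$ (cf.\ \cite[Ch.~II]{Sh00}), by means of Shimura's correspondence \cite{Sh78} between Siegel-Jacobi forms and Siegel modular forms.

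For part (1), I would write $v=(v_1,v_2)\in M_{l,n}(F)\times M_{l,n}(F)$, so that $v\Omega_\tau = v_1\tau+v_2$. A direct computation, using the decomposition $w = v_1(\tau-\bar\tau)+(v_1\bar\tau+v_2)$ and the symmetry of $\tau$, shows that substituting $w=v\Omega_\tau$ into the non-holomorphic factor $\mathbf{e}_{\mathbf{a}}(\tr(Sw(\tau-\bar\tau)^{-1}\T{w}))$ of $\b{f}_*$ cancels the antiholomorphic dependence, so the resulting function of $\tau$ is holomorphic and transforms as a Siegel modular form of weight $k$. Comparing Fourier expansions, the coefficients of $\b{f}_*(\tau,v\Omega_\tau)$ are $\Q^{ab}$-linear combinations of the $c(t,r)$, the relevant factors being of the form $\mathbf{e}_{\mathbf{a}}(\tr(\T{r}v_2))$---roots of unity, since the $\Q$-trace of $\tr(\T{r}v_2)$ is rational. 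This gives $\b{f}\in M_{k,S}^n(K)\Rightarrow \b{f}_*(\tau,v\Omega_\tau)\in M_k^n(K)$. Conversely, by varying $v$ over a sufficiently rich finite subset of $M_{l,2n}(F)$, the Fourier coefficients of the resulting family of Siegel modular forms determine each $c(t,r)$ through an invertible linear system over $\Q^{ab}$. Both implications are due, for $F=\Q$, to \cite[\S 3]{Sh78}; the extension to totally real $F$ is immediate since the argument is place-wise at $\mathbf{a}$.

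For part (2), consider first $\gamma\in \Sp_n(F)$ embedded in $\b{G}^n(F)$ with trivial Heisenberg component. A direct computation using $J_{k,S}(\gamma,z) = j(\gamma,\tau)^k\mathbf{e}_{\mathbf{a}}(\tr(S[w](c_\gamma\tau+d_\gamma)^{-1}c_\gamma))$ yields an identity of the form
\[
(\b{f}|_{k,S}\gamma)_*(\tau,v\Omega_\tau) = \b{f}_*(\tau,v'\Omega_\tau)\big|_k\gamma,
\]
with $v'=v\gamma^{-1}\in M_{l,2n}(F)$. By part (1), $\b{f}_*(\tau,v'\Omega_\tau)\in M_k^n(K)$, and Shimura's arithmeticity for Siegel modular forms \cite[Thm.~10.7]{Sh00} implies that $\b{f}_*(\tau,v'\Omega_\tau)|_k\gamma$ still lies in $M_k^n(K)$; applying the converse direction of part (1) gives $\b{f}|_{k,S}\gamma\in M_{k,S}^n(K)$. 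For the Heisenberg action by $(\lambda,\mu,\kappa)\in H^{n,l}(F)$, one computes
\[
J_{k,S}((\lambda,\mu,\kappa),(\tau,w)) = \mathbf{e}_{\mathbf{a}}\bigl(-\tr(S\kappa)-2\tr(\T{\lambda}Sw)-\tr(S[\lambda]\tau)\bigr),
\]
and $(\lambda,\mu,\kappa)(\tau,w)=(\tau,w+\lambda\tau+\mu)$. Substituting into the Fourier expansion of $\b{f}$ and re-indexing $(t,r)\mapsto (t+\T{r}\lambda+S[\lambda],\,r+2S\lambda)$ (with appropriate symmetrization of $t$), the new Fourier coefficients equal $c(t,r)\cdot \mathbf{e}_{\mathbf{a}}(\tr(S\kappa)+\tr(\T{r}\mu))$, where the exponential factors are roots of unity (their $\Q$-trace arguments are rational) and hence lie in $\Q^{ab}\subset K$. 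Combining the symplectic and Heisenberg cases shows $\b{f}|_{k,S}\b{\alpha}\in M_{k,S}^n(K)$ for every $\b{\alpha}\in\b{G}^n(F)$. Finally, since $\b{f}|T_{r,\psi}=\sum_{\b{\alpha}\in\b{A}}\psi_{\f{c}}(\det(a_\alpha)_{\f{c}})^{-1}\b{f}|_{k,S}\b{\alpha}$ is a finite sum whose coefficients lie in the image of $\psi$, hence in $K$ by hypothesis, we deduce $\b{f}|T_{r,\psi}\in M_{k,S}^n(\b{\Gamma},\psi,K)$.

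The main obstacle will be the inversion step in part (1): one must select sufficiently many auxiliary vectors $v$ so that the induced finite linear system over $\Q^{ab}$ nondegenerately recovers all the Fourier coefficients $c(t,r)$. This requires a careful analysis of how the Fourier supports of $\b{f}_*(\tau,v\Omega_\tau)$ vary with $v$, in the spirit of \cite[\S 3]{Sh78}. A secondary technical point is to verify that, in the Heisenberg computation, the re-indexing $(t,r)\mapsto(t',r')$ remains within the lattice indexing the Fourier expansion of the transformed form, i.e.\ that the level structure is preserved under $|_{k,S}\b{\alpha}$.
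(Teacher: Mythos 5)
Your proposal follows essentially the same route as the paper, which simply invokes Shimura's \cite[Proposition 3.2]{Sh78} for $F=\mathbb{Q}$ together with the stability of $M_k^n(K)$ under $\Sp_n(F)$ from \cite[Theorem 10.7]{Sh00}, and notes that the argument carries over to totally real $F$ place-by-place. Your write-up merely fills in the details of Shimura's argument (the substitution $w=v\Omega_\tau$, the recovery of the $c(t,r)$, and the explicit Heisenberg computation needed for the Hecke-operator statement), all of which is consistent with the paper's proof.
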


\begin{proof} If $F=\mathbb{Q}$, this is \cite[Proposition 3.2]{Sh78}. It is easy to see that the proof generalizes to the case of any totally real field. Indeed, the first part of the proof is a direct generalization of the argument used by Shimura. The second part requires the fact that the space $M_{k}^n(K)$ is stable under the action of elements in $\Sp_n(F)$, which is true for any totally real field, as it is proved in \cite[Theorem 10.7 (6)]{Sh00}. The last statement follows from the definition of the Hecke operator $T_{r,\psi}$.
\end{proof}

For a symmetric matrix $S\in Sym_l(F)$, $h \in M_{l,n}(F)$ and a lattice $L \subset M_{l,n}(F)$ we define the Jacobi theta series of characteristic $h$ by
\[
\Theta_{S,L,h}(\tau,w) = \sum_{x \in L} \mathbf{e_a}(\tr(S(\frac{1}{2} \transpose{(x + h)}\tau (x+h) + (x+h) w  ) )).
\]

\begin{thm}  \label{Structure of holomorphic Siegel-Jacobi}Assume that $n > 1$ or $F \neq \mathbb{Q}$, and let $K$ be any subfield of $\mathbb{C}$. Let $A \in \GL_l(F)$ be such that $A S \,\transpose{A} = \diag[s_1, \ldots, s_l ]$, and define the lattices $\Lambda_1 := A M_{l,n}(\mathfrak{o}) \subset M_{l,n}(F)$ and $\Lambda_2 := 2 \diag[s_1^{-1}, \ldots, s_l^{-1}] M_{l,n}(\mathfrak{o}) \subset M_{l,n}(F)$. Then there is an isomorphism
\[
\Phi: M^n_{k,S}(K) \cong \bigoplus_{h \in \Lambda_1/\Lambda_2} M^n_{k-l/2}(K)
\] 
given by $\b{f} \mapsto \left( f_{h}\right)_h$, where the $f_h \in M^n_{k-l/2}(K)$ are defined by the expression
\[
\b{f}(\tau,w) = \sum_{h \in \Lambda_1/\Lambda_2} f_h(\tau) \Theta_{2S,\Lambda_2,h}(\tau,w).
\]

Moreover, under the above isomorphism,
\[
   \Phi^{-1} \left( \bigoplus_{h \in \Lambda_1/\Lambda_2} S^n_{k-l/2}(K) \right) \subset S^n_{k,S}(K).
\]
\end{thm}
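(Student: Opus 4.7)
The proof proceeds by constructing $\Phi$ directly from the Fourier expansion of $\b{f}$ and tracking Fourier coefficients explicitly. Writing $\b{f}(\tau,w) = \sum_{t,r} c(t,r)\mathbf{e}_{\mathbf{a}}(\tr(t\tau + \T{r}w))$, the Heisenberg invariance $\b{f}|_{k,S}(0,\mu,0) = \b{f}$ for $\mu \in M_{l,n}(\f{b}^{-1})$ provides periodicity in $w$, while the quasi-periodicity
\[
\b{f}(\tau, w+\lambda\tau) = \mathbf{e}_{\mathbf{a}}(-2\tr(\T{\lambda}Sw) - \tr(S[\lambda]\tau))\,\b{f}(\tau,w),\qquad\lambda\in M_{l,n}(\f{o}),
\]
(coming from invariance under $(\lambda,0,0)$) forces $r$ to be of the form $2Sy$ with $y \in \Lambda_1$ and identifies coefficients whose $y$'s differ by an element of $\Lambda_2$ via
\[
a_h(t) := c(t + S[y], 2Sy),\qquad y\in h+\Lambda_2,
\]
independently of the choice of $y$. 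Regrouping the Fourier series by classes $h \in \Lambda_1/\Lambda_2$ then yields the theta decomposition
\[
\b{f}(\tau,w) = \sum_{h\in\Lambda_1/\Lambda_2} f_h(\tau)\,\Theta_{2S,\Lambda_2,h}(\tau,w),\qquad f_h(\tau) := \sum_t a_h(t)\,\mathbf{e}_{\mathbf{a}}(\tr(t\tau)).
\]
Modularity of each $f_h$ with weight $k-l/2$ follows from comparing the symplectic transformation of $\b{f}$ with the classical transformation law of the Jacobi theta series (which carries weight $l/2$ and a theta multiplier permuting the characteristics $h$); on a sufficiently deep congruence subgroup this multiplier becomes trivial and the permutation fixes each class, and the linear independence of the theta series in $w$ then forces individual modularity of each $f_h$. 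Holomorphy at cusps is guaranteed by the K\"ocher principle under the assumption $n>1$ or $F\neq\mathbb{Q}$.

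Preservation of the field of definition is now immediate: the identity $a_h(t) = c(t+S[y],2Sy)$ shows that every Fourier coefficient of $f_h$ equals a Fourier coefficient of $\b{f}$, hence $\b{f}\in M^n_{k,S}(K)$ gives $f_h\in M^n_{k-l/2}(K)$; conversely, the theta series $\Theta_{2S,\Lambda_2,h}$ have Fourier coefficients in $\mathbb{Z}\subset K$, giving the inverse direction and completing the isomorphism over $K$. For the cusp form assertion, if each $f_h\in S^n_{k-l/2}(K)$ so that $a_h(t)=0$ unless $t>0$, the Fourier coefficient of $\sum_h f_h \Theta_{2S,\Lambda_2,h}$ at $(t',r') = (t+S[y], 2Sy)$ equals $a_h(t)$, and the block factorization
\[
\begin{pmatrix} S & Sy \\ \T{y}S & t+S[y]\end{pmatrix} = \begin{pmatrix} 1 & 0 \\ \T{y} & 1\end{pmatrix}\begin{pmatrix} S & 0 \\ 0 & t\end{pmatrix}\begin{pmatrix} 1 & y \\ 0 & 1\end{pmatrix}
\]
shows (using $S>0$) that the extended matrix $\bigl(\begin{smallmatrix} S & r'/2 \\ \T{r'}/2 & t'\end{smallmatrix}\bigr)$ is positive definite precisely when $t>0$. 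Hence $\sum_h f_h\Theta_{2S,\Lambda_2,h}\in S^n_{k,S}(K)$, yielding the asserted inclusion.

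The main obstacle is the modularity step: passing from the a priori vector-valued transformation of the tuple $(f_h)_{h\in\Lambda_1/\Lambda_2}$ under $\Sp_n$ to individual modularity of each $f_h$. This requires careful control of the theta multiplier system at both archimedean and non-archimedean places of $F$, and the identification of an explicit congruence subgroup on which the multiplier becomes trivial and the permutation action on characteristics is trivialized. Once this is accomplished, all remaining properties follow by direct computation with Fourier coefficients.
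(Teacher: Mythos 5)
Your construction of $\Phi$ via the Fourier expansion (periodicity in $w$ from the $(0,\mu,0)$-invariance, the quasi-periodicity from $(\lambda,0,0)$ identifying $c(t+S[y],2Sy)$ along classes $y\bmod\Lambda_2$, and the resulting theta decomposition) is the standard route and is essentially the content of Shimura's Proposition 3.5 in \cite{Sh78}, which is all the paper itself invokes for the first statement; so for that part you are, if anything, more explicit than the paper. The step you flag as the main obstacle — trivializing the Weil-representation action on the characteristics over a sufficiently deep congruence subgroup so that linear independence of the $\Theta_{2S,\Lambda_2,h}$ yields individual modularity of each $f_h$ in weight $k-l/2$ — is correctly identified but left as a black box; that is acceptable here since the paper also defers it to \cite{Sh78}, though note the multiplier only becomes a character (the $\psi_S$ appearing later in Section 8), not literally trivial, which is harmless but should be said.

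There is, however, a genuine gap in your cusp-form assertion. You verify the positivity condition only for the Fourier expansion at the cusp at infinity, but $S^n_{k,S}(K)$ is defined by requiring, for \emph{every} $g\in G^n(F)$, that the coefficients $c(\b{g};t,r)$ of $\b{f}|_{k,S}\,g$ vanish unless $\spmatrix{S_v & r_v\\ \T{r_v} & t_v}$ is positive definite at all $v\in\a$; for non-full level, cuspidality at one cusp does not imply it at the others. The missing step (which is exactly what the paper's proof supplies) is to write, for $\gamma\in \Sp_n(F)$,
\[
\Theta_{2S,\Lambda_2,h_1}|_{k,S}\,\gamma=\sum_{h_2}f_{h_1,h_2}\,\Theta_{2S,\Lambda_2,h_2},\qquad f_{h_1,h_2}\in M^n_{k-l/2},
\]
so that
\[
\b{f}|_{k,S}\,\gamma=\sum_{h_2}\Big(\sum_{h_1}\big(f_{h_1}|_{k-l/2}\,\gamma\big)\,f_{h_1,h_2}\Big)\Theta_{2S,\Lambda_2,h_2},
\]
and to observe that each inner coefficient function still has Fourier support in $t_2>0$, because $f_{h_1}|_{k-l/2}\,\gamma$ does (cuspidality of $f_{h_1}$ at all cusps) and the $f_{h_1,h_2}$ are holomorphic at infinity; your block-factorization computation then applies verbatim at the cusp $\gamma$. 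Without this propagation the asserted inclusion $\Phi^{-1}\big(\bigoplus_h S^n_{k-l/2}(K)\big)\subset S^n_{k,S}(K)$ is not established.
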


\begin{rem} \label{Condition $n=1$}We remark here that the assumption of $n>1$ or $F \neq \mathbb{Q}$ is needed to guarantee that the $f_h$'s are holomorphic at the cusps, which follows from the K\"{ocher} principle. However, even in the case of $F=\mathbb{Q}$ and $n=1$, if we take $\b{f}$ to be of trivial level, then the $f_h$'s are holomorphic at infinity (see for example \cite[page 59]{EZ}).
\end{rem}

\begin{proof}[Proof of Theorem \ref{Structure of holomorphic Siegel-Jacobi}] The first statement is \cite[Proposition 3.5]{Sh78} for $F=\mathbb{Q}$ and it easily generalizes to the case of any totally real field. We explain the statement about  cusp forms. 

Consider first expansions around the cusp at infinity. Fix $h\in \Lambda_1/\Lambda_2$ and let $f_h(\tau) = \sum_{t_2 >0} c(t_2) \mathbf{e}_{\a}(\tr(t_2\tau))$. It is known that Fourier coefficients $c(t_1,r)$ of a Jacobi theta series  
\[
\Theta_{2S,\Lambda_2,h}(\tau,w) = \sum_{t_1,r} c(t_1,r) \mathbf{e}_{\a}(\tr(t_1\tau)) \mathbf{e}_{\a}(\tr(\T{r}w))
\]
are nonzero only if $4t_1 = r S^{-1} \transpose{r}$ (see \cite[p. 210]{Z89}). Hence, the coefficients of
\[
f_h(\tau) \Theta_{2S,\Lambda_2,h}(\tau,w) = \sum_{t,r}\left( \sum_{t_1 + t_2 = t} c(t_1,r) c(t_2)\right) \mathbf{e}_{\a}(\tr(t\tau)) \mathbf{e}_{\a}(\tr(\T{r}w))
\] 
are nonzero only if $4t = 4(t_1+t_2) = rS^{-1} \transpose{r} + 4t_2 > rS^{-1}\transpose{r}$. This means that the function $f_h(\tau) \Theta_{2S,\Lambda_2,h}(\tau,w)$ satisfies cuspidality condition at infinity. 

Now let $\gamma$ be any element in $\Sp_n(F)$. The first statement in the Theorem states that for every $h_1 \in \Lambda_1/\Lambda_2$ there exist $f_{h_1,h_2}\in M_{k-l/2}^n(K),h_2\in\Lambda_1/\Lambda_2$, such that
\[
\Theta_{2S,\Lambda_2,h_1}|_{k,S}\gamma (\tau,w)= \sum_{h_2} f_{h_1,h_2}(\tau) \Theta_{2S, \Lambda_2,h_2} (\tau,w).
\]
Hence, for some cusp forms $f_{h_1}\in S_{k-l/2}^n(K)$,
\begin{align*}
\b{f}|_{k,S}\gamma (\tau,w) &:=  \sum_{h_1 } f_{h_1}|_{k}\gamma(\tau) \left(\sum_{h_2} f_{h_1,h_2}(\tau)\Theta_{2S, \Lambda_2,h_2} (\tau,w)\right)\\
&=\sum_{h_2} \left(\sum_{h_1} f_{h_1}|_{k}\gamma(\tau) f_{h_1,h_2}(\tau) \right) \Theta_{2S, \Lambda_2,h_2} (\tau,w).
\end{align*}
The same argument as used for the cusp at infinity implies that the functions $\b{f}|_{k,S}\gamma (\tau,w)$ and $\sum_{h_1} f_{h_1}|_{k}\gamma (\tau)f_{h_1,h_2}(\tau)$ are cuspidal. This finishes the proof.
\end{proof}

Note that the above theorem does not state that  $\Phi^{-1}\left( \bigoplus_{h \in \Lambda_1/\Lambda_2} S^n_{k-l/2}(K) \right) = S^n_{k,S}(K)$.
For this reason we make the following definition.

\noindent \textbf{Property A.} We say that a cusp form $\b{f} \in S_{k,S}^n(K)$ has the Property A if 
\[
\Phi(\b{f}) \in \bigoplus_{h \in \Lambda_1/\Lambda_2} S^n_{k-l/2}(K).
\]
 \textbf{Examples of Siegel-Jacobi forms that satisfy the Property A:} 
\begin{enumerate}
\item  Siegel-Jacobi forms over a field $F$ of class number one, and with trivial level, i.e. with $\mathfrak{c} = \mathfrak{o}$. Note that in this situation there is only one cusp. Then, keeping the notation as in the proof of the theorem above we need to verify that if $\b{f}(\tau,w) = \sum_{t,r} c_{\mathbf{f}}(t,r) \mathbf{e}_{\a}(\tr(t\tau)) \mathbf{e}_{\a}(\tr(\T{r}w))$ with $4t > rS^{-1}\transpose{r}$ whenever $c(t,r)\neq 0$, then the $f_{h}$ have to be cuspidal. Observe first that if $h_1,h_2 \in \Lambda_1/\Lambda_2$ are different, $\Theta_{2S,\Lambda_2,h_1}(\tau,w) = \sum_{t,r} c_1(t,r) \mathbf{e}_{\a}(\tr(t\tau)) \mathbf{e}_{\a}(\tr(\T{r}w)),$ and $\Theta_{2S,\Lambda_2,h_2}(\tau,w) = \sum_{t,r} c_2(t,r) \mathbf{e}_{\a}(\tr(t\tau)) \mathbf{e}_{\a}(\tr(\T{r}w))$, then there is no $r$ such that at the same time $c_1(t,r) \neq 0$ and $c_2(t,r) \neq 0$. Indeed, if it was not the case then there would be $\lambda_1,\lambda_2 \in \Lambda_2$ such that $\transpose{r} = 2S (\lambda_1 + h_1)$ and $\transpose{r} = 2 S (\lambda_2 + h_2)$, that is, $\lambda_1 + h_1 = \lambda_2 + h_2$ or, equivalently, $h_1 - h_2 \in \Lambda_2$; contradiction. Hence, for any given $r$ there is a unique $h \in \Lambda_1/\Lambda_2$ such that $\Theta_{2S,\Lambda_2,h}$ has a nonzero coefficient $c(t,r)$. This means that there exists a unique $h$ such that $c_{\mathbf{f}}(t,r)$ is the Fourier coefficient of $f_h(\tau) \Theta_{2S,\Lambda_2,h}(\tau,w) = \sum_{t,r} \sum_{t_1 + t_2 = t} c(t_1,r) c(t_2) \mathbf{e}_{\a}(\tr(t\tau)) \mathbf{e}_{\a}(\tr(\T{r}w))$. But then $rS^{-1} \transpose{r} < 4t = 4(t_1+t_2) = rS^{-1} \transpose{r} + 4t_2$ and so $t_2 > 0$, which proves that $f_h$ is cuspidal. 

\item  Siegel-Jacobi forms of index $S$ such that $\det(2S) \in \mathfrak{o}^{\times}$, as in this case the lattices $\Lambda_1$ and $\Lambda_2$ from Theorem \ref{Structure of holomorphic Siegel-Jacobi} are equal.
\item Siegel-Jacobi forms of non-parallel weight, that is, if there exist distinct $v,v' \in \mathbf{a}$ such that $k_v \neq k_{v'}$. Indeed, in this case $M^n_{k-l/2}(K) = S^n_{k-l/2}(K)$ for all $h \in \Lambda_1/\Lambda_2$ (see \cite[Proposition 10.6]{Sh96}).
\end{enumerate}

Let us now explain the significance of the Property A. Recall first that we have defined a Petersson inner product $<\b{f},\b{g}>$ when $\b{f},\b{g} \in M^n_{k,S}(K)$ and one of them, say, $\b{f}$ is cuspidal. If $\b{f}$ satisfies the Property A, then we claim that
\[
<\b{f},\b{g}> = N(\det(4S))^{-n/2}\sum_{h \in \Lambda_1/\Lambda_2} <f_h, g_h>.
\]
Indeed, as in \cite[Lemma 3.4]{Z89},  
\[
<\b{f},\b{g}> = N(\det(4S))^{-n/2}  vol(A)^{-1}\int_{A} \sum_{h \in \Lambda_1/\Lambda_2} f_h(\tau) \overline{g_h(\tau)} \det(\Im(\tau))^{k-l/2 -(n+1)}d\tau ,  
\]
where $A = \Gamma \back \mathbb{H}^{\mathbf{a}}_n$ and a congruence subgroup $\Gamma$ is deep enough. We obtain the claimed equality after exchanging the order of integration and summation. This can be done exactly because each $f_h$ is cuspidal, which makes each individual integral well defined.

\begin{lem} \label{projection to cuspidal part} Assume that $n>1$ or $F \neq \mathbb{Q}$ and that $\b{f} \in S_{k,S}^{n}(\overline{\mathbb{Q}})$ satisfies the Property A and one of the following two conditions hold:
\begin{enumerate}
\item[(i)] there exist $v,v' \in \mathbf{a}$ such that $k_v \neq k_{v'}$;
\item[(ii)] $k = \mu \mathbf{a} = (\mu,\ldots,\mu) \in \mathbb{Z}^{\mathbf{a}}$, with $\mu \in \mathbb{Z}$ depending on $n$ and $F$ in the following way:\\
\begin{tabular}{cccccccc}
$n >2$ & & $n =2, F=\mathbb{Q}$ & & $n =2, F\neq\mathbb{Q}$ & & $n=1$ & .\\
$\mu > 3n/2 +l/2$ & & $\mu > 3$ & & $\mu > 2$ & & $\mu \geq 1/2$ &
\end{tabular}
\end{enumerate} 
Then for any $\b{g}\in M_{k,S}^{n}(\overline{\mathbb{Q}})$ there exists $\widetilde{\b{g}} := \mathfrak{q}(\b{g}) \in S_{k,S}^{n}(\overline{\mathbb{Q}})$ such that 
\[
<\b{f},\b{g}> = <\b{f},\widetilde{\b{g}}>.
\]
\end{lem}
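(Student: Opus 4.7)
The strategy is to reduce the statement to the analogous (and known) statement about cuspidal projection for Siegel modular forms of weight $k-l/2$, via the theta decomposition of Theorem \ref{Structure of holomorphic Siegel-Jacobi}. First I would apply that theorem to write
\[
\b{f} \longleftrightarrow (f_h)_{h\in\Lambda_1/\Lambda_2}, \qquad \b{g} \longleftrightarrow (g_h)_{h\in\Lambda_1/\Lambda_2},
\]
with $f_h,g_h \in M_{k-l/2}^n(\overline{\mathbb{Q}})$. Because $\b{f}$ satisfies Property A, in fact each $f_h$ lies in $S_{k-l/2}^n(\overline{\mathbb{Q}})$. Next, as noted in the paragraph preceding the lemma, the Property A of $\b{f}$ allows the Petersson product to be unfolded termwise:
\[
\langle\b{f},\b{g}\rangle \;=\; N(\det(4S))^{-n/2}\sum_{h\in\Lambda_1/\Lambda_2}\langle f_h,g_h\rangle ,
\]
so it suffices to construct, componentwise, a cuspidal projection $\mathfrak{q}_h: M_{k-l/2}^n(\overline{\mathbb{Q}})\to S_{k-l/2}^n(\overline{\mathbb{Q}})$ such that $\langle f_h,g_h\rangle = \langle f_h,\mathfrak{q}_h(g_h)\rangle$; then setting
\[
\widetilde{\b{g}} := \Phi^{-1}\bigl((\mathfrak{q}_h(g_h))_h\bigr)
\]
produces an element of $S_{k,S}^n(\overline{\mathbb{Q}})$ (again by Theorem \ref{Structure of holomorphic Siegel-Jacobi}) with the desired property.

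The construction of each $\mathfrak{q}_h$ is where the hypotheses (i), (ii) enter. Under (i) the weight $k-l/2$ is non-parallel, so by \cite[Proposition 10.6]{Sh96} one has $M_{k-l/2}^n(\overline{\mathbb{Q}}) = S_{k-l/2}^n(\overline{\mathbb{Q}})$ and we may simply take $\mathfrak{q}_h = \mathrm{id}$. Under (ii) the weight $k-l/2 = (\mu - l/2)\mathbf{a}$ is parallel, and the range of $\mu$ is precisely the one in which Shimura's results on arithmeticity of the holomorphic projection apply to the space $M_{k-l/2}^n$ (this is built on the orthogonal splitting of $M_{k-l/2}^n$ into a cuspidal part and a span of Klingen-type Eisenstein series, whose arithmetic properties are established in \cite{Sh00}, Chapter V, in particular in the results around Theorem 27.14). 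In both cases the projection preserves $\overline{\mathbb{Q}}$-rationality and is adjoint to the inclusion $S_{k-l/2}^n\hookrightarrow M_{k-l/2}^n$ with respect to the Petersson pairing against cusp forms, which is exactly what is needed to conclude $\langle f_h,g_h\rangle = \langle f_h,\mathfrak{q}_h(g_h)\rangle$.

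Chaining these steps:
\[
\langle\b{f},\b{g}\rangle = N(\det(4S))^{-n/2}\!\!\sum_{h}\langle f_h,g_h\rangle = N(\det(4S))^{-n/2}\!\!\sum_{h}\langle f_h,\mathfrak{q}_h(g_h)\rangle = \langle \b{f},\widetilde{\b{g}}\rangle ,
\]
which finishes the proof. The main obstacle, and the reason for the precise list of hypotheses, is the construction of an \emph{arithmetic} cuspidal projection on the Siegel side: outside the ranges in (i) and (ii) one cannot guarantee that the orthogonal projection of a $\overline{\mathbb{Q}}$-rational modular form onto the cuspidal subspace is again $\overline{\mathbb{Q}}$-rational, because the relevant Klingen-type Eisenstein series need not have algebraic Fourier coefficients up to an explicit period. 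Property A is needed to avoid dealing with the mixed contribution of non-cuspidal components of $\b{f}$ in the Petersson inner product; without it one could not split the integral termwise.
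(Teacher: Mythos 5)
Your proposal is correct and follows essentially the same route as the paper: decompose via Theorem \ref{Structure of holomorphic Siegel-Jacobi}, use Property A to split the Petersson product termwise, apply Shimura's arithmetic cuspidal projection from \cite[Theorem 27.14]{Sh00} componentwise in the parallel-weight case (and the identity in the non-parallel case, where $M^n_{k-l/2}(\overline{\mathbb{Q}})=S^n_{k-l/2}(\overline{\mathbb{Q}})$), and reassemble with $\Phi^{-1}$. The only cosmetic difference is that the paper dispatches the non-parallel case immediately without invoking the decomposition at all.
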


\begin{proof}  There is nothing to show in the case of non-parallel weight, since as it was mentioned above there is no (holomorphic) Eisenstein part in this case. In the parallel weight case, since $\b{f}$ has the Property A, $<\b{f},\b{g}> = N(\det(4S))^{-n/2} \sum_{h \in \Lambda_1/\Lambda_2} <f_h, g_h>$. Let $\widetilde{\mathfrak{q}} : M_{k-l/2}^n(\overline{\mathbb{Q}}) \rightarrow S_{k-l/2}^n(\overline{\mathbb{Q}})$ be the projection operator defined in \cite[Theorem 27.14]{Sh00}. Then, if we put $\widetilde{g}_h := \widetilde{\mathfrak{q}}(g_h)$ for all $h \in \Lambda_1 /\Lambda_2$, it follows that 
\[
<\b{f},\b{g}> = N(\det(4S))^{-n/2} \sum_{h \in \Lambda_1/\Lambda_2} <f_h, g_h> = N(\det(4S))^{-n/2} \sum_{h \in \Lambda_1/\Lambda_2} <f_h, \widetilde{g}_h> .
\]
In particular, if we set $\widetilde{\b{g}} := \Phi^{-1}((\widetilde{g}_h)_h)$, we obtain the statement of the lemma.
\end{proof}

We consider now a non-zero $\b{f} \in S_{k,S}^{n}(\b{\Gamma},\overline{\mathbb{Q}})$ with $\b{\Gamma} := \b{G} \cap \b{D}$, where
$$\b{D}:=\{ (\l,\mu,\kappa)x\in C[\f{o}, \f{b}^{-1}, \f{b}^{-1}]D[\f{b}^{-1}\f{c}_{\b{f}},\f{bc}_{\b{f}}]:(a_x-1_n)_v\in M_n((\f{c}_{\b{f}})_v)\mbox{ for every } v|\f{c}_{\b{f}}\}. $$ 
We assume that $\b{f}$ is an eigenfunction of the operators $T(\f{a})$ for all integral ideals $\f{a}$, write $\b{f} | T(\f{a}) = \lambda(\f{a}) \b{f}$ and define the space
\[
V(\b{f}) := \{ \widetilde{\b{f}} \in S_{k,S}^{n}(\b{\Gamma},\overline{\mathbb{Q}}) :\widetilde{\b{f}} | T(\f{a}) = \lambda(\f{a}) \widetilde{\b{f}}\mbox{ for all }\f{a}\} .
\]
For simplicity, from now on we will only consider the case of $\f{c}_{\b{f}}=\f{e}_{\b{f}}$, but our arguments can be easily generalized to the more general case of $\f{e}_{\b{f}}\neq \f{c}_{\b{f}}$. We are now ready to state the main theorem of this section on algebraic properties of 
\[
\b{\Lambda}(s,\b{f},\chi) = L(2s-n-l/2,\b{f},\chi) \begin{cases} L_{\mathfrak{c}}(2s-l/2,\chi \psi_S)  & \hbox{if } l \in 2\mathbb{Z},\\
1 & \hbox{if } l \not \in 2\mathbb{Z}.
 \end{cases} 
\]

\begin{thm}\label{Main Theorem on algebraicity}
Assume $n >1$ or $F\neq \mathbb{Q}$. Let $\chi$ be a Hecke character of $F$ such that $\chi_{\b{a}}(x) =\sgn_{\b{a}}(x)^k$, and $0 \neq \b{f} \in S_{k,S}^{n}(\b{\Gamma},\overline{\mathbb{Q}})$ an eigenfunction of all $T(\f{a})$. Set $\mu := \min_v{k_v}$ and assume that 
\begin{enumerate}
\item $\mu > 2n +l +1$,
\item  Property A holds for all $\widetilde{\b{f}} \in V(\b{f})$,
\item $k_v \equiv k_{v'} \mod{2}$ for all $v, v' \in \mathbf{a}$.
\end{enumerate}
Let $\sigma \in  \mathbb{Z}$ be such that 
\begin{enumerate}
\item $2n+1 - (k_v - l/2)  \leq \sigma -l/2 \leq k_v -l/2$ for all $v \in \mathbf{a}$, 
\item $| \sigma - \frac{l}{2} - \frac{2n+1}{2} | + \frac{2n+1}{2} - (k_v - l/2) \in 2 \mathbb{Z}$ for all $v \in \mathbf{a}$,
\item $k_v > l/2 + n(1+k_v - l/2 -|\sigma - l/2 - (2n+1)/2| - (2n+1)/2)$ for all $v \in \mathbf{a}$,
\end{enumerate}
but exclude the cases
\begin{enumerate}
\item $\sigma = n+1+ l/2$, $F = \mathbb{Q}$ and $\chi^2 \psi_i^2 = 1$ for some $\psi_i$,
\item $\sigma = l/2$, $\mathfrak{c} = \mathfrak{o}$ and $\chi \psi_S \psi_i = 1$ for some $\psi_i$,
\item $0 < \sigma - l/2 \leq n$, $\mathfrak{c} = \mathfrak{o}$ and $\chi^2 \psi_i^2 = 1$ for some $\psi_i$.
\item $\sigma \leq l +n$ in case $F$ has class number larger than one.
\end{enumerate}

Under these conditions
\[
\frac{\b{\Lambda}(\sigma/2,\b{f},\chi)}{\pi^{e_{\sigma}} <\b{f},\b{f}>} \in \overline{\mathbb{Q}},
\]
where  
$$e_{\sigma} =  n \sum_{v \in \mathbf{a}} (k_v - l + \sigma) - de,\quad e := \begin{cases}  n^2 + n - \sigma + l/2, & \mbox{if } 2 \sigma -l  \in 2 \mathbb{Z} \mbox{ and } \sigma \geq 2n + l/2, \\ n^2, & \mbox{otherwise}. \end{cases} $$
\end{thm}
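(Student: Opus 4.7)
My plan is to evaluate the doubling identity \eqref{Main identity 2} at the critical point $s=\sigma/2$. After rearrangement, it reads, schematically,
\[
\b{\Lambda}(\sigma/2,\b{f},\chi) \;=\; \frac{1}{C(\sigma)}\cdot\frac{<\,(\mathcal{E}|_{k,S}\b{\rho})(\diag[z_1,z_2],\sigma/2),\,(\b{f}^{c}|I_{\f{b}})(z_2)\,>}{(\b{f}|U_{h1_n,\chi}I_{\f{b}})|_{k,S}\b{\eta}_n^{-1}(z_1)},
\]
where $C(\sigma)$ is the explicit product of archimedean constants $c_{S,k}(\sigma/2-k/2)$, $\mathcal{G}_{k-l/2,2n}(\sigma/2-l/4)$, a finite constant $\nu_{\f{e}}$, and $G(\chi,\sigma-n-l/2)^{-1}$. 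The exclusions (1)--(4) of the theorem are exactly the loci where either $C(\sigma)$ is singular or $\mathcal{E}$ has a pole in the sense of Theorem \ref{analytic properties of Eisenstein series}; outside them, $C(\sigma)$ is a nonzero algebraic multiple of an explicit power of $\pi$. Hence the theorem reduces to proving that the Petersson pairing in the numerator equals $<\b{f},\b{f}>$ times an algebraic number times the \emph{correct} power of $\pi$, after which a careful bookkeeping of powers of $\pi$ will yield $e_\sigma$.

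First I would establish near-holomorphy and algebraicity of $\mathcal{E}|_{k,S}\b{\rho}$ at $s=\sigma/2$. The strategy is the theta decomposition already used in the proof of Theorem \ref{analytic properties of Eisenstein series}: the Jacobi Siegel Eisenstein series $\b{E}(z,s)$ decomposes along the cusps into finitely many products $E_\zeta(\tau,s-l/4)\,\Theta_{S,\Lambda_{a_\zeta}}(\tau,w)$, where $E_\zeta$ is a symplectic Siegel-type Eisenstein series of weight $k-l/2$ and character $\chi\psi_S$ on $\mathbb{H}_n^{\b{a}}$. The conditions (1)--(3) on $\sigma$ are precisely Shimura's criticality conditions (\cite[Thm.~28.8]{Sh00}) for the normalized Eisenstein series $\mathcal{G}_{k-l/2,n+m}(s-l/4)\Lambda^{n+m}_{k-l/2,\f{c}}(s-l/4,\chi\psi_S)E(\tau,s-l/4)$ of degree $n+m=2n$: at these points each $E_\zeta(\tau,\sigma/2-l/4)$ is nearly holomorphic with algebraic Fourier coefficients after dividing by an explicit power of $\pi$. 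Since $\Theta_{S,\Lambda_{a_\zeta}}$ has algebraic Fourier coefficients, Proposition \ref{Hecke Operators preserve field of definition} together with the theta decomposition imply that $\pi^{-e'_\sigma}\,\mathcal{E}|_{k,S}\b{\rho}(-,\sigma/2)$ is a nearly holomorphic Siegel-Jacobi form over $\overline{\mathbb{Q}}$ for an appropriate integer $e'_\sigma$ computable from the explicit $\Gamma$-factors in $\mathcal{G}_{k-l/2,2n}$.

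Second I would pass to the Petersson pairing. Using Theorem \ref{Structure of holomorphic Siegel-Jacobi} to write both arguments as $\sum_h g_h(\tau)\Theta_{2S,\Lambda_2,h}(\tau,w)$, the pairing $<\cdot,\b{f}^c|I_{\f{b}}>$ factors as $N(\det(4S))^{-n/2}\sum_h <(-)_h,(\b{f}^c|I_{\f{b}})_h>$ of symplectic Petersson pairings in weight $k-l/2$. Because $\b{f}^c|I_{\f{b}}\in V(\b{f}^c|I_{\f{b}})$ satisfies Property A by assumption, Lemma \ref{projection to cuspidal part} allows us to replace the nearly holomorphic inner arguments by their holomorphic cuspidal projections via Shimura's operator $\widetilde{\f{q}}$, producing a cusp form of level $\b{\Gamma}$ and weight $k-l/2$, defined over $\overline{\mathbb{Q}}$, lying in the $T(\f{a})$-Hecke eigensystem attached to $\b{f}$. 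Shimura's algebraicity theorem for critical values in the Siegel setting (\cite[Thm.~28.5, 28.8]{Sh00}), applied component-wise to the $h$-th theta components and then summed, yields that the numerator Petersson pairing is an algebraic multiple of $\pi^{e''_\sigma}\,<\b{f},\b{f}>$, where the equality $<\b{f}^c,\b{f}^c>=<\b{f},\b{f}>$ and the compatibility of $I_{\f{b}}$ with complex conjugation (Proposition \ref{Behaviour under complex conjugation}) are used.

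The main technical obstacle will be verifying that the sum $-\log_\pi C(\sigma)+e'_\sigma+e''_\sigma$ of the three contributions equals $e_\sigma$ as defined. The two pieces $e'_\sigma,e''_\sigma$ involve the $\Gamma$-factor $\mathcal{G}_{k-l/2,2n}(\sigma/2-l/4)$ and Shimura's critical-value period, whose exponent depends on whether $\sigma-l/2\geq 2n$ or not. This is exactly the origin of the piecewise definition of $e$, and a careful evaluation of $\gamma(s,|k_v-l/2|)$ in each of the cases distinguished in Theorem \ref{analytic properties of Siegel Eisenstein series} should match the formula in the statement. A subsidiary difficulty, only present when $F$ has class number $>1$, is the appearance of the class-group characters $\psi_i$ in the ratio $\Lambda^{2n}_{k-l/2,\f{c}}(\sigma/2-l/4,\chi\psi_S)/\Lambda^{2n}_{k-l/2,\f{c}}(\sigma/2-l/4,\chi\psi_S\psi_i)$; the excluded range $\sigma\leq l+n$ in (4) is designed to keep $C(\sigma)$ away from the zeros and poles produced by these auxiliary Dirichlet $L$-factors.
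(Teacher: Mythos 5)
Your overall skeleton (evaluate the doubling identity \eqref{Main identity 2} at $s=\sigma/2$, use the near-holomorphy and algebraicity of the restricted Siegel-type Jacobi Eisenstein series via the theta decomposition, apply holomorphic projection, and then do the $\pi$-bookkeeping) is the same as the paper's. The gap is in your second step, where you claim the numerator Petersson pairing equals an algebraic multiple of $\pi^{e''_\sigma}\langle \b{f},\b{f}\rangle$ by decomposing both arguments into theta components and applying Shimura's symplectic algebraicity theorems ``component-wise to the $h$-th theta components and then summed.'' This does not work as stated: the theta components $f_h$ of a Jacobi--Hecke eigenform $\b{f}$ are in general \emph{not} eigenforms of the symplectic Hecke algebra, so \cite[Thm.~28.5, 28.8]{Sh00} does not apply to them; and even granting that each $\langle g_h, f_h\rangle/\langle f_h,f_h\rangle$ were algebraic, you cannot conclude that $\sum_h\langle g_h,f_h\rangle$ is an algebraic multiple of $\sum_h\langle f_h,f_h\rangle$ without knowing that the individual periods $\langle f_h,f_h\rangle$ have pairwise algebraic ratios — and nothing in your argument supplies that.

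The paper closes this gap by first proving, entirely within the Jacobi setting, that $\langle\b{f},\b{g}\rangle/\langle\b{f},\b{f}\rangle\in\overline{\mathbb{Q}}$ for \emph{every} $\b{g}\in M^n_{k,S}(\overline{\mathbb{Q}})$ (Theorem \ref{ratio of inner products}). This is done by running the same doubling identity a second time, at the auxiliary point $s=\mu/2$ and with an auxiliary character $\chi$ chosen so that $\chi^2\neq 1$ and $G(\chi,\mu-n-l/2)\in\overline{\mathbb{Q}}^{\times}$; the hypothesis $\mu>2n+l+1$ puts $\mu-n-l/2$ in the region of absolute convergence, so Corollary \ref{non-vanishing of L-values} gives $L(\mu-n-l/2,\b{f},\chi)\neq 0$, and this non-vanishing is what shows that the holomorphic projections $\widetilde{\b{h}}_{r,t}$ obtained by comparing Fourier coefficients span $V(\b{f})$ over $\overline{\mathbb{Q}}$ with $\langle\widetilde{\b{h}}_{r,t},\b{f}\rangle$ a fixed algebraic multiple of a power of $\pi$ times the $L$-value. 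Only then is the identity at $s=\sigma/2$ used, and division by $\langle\b{f},\b{f}\rangle$ is licensed by the already-established Theorem \ref{ratio of inner products}. Your proposal omits both the auxiliary evaluation at $s=\mu/2$ and the non-vanishing/spanning argument, which are the essential ingredients; without them the final division by $\langle\b{f},\b{f}\rangle$ is not justified.
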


This theorem will be proved at the end of this section.
We first need to introduce the notion of nearly holomorphic Siegel-Jacobi modular forms $N_{k,S}^{n,r}(\Gamma)$ for $r \in \mathbb{Z}^{\mathbf{a}}$.
\subsection{Nearly holomorphic Siegel-Jacobi modular forms}

\begin{defn} A $C^{\infty}$ function $\b{f} (\tau,w): \mathcal{H}_{n,l} \rightarrow \mathbb{C}$ is said to be a nearly holomorphic Siegel-Jacobi modular form (of weight $k$ and index $S$) for the congruence subgroup $\b{\Gamma}$ if 
\begin{enumerate}
\item $\b{f}$ is holomorphic with respect to the variable $w$ and nearly holomorphic with respect to the variable $\tau$, that is, $\b{f}$ belongs to the space $N^r(\mathbb{H}_n^d)$ for some $r \in \mathbb{N}$ defined in \cite[page 99]{Sh00};
\item $\b{f} |_{k,S} \gamma = \b{f}$ for all $\gamma \in \b{\Gamma}$.
\end{enumerate}
We denote this space by $N_{k,S}^{n,r}(\b{\Gamma})$ and write $N_{k,S}^{n,r} := \bigcup_{\b{\Gamma}} N_{k,S}^{n,r}(\b{\Gamma})$ for the space of all nearly holomorphic Siegel-Jacobi modular forms of weight $k$ and index $S$.
\end{defn}

We note that if $\b{f} \in N_{k,S}^{n,r}$, then $\b{f}_*(\tau,v\,\,\Omega_{\tau}) \in N_{k}^{n,r}$, the space of nearly holomorphic Siegel modular forms, where recall $\Omega_{\tau} := \transpose{(\tau \,\,\,1_n)}$, and $v \in M_{l,2n}(F)$. Below we extend Theorem \ref{Structure of holomorphic Siegel-Jacobi} to the nearly-holomorphic situaton.

\begin{thm}  Assume that $n > 1$ or $F \neq \mathbb{Q}$. Let $A \in \GL_l(F)$ be such that $A S\, \transpose{A} = \diag[s_1, \ldots, s_l ]$, and define the lattices $\Lambda_1 := A M_{l,n}(\mathfrak{o}) \subset M_{l,n}(F)$ and $\Lambda_2 := 2 \diag[s_1^{-1}, \ldots, s_l^{-1}] M_{l,n}(\mathfrak{o}) \subset M_{l,n}(F)$. Then there is an isomorphism
\[
\Phi: N^{n,r}_{k,S}\cong \bigoplus_{h \in \Lambda_1/\Lambda_2} N^{n,r}_{k-l/2}
\] 
given by $\b{f} \mapsto \left( f_{h}\right)_h$, where the $f_h \in N^{n,r}_{k-l/2}$ are defined by the expression
\[
\b{f}(\tau,w) = \sum_{h \in \Lambda_1/\Lambda_2} f_h(\tau) \Theta_{2S,\Lambda_2,h}(\tau,w).
\]
\end{thm}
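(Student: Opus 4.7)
The plan is to adapt the proof of Theorem \ref{Structure of holomorphic Siegel-Jacobi} to the nearly-holomorphic setting. The crucial observation is that the decomposition is carried out entirely in the $w$-variable, while the nearly-holomorphic structure lives only in $\tau$ (by definition, an element of $N_{k,S}^{n,r}$ is already holomorphic in $w$). Hence the theta-expansion procedure commutes with ``nearly-holomorphic in $\tau$ of degree $\leq r$,'' and the whole argument should reduce to a bookkeeping check that each step preserves this property.

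First I would fix $\b{f}\in N_{k,S}^{n,r}(\b{\Gamma})$ for a congruence subgroup $\b{\Gamma}$ sufficiently deep that it contains translations $(0,\mu,0)$ and $(\lambda,0,0)$ for $\mu,\lambda$ in suitable lattices. Since $\b{f}$ is holomorphic in $w$ and invariant under $(0,\mu,0)$ for $\mu$ in a lattice, it admits a Fourier expansion
\[
\b{f}(\tau,w) \;=\; \sum_{r} c_r(\tau)\,\mathbf{e}_{\mathbf{a}}(\tr(\transpose{r}w)).
\]
Using the Heisenberg quasi-periodicity $\b{f}|_{k,S}(\lambda,0,0)=\b{f}$, one obtains the standard relations among the $c_r(\tau)$: for each $r$ there is a unique coset $h\in\Lambda_1/\Lambda_2$ and an $x\in\Lambda_2$ with $\transpose{r}=2S(x+h)$, and setting
\[
f_h(\tau)\;:=\;c_r(\tau)\,\mathbf{e}_{\mathbf{a}}\!\bigl(-\tr(S\transpose{(x+h)}\tau(x+h))\bigr)
\]
yields a function of $\tau$ alone, independent of the choice of $x$. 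This gives the desired decomposition
\[
\b{f}(\tau,w)\;=\;\sum_{h\in\Lambda_1/\Lambda_2} f_h(\tau)\,\Theta_{2S,\Lambda_2,h}(\tau,w),
\]
which is exactly the identity proved by Shimura in the holomorphic case, since the computation is purely formal in $w$.

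Next I would verify that each $f_h$ lies in $N^{n,r}_{k-l/2}$. By definition, being nearly holomorphic of degree $\leq r$ in $\tau$ means being a polynomial of total degree $\leq r$ in the entries of $(\tau-\bar\tau)^{-1}$ with coefficients that are holomorphic in $\tau$; this property is inherited by each Fourier coefficient $c_r(\tau)$, and multiplication by the holomorphic factor $\mathbf{e}_{\mathbf{a}}(-\tr(S\transpose{(x+h)}\tau(x+h)))$ preserves it. For the modular transformation law of $f_h$ under an appropriate congruence subgroup of $\Sp_n(F)$, one combines the transformation $\b{f}|_{k,S}\gamma=\b{f}$ for $\gamma$ in the symplectic part of $\b{\Gamma}$ with the known weight-$l/2$ transformation of $\Theta_{2S,\Lambda_2,h}$ under $\Gamma^\theta$, and then uses the $\C$-linear independence of the $\Theta_{2S,\Lambda_2,h}(\tau,w)$ as functions of $w$ (for generic $\tau$) to equate coefficients. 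This is exactly Shimura's computation in \cite[Proposition 3.5]{Sh78}, which does not use holomorphy in $\tau$. The reverse direction is immediate: given $(f_h)_h$ with $f_h\in N_{k-l/2}^{n,r}$ transforming compatibly, $\sum_h f_h(\tau)\Theta_{2S,\Lambda_2,h}(\tau,w)$ is automatically $C^\infty$, holomorphic in $w$, nearly holomorphic of degree $\leq r$ in $\tau$ (since the theta series are holomorphic in both variables), and satisfies the required modular transformation law.

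The main obstacle, just as in the holomorphic case, is controlling the interplay between the symplectic action on $\b{f}$ and the permutation it induces on the characteristics $h$: the $f_h$ are individually modular only for a deeper congruence subgroup, and one must track the explicit cocycle relating different $h$'s to recover the correct transformation of the tuple $(f_h)_h$. Once this is handled by the same argument as in Shimura's proof --- which is essentially algebraic and insensitive to near-holomorphy in $\tau$ --- the isomorphism $\Phi$ follows.
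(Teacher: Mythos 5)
Your proposal is correct and follows essentially the same route as the paper: both reduce the decomposition itself to Shimura's holomorphic argument (which only uses the modularity in the $w$-variable) and then observe that the passage from $\b{f}$ to the $f_h$ involves only operations that are holomorphic in $\tau$, so near-holomorphy of degree $\leq r$ is preserved. The one cosmetic difference is the mechanism for that last step: you recover $f_h$ by Fourier-coefficient extraction in $w$ followed by multiplication by a $\tau$-holomorphic exponential, whereas the paper solves the linear system $\b{f}(\tau,w_i)=\sum_{h} f_h(\tau)\Theta_{2S,\Lambda_2,h}(\tau,w_i)$ at finitely many points $w_i$ with $\det\left(\Theta_{2S,\Lambda_2,h}(\tau,w_i)\right)\neq 0$; both yield the same conclusion.
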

\begin{proof} Given an $\b{f} \in N^{n,r}_{k,S}$, the modularity properties with respect to the variable $w$ show that (see for example \cite[proof of Proposition 3.5]{Sh78}) we may write 
\[
\b{f}(\tau,w) = \sum_{h \in \Lambda_1/\Lambda_2} f_h(\tau) \Theta_{2S,\Lambda_2,h}(\tau,w)
\]
for some functions $f_h(\tau)$ with the needed modularity properties. In order to establish that they are actually nearly holomorphic one argues similarly to the holomorphic case. Indeed, a close look at the proof of \cite[Lemma 3.4]{Sh78} shows that the functions $f_h$ have the same properties (real analytic, holomorphic, nearly holomorphic, meromorphic, etc.) with respect to the variable $\tau$ as $\b{f} (\tau,w)$, since everything is reduced to a linear system of the form
\[
\b{f}(\tau,w_i) = \sum_{h \in \Lambda_1/\Lambda_2} f_h(\tau) \Theta_{2S,\Lambda_2,h}(\tau,w_i), \,\,\, i=1,\ldots ,\sharp \Lambda_1/\Lambda_2,
\] 
for some $\{w_i \}$ such that $\det(\Theta_{2S,\Lambda_2,h}(\tau,w_i)) \neq 0$. In particular, after solving the linear system of equations we see that the near holomorphicity of $f_h$ follows from that of $\b{f}$ since the $\Theta_{2S,\Lambda_2,h}(\tau,w_i)$ are holomorphic with respect to the variable $\tau$.  
\end{proof}

The above theorem immediately implies the following.

\begin{cor}\label{cor:N_finite_dim} For a congruence subgroup $\b{\Gamma}$, $N_{k,S}^{n,r}(\b{\Gamma})$ is a finite dimensional $\mathbb{C}$ vector space.
\end{cor}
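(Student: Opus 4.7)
The plan is to derive this corollary directly from the structural isomorphism proved in the preceding theorem, reducing finite dimensionality for nearly holomorphic Siegel--Jacobi forms to the known finite dimensionality of the spaces of nearly holomorphic Siegel modular forms.

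Concretely, I would first observe that although the isomorphism $\Phi$ in the preceding theorem is stated on $N_{k,S}^{n,r}$ (a union over all congruence subgroups), the construction is level-preserving in the following sense: given $\b{f}\in N_{k,S}^{n,r}(\b{\Gamma})$, the invariance $\b{f}|_{k,S}\gamma=\b{f}$ for $\gamma\in\b{\Gamma}$ together with the known transformation law of the theta series $\Theta_{2S,\Lambda_2,h}$ under the Jacobi group forces the coefficient functions $f_h$ in the expansion $\b{f}(\tau,w)=\sum_{h\in\Lambda_1/\Lambda_2} f_h(\tau)\Theta_{2S,\Lambda_2,h}(\tau,w)$ to be invariant under some fixed congruence subgroup $\Gamma'\subset\Sp_n(F)$, depending only on $\b{\Gamma}$ and on $S$. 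Hence $\Phi$ restricts to an injection
\[
\Phi\colon N_{k,S}^{n,r}(\b{\Gamma})\hookrightarrow\bigoplus_{h\in\Lambda_1/\Lambda_2} N_{k-l/2}^{n,r}(\Gamma').
\]

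Next I would note two standard finiteness facts. First, the index set $\Lambda_1/\Lambda_2$ is finite, because $\Lambda_1$ and $\Lambda_2$ are both $\f{o}$-lattices of full rank in $M_{l,n}(F)$, so the quotient is a finite abelian group of order controlled by $N(\det(2S))$. Second, for each $h$, the space $N_{k-l/2}^{n,r}(\Gamma')$ of nearly holomorphic Siegel modular forms of weight $k-l/2$, near-holomorphy degree $r$, and level $\Gamma'$ is finite dimensional over $\mathbb{C}$ by the classical result of Shimura (see \cite[Chapter II, \S14]{Sh00}).

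Combining these, the direct sum on the right-hand side is a finite dimensional $\mathbb{C}$ vector space, and since $\Phi$ is injective (being the restriction of an isomorphism), it follows that $N_{k,S}^{n,r}(\b{\Gamma})$ is also finite dimensional, which is the desired conclusion. The only nontrivial verification, and the place that would take a careful but essentially routine check, is the assertion that $\b{\Gamma}$-invariance of $\b{f}$ translates into $\Gamma'$-invariance of each $f_h$ for a single congruence subgroup $\Gamma'$ not depending on $h$; this is really just a bookkeeping argument with the explicit transformation formula for $\Theta_{2S,\Lambda_2,h}$ under the Siegel modular group acting on the first factor of $\mathcal{H}_{n,l}$.
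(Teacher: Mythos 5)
Your proposal is correct and follows essentially the same route as the paper: reduce to the theta decomposition $\b{f}=\sum_h f_h\Theta_{2S,\Lambda_2,h}$, observe that each $f_h$ lies in a space $N^{n,r}_{k-l/2}(\Gamma_h)$ of nearly holomorphic Siegel modular forms for some congruence subgroup, and invoke Shimura's finite-dimensionality result (the paper cites \cite[Lemma 14.3]{Sh00}) together with the finiteness of $\Lambda_1/\Lambda_2$. The only cosmetic difference is that the paper allows the congruence subgroup to depend on $h$ rather than fixing a single $\Gamma'$, which is immaterial for the conclusion.
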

\begin{proof} The theorem above states that $N_{k,S}^{n,r}(\b{\Gamma}) \cong \bigoplus_h N^{n,r}_{k-l/2}(\Gamma_h)$ for some congruence subgroups $\Gamma_h$, which are known to be finite dimensional (see \cite[Lemma 14.3]{Sh00}). 
\end{proof}

Given an automorphism $\sigma \in Aut(\mathbb{C})$ and $\b{f} \in N_{k,S}^{n,r}$, we define
\[
\b{f}^{\sigma}(\tau,w) := \sum_{h \in \Lambda_1/\Lambda_2} f^{\sigma}_h(\tau) \Theta_{2S,\Lambda_2,h}(\tau,w),
\]
where $f_h \in N^{n,r}_{k-l/2}$, and $f_h^\sigma$ is defined as in \cite[page 117]{Sh00}.
Also, for a subfield $K$ of $\mathbb{C}$, define the space $N^{n,r}_{k,S}(K)$ to be the subspace of $N^{n,r}_{k,S}$ such that $\Phi(N^{n,r}_{k,S}(K))= \bigoplus_{h \in \Lambda_1/\Lambda_2} N^n_{k-l/2}(K)$. In particular, $\b{f} \in N_{k,S}^{n,r}$ belongs to $N^{n,r}_{k,S}(K)$ if and only if $\b{f}^\sigma = \b{f}$ for all $\sigma \in Aut(\C /K)$. Moreover, if $K$ contains the Galois closure of $F$ in $\overline{\Q}$ and $\Q^{ab}$, then $N_{k,S}^{n,r} = N_{k,S}^{n,r}(K) \otimes_K \mathbb{C}$ as the same statement holds for $N_{k-l/2}^{n,r}$. Similarly it follows that if $\b{f} \in N_{k,S}^{n,r}(\overline{\mathbb{Q}})$, then $\b{f}|_{k,S} \b{\gamma} \in N_{k,S}^{n,r}(\overline{\mathbb{Q}})$ for all $\b{\gamma} \in \b{G}(F)$. At this point we also remark that for an $\b{f} \in M_{k,S}^n$ we have that $\b{f}^c$ defined before is nothing else than $\b{f}^{\rho}$ where $1 \neq \rho \in Gal(\mathbb{C}/\mathbb{R})$ i.e. complex conjugation. \newline

We now define a variant of the holomorphic projection in the Siegel-Jacobi case. We define a map $\f{p}\colon N_{k,S}^{n,r}(\overline{\Q}) \rightarrow M_{k,S}^n(\overline{\Q})$ whenever $k_v > n + r_v$ for all $v \in \mathbf{a}$ by 
\[
\f{p}(\b{f}) := \f{p}\left(\sum_{h \in \Lambda_1/\Lambda_2} f_h(\tau) \Theta_{2S,\Lambda_2,h}(\tau,w)\right) := \sum_{h \in \Lambda_1/\Lambda_2} \widetilde{\f{p}}(f_h(\tau)) \Theta_{2S,\Lambda_2,h}(\tau,w),
\]
where $\widetilde{\f{p}}\colon N_{k-l/2}^{n,r}(\overline{\mathbb{Q}}) \rightarrow M_{k-l/2}^n(\overline{\mathbb{Q}})$ is the holomorphic projection operator defined for example in \cite[Chapter III, section 15]{Sh00}. 

\begin{lem} Assume $n >1$ or $F\neq \mathbb{Q}$ and that $\b{f} \in S_{k,S}^n$ satisfies the Property A, and $k_v > n + r_v$ for all $v \in \mathbf{a}$. Then for any $\b{g} \in N_{k}^{n,r}(\overline{\mathbb{Q}})$,
\[
<\b{f},\b{g}> = <\b{f},\f{p}(\b{g})>.
\]
\end{lem}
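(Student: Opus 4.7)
The plan is to reduce the claim to the known analogue for Siegel modular forms via the theta decomposition. First I would write, using the isomorphism $\Phi$ and Property A for $\b{f}$, together with the nearly holomorphic decomposition from the theorem above,
\[
\b{f}(\tau,w) = \sum_{h \in \Lambda_1/\Lambda_2} f_h(\tau)\,\Theta_{2S,\Lambda_2,h}(\tau,w), \qquad
\b{g}(\tau,w) = \sum_{h \in \Lambda_1/\Lambda_2} g_h(\tau)\,\Theta_{2S,\Lambda_2,h}(\tau,w),
\]
with $f_h \in S^n_{k-l/2}(\overline{\mathbb{Q}})$ (by Property A) and $g_h \in N^{n,r}_{k-l/2}(\overline{\mathbb{Q}})$. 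Exactly as in the computation of the inner product in Property A (Ziegler's Lemma 3.4 of \cite{Z89}, which goes through verbatim once each $f_h$ is cuspidal so that the individual integrals converge), I get
\[
\langle \b{f}, \b{g}\rangle = N(\det(4S))^{-n/2} \sum_{h \in \Lambda_1/\Lambda_2} \langle f_h, g_h\rangle.
\]

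Next, I would apply Shimura's holomorphic projection $\widetilde{\f{p}}\colon N^{n,r}_{k-l/2}(\overline{\mathbb{Q}}) \to M^n_{k-l/2}(\overline{\mathbb{Q}})$, which is well defined under the assumption $k_v > n + r_v$ for all $v$ (see \cite[Chapter III, \S15]{Sh00}), and which satisfies $\langle f_h, g_h\rangle = \langle f_h, \widetilde{\f{p}}(g_h)\rangle$ for cuspidal $f_h$. Substituting this into the displayed identity yields
\[
\langle \b{f}, \b{g}\rangle = N(\det(4S))^{-n/2} \sum_{h \in \Lambda_1/\Lambda_2} \langle f_h, \widetilde{\f{p}}(g_h)\rangle.
\]

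Finally, by the definition of $\f{p}$, we have $\f{p}(\b{g}) = \sum_h \widetilde{\f{p}}(g_h)\,\Theta_{2S,\Lambda_2,h}$, and applying the same decomposition of the Petersson inner product (valid since each $f_h$ is still cuspidal and $\widetilde{\f{p}}(g_h) \in M^n_{k-l/2}(\overline{\mathbb{Q}})$) gives
\[
\langle \b{f}, \f{p}(\b{g})\rangle = N(\det(4S))^{-n/2} \sum_{h \in \Lambda_1/\Lambda_2} \langle f_h, \widetilde{\f{p}}(g_h)\rangle,
\]
which matches the previous line and finishes the proof. There is no serious obstacle: the main point is that Property A ensures the individual $f_h$ are cusp forms, which both legitimizes the term-by-term Petersson integral and puts us in the setting where Shimura's holomorphic projection formula applies.
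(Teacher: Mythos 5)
Your proposal is correct and follows essentially the same route as the paper: the paper's (much terser) proof likewise uses Property A to split $\langle\b{f},\b{g}\rangle$ into a sum of Petersson inner products of the Siegel components $f_h$ against $g_h$, and then invokes the holomorphic projection identity for nearly holomorphic Siegel modular forms componentwise. Your write-up simply makes explicit the steps the paper leaves as a sketch.
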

\begin{proof} This follows from the fact that the above property holds for nearly holomorphic Siegel modular forms, and the fact that the Property A allows us to write the Petersson inner product of Siegel-Jacobi forms as a sum of Petersson inner products Siegel modular forms, in a similar way as we did in the proof of Lemma \ref{projection to cuspidal part}.  
\end{proof}

Further, we define the operator 
$$\f{p}^0 :=\begin{cases} \f{p}, & k\mbox{ not parallel},\\
\f{q} \circ \f{p}, & k\,\,\mbox{parallel}. \end{cases}$$
We now state a theorem regarding the nearly holomorphicity of Siegel-type Jacobi Eisenstein series. The notation below follows the one used in section \ref{section of analytic properties of Eisenstein series}, where the analytic properties were investigated. In particular, the characters $\psi_i$ below are characters of the Hilbert class field extension of $F$.

\begin{thm}\label{algebraic properties of Eisenstein series} Consider the normalized Siegel-type Jacobi-Eisenstein series
\[
D(s):=D(z,s;k,\chi) :=  \Lambda_{k-l/2,\mathfrak{c}}^n(s-l/4,\chi \psi_S) E(z,\chi,s).
\]
Let $\mu \in \mathbb{Z}$ be such that 
\begin{enumerate}
\item $n+1 - (k_v - l/2) \leq \mu-l/2 \leq k_v - l/2$ for all $v \in \mathbf{a}$, and
\item $| \mu -l/2 - \frac{n+1}{2}| + \frac{n+1}{2} - k_v + l/2 \in 2\mathbb{Z}$,
\end{enumerate}
but exclude the cases
\begin{enumerate}
\item $\mu = \frac{n+2}{2}+ l/2$, $F = \mathbb{Q}$ and $\chi^2 \psi_i^2 = 1$ for some $\psi_i$,
\item $\mu = l/2$, $\mathfrak{c} = \mathfrak{o}$ and $\chi \psi_S \psi_i = 1$ for some $\psi_i$,
\item $0 < \mu - l/2 \leq n/2$, $\mathfrak{c} = \mathfrak{o}$ and $\chi^2 \psi_i^2 = 1$ for some $\psi_i$.
\item $\mu \leq l +n$ if $F$ has class number larger than one.
\end{enumerate}
Then 
\[
D(\mu/2) \in \pi^{\beta} N_{k,S}^{n,r}(\overline{\mathbb{Q}}),
\]
where 
$$r = \begin{cases} {n(k-\mu+2)\over 2} & \mbox{if } \mu = {n+2\over 2} +{l\over 2}$, $F=\mathbb{Q}, \chi^2=1,\\
{k\over 2}-{l\over 4} & \mbox{if } n=1, \mu = 2 + {l\over 2}, F= \mathbb{Q}, \chi \psi_S = 1,\\
{n\over 2} (k-{l\over 2} - |\mu - {l\over 2} - {n+1\over 2}|\mathbf{a} - {n+1\over 2} \mathbf{a} ) & \mbox{otherwise} .\end{cases}$$
Moreover,
$\beta = {n\over 2} \sum_{v \in \mathbf{a}} (k_v - l + \mu) - de$,
where 
$$e :=\begin{cases} [{(n+1)^2\over 4}] - \mu + {l\over 2} & \mbox{if } 2\mu -l+n\in 2\mathbb{Z}, \mu \geq n + {l\over 2},\\
[{n^2\over 4}] & \mbox{otherwise} .\end{cases} $$
\end{thm}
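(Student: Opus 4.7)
The strategy is to reduce the statement to Shimura's known algebraicity/near-holomorphicity results for symplectic Siegel-type Eisenstein series (e.g.\ \cite[Theorem 17.9]{Sh00}) via the decomposition of Jacobi Eisenstein series as sums of products of symplectic Eisenstein series of weight $k-l/2$ and Jacobi theta series. This decomposition was developed in Section \ref{section of analytic properties of Eisenstein series} precisely to pass analytic information from the symplectic to the Jacobi side, and I expect the same machinery to transport algebraic information.

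First I would recall the identity
\[
\b{E}_{\zeta}(z,s) \;=\; N_{F/\mathbb{Q}}(a_\zeta)^{l/2}\,\psi_S(a_\zeta)\hspace{-0.4cm}\sum_{\gamma \in \Gamma_{a_\zeta}\setminus \Gamma_0(\f{b},\f{c})}\!\!\overline{\chi[\gamma]}\bigl(E_{\zeta}(\tau,s-l/4)\,\Theta_{S,\Lambda_{a_\zeta}}(z)\bigr)\big|_{S,k}\gamma,
\]
decompose $\b{E}(z,\chi,s)=\sum_{\zeta\in Z} N(\f{a}(\zeta))^{2s}\b{E}_\zeta(z,s)$ and, using the Fourier-inversion over $Cl(F)$ employed in the proof of Theorem \ref{analytic properties of Eisenstein series}, further express each $E_\zeta(\tau,s-l/4)$ in terms of the global symplectic Siegel Eisenstein series $E(\tau,\chi\psi_S\psi_i,s-l/4)$ of weight $k-l/2$. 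Since the Jacobi theta series $\Theta_{S,\Lambda_{a_\zeta}}$ is holomorphic with algebraic Fourier coefficients (hence lies in $M_{l/2}^{n}(\overline{\mathbb{Q}})$ after suitable interpretation) and the slash action by $\gamma \in \b{G}^n(F)$ preserves the field $\overline{\mathbb{Q}}$, it suffices to control the normalized symplectic Eisenstein series $\Lambda_{k-l/2,\mathfrak{c}}^n(s-l/4,\chi\psi_S\psi_i)\,E(\tau,\chi\psi_S\psi_i,s-l/4)$ at $s=\mu/2$.

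At this point I would invoke Shimura's theorem for symplectic Siegel Eisenstein series at critical values. Under the arithmetic conditions on $\mu$ (precisely translated: $n+1-(k_v-l/2)\leq (\mu-l/2)\leq k_v-l/2$ and the appropriate parity condition for $s-l/4$), Shimura shows that the normalized Eisenstein series, divided by the correct power of $\pi$, belongs to $N^{n,r'}_{k-l/2}(\overline{\mathbb{Q}})$, with $r' = \tfrac{n}{2}\bigl(k-\tfrac{l}{2}-|\mu-\tfrac{l}{2}-\tfrac{n+1}{2}|\mathbf{a}-\tfrac{n+1}{2}\mathbf{a}\bigr)$ and $\pi$-exponent equal to $\tfrac{n}{2}\sum_v(k_v-\tfrac{l}{2}+\mu-\tfrac{l}{2}) - de$ with the correction $e$ coming from Shimura's formula. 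The exceptional cases (the three ``bad'' pairs $(\mu,\chi)$ and the class-number-one-vs-larger dichotomy) are exactly the ones detected in Theorem \ref{analytic properties of Eisenstein series} as locations of genuine poles of the normalized Eisenstein series, and they are excluded so that $D(\mu/2)$ is a well-defined element rather than a residue. Multiplying by $\Theta_{S,\Lambda_{a_\zeta}}$ (holomorphic, weight $l/2$, algebraic) and summing does not raise $r'$ and keeps us in $\pi^\beta N^{n,r}_{k,S}(\overline{\mathbb{Q}})$ with exactly the $r$ and $\beta$ claimed, after accounting for the shift of weight by $l/2$.

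The main obstacle I anticipate is a careful bookkeeping of constants: one must verify that the difference between Shimura's normalization (which typically includes the Gamma factor $\mathcal{G}_{k-l/2,n}(s-l/4)$) and the one used in $D(s)$ contributes only to the power of $\pi$, and then match this cleanly against the formula for $\beta$. In particular, the distinct subcases for $e$ in the statement arise from the jump in Shimura's Gamma factor when $2\mu-l+n\in 2\mathbb{Z}$ and $\mu\geq n+l/2$, and reproducing this trichotomy requires tracking which Gamma factors become half-integer poles/zeros. A secondary technical point is the class-number-greater-than-one case, where one must use the ratio manipulation from Theorem \ref{analytic properties of Eisenstein series} to ensure the excluded range $\mu\leq l+n$ indeed covers all possible obstructions coming from the discrepancy between $L_\f{c}(\cdot,\chi\psi_S)$ and $L_\f{c}(\cdot,\chi\psi_S\psi_i)$.
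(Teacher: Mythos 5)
Your overall route is the one the paper itself takes: decompose $\b{E}(z,\chi,s)$ via the theta expansion of Section \ref{section of analytic properties of Eisenstein series} into sums of terms $(E_{\zeta}(\tau,s-l/4)\Theta_{S,\Lambda_{a_\zeta}})|_{S,k}\gamma$, recover each $E_{\zeta}$ from the global symplectic series $E(\tau,\chi\psi_S\psi_i,s-l/4)$ of weight $k-l/2$ by Fourier inversion over $Cl(F)$, and then quote Shimura's near-holomorphy and algebraicity results for the latter at $s=\mu/2$; the constants $r$, $\beta$, $e$ are indeed Shimura's, shifted by $l/2$ in the weight and $l/4$ in the variable, and multiplication by the theta series and the algebraic slash action are handled exactly as you say.

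There is, however, one genuine gap: your justification of exclusion (4). You explain all four excluded cases as ``locations of genuine poles,'' but for $F$ of class number larger than one the condition $\mu\le l+n$ is not a pole condition. After the Fourier inversion over $Cl(F)$, the expression for $D(\mu/2)$ carries, in front of each normalized symplectic series $\Lambda_{k-l/2,\mathfrak{c}}^n(\mu/2-l/4,\chi\psi_S\psi_i)E(\tau,\chi\psi_S\psi_i,\mu/2-l/4)$, the ratio
\[
\frac{\Lambda_{k-l/2,\mathfrak{c}}^n(\mu/2-l/4,\chi\psi_S)}{\Lambda_{k-l/2,\mathfrak{c}}^n(\mu/2-l/4,\chi\psi_S\psi_i)},
\]
and even at points where this ratio is finite you must show it is \emph{algebraic}, since it multiplies an element of $\pi^{\beta}N^{n,r}_{k-l/2}(\overline{\mathbb{Q}})$. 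This is a ratio of Hecke $L$-values of two characters with the same infinity type evaluated at the same point; its algebraicity is known only when that point is critical (each value is then the same power of $\pi$ times an algebraic number), and away from the critical range it would follow only from Beilinson-type conjectures. The hypothesis $\mu>n+l$ is there precisely to push all the arguments $\mu-l/2$, $2\mu-l-2i$ (resp. $2\mu-l-2i+1$) occurring in $\Lambda^n_{k-l/2,\mathfrak{c}}$ into the critical range. A pole analysis alone, as in your write-up, would leave the algebraicity of $D(\mu/2)$ unproved for class number greater than one whenever $\mu\le l+n$ but no pole actually occurs.
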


\begin{proof} The proof is similar to the proof of Theorem \ref{analytic properties of Eisenstein series}. As in there, we can read off the nearly holomorphicity of the Jacobi Eisenstein series from the classical Siegel Eisenstein series; to be more precise, from the series $E(z,s-l/4;\chi \psi_S \psi_i, k-l/2)$, where $\psi_i$'s vary over all the Hilbert characters. Indeed, the series
\[
\frac{\Lambda_{k-l/2,\mathfrak{c}}^n(\mu/2-l/4,\chi \psi_S)}{\Lambda_{k-l/2,\mathfrak{c}}^n(\mu/2-l/4,\chi \psi_S \psi_i)} \Lambda_{k-l/2,\mathfrak{c}}^n(\mu/2-l/4,\chi \psi_S \psi_i)E(z,s-l/4;\chi \psi_S \psi_i, k-l/2)
\]
 has the same algebraic properties as the normalized series 
$$\Lambda_{k-l/2,\mathfrak{c}}^n(\mu/2-l/4,\chi \psi_S \psi_i)E(z,s-l/4;\chi \psi_S \psi_i, k-l/2),$$ 
if we exclude the cases where the factor $\frac{\Lambda_{k-l/2,\mathfrak{c}}^n(\mu/2-l/4,\chi \psi_S)}{\Lambda_{k-l/2,\mathfrak{c}}^n(\mu/2-l/4,\chi \psi_S \psi_i)}$ has a pole. Therefore all we need to check is that 
 \[
 \frac{\Lambda_{k-l/2}^n(\mu/2-l/4,\chi \psi_S)}{\Lambda_{k-l/2}^n(\mu/2-l/4,\chi \psi_S \psi_i)} \in \overline{\mathbb{Q}}.
 \] 
This should follow from the general Bellinson conjectures for motives associated to finite Hecke characters over totally real fields (see for example \cite{Scholl}). However this is not known in general, and hence we are forced to set the condition $\mu > n+l$ in case $F$ has class number larger than one, in which case we obtain values whose ratio is known to be algebraic, since we are then considering critical values.  
\end{proof}

\begin{lem} \label{diagonal restriction preserves algebraicity} Consider the embedding 
\[
\Delta : \mathcal{H}_{n,l} \times \mathcal{H}_{m,l} \hookrightarrow \mathcal{H}_{N,l},\,\,\,(\tau_1,w_1) \times (\tau_2,w_2) \mapsto (\diag[\tau_1, \tau_2], (w_1 \,\,w_2)),
\]
where $N:=m+n$. Then
\[
\Delta^*\left( N^{N,r}_{k,S}(\overline{\Q}) \right) \subset N^{n,r}_{k,S}(\overline{\Q}) \otimes_{\overline{\Q}}   N^{m,r}_{k,S}(\overline{\Q}). 
\]
\end{lem}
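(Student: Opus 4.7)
The plan is to reduce the statement to the analogous (known) fact for nearly holomorphic Siegel modular forms via the theta decomposition isomorphism $\Phi$ established in the preceding discussion. First I would take $\b{f} \in N^{N,r}_{k,S}(\overline{\Q})$ with $N=n+m$, and write
\[
\b{f}(\tau,w) = \sum_{h \in \Lambda_1^N/\Lambda_2^N} f_h(\tau)\,\Theta_{2S,\Lambda_2^N,h}(\tau,w),
\]
with $f_h \in N^{N,r}_{k-l/2}(\overline{\Q})$. Here I write $\Lambda_j^N$ for the lattices in $M_{l,N}(F)$ built from the fixed diagonalisation of $2S$.

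The next step is to observe that the lattices decompose block-wise along the splitting $M_{l,N}(\f{o}) = M_{l,n}(\f{o}) \oplus M_{l,m}(\f{o})$, so that $\Lambda_j^N = \Lambda_j^n \oplus \Lambda_j^m$ for $j=1,2$, and writing $h=(h_1,h_2)$, $x = (x_1,x_2)$, the exponent in the theta series separates block-diagonally when restricted to $\tau = \diag[\tau_1,\tau_2]$, $w=(w_1,w_2)$. This yields the factorisation
\[
\Theta_{2S,\Lambda_2^N,h}\bigl(\diag[\tau_1,\tau_2],(w_1,w_2)\bigr) = \Theta_{2S,\Lambda_2^n,h_1}(\tau_1,w_1)\cdot \Theta_{2S,\Lambda_2^m,h_2}(\tau_2,w_2),
\]
and each factor on the right is a holomorphic Jacobi theta series defined over $\overline{\Q}$ (indeed over $\Q$).

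Then I would invoke the corresponding symplectic statement for nearly holomorphic Siegel modular forms, namely
\[
\Delta^*\bigl(N^{N,r}_{k-l/2}(\overline{\Q})\bigr) \subset N^{n,r}_{k-l/2}(\overline{\Q}) \otimes_{\overline{\Q}} N^{m,r}_{k-l/2}(\overline{\Q}),
\]
which is part of Shimura's theory of nearly holomorphic Siegel modular forms (as developed in \cite{Sh00}): the diagonal restriction is compatible with the polynomial realisation of nearly holomorphic forms and with $\overline{\Q}$-rationality on Fourier expansions at cusps. Applying this to each $f_h$ gives a decomposition $f_h(\diag[\tau_1,\tau_2]) = \sum_i g_{h,i}(\tau_1)\,h_{h,i}(\tau_2)$ with $g_{h,i} \in N^{n,r}_{k-l/2}(\overline{\Q})$ and $h_{h,i} \in N^{m,r}_{k-l/2}(\overline{\Q})$. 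Combining with the theta factorisation,
\[
\b{f}\bigl(\diag[\tau_1,\tau_2],(w_1,w_2)\bigr) = \sum_{h,i} \bigl(g_{h,i}(\tau_1)\,\Theta_{2S,\Lambda_2^n,h_1}(\tau_1,w_1)\bigr) \cdot \bigl(h_{h,i}(\tau_2)\,\Theta_{2S,\Lambda_2^m,h_2}(\tau_2,w_2)\bigr),
\]
and each parenthesised factor lies in the respective $N^{n,r}_{k,S}(\overline{\Q})$ or $N^{m,r}_{k,S}(\overline{\Q})$ by the definition of these spaces through $\Phi$ applied in the smaller degrees.

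The one step that is not merely bookkeeping is the symplectic input, i.e.\ that diagonal restriction respects both near-holomorphicity and the $\overline{\Q}$-structure on Siegel modular forms. This is the main obstacle in the sense that it is where one genuinely needs Shimura's machinery (realisation of nearly holomorphic forms as values of polynomials in the Maass--Shimura operators on holomorphic forms, plus the fact that these polynomial operators commute with diagonal restriction and act $\overline{\Q}$-rationally on $q$-expansions). Everything else — the theta splitting and the passage through $\Phi$ — is formal once the lattice decomposition and block structure are written out.
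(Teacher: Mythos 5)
Your proof is correct, but it takes a genuinely different route from the one in the paper. The paper argues directly at the level of Jacobi forms, imitating the argument of \cite[Lemma 24.11]{Sh00}: for fixed $z_1$ the restriction $\b{g}(z_1,z_2):=\Delta^*\b{f}$ lies in a fixed finite-dimensional space $N^{m,r}_{k,S}(\b{\Gamma}^m)$ (finite-dimensionality being Corollary \ref{cor:N_finite_dim}), so one may expand $\b{g}(z_1,z_2)=\sum_i \b{g}_i(z_1)\b{h}_i(z_2)$ along a $\overline{\Q}$-rational basis $\{\b{h}_i\}$; linear independence of the $\b{h}_i$ forces the coefficient functions $\b{g}_i$ to inherit near-holomorphicity in $z_1$, and applying $\sigma\in Aut(\C/\overline{\Q})$ to the identity gives $\b{g}_i^\sigma=\b{g}_i$, hence $\overline{\Q}$-rationality. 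You instead push everything through the theta decomposition $\Phi$ first: the block splitting $\Lambda_j^N=\Lambda_j^n\oplus\Lambda_j^m$ and the factorisation $\Theta_{2S,\Lambda_2^N,h}(\diag[\tau_1,\tau_2],(w_1\,w_2))=\Theta_{2S,\Lambda_2^n,h_1}(\tau_1,w_1)\Theta_{2S,\Lambda_2^m,h_2}(\tau_2,w_2)$ are correct, and they reduce the lemma to the symplectic statement for the components $f_h\in N^{N,r}_{k-l/2}(\overline{\Q})$ (of possibly half-integral weight $k-l/2$, which \cite{Sh00} does cover), after which the identification of each product $g_{h,i}\,\Theta_{2S,\Lambda_2^n,h_1}$ as an element of $N^{n,r}_{k,S}(\overline{\Q})$ is exactly the definition of that space via $\Phi$. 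The two arguments are of comparable depth — the paper's finite-dimensionality and Galois action on Jacobi forms are themselves defined through $\Phi$, so both ultimately rest on the theta decomposition — but yours imports Shimura's symplectic lemma as a black box rather than re-running its proof in the Jacobi setting, at the cost of having to verify the theta factorisation explicitly. Both are acceptable; there is no gap in your argument.
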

\begin{proof} The proof of this lemma is identical to the Siegel modular form case (see \cite[Lemma 24.11]{Sh00}). Let $\b{f} \in N^{N,r}_{k,S}(\b{\Gamma}^N, \overline{\mathbb{Q}})$ for a sufficiently deep congruence subgroup $\b{\Gamma}^N$. Note that  the function $\b{g}(z_1,z_2) := \Delta^*\b{f}(z)$ is in $N_{k,S}^{n,r}(\b{\Gamma}^n)$ as a function in $z_1$ and in  $N_{k,S}^{m,r}(\b{\Gamma}^m)$ as a function in $z_2$ for appropriate congruence subgroups $\b{\Gamma}^n$ and $\b{\Gamma}^m$. Hence, by Corollary \ref{cor:N_finite_dim} and the fact that $N_{k,S}^{n,r} = N_{k,S}^{n,r}(\overline{\mathbb{Q}}) \otimes_{\overline{\mathbb{Q}}} \mathbb{C}$, for each fixed $z_1$ we may write
\[
\b{g}(z_1,z_2) = \sum_i \b{g}_i(z_1) \b{h}_i(z_2),
\]
where $\b{g}_i(z_1) \in \mathbb{C}$, and $\b{h}_i \in \b{N}_{k,S}^{n,r}(\overline{\mathbb{Q}})$ form a basis of the space. The general argument used in \cite[Lemma 24.11]{Sh00}, which is based on the linear independence of the basis $\b{h}_i$,  shows that the functions $\b{g}_i(z_1)$ have the same properties as the function $\b{g}$ when viewed as a function of the variable $z_1$. Hence, $\b{g}_i \in N^{n,r}_{k,S}$. Now, for any $\sigma \in Aut(\mathbb{C}/\overline{\mathbb{Q}})$, 
\[
\b{g}(z_1,z_2) = \b{g}^{\sigma}(z_1,z_2) = \sum_i \b{g}^{\sigma}_i(z_1) \b{h}^{\sigma}_i(z_2) = \sum_i \b{g}^{\sigma}_i(z_1) \b{h}_i(z_2).
\] 
Hence, $\b{g}^{\sigma}_i(z_2) = \b{g}_i(z_2)$ for all $\sigma \in  Aut(\mathbb{C}/\overline{\mathbb{Q}})$, and thus $\b{g}_i \in N^{n,r}_{k,S}(\overline{\mathbb{Q}})$.
\end{proof}

We can now establish a theorem, which is the key result towards Theorem \ref{Main Theorem on algebraicity}.

\begin{thm} \label{ratio of inner products}
Assume $n >1$ or $F\neq \mathbb{Q}$. Let $0 \neq \b{f} \in S_{k,S}^{n}(\b{\Gamma},\overline{\mathbb{Q}})$ be an eigenfunction of $T(\mathfrak{a})$ for all integral ideas $\mathfrak{a}$ with $(\f{a},\f{c}_{\b{f}})=1$. Define $\mu := \min_{v \in \mathbf{a}}{\{k_v\}}$ and assume that
\begin{enumerate}
\item $\mu > 2n +l +1$,
\item Property A holds for all $\widetilde{\b{f}} \in V(\b{f})$,
\item $k_v \equiv k_{v'} \mod{2}$ for all $v, v' \in \mathbf{a}$.
\item $k_v  > l/2 + n(1+ k_v - \mu)$ for all $v \in \mathbf{a}$. 
\end{enumerate}
Then for any $\b{g} \in M_{k,S}^n(\overline{\mathbb{Q}})$,
\[
\frac{<\b{f},\b{g}>}{ <\b{f}, \b{f}>} \in \overline{\mathbb{Q}}
\]
\end{thm}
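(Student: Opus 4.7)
The approach combines cuspidal and Hecke-algebra projections with the doubling identity \eqref{Main identity 2} evaluated at a critical value, leveraging the algebraic properties of Siegel-type Jacobi Eisenstein series from Theorem \ref{algebraic properties of Eisenstein series}.

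First I would reduce to $\b{g}\in S^n_{k,S}(\overline{\Q})$ by applying the cuspidal projection of Lemma \ref{projection to cuspidal part}: Property A together with the corresponding statement for Siegel modular forms (\cite[Theorem 27.14]{Sh00}) ensures that this projection preserves $\overline{\Q}$-rationality. Next I invoke the formalism of section \ref{Hecke algebra}. The commutative algebra generated by $\{T(\f{a}):(\f{a},\f{c}_{\b{f}})=1\}$ is semisimple by self-adjointness of its operators under $<\cdot,\cdot>$ (section \ref{Normal Operators}), its eigenvalues are algebraic and totally real (Proposition \ref{Behaviour under complex conjugation}), and it preserves $\overline{\Q}$-rational forms by Proposition \ref{Hecke Operators preserve field of definition}. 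Lagrange interpolation then produces a polynomial $\pi_{\b{f}}$ with $\overline{\Q}$-coefficients realising the projector onto the generalised eigenspace $V(\b{f})(\overline{\Q})$, so that $<\b{f},\b{g}>=<\b{f},\pi_{\b{f}}(\b{g})>$ and the problem reduces to proving algebraicity of $<\b{f},\b{h}>/<\b{f},\b{f}>$ for $\b{h}\in V(\b{f})(\overline{\Q})$.

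For the latter I would deploy the doubling identity \eqref{Main identity 2} with $m=n$, a Hecke character $\chi$ of infinity type $k$, and a critical value $s=\sigma/2$ satisfying the hypotheses of Theorem \ref{algebraic properties of Eisenstein series}. The constraints $\mu>2n+l+1$, $k_v\equiv k_{v'}\pmod{2}$, and $k_v>l/2+n(1+k_v-\mu)$ guarantee existence of such $(\sigma,\chi)$ while placing $L(\sigma-n-l/2,\b{f},\chi)$ in the range of absolute convergence, where it is nonzero by Corollary \ref{non-vanishing of L-values}. By Theorem \ref{algebraic properties of Eisenstein series} and Lemma \ref{diagonal restriction preserves algebraicity}, the diagonal restriction satisfies $\Delta^{*}\mathcal{E}(\sigma/2)=\pi^{\beta}\sum_{i}\b{h}_{i}(z_{1})\b{k}_{i}(z_{2})$ with $\b{h}_{i},\b{k}_{i}\in N^{n,r}_{k,S}(\overline{\Q})$. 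Pairing \eqref{Main identity 2} against $\b{g}(z_{1})$, exchanging the order of integration, and applying the projector $\f{p}^{0}$ (well defined by the regularity bound) in both variables, the left-hand side becomes a $\overline{\Q}$-linear combination of products $<\f{p}^{0}(\b{h}_{i}),\b{g}>\cdot<\f{p}^{0}(\b{k}_{i}),\b{f}^{c}|I_{\f{b}}>$, while the right-hand side is an algebraic multiple of $\pi^{\beta}L(\sigma-n-l/2,\b{f},\chi)<\b{f},\b{g}>$. Combined with the identity $<\b{f}^{c}|I_{\f{b}},\b{f}^{c}|I_{\f{b}}>=<\b{f},\b{f}>$ from Proposition \ref{Behaviour under complex conjugation} and the nonvanishing of the $L$-value, the equation can be solved for the desired ratio.

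The principal obstacle is controlling the pairings $<\f{p}^{0}(\b{k}_{i}),\b{f}^{c}|I_{\f{b}}>$ when $V(\b{f})$ has multiplicity greater than one, since individually they are a priori transcendental. This is resolved by inserting the Hecke projector $\pi_{\b{f}}$ also in the $z_{2}$-variable of $\mathcal{E}(\sigma/2)$ before the inner pairing, which confines each $\f{p}^{0}(\b{k}_{i})$ to $V(\b{f})(\overline{\Q})$, followed by induction on $\dim V(\b{f})(\overline{\Q})$: varying $(\chi,\sigma)$ within the permitted range of Theorem \ref{algebraic properties of Eisenstein series} generates enough independent linear equations among the unknown ratios $<\b{f},\b{f}_{i}>/<\b{f},\b{f}>$, for a chosen $\overline{\Q}$-basis $\{\b{f}_{i}\}$ of $V(\b{f})(\overline{\Q})$ with $\b{f}_{1}=\b{f}$, to extract the algebraicity of each ratio.
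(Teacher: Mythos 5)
Your proposal tracks the paper's architecture for most of its length: the reduction to cusp forms via Lemma \ref{projection to cuspidal part}, the reduction to $V(\b{f})$ using normality and $\overline{\Q}$-rationality of the Hecke operators, and the evaluation of the doubling identity \eqref{Main identity 2} at a point of absolute convergence combined with Theorem \ref{algebraic properties of Eisenstein series} and Lemma \ref{diagonal restriction preserves algebraicity} are exactly the steps taken there (the paper uses $s=\mu/2$ with $\mu=\min_v k_v$, and a character satisfying $\chi^2\neq 1$ and $G(\chi,\mu-n-l/2)\in\overline{\Q}^{\times}$ --- two conditions you should make explicit, since they keep the Eisenstein series pole-free and the Euler-factor discrepancy harmless).

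The gap is in your final extraction step. By pairing the identity against $\b{g}(z_1)$ as well, you turn the left-hand side into a sum of products $<\f{p}^{0}(\b{h}_{i}),\b{g}>\cdot<\f{p}^{0}(\b{k}_{i}),\b{f}^{c}|I_{\f{b}}>$ of individually unknown transcendentals, and you propose to resolve the resulting underdetermined system by varying $(\chi,\sigma)$ and inducting on $\dim V(\b{f})$. This does not work as stated: there are only finitely many admissible $\sigma$, no independence of the resulting equations is argued, and, more fundamentally, each choice of $(\chi,\sigma)$ introduces a \emph{different} unknown transcendental $\pi^{\beta}\b{\Lambda}(\sigma/2,\b{f},\chi)$ on the right-hand side, so the equations cannot be combined over $\overline{\Q}$ into a solvable linear system for the ratios. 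The paper avoids this entirely by \emph{not} pairing in the $z_1$-variable: it leaves $z_1$ free, obtaining $\sum_i \b{f}_i(z_1)<\b{g}_i,\b{g}>=\pi^{\delta-d_0+\beta}\b{\Lambda}(\mu/2,\b{f},\chi)\,\tilde{\b{f}}^c(z_1)$ up to $\overline{\Q}^{\times}$ with $\b{f}_i,\b{g}_i\in N^{n}_{k,S}(\overline{\Q})$, and then compares Fourier coefficients at any index $(r,t)$ with $c(r,t;\tilde{\b{f}}^c)\neq 0$. This produces explicit elements $\widetilde{\b{h}}_{r,t}=\f{p}^0(\b{h}_{r,t}|_{k,S}\b{\eta}_n)\in S^n_{k,S}(\overline{\Q})$ whose pairing with $\tilde{\b{f}}$ lies in $\pi^{\delta-d_0+\beta}\b{\Lambda}(\mu/2,\b{f},\chi)\overline{\Q}^{\times}$; nonvanishing of the $L$-value (Corollary \ref{non-vanishing of L-values}) forces these elements to span $V(\b{f})$ over $\overline{\Q}$, so every Petersson pairing against $\b{f}$, including $<\b{f},\b{f}>$ itself, lies on the single line $\pi^{\delta-d_0+\beta}\b{\Lambda}(\mu/2,\b{f},\chi)\overline{\Q}$, and the ratio is algebraic. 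You should replace the parameter-variation argument by this Fourier-coefficient comparison.
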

\begin{proof} By Lemma \ref{projection to cuspidal part} it suffices to prove this theorem for $\b{g} \in S_{k,S}^n(\overline{\mathbb{Q}})$. 

By the discussion in subsection \ref{Normal Operators} where it was shown that the Hecke operators are normal and Proposition \ref{Hecke Operators preserve field of definition} which states that the Hecke operators $T(\f{a})$ preserve $S_{k,S}^{n}(\b{\Gamma},\overline{\mathbb{Q}})$, we have a decomposition
\[
S_{k,S}^{n}(\b{\Gamma},\overline{\Q}) = V(\b{f}) \oplus \b{U},
\]
where $\b{U}$ is a $\overline{\mathbb{Q}}$-vector space orthogonal to $V(\b{f})$. Therefore, without loss of generality, we may assume that $\b{g} \in V(\b{f})$. 

Now consider a character $\chi$ of conductor $\f{f}_{\chi} \neq \mathfrak{o}$ such that $\chi_{\b{a}}(x) = \sgn_{\b{a}}(x)^{k}$, $\chi^2 \neq 1$ and $G(\chi,\mu-n-l/2) \in \overline{\mathbb{Q}}^{\times}$, where $G(\chi,\mu-n-l/2)$ is as in equation \eqref{definition of $G$}.  The existence of such a character follows from the fact that $G(\chi,2s-n-l/2)$ is the ratio of products of finitely many Euler polynomials.  
We now use a slightly different version of equation \eqref{Main identity 2}, i.e. before multiplying by the factor $\mathcal{G}_{k-l/2,2n}(s-l/4) $, where we take $\f{c} = \f{c}_{\b{f}} \cap \f{f}_{\chi}$, $\f{e}=\f{c}$ and $n=m$, and evaluate it at $s=\mu/2$. Moreover, thanks to Proposition \ref{Behaviour under complex conjugation} if $\tilde{\b{f}} \in V(\b{f})$, then so is $\tilde{\b{f}}^c \in V(\b{f})$ and their $L$-functions agree. In particular, we obtain the following equality up to some non-zero algebraic number:
\begin{multline*}
\Lambda_{k-l/2,\mathfrak{c}}^{2n}(\mu/2-l/4,\chi \psi_S)  vol(A)  <(E|_{k,S} \b{\rho}) (\diag[z_1,z_2],\mu/2;\chi),(\tilde{\b{f}}^c|_{k,S}\b{\eta}_n)^c(z_2)>\\
={^{\overline{\mathbb{Q}}}}^{\times} c_{S,k}(\mu/2-k/2) \b{\Lambda}(\mu/2,\b{f},\chi) \tilde{\b{f}}^c(z_1), 
\end{multline*}
where, recall,
\[
\Lambda^{2n}_{k-l/2,\mathfrak{c}}(\mu/2-l/4,\chi\psi_S) := \begin{cases} L_{\mathfrak{c}}(\mu-l/2,\chi \psi_S) \prod_{i=1}^{n} L_{\mathfrak{c}}(2\mu-l-2i,\chi^{2}) & \hbox{if } l \in 2\mathbb{Z},\\
 \prod_{i=1}^{[(2n+1)/2]} L_{\mathfrak{c}}(2\mu-l-2i+1,\chi^{2}) & \hbox{if } l \notin 2\mathbb{Z},
 \end{cases}
\] 
and
\[
\b{\Lambda}(s,\b{f},\chi) := L(2s-n-l/2,\b{f},\chi) \begin{cases} L_{\mathfrak{c}}(2s-l/2,\chi \psi_S)  & \hbox{if } l \in 2\mathbb{Z},\\
1 & \hbox{if } l \not \in 2\mathbb{Z}.
 \end{cases}
\]
By Theorem \ref{algebraic properties of Eisenstein series}, $\Lambda_{k-l/2,\mathfrak{c}}^{2n}(\mu/2-l/4,\chi \psi_S) E(z,\mu/2; \chi) \in \pi^{\beta} N_{k,S}^{2n}(\overline{\mathbb{Q}})$ for $\beta \in \mathbb{N}$, and hence the same holds for 
\[
\Lambda_{k-l/2,\mathfrak{c}}^{2n}(\mu/2-l/4,\chi \psi_S) E(z,\mu/2; \chi)|_{k,S} \b{\rho}.
\]
In particular,
\[
\pi^{-\beta}\Lambda_{k-l/2,\mathfrak{c}}^{2n}(\mu/2-l/4,\chi \psi_S) (E|_{k,S} \b{\rho}) (\diag[z_1,z_2],\mu/2; \chi) = \sum_i \b{f}_i(z_1) \b{g}_i(z_2),
\] 
where $\b{f}_i,\b{g}_i \in N^{n}_{k,S}(\overline{\mathbb{Q}})$ by Lemma \ref{diagonal restriction preserves algebraicity}. Moreover, $vol(A) = \pi^{d_0} \mathbb{Q}^{\times}$, where $d_0$ is the dimension of $\mathbb{H}_n^d$ since the volume of the Heisenberg part is normalized to one. Furthermore, 
\[
c_{S,k}(\mu/2-k/2) \in \pi^{\delta} \overline{\mathbb{Q}}^{\times},\,\,\,\delta \in \frac{1}{2}\mathbb{Z}.
\]
Altogether we obtain
\[
 \sum_i \b{f}_i(z_1) <\b{g}_i(z_2), \b{g}(z_2) > ={^{\overline{\mathbb{Q}}}}^{\times} \pi^{\delta- d_0 +\beta} \b{\Lambda}(\mu/2,\b{f},\chi) \tilde{\b{f}}^c(z_1),
\]
where $\b{g}:= (\tilde{\b{f}}^c|_{k,S}\b{\eta}_n)^c \in S_{k,S}^n(\overline{\mathbb{Q}})$. Considering the Fourier expansion of $\b{f}_i$'s and $\b{f}$, and comparing any $(r,t)$ coefficients for which $c(r,t;\tilde{\b{f}}^c) \neq 0$, we find that 
\[
<\sum_i \alpha_{i,r,t} \b{g}_i(z_2),\b{g}(z_2)> = {^{\overline{\mathbb{Q}}}}^{\times} \pi^{\delta- d_0 +\beta} \b{\Lambda}(\mu/2,\b{f},\chi) \neq 0
\]
for some $\alpha_{i,r,t} \in \overline{\mathbb{Q}}$, where the non-vanishing follows from Corollary \ref{non-vanishing of L-values}. Setting $\b{h}_{r,t}(z_2) := \sum_i \alpha_{i,r,t} \b{g}_i(z_2) \in N^n_{k,S}(\overline{\mathbb{Q}})$, we obtain 
\[
<\b{h}_{r,t}(z_2), \b{g}(z_2)>={^{\overline{\mathbb{Q}}}}^{\times} \pi^{\delta- d_0 +\beta} \b{\Lambda}(\mu/2,\b{f},\chi) \neq 0,
\]
or,
\[
<\mathfrak{p}^0(\b{h}_{r,t}|_{k,S}\b{\eta}_n)(z_2), \tilde{\b{f}}(z_2)>= {^{\overline{\mathbb{Q}}}}^{\times} \pi^{\delta- d_0 +\beta} \b{\Lambda}(\mu/2,\b{f},\chi) \neq 0,
\]
That is, the forms, or rather their projections to $V(\b{f})$, $\widetilde{\b{h}}_{r,t} := \mathfrak{p}^0(\b{h}_{r,t}|_{k,S}\b{\eta}_n) \in S_{k,S}^n(\overline{\mathbb{Q}})$ for the various $(r,t)$ span the space $V(\b{f})$ over $\overline{\mathbb{Q}}$ and
\[
<\widetilde{\b{h}}_{r,t},\tilde{\b{f}}> \in \pi^{\delta- d_0 +\beta} \b{\Lambda}(\mu/2,\b{f},\chi) \overline{\mathbb{Q}}^{\times}.
\]  
That is, for any $\b{g} \in V(\b{f})$ we have $<\b{g},\b{f}> \in \pi^{\delta- d_0 +\beta} \b{\Lambda}(\mu/2,\b{f},\chi) \overline{\mathbb{Q}}^{\times}$. In particular, the same holds for $\b{g} = \b{f}$, and that concludes the proof.
\end{proof}

\begin{proof}[Proof of Theorem \ref{Main Theorem on algebraicity}] We follow the same steps as in the proof of Theorem \ref{ratio of inner products} but this time we set $s = \sigma/2$. In exactly the same way as above we obtain
\[
<\b{h}_{r,t}(z_2), \b{f}(z_2)>={^{\overline{\mathbb{Q}}}}^{\times} \pi^{\delta- d_0 +\beta} \b{\Lambda}(\sigma/2,\b{f},\chi),
\]
for some $\b{h}_{r,t} \in \b{N}_{k,S}^n(\overline{\mathbb{Q}})$. Thanks to Theorem \ref{ratio of inner products} the proof will be finished after dividing the above equality by $<\b{f},\b{f}>$ if we make the powers of $\pi$ precise. 
Recall that
\begin{multline*}
c_{S,k}(\sigma/2 - k/2) ={^{\overline{\mathbb{Q}}}}^{\times} \pi^{dn(n+1)/2} \prod_{v \in \mathbf{a}} \frac{\Gamma_n(\sigma/2 + k_v -l/2 - (n+1)/2)}{\Gamma_n(\sigma/2 +k_v - l/2)}\\
=\pi^{dn(n+1)/2} \prod_{v \in \mathbf{a}} \frac{\prod_{i=0}^{n-1}\Gamma(\sigma/2 + k_v -l/2 - (n+1)/2 - i/2)}{\prod_{i=0}^{n-1}\Gamma(\sigma/2 +k_v - l/2 - i/2)} ={^{\overline{\mathbb{Q}}}}^{\times} \pi^{dn(n+1)/2}. 
\end{multline*}
Hence, $\delta = dn(n+1)/2$. However, this is also equal to the dimension of the space $\mathbb{H}_n^d$, which we denoted by $d_0$. We are then left only with $\beta$, which is provided by Theorem \ref{algebraic properties of Eisenstein series}; namely,
\[
\beta =  n \sum_{v \in \mathbf{a}} (k_v - l + \sigma) - de, 
\]
where $e := n^2 + n - \sigma + l/2$ if $2 \sigma -l  \in 2 \mathbb{Z}$ and $\sigma \geq 2n + l/2$, and $e:= n^2$ otherwise. This concludes the proof of the theorem.
\end{proof}


\end{document}